\def\@secnumfont{\bfseries}
\def\section{\@startsection{section}{1}%
  \z@{.7\linespacing\@plus\linespacing}{.5\linespacing}%
  {\normalfont\Large\bfseries}}
\def\subsection{\@startsection{subsection}{2}%
  \z@{.5\linespacing\@plus.7\linespacing}{-.5em}%
  {\normalfont\large\bfseries}}
  \def\subsubsection{\@startsection{subsubsection}{3}%
  \z@{.5\linespacing\@plus.7\linespacing}{-.5em}%
  {\normalfont\bfseries}}
\newtheorem{theorem}{Theorem}[section]
\newtheorem{proposition}[theorem]{Proposition}
\newtheorem{lemma}[theorem]{Lemma}
\newtheorem{corollary}[theorem]{Corollary}
\theoremstyle{definition}
\newtheorem{definition}[theorem]{Definition}
\newtheorem{examples}[theorem]{Example}
\theoremstyle{remark}
\newtheorem{remark}[theorem]{Remark}
\numberwithin{equation}{section}
\renewcommand{\epsilon}{\varepsilon}
\renewcommand{\phi}{\varphi}
\begin{document}

{
\title{Contact Manifolds with Flexible Fillings}
\author{Oleg Lazarev}
} 

\maketitle

\begin{abstract}
We prove that all flexible Weinstein fillings of a given contact manifold with vanishing first Chern class have isomorphic integral cohomology; in certain cases, we prove that all flexible fillings are symplectomorphic. As an application, we show that in dimension at least 5 any almost contact class that has an almost Weinstein filling
has infinitely many different contact structures.
Similar methods are used to construct the first known infinite family of almost symplectomorphic Weinstein domains whose contact boundaries are not contactomorphic. 
We also prove relative analogs of our results,
which we apply to Lagrangians in cotangent bundles. 
\end{abstract}
\tableofcontents

\section{Introduction}

This paper is mainly concerned with the two related problems of distinguishing contact structures and classifying symplectic fillings of a given contact structure. We focus on distinguishing contact structures that have the same  bundle-theoretic data, or \textit{almost contact class}, and hence cannot be distinguished via algebraic topology. Non-contactomorphic contact structures in the same almost contact class are called \textit{exotic}. 
This problem has a long history. 
Bennequin \cite{Benn} constructed the first example of an exotic contact structure in the standard almost contact class on $\mathbb{R}^3$. In higher dimensions, Eliashberg \cite{Eexotic} constructed an exotic contact structure in the standard almost contact class on $S^{4k+1}$ for $k \ge 1$. 
This was generalized by Ustilovsky \cite{U} who proved that \textit{every} almost contact class on $S^{4k+1}$ has infinitely many different contact structures; also see 
\cite{Fauck, Gutt}. van Koert \cite{vK}  showed that many simply-connected 5-manifolds have infinitely many contact structures in the same almost contact class; see  \cite{DG, MMgrowth, Ue} for  more examples.

One way to study contact manifolds is through their symplectic fillings, i.e symplectic manifolds whose contact boundary is contactomorphic to the given contact manifold. 
In this paper, we consider only fillings that are 
\textit{Liouville domains}, which are certain exact symplectic manifolds, or \textit{Weinstein domains}, which are Liouville domains that admit a compatible Morse function; see Section \ref{ssec: domains}. 
One phenomenon is that if a contact manifold has a symplectic filling that satisfies an h-principle, i.e. is governed by algebraic topology, then the contact manifold itself is very rigid and remembers the topology of its fillings; see the discussion at the end of Section \ref{sec: mainresults}. The first result of this type is the Eliashberg-Floer-McDuff theorem \cite{McD}: all Liouville fillings of the standard contact structure $(S^{2n-1}, \xi_{std}) = \partial (B^{2n}, \lambda_{std})$ are diffeomorphic to $B^{2n}$. Using this result,  Eliashberg proved that the contact structures on $S^{4k+1}$ in  \cite{Eexotic} are exotic. 
Similarly, quite a lot is known about contact manifolds with \textit{subcritical} Weinstein fillings, which satisfy an h-principle; see Section \ref{ssec: domains}. 
For example, M.-L.Yau \cite{MLYau} showed that linearized contact homology, which a priori depends on the filling,  is a contact invariant for such contact manifolds. This can be used to prove that all fillings with vanishing symplectic homology of such contact manifolds  have the same rational homology. Later Barth, Geiges, and Zehmisch \cite{Geiges_subcritical} showed  that in fact all Liouville fillings of simply-connected subcritically-filled contact manifolds are diffeomorphic; since $(S^{2n-1}, \xi_{std})$ has a subcritical filling $(B^{2n}, \lambda_{std})$, this result generalizes the Eliashberg-McDuff-Floer theorem.

However not much was known beyond the subcritical case. There are contact manifolds with many different fillings 
\cite{Oba_infinite_fillings, OS} so the results in the subcritical setting do not hold in general. The main purpose of this paper is to extend those results to \textit{flexible} fillings, which  generalize subcritical fillings and also satisfy an h-principle; see Section \ref{subsection: loose}. Flexible Weinstein domains are only defined for $n \ge 3$ and so many of our results below require $n \ge 3$. 

\subsection{Contact manifolds with flexible fillings}\label{sec: mainresults}

In this paper, we will denote almost contact structures by $(Y, J)$ 
and almost symplectic structures by $(W,J)$; see Section \ref{ssec: domains}. We also assume the first Chern classes $c_1(Y, J) \in H^2(Y; \mathbb{Z}), c_1(W, J) \in H^2(W; \mathbb{Z})$ always vanish, even when this is not stated explicitly. Many of our results concern only Weinstein fillings and so it will often suffice to assume $c_1(Y, J) = 0$; indeed by Proposition \ref{prop: c1equivalence}, if $W^{2n}$ is a Weinstein filling of $(Y^{2n-1}, \xi)$ with $n \ge 3$, then $c_1(Y, J)$ vanishes if and only if $c_1(W, J)$ does. 

We begin by stating our main geometric result and some of its applications; its proof will be briefly discussed at the end of this section.
\begin{theorem}\label{thm: main}
All flexible Weinstein fillings of a contact manifold $(Y, \xi)$ with $c_1(Y, \xi) = 0$ have isomorphic integral cohomology; that is, if $W_1, W_2$ are flexible fillings of $(Y, \xi)$, then $H^*(W_1; \mathbb{Z}) \cong H^*(W_2;\mathbb{Z})$ as abelian groups.
\end{theorem}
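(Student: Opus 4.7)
The plan is to express the integral cohomology $H^*(W;\Z)$ of a flexible filling as a symplectic invariant that depends only on $(Y,\xi)$. The starting input is the Bourgeois--Ekholm--Eliashberg vanishing theorem: for a flexible Weinstein domain $W^{2n}$ with $c_1(W) = 0$, the symplectic cohomology satisfies $SH^*(W;\Z) = 0$. Note that the hypothesis $c_1(Y,\xi) = 0$ is equivalent to $c_1(W) = 0$ by Proposition \ref{prop: c1equivalence}, so this applies to any flexible filling in our setting.

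Combining this vanishing with the tautological (Viterbo--Cieliebak--Oancea) long exact sequence for a Liouville domain,
\begin{equation*}
\cdots \longrightarrow H^*(W;\Z) \longrightarrow SH^*(W;\Z) \longrightarrow SH^*_+(W;\Z) \longrightarrow H^{*+1}(W;\Z) \longrightarrow \cdots,
\end{equation*}
immediately produces a degree-shifted isomorphism $H^{*+1}(W;\Z) \cong SH^*_+(W;\Z)$ of abelian groups. It therefore suffices to prove that $SH^*_+(W;\Z)$ depends only on $(Y,\xi)$ whenever $W$ is flexible.

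To establish this contact-invariance, I would invoke the identification, available under the $c_1 = 0$ assumption, of positive symplectic cohomology with linearized contact homology of $(Y,\xi)$ with respect to the augmentation $\epsilon_W$ of the Chekanov--Eliashberg DGA induced by the filling (via the Bourgeois--Oancea long exact sequence and the fact that $SH^* = 0$ kills the cylindrical contribution obstructing pure linearization). The crucial additional input is that for flexible Weinstein fillings the augmentation $\epsilon_W$ is essentially canonical: once loose Legendrians are attached, the BEE machinery shows that the moduli spaces of holomorphic disks in $W$ with one positive puncture at a Reeb orbit carry no rigid contributions, so $\epsilon_W$ agrees up to DGA homotopy with the trivial (zero) augmentation. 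The resulting linearized contact homology reduces to an invariant of $(Y,\xi)$ alone, yielding $SH^*_+(W_1;\Z) \cong SH^*_+(W_2;\Z)$ for any two flexible fillings and completing the proof.

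The principal obstacle is executing this program over $\Z$ rather than over $\mathbb{Q}$ or $\mathbb{Z}/2$: the BEE vanishing, the Bourgeois--Oancea isomorphism, and the triviality of the filling-augmentation are most naturally proved with field coefficients, so upgrading each step to integral coefficients demands coherent orientations in SFT and a virtual perturbation scheme compatible with $\Z$-valued counts. The grading in each of these packages likewise forces the Conley--Zehnder index to be well-defined, which is precisely where the $c_1 = 0$ hypothesis enters. Once these technicalities are handled, the theorem follows formally by assembling the pieces above.
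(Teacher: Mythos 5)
Your first step matches the paper: $SH(W)=0$ for flexible $W$ (BEE) plus the tautological exact sequence identifies $SH^+_k(W)$ with $H^{n-k+1}(W;\mathbb{Z})$, so everything reduces to showing that $SH^+$ of a flexible filling depends only on $(Y,\xi)$. The gap is in how you get that independence. You assert that "the BEE machinery shows that the moduli spaces of holomorphic disks in $W$ with one positive puncture at a Reeb orbit carry no rigid contributions, so $\epsilon_W$ agrees up to DGA homotopy with the trivial augmentation." No such statement follows from the BEE surgery package, and it is essentially the whole content of the theorem. What BEE give is $SH(W)=0$, which says nothing about the $SH^+$ (or linearized contact homology) differential's dependence on the filling: the once-punctured surfaces $\Sigma_g$ show that $SH^+$ genuinely depends on the filling in general, and the triviality of the filling augmentation is, morally, equivalent to the absence of rigid planes, i.e.\ to degree-zero Reeb orbits being absent — a statement about the Reeb dynamics of $\partial W$ that is false for an arbitrary contact form on the boundary of a flexible domain and must be \emph{arranged}. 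The paper arranges it by a new geometric input with no analogue in your outline: loose Legendrian attaching spheres can be displaced by a $C^0$-small formal (hence, by Murphy's h-principle, genuine) Legendrian isotopy so that all Reeb chords below any fixed action have positive degree (Lemma \ref{lem: maingeo}), which via the BEE chord--orbit correspondence (Proposition \ref{prop: BEE}) and a delicate limiting scheme over a \emph{non-increasing} sequence of contact forms with action windows going to infinity (the ADC condition, Definition \ref{def: semigood}, Theorem \ref{thm: semi-surgery}) yields the dynamical control; one cannot work with a single contact form because surgery creates wild high-action orbits, and one cannot rescale forms freely without trivializing the condition.

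A second, smaller problem is the mechanism you propose for turning positivity into invariance. Even granting positive-degree orbits, the paper explicitly avoids the route through rigid planes/augmentations because transversality for $J$-holomorphic disks in the filling may fail (Remark \ref{rem: H1}); instead, filling-independence of $SH^+$ is proved by a neck-stretching argument for Floer cylinders (Proposition \ref{prop: neckstretch} feeding into Proposition \ref{prop: nice_sh_independent}), where transversality is achieved by time-dependent perturbations and the positivity of chord/orbit degrees (not merely nonvanishing) rules out breaking into the filling by an index count. This also dissolves the "principal obstacle" you flag: since the argument lives entirely in Hamiltonian Floer theory rather than in SFT/linearized contact homology, integral coefficients and coherent orientations are standard and no virtual perturbation scheme is needed. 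So the proposal's skeleton (vanishing $SH$ plus a filling-independence statement for $SH^+$) is right, but the justification of the independence step — the actual theorem — is missing and the augmentation-based shortcut you sketch does not close it.
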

\begin{remark}\label{rem: fieldcoefficients,homology}
Theorem \ref{thm: main} also holds with any field coefficients and with homology instead of cohomology.
\end{remark}
The cohomology long exact sequence of the pair $(W, Y)$ and the fact that $H^k(W, Y) \cong H_{2n-k}(W; \mathbb{Z}) = 0$ for $k \le  n-1$ for Weinstein domains show that 
$H^k(W; \mathbb{Z}) \cong H^k(Y; \mathbb{Z})$ for $k \le n-2$.
In particular, $H^k(W_1; \mathbb{Z}) \cong H^k(W_2; \mathbb{Z})$ for $k \le n-2$ because of purely topological reasons.  Therefore, Theorem \ref{thm: main} is interesting only for $k = n-1$ and $k=n$. On the other hand, its stronger variant Corollary \ref{thm: mainstronger}  applies to a larger class of Liouville domains and so is interesting for all $k$.
The long exact sequence also shows that for Weinstein $W$,
the rank of the intersection form 
$H_n(W;\mathbb{Z})\otimes H_n(W; \mathbb{Z})\rightarrow \mathbb{Z}$ equals
$$
\dim H^n(W; \mathbb{Q}) + \dim H^{n-1}(W; \mathbb{Q}) - \dim H^{n}(Y; \mathbb{Q}).
$$
So Theorem \ref{thm: main} shows that the contact boundary also remembers the intersection form rank of its flexible fillings. 

There are analogs of Theorem \ref{thm: main} in smooth topology, where fillings play a similar role. For example, Kervaire and Milnor \cite{KMhomotopy} showed that the diffeomorphism type of an exotic $(4k-1)$-dimensional sphere bounding a parallelizable manifold is determined by the signature of this manifold (modulo some other integer). In particular, the smooth structure remembers the signature  of its  parallelizable fillings (again, modulo some fixed integer). Similarly, Theorem \ref{thm: main} shows that the contact structure remembers the cohomology of its flexible fillings.  Work in progress with Y. Eliashberg and S. Ganatra \cite{EGL2} shows that the contact structure also remembers the signature of its fillings as an integer (not just as a residue modulo some other integer). 

We also note that there are contact manifolds that do not have any flexible fillings, e.g. overtwisted manifolds. In this case, Theorem \ref{thm: main} is vacuous. In fact, there are contact structures that have Weinstein fillings but no flexible Weinstein fillings, e.g.  $ST^*M^n = \partial T^*M^n$ if $M$ is simply-connected and spin or the Ustilovsky contact structures on $S^{4k+1}$; see Remark \ref{rem: ustilovskynotflex} and Remark \ref{rem: boundedinfinite}. 

As seen in the Eliashberg-Floer-McDuff theorem \cite{McD}, certain contact manifolds remember the \textit{diffeomorphism type} of their fillings. This is because in some special cases cohomology is enough to determine diffeomorphism type. Sometimes cohomology can even determine almost symplectomorphism type. 
 Since almost symplectomorphic flexible Weinstein domains are genuinely symplectomorphic (see Section \ref{subsection: loose}), we can also use Theorem \ref{thm: main} to prove results about symplectomorphism type.

One application is to simply-connected 5-manifolds. 
In \cite{S5}, Smale showed that any simply-connected 5-manifold $Y^5$  with $w_2(Y) = 0$ admits a smooth 2-connected filling $W^6$ that has a handle decomposition with only 0 and 3-handles. Smale also showed that such fillings of $Y^5$ are unique up to boundary connected sum with $S^3\times S^3 \backslash D^6$; we will suppose that every filling of $Y^5$ is of the form 
 $W_n := W \natural (S^3 \times S^3 \backslash D^6) \natural \cdots \natural (S^3 \times S^3 \backslash D^6)$, the boundary connected sum of $W$ with n copies of $S^3 \times S^3 \backslash D^6$, for some $n \in \mathbb{Z}_{\ge 0}$. Note that $W_n$ is determined by its cohomology. We also note that $W_n$ admits a unique almost complex structure by obstruction theory since $\pi_3(O(6)/U(3)) \cong \pi_3(O/U) \cong \pi_4(O)$ vanishes. Hence  by the uniqueness h-principle in Section \ref{subsection: loose}, $W_n$ admits a unique flexible Weinstein structure. 
More precisely, all flexible Weinstein structures on $W_n$  have symplectomorphic completions; the completion of $W$ is the open symplectic manifold $\widehat{W} := W \cup Y \times [1, \infty)$ with a conical symplectic form on the cylindrical end $Y \times [1, \infty)$, see Section \ref{ssec: domains} for details. 
So in this case, the cohomology of the flexible Weinstein domain determines the symplectomorphism type of its completion.  

The boundary of $W_n$ has a contact structure $(Y, \xi_n)$, which by definition has $W_n$ as a filling; this was first proven by Geiges in \cite{G5}. Since $c_1(W_n) \in H^{2}(W_n) = 0$, these structures have $c_1(Y, \xi_n) = i^*c_1(W_n) = 0$. Geiges also  showed that there is a unique \textit{almost} contact structure $(Y, J)$ with $c_1(Y, J) = 0$ so these are all in the same almost contact class. Using Theorem \ref{thm: main}, we can prove that these contact structures have unique flexible fillings. 
\begin{corollary}\label{cor: example}
Any contact structure on $Y^5$ with a  2-connected  flexible filling is of the form $(Y, \xi_n)$ for some $n\in \mathbb{Z}_{\ge 0}$ and all 2-connected flexible  fillings of $(Y^5, \xi_n)$ have completions that are symplectomorphic to $\widehat{W}_n$. 
In particular, $(Y^5, \xi_n)$ are in the same almost contact class but are pairwise non-contactomorphic. 
\end{corollary}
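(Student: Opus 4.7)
My plan is to combine Theorem \ref{thm: main} with four ingredients already assembled in the excerpt: Smale's diffeomorphism classification of 2-connected fillings of $Y^5$, the uniqueness h-principle for flexible Weinstein structures on $W_n$ (available because $\pi_3(O/U)=0$ forces a unique almost complex structure on $W_n$), Geiges's uniqueness of the almost contact class with $c_1(Y,J)=0$, and a simple cohomology computation. The key computation is $H^3(W_n;\Z)\cong H^3(W;\Z)\oplus \Z^{2n}$, which follows because $S^3\times S^3\setminus D^6$ deformation retracts onto the 3-skeleton $S^3\vee S^3$ with $H^3\cong\Z^2$, and boundary connected sum is additive on positive-degree cohomology; in particular, the integer $n$ is detected by the cohomology of $W_n$.

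I would first show that any contact structure $(Y,\xi)$ admitting a 2-connected flexible filling $W'$ is of the form $(Y,\xi_n)$. By Smale's classification, $W'$ is diffeomorphic to some $W_m$, and the h-principle gives that the Weinstein structure on $W'$ is unique up to symplectomorphism of completions, so $\widehat{W'}$ is symplectomorphic to $\widehat{W}_m$. The induced contact structures on the ideal boundaries therefore agree, yielding $(Y,\xi)\cong(Y,\xi_m)$. To upgrade this to the stronger claim that any 2-connected flexible filling of a fixed $(Y,\xi_n)$ has completion symplectomorphic to $\widehat{W}_n$, observe that both $W_n$ and $W' \cong W_m$ are then flexible fillings of the same contact manifold $(Y,\xi_n)$, so Theorem \ref{thm: main} forces $H^3(W_n;\Z)\cong H^3(W_m;\Z)$; by the rank computation, $m=n$, and uniqueness of the flexible structure on $W_n$ identifies the completion with $\widehat{W}_n$.

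The same rank argument proves pairwise non-contactomorphism of $\{(Y,\xi_n)\}$: if $(Y,\xi_n)\cong(Y,\xi_m)$, then $W_n$ and $W_m$ are flexible fillings of a single contact manifold, and Theorem \ref{thm: main} forces $n=m$. Finally, each $\xi_n$ lies in the unique almost contact class with $c_1=0$, since $c_1(Y,\xi_n)=i^*c_1(W_n)=0$ and Geiges's result provides uniqueness of such an almost contact structure. The main obstacle here is really Theorem \ref{thm: main} itself together with the flexible uniqueness h-principle; given both as black boxes, the corollary reduces to the cohomology rank count above, with the only minor subtlety being the standard passage from a symplectomorphism of completions to a contactomorphism of the contact boundaries, which holds for Liouville ends.
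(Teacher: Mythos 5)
Your proposal is correct and follows essentially the same route as the paper: Smale's classification plus the uniqueness h-principle (via the unique almost complex structure on $W_n$) identify any 2-connected flexible filling with some $W_m$ up to Weinstein homotopy, and Theorem \ref{thm: main} together with the rank of $H^3$ pins down $m=n$, which simultaneously gives the symplectomorphism of completions and the pairwise non-contactomorphism, with Geiges's uniqueness of the almost contact class with $c_1=0$ handling the last claim. Your write-up merely makes explicit the cohomology count and the passage from Weinstein homotopy to contactomorphic boundaries that the paper leaves implicit.
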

We will provide more examples of exotic contact structures in Section \ref{sub: exotic}.

Theorem \ref{thm: main} can also be used to show that the contact boundary remembers the symplectomorphism type of its fillings when the original flexible filling is smoothly displaceable in its completion, i.e. there is an embedding 
$\phi: W \hookrightarrow \widehat{W}$ smoothly isotopic to the standard inclusion $i: W\hookrightarrow \widehat{W}$ such that  
$\phi(W) \cap i(W) = \emptyset$.
To prove this result, we follow the approach of Barth, Geiges, and Zehmisch \cite{Geiges_subcritical} of finding an h-cobordism between the standard filling and the new filling.  

\begin{corollary}\label{cor: displaceable_flexible_diffeomorphism}
Suppose that $(Y, \xi)$ has a flexible filling $W_{flex}$ such that $W_{flex}$ is smoothly displaceable in its completion and $H^{n-1}(W_{flex}; \mathbb{Z}), H_{n-1}(W_{flex}; \mathbb{Z}), \pi_1(W_{flex}), c_1(W_{flex})$ all vanish. Then all flexible Weinstein fillings of $(Y, \xi)$ have symplectomorphic completions. 
\end{corollary}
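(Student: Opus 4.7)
The plan is to follow the Barth-Geiges-Zehmisch strategy: construct a smooth h-cobordism between $W_{flex}$ and an arbitrary flexible filling $W$ of $(Y,\xi)$, invoke the h-cobordism theorem to obtain a diffeomorphism rel $Y$, and finish with the uniqueness part of the flexible Weinstein h-principle.

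First I would collect the topological input. Since $W_{flex}$ is Weinstein of real dimension $2n\ge 6$, the handle decomposition has handles of index $\le n$, and the dual decomposition presents $W_{flex}$ as built from a collar of $Y$ by attaching handles of index $\ge n\ge 3$; such attachments do not alter $\pi_1$, so $\pi_1(Y)=\pi_1(W_{flex})=0$, and the same argument yields $\pi_1(W)=0$. Theorem \ref{thm: main} combined with Remark \ref{rem: fieldcoefficients,homology} gives $H^*(W;\Z)\cong H^*(W_{flex};\Z)$ and $H_*(W;\Z)\cong H_*(W_{flex};\Z)$, so $H^{n-1}(W)=H_{n-1}(W)=0$ as well.

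For the h-cobordism, I would use smooth displaceability to obtain an embedding $\psi:W_{flex}\hookrightarrow \widehat{W}_{flex}$, smoothly isotopic to the standard inclusion, with $\psi(W_{flex})\cap W_{flex}=\emptyset$. Since $\psi(W_{flex})$ is compact, connected, and disjoint from $W_{flex}$, it lies in the cylindrical collar $Y\times(1,T)$ for some $T$. Because the cylindrical ends of $\widehat{W}$ and $\widehat{W}_{flex}$ are canonically identified, $\psi$ transports to an embedding $W_{flex}\hookrightarrow Y\times(1,T)\subset\widehat{W}$. Set $R:=W\cup_Y(Y\times[1,T])\subset \widehat{W}$, and let $V:=R\setminus \mathrm{int}(\psi(W_{flex}))$, a smooth cobordism from $Y\times\{T\}$ to $\psi(\partial W_{flex})$, both copies of $Y$, satisfying $V\cup_Y W_{flex}\cong R\cong W$ rel the outer $Y$. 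Van Kampen applied to $R=V\cup\psi(W_{flex})$ with connected intersection $\psi(Y)$ gives $\pi_1(V)=0$. The main technical step is verifying that both boundary inclusions $Y\hookrightarrow V$ induce isomorphisms on integral homology; this is carried out via the Mayer-Vietoris sequence for the same decomposition, using $H^*(R)\cong H^*(W)\cong H^*(W_{flex})$ from Theorem \ref{thm: main} together with the vanishing of $H^{n-1}(W_{flex})$ and $H_{n-1}(W_{flex})$ to force the relevant connecting maps to vanish. The simply-connected h-cobordism theorem then yields $V\cong Y\times[0,1]$ rel $\partial$, so $W\cong V\cup_Y W_{flex}\cong W_{flex}$ rel $Y$.

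To conclude, the uniqueness part of the flexible Weinstein h-principle (Section \ref{subsection: loose}) promotes the diffeomorphism $W\cong W_{flex}$ rel $Y$ to a symplectomorphism $\widehat{W}\cong \widehat{W}_{flex}$ of completions, provided the underlying almost complex structures match up to homotopy through structures inducing the given contact structure on $Y$. The hypothesis $c_1(W_{flex})=0$, together with simple connectivity and the matching low-degree cohomology from Theorem \ref{thm: main}, kills the primary obstruction, and the remaining obstructions vanish by a standard obstruction-theoretic argument adapted to almost Weinstein structures. The principal obstacle is the Mayer-Vietoris verification in the h-cobordism step, where all four hypotheses of the corollary---vanishing of $\pi_1$, $H^{n-1}$, $H_{n-1}$, and $c_1$---are used in an essential way.
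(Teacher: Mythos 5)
Your route is the same as the paper's: both follow the Barth--Geiges--Zehmisch strategy of excising the displaced copy from a collar-extended filling, showing the complementary cobordism is an h-cobordism, trivializing it, and finishing with the uniqueness h-principle for flexible Weinstein structures. However, two steps of your sketch have genuine gaps. The first is the homological verification. Knowing from Theorem \ref{thm: main} that $H_*(W)\cong H_*(W_{flex})$ as abstract groups does not control the map induced by the inclusion $\psi(W_{flex})\hookrightarrow R$, and it is precisely the statement that this map is an isomorphism that $H_*(V,\partial_\pm V)=0$ requires. Your proposed mechanism (Mayer--Vietoris plus the vanishing of $H^{n-1}(W_{flex})$ and $H_{n-1}(W_{flex})$ forcing connecting maps to vanish) cannot reach the middle dimension, where the relevant groups $H_n$ and $H^n$ need not vanish at all. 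The paper's argument factors $H_k(\psi(W_{flex}))\to H_k(R)$ through $H_k(Y\times[1,T])$, uses that $\psi$ is smoothly isotopic to the standard inclusion inside $\widehat{W}_{flex}$ so that the composite into $H_k(W_{flex}^T)$ is an isomorphism, and then, for $k=n$, needs two ingredients you never mention: the vanishing of the intersection form of $W_{flex}$ (a second, homological use of displaceability), which yields $H^n(W_{flex})\cong H_{n-1}(Y)$, and the Hopfian property of finitely generated abelian groups to upgrade surjections between abstractly isomorphic groups to isomorphisms. Without these, the degree-$n$ step of your h-cobordism verification does not close.

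The second gap is the passage from the diffeomorphism $W\cong W_{flex}$ to an almost symplectomorphism. You appeal to obstruction theory and assert that $c_1(W_{flex})=0$, simple connectivity and matching cohomology kill all obstructions; this is unjustified, since the obstruction groups $H^k\bigl(W;\pi_k(SO(2n)/U(n))\bigr)$ are generally nonzero (in particular in degree $n$, where $H^n(W_{flex})$ can be arbitrarily large), and the paper's Corollary \ref{cor: example} works only because $\pi_3(O/U)=0$ in that special dimension. The paper avoids obstruction theory entirely: since the diffeomorphism identifies the other filling with $\phi(W_{flex})\cup C$ where $C$ is a concordance, the pushed-forward almost symplectic structure differs from the induced-by-inclusion one only on a collar, and any almost symplectic structure on a concordance deforms rel the negative boundary to a product structure; the isotopy $\phi\simeq i$ then gives the almost symplectomorphism with $W_{flex}$, after which the flexible uniqueness h-principle applies. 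You should replace your obstruction-theoretic claim with this deformation-on-the-concordance argument, since the stated hypotheses do not support a general vanishing of obstructions.
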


The condition that $W_{flex}$ is smoothly displaceable in its completion restricts the topology of $W_{flex}$; for example, the intersection form of $W_{flex}$ must vanish. 

\begin{examples}
If $M^n, n \ge 3,$ is a closed manifold with $\chi(M) = 0$ and $\pi_1(M) = 0$, then all flexible Weinstein fillings of $\partial T^*M_{flex}$ have symplectomorphic completions. In particular, for n odd, all flexible fillings of $\partial T^*S^n_{flex}$ have symplectomorphic completions. Similarly, all flexible fillings of 
$\partial (T^*S^n_{flex} \natural \cdots \natural T^*S^n_{flex})$ have symplectomorphic completions.  
\end{examples}

We will now sketch the proof of Theorem \ref{thm: main}. The main technical tool in this paper is a certain Floer-theoretic invariant of Liouville domains called positive symplectic homology $SH^+$; see Section \ref{ssec: postive_sym_hom}. If we fix a contact manifold $(Y, \xi)$, then a priori positive symplectic homology of a Liouville filling $W$ of $(Y, \xi)$ depends on $W$. In Section \ref{ssec: independencelin}, we define a certain collection of \textit{asymptotically dynamically convex} contact structures, which generalize the dynamically convex contact structures from \cite{Abreu_Mac, CieliebakOancea, HWZ_convex}, and show that $SH^+$ is independent of the filling for these structures and is therefore a contact invariant; see Definition \ref{def: semigood} and Proposition \ref{prop: nice_sh_independent}. Our main result is that asymptotically dynamically convex contact structures are preserved under flexible contact surgery, i.e. surgery along a \textit{loose} Legendrian \cite{Murphy11}. This extends a similar result of M.-L.Yau \cite{MLYau} for subcritical surgery; see Theorems  \ref{thm: MLYau}, \ref{thm: MLYauindex2}, and \ref{thm: semi-surgery}. In particular, contact manifolds with flexible Weinstein fillings are asymptotically dynamically convex and so have $SH^+$ independent of the filling; see Corollary \ref{cor: flexsemigood}. It is a standard fact that $SH^+$ of a flexible domain always equals the singular cohomology of the domain (see Proposition \ref{prop: shcomputation}) and so all flexible fillings have the same cohomology. 
We note that positive symplectic homology is the (non-equivariant) Floer-theoretic version of linearized contact homology and so this generalizes the result of M.-L.Yau mentioned earlier.

As we explain in Definition \ref{def: semigood}, asymptotically dynamically convex contact structures are essentially characterized by the fact that their Reeb orbits have positive degree. Hence to prove our main result that these structures are preserved under flexible surgery, we use 
Proposition \ref{prop: BEE} (see \cite{BEE12}) which gives a correspondence between the new Reeb orbits after flexible surgery and words of Reeb chords of the loose Legendrian attaching sphere. So it is enough to prove that Reeb chords of loose Legendrians have positive degree (possibly after Legendrian isotopy), which we do in our main geometric result Lemma \ref{lem: maingeo}. This lemma depends crucially on Murphy's h-principle for loose Legendrians \cite{Murphy11}.
\\

In certain special cases, there is an alternative approach to Theorem \ref{thm: main} which may provide some more geometric intuition for why it is true. In these cases, Theorem \ref{thm: main} for flexible domains can be reduced to the subcritical case. The idea is that flexible domains obey an embedding h-principle \cite{EM} so any flexible domain satisfying the appropriate topological conditions can embedded into a subcritical domain. Hence if its contact boundary had many different fillings, then  the contact boundary of the subcritical domain would also have many different fillings as well by a cut-and-paste argument, violating known results in the subcritical case  \cite{Geiges_subcritical, MLYau, OV}. So if there is any contact rigidity (in the sense that some contact manifold remembers the topology of its filling), then the contact boundary of a flexible domain should also inherit this rigidity. 

We will give a proof of this result here since it is independent of all other results in this paper. In the following, an almost symplectic embedding $\phi: (X, \omega_X) \rightarrow (Y, \omega_Y)$  is an embedding such that $\phi^* \omega_Y$ can be deformed through non-degenerate two-forms to $\omega_X$; this is a purely topological notion.

\begin{proposition}\label{thm: reduce_subcritical}
Suppose $(Y^{2n-1}, \xi)$ has a flexible Weinstein filling $W_{flex}$  that has an almost symplectic embedding into a subcritical Weinstein domain. Then all Liouville fillings of $(Y, \xi)$  have the same integral homology as $W_{flex}$.  
\end{proposition}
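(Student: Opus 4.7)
The overall strategy, as sketched in the preceding discussion, is to upgrade the given almost symplectic embedding to a genuine Liouville embedding, interpret its complement as a Liouville cobordism, and transport an arbitrary filling of $(Y,\xi)$ to a filling of $(\partial W_{sub},\xi_{sub})$, where subcritical uniqueness already constrains the homology.

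First I would invoke the embedding h-principle for flexible Weinstein domains: because $W_{flex}$ is flexible and admits an almost symplectic embedding into $W_{sub}$, it also admits a genuine Liouville subdomain embedding $W_{flex} \hookrightarrow W_{sub}$, possibly after an overall scaling of the Liouville form. The complement $C := \overline{W_{sub} \setminus W_{flex}}$ then carries the structure of a Liouville cobordism with negative end $\partial^{-}C = (Y,\xi)$ and positive end $\partial^{+}C = (\partial W_{sub}, \xi_{sub})$. For any other Liouville filling $W'$ of $(Y,\xi)$, gluing along the common boundary produces a Liouville filling $W'' := W' \cup_Y C$ of $(\partial W_{sub}, \xi_{sub})$. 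Since $W_{sub}$ is subcritical, the results cited in the introduction (Barth--Geiges--Zehmisch, Oancea--Viterbo, M.-L.~Yau) then imply $H_*(W'') \cong H_*(W_{sub})$ with integer coefficients.

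To extract the desired isomorphism $H_*(W') \cong H_*(W_{flex})$, I would compare the long exact sequences of the pairs $(W_{sub},W_{flex})$ and $(W'',W')$. Excision canonically identifies $H_*(W_{sub},W_{flex}) \cong H_*(C,Y) \cong H_*(W'',W')$, so both sequences share the same relative term; combined with $H_*(W_{sub}) \cong H_*(W'')$ and the vanishing $H_k(W_{sub})=0$ for $k\ge n$ together with $H_k(W_{flex})=0$ for $k>n$, a degree-by-degree exactness argument produces $H_k(W') \cong H_k(W_{flex})$ for $k \ge n$. The main obstacle is the transfer to the remaining degrees $k \le n-1$: since $W'$ is only Liouville, it lacks the half-dimensional handle decomposition of a Weinstein domain, so the low-degree argument must proceed through Poincar\'e--Lefschetz duality applied to both $W'$ and $W_{flex}$, together with the pair sequence of each filling relative to the common boundary $Y$, in order to pin down integer homology in every remaining degree from the high-degree match.
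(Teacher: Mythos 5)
Your reduction coincides with the paper's: the embedding h-principle upgrades the almost symplectic embedding to a genuine Liouville embedding $W_{flex}\hookrightarrow W_{sub}$, the complement $C$ is a Liouville cobordism from $(Y,\xi)$ to $\partial W_{sub}$, and gluing an arbitrary Liouville filling $W'$ of $(Y,\xi)$ to $C$ gives a filling $W''$ of the subcritical boundary, to which the subcritical uniqueness result is applied (of the three references you list, only Barth--Geiges--Zehmisch \cite{Geiges_subcritical} gives the integral statement). Your high-degree step is correct and in fact a bit cleaner than the paper's: for $k\ge n$ the vanishing of $H_k$ and $H_{k+1}$ of $W_{sub}$ and of $W''$ turns the pair sequences into isomorphisms $H_k(W')\cong H_{k+1}(W'',W')\cong H_{k+1}(W_{sub},W_{flex})\cong H_k(W_{flex})$ via excision, whereas the paper runs absolute Mayer--Vietoris and then cancels the common summand $H_k(W_{sub}\setminus \phi(W_{flex}))$ using the classification of finitely generated abelian groups.

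The gap is in degree $n-1$. Your duality route does finish $k\le n-2$: the high-degree match gives $H_j(W')=0$ for $j\ge n+1$ and $H_n(W')\cong H_n(W_{flex})$ free, so $H_k(W',Y)\cong H^{2n-k}(W')$ vanishes for $k\le n-1$, and hence $H_k(W')\cong H_k(Y)\cong H_k(W_{flex})$ for $k\le n-2$. But at $k=n-1$ both $H_{n-1}(W')$ and $H_{n-1}(W_{flex})$ appear as cokernels of the boundary maps $H_n(W',Y)\to H_{n-1}(Y)$ and $H_n(W_{flex},Y)\to H_{n-1}(Y)$, and an abstract isomorphism of the source groups (which is all that $H_*(W'')\cong H_*(W_{sub})$ plus excision provide) does not control the images, hence not the cokernels; torsion in degree $n-1$ is exactly what is at stake, and Euler-characteristic bookkeeping would only recover the rational statement there. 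The paper closes this degree with a strictly stronger, map-level input from Theorem 1.2 of \cite{Geiges_subcritical}: the inclusion induces an isomorphism $H_k(\partial W_{sub})\to H_k(W'')$ for $k\le n-1$ and $H_n(W'')=0$, whence $H_k(W'',\partial W_{sub})=0$ for $k\le n$; feeding this, together with $H_k(W_{sub},\partial W_{sub})=0$ for $k\le n$, into the relative Mayer--Vietoris sequences of $(W_{sub},\partial W_{sub})=(W_{flex},\emptyset)\cup(C,\partial W_{sub})$ and $(W'',\partial W_{sub})=(W',\emptyset)\cup(C,\partial W_{sub})$ gives $H_k(Y)\cong H_k(W_{flex})\oplus H_k(C,\partial W_{sub})$ and $H_k(Y)\cong H_k(W')\oplus H_k(C,\partial W_{sub})$ for $k\le n-1$, and cancellation finishes. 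To complete your argument you must import this map-level statement (or an equivalent substitute), not merely the isomorphism of homology groups.
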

\begin{remark}
The condition that $W_{flex}$ has a smooth embedding into a subcritical domain implies that its intersection form vanishes. 
\end{remark}
\begin{proof}
By the embedding h-principle \cite{EM}, the topological condition that $W_{flex}$   has an almost symplectic embedding into a subcritical Weinstein domain $W_{sub}$ implies the geometric condition that $W_{flex}$  admits a symplectic (in fact Liouville) embedding $\phi: W_{flex} \hookrightarrow  W_{sub}$. Suppose $X$ is another filling of $(Y, \xi)$ and consider the cut-and-pasted domain 
$X' :=  X \cup (W_{sub} \backslash \phi(W_{flex}))$, which is another filling of $(Y_{sub}, \xi_{sub}) := \partial W_{sub}$. By Theorem 1.2 of \cite{Geiges_subcritical}, $H_k(X') \cong H_k(W_{sub})$ for all $k$. Using the Mayer-Vietoris long exact sequence of the pair 
$X' = X \cup (W_{sub}\backslash \phi(W_{flex}))$ and the fact that $H_k(X') \cong H_k(W_{sub}) = 0$ for $k \ge n$, we get $H_k(Y) \cong H_k(X) \oplus H_k(W_{sub} \backslash \phi(W_{flex}))$ for $k \ge n$.
Similarly, the Mayer-Vietoris sequence of the pair $W_{sub} = W_{flex}\cup (W_{sub}\backslash \phi(W_{flex}))$ shows that $H_k(Y) \cong H_k(W_{flex}) \oplus H_k(W_{sub} \backslash \phi(W_{flex}))$.
So 
$H_k(X) \oplus H_k(W_{sub} \backslash \phi(W_{flex})) \cong H_k(W_{flex}) \oplus H_k(W_{sub} \backslash \phi(W_{flex}))$ for $k \ge n$. 
By the classification of finitely-generated abelian groups, we can cancel 
$ H_k(W_{sub} \backslash \phi(W_{flex}))$ from both sides and get 
$ H_k(X) \cong H_k(W_{flex}) = 0$ for $k \ge n$. 
For the case $k \le n-1$, we follow the same approach but use the \textit{relative} Mayer-Vietoris  sequences of the pairs 
$(W_{flex}, \emptyset)  \cup 
(W_{sub}\backslash \phi(W_{flex}), \partial W_{sub}) = (W_{sub}, \partial W_{sub})$  and 
$(X, \emptyset)  \cup 
(W_{sub}\backslash \phi(W_{flex}), \partial W_{sub}) = (X', \partial W_{sub})$ and the facts 
that $H_k(W_{sub}, \partial W_{sub})$ and 
$H_k(X', \partial W_{sub})$ both vanish for 
$k \le n$; the latter fact comes from Theorem 1.2 of \cite{Geiges_subcritical} 
which shows that $H_{k}(\partial W_{sub}) \rightarrow H_{k}(X')$ is an isomorphism for $k \le n-1$ 
and $H_n(X') = 0$.
\end{proof}
\begin{examples}
Any closed manifold $M^n$ embeds into $\mathbb{C}^n$ and let $N(M^n) \subset \mathbb{C}^n$ be its tubular neighborhood.
For example, $N(S^n)$ is diffeomorphic to  $S^n \times D^n$ for any embedding $S^n \subset \mathbb{C}^n$. 
Then $N(M^n)$  admits a Morse function with critical points of index at most $n$ and an almost complex structure obtained by restricting the almost complex structure on $\mathbb{C}^n$. By the existence h-principle, $N(M^n)$ has a flexible Weinstein structure; see Section \ref{subsection: loose}. This flexible structure automatically has an almost symplectic embedding into the subcritical domain $\mathbb{C}^n$ and so Theorem \ref{thm: reduce_subcritical} shows that all Liouville fillings of $\partial N(M^n)$ (with the induced contact structure) have the same homology. 
Also, the flexible domain $W_{flex} \times T^*S^1$ has an almost symplectic embedding into the subcritical domain $W_{flex} \times \mathbb{C}$ and so all Liouville fillings of  
$\partial(W_{flex} \times T^*S^1)$ have the same homology. 
\end{examples}

The proof of Proposition \ref{thm: reduce_subcritical} works more generally than just for flexible Weinstein domains; any contact manifold with a Liouville filling that symplectically embeds into a subcritical domain remembers the homology of its fillings. 
But not any Weinstein domain that has an almost symplectic embedding into a subcritical domain has a genuine symplectic embedding there (and not every contact manifold has a unique filling \cite{Oba_infinite_fillings}). So the flexible condition cannot be removed. 
We also note that Proposition \ref{thm: reduce_subcritical}, when it applies, is stronger than Theorem \ref{thm: main}. For example,  it restricts all Liouville fillings of $(Y, \xi)$, not just flexible Weinstein fillings. Furthermore, the condition that $c_1(Y, \xi)$ vanishes is not necessary in Proposition \ref{thm: reduce_subcritical}. 
Also, it is possible to combine 
Proposition \ref{thm: reduce_subcritical} and
Corollary \ref{cor: displaceable_flexible_diffeomorphism}  to get new examples of contact manifolds where all Liouville fillings are diffeomorphic. 

Despite the strength of Proposition \ref{thm: reduce_subcritical}, the $SH^+$ approach used to prove Theorem \ref{thm: main} has several important advantages, particularly for applications. First of all, not all flexible Weinstein domains admit smooth embeddings into subcritical domains; the non-vanishing of the intersection form obstructs this. Hence the condition in Proposition \ref{thm: reduce_subcritical} is quite special. The constructions of exotic contact structures in the next section (see Theorem \ref{thm: inf_contact_flex}) all require using flexible domains with non-degenerate intersection forms and so these results cannot be proven using Proposition \ref{thm: reduce_subcritical}. Perhaps more importantly, the $SH^+$ approach 
even works for certain non-flexible domains.
For example, in Theorem \ref{thm: boundedinfinite} we will study non-flexible domains that are constructed by attaching flexible handles to asymptotically dynamically convex contact structures. These domains do not symplectically embed into subcritical domains even if they admit almost symplectic embeddings. 

\subsection{Exotic contact structures}\label{sub: exotic}

Theorem \ref{thm: main} and its variants can also be used to construct  exotic contact structures.
As we noted in the Introduction, there are infinitely many contact structures in the standard almost contact class $(S^{4k+1}, J_{std}), k \ge 1,$ and on certain $5$-manifolds \cite{Eexotic, U, vK}. Also, McLean \cite{MMgrowth} showed that if $(Y^{2n-1}, \xi)$, $n \ge 8$, has a certain special Weinstein filling (an \textit{algebraic} Stein filling with subcritical handles attached), then $Y$ has at least two Weinstein-fillable contact structures in the same almost contact class. Here we extend these results by constructing infinitely many contact structures in a general setting. 
 
In the following theorem, an \textit{almost Weinstein manifold} is an almost symplectic manifold with a compatible Morse function; see Section \ref{ssec: domains} for details. Note that this is a purely algebraic topological notion. Also, let $X\natural Y$ denote the boundary connected sum of $X$ and $Y$. 
\begin{theorem}\label{thm: inf_contact_flex}
Suppose $(Y^{2n-1}, J), n \ge 3$ with $c_1(Y, J) = 0$, has an almost Weinstein filling $W$. Then for any almost Weinstein filling $M^{2n}$ of $(S^{2n-1}, J_{std})$, there is a contact structure $(Y,\xi_M)$ such that the following hold 
\begin{itemize}
\item $(Y, \xi_M)$ is almost contactomorphic to $(Y, J)$
\item if $H^*(M; \mathbb{Z}) \not\cong H^*(N; \mathbb{Z})$, then $\xi_M, \xi_N$ are non-contactomorphic
\item $(Y, \xi_M)$ has a flexible Weinstein filling $W_M$ almost symplectomorphic to $W\natural M$.
\end{itemize}
There are infinitely many such $M^{2n}$ with different integral cohomology and hence there are infinitely many contact structures in 
$(Y, J)$ with flexible fillings. 
\end{theorem}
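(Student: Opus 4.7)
The plan is to build $(Y, \xi_M)$ by applying the flexible Weinstein existence h-principle to $W \natural M$, and then to distinguish the resulting contact structures via Theorem \ref{thm: main} combined with a Mayer--Vietoris splitting of the cohomology of a boundary connected sum.

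First, given $M$, I form the almost Weinstein manifold $W \natural M$; its smooth boundary is $Y \# S^{2n-1} = Y$, and its induced almost contact structure is $J$ because the summand $(S^{2n-1}, J_{std})$ acts trivially at the almost level. The existence h-principle for flexible Weinstein structures (Section \ref{subsection: loose}) applied within this almost symplectic class produces a flexible Weinstein domain $W_M$ almost symplectomorphic to $W \natural M$; its contact boundary $(Y, \xi_M)$ is almost contactomorphic to $(Y, J)$ by construction, and $c_1(W_M) = c_1(W \natural M) = 0$ forces $c_1(Y, \xi_M) = 0$ via Proposition \ref{prop: c1equivalence}. This handles the first and third bullets.

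For the second bullet, I argue by contraposition. If $\xi_M$ is contactomorphic to $\xi_N$, then Theorem \ref{thm: main} yields $H^*(W_M; \mathbb{Z}) \cong H^*(W_N; \mathbb{Z})$. Since $W_M$ is diffeomorphic to $W \natural M$ (and likewise for $N$), and the Mayer--Vietoris sequence for the boundary connected sum produces the splitting
\[
\widetilde{H}^*(W \natural M; \mathbb{Z}) \cong \widetilde{H}^*(W; \mathbb{Z}) \oplus \widetilde{H}^*(M; \mathbb{Z})
\]
(the gluing region deformation retracts to a point), we conclude $\widetilde{H}^*(W) \oplus \widetilde{H}^*(M) \cong \widetilde{H}^*(W) \oplus \widetilde{H}^*(N)$, and Krull--Schmidt cancellation among finitely generated abelian groups forces $H^*(M; \mathbb{Z}) \cong H^*(N; \mathbb{Z})$.

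Finally, one needs an infinite sequence of almost Weinstein fillings $M_k$ of $(S^{2n-1}, J_{std})$ with $c_1 = 0$ and pairwise non-isomorphic integral cohomology. A natural candidate is $M_k = V^{\natural k}$, where $V$ is the complement of an open ball in a closed almost complex $2n$-manifold with $c_1 = 0$ (for instance a Calabi--Yau hypersurface of complex dimension $n$, available in every $n \ge 3$); then $H^n(M_k) \cong H^n(V)^{\oplus k}$ has rank growing linearly in $k$, so the $M_k$ are cohomologically distinct. The main technical point is verifying that the induced almost contact structure on $\partial V = S^{2n-1}$ matches $J_{std}$, which can be arranged by obstruction theory or by realizing $V$ inside $\mathbb{C}^n$. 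Everything else is formal once Theorem \ref{thm: main} is in hand, which is why that theorem is the real engine of this corollary.
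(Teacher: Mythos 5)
Your treatment of the three bullet points is essentially the paper's argument: replace $W$ and $M$ by flexible Weinstein domains in their almost symplectic classes (h-principle), take $W_M = W \natural M$ (flexible, boundary in $(Y,J)$ since $\partial M$ is in $(S^{2n-1},J_{std})$), and then distinguish $\xi_M$ from $\xi_N$ by Theorem \ref{thm: main} together with the splitting $\widetilde H^*(W\natural M)\cong \widetilde H^*(W)\oplus\widetilde H^*(M)$ and cancellation of finitely generated abelian groups. That part is fine, including the use of Proposition \ref{prop: c1equivalence} to get $c_1(Y,\xi_M)=0$ so that Theorem \ref{thm: main} applies.

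The gap is in the final claim, which is where the actual content of the theorem beyond Theorem \ref{thm: main} lies: producing infinitely many almost Weinstein fillings $M$ of $(S^{2n-1}, J_{std})$ with unbounded $H^n$. Your candidate $M_k = V^{\natural k}$, with $V$ a closed Calabi--Yau hypersurface minus a ball, is not an almost Weinstein domain: such a $V$ has nonzero cohomology in degrees up to $2n-2 > n$ (the K\"ahler class and its powers survive), so it admits no Morse function with critical points of index at most $n$, which is part of the definition of almost Weinstein. Moreover, the point you defer --- that the induced almost contact structure on $\partial M = S^{2n-1}$ is the \emph{standard} one --- is precisely the nontrivial issue and cannot be ``arranged by obstruction theory'' in general, nor by realizing the filling inside $\mathbb{C}^n$: a compact codimension-zero domain in $\mathbb{C}^n$ whose boundary is $S^{2n-1}$ is contractible (Schoenflies/Alexander duality), so it has $H^n = 0$ and gives no cohomological growth. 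This is why the paper's proof resorts to explicit models: for $n$ odd, the Brieskorn varieties $\{z_0^i + z_1^2 + \cdots + z_n^2 = \epsilon\}\cap B^{2n+2}$ with $i \equiv 1 \bmod 2(n-1)!$, which are plumbings of growing numbers of copies of $T^*S^n$ and whose boundaries lie in $(S^{2n-1}, J_{std})$; and for $n$ even, where Brieskorn boundaries do not land in the standard class, boundary connected sums of the Weinstein domains constructed by Geiges ($n\equiv 2 \bmod 4$) and Ding--Geiges ($n\equiv 0 \bmod 4$) with $\partial M$ in $(S^{2n-1},J_{std})$ and $\dim H_n(M)\ge 1$. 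Without such an input your argument establishes the three bullets but not the existence of the infinite family.
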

\begin{remark}\label{rem: Cieliebak_Eliash_McLean}
 Using work of McLean \cite{MM}, Cieliebak and Eliashberg \cite{CE12} proved  that any almost Weinstein domain $(W^{2n},J), n \ge 3,$ admits infinitely many non-symplectomorphic Weinstein structures $W_k^{2n}$. The contact boundaries $(Y, \xi_k) = \partial W_k^{2n}$ are in the same almost contact class but it is unknown whether they are contactomorphic. 
We show that at most $\dim H^1(Y; \mathbb{Z}/2) + 1$ of the Cieliebak-Eliashberg-McLean contact structures can have flexible fillings and so at most finitely many of them 
can coincide with the structures $(Y, \xi_M)$ from Theorem \ref{thm: inf_contact_flex}. 
\end{remark}

The second part of Theorem \ref{thm: inf_contact_flex} fails for $n = 2$ (the first part does not make sense for $n = 2$ since flexible Weinstein domains are defined only for $n \ge 3$). For example, $S^3$, the 3-torus $T^3,$ and the lens space $L(p,1)$ have finitely many Weinstein-fillable contact structures; see Chapter 16 of \cite{CE12}. Also note that the condition $H^*(M; \mathbb{Z}) \not \cong H^*(N; \mathbb{Z})$ is interesting only in degree $n$ since $M, N$ are Weinstein fillings of $S^{2n-1}$ and hence have zero cohomology except in degree $0$ and $n$; see Remark \ref{rem: ustilovksy} for a generalization where all degrees matter. 

We also note that the question of whether an almost contact manifold admits an almost Weinstein filling, and hence a flexible one, is purely topological. This was first explored by Geiges  \cite{Geiges} and then by Bowden, Crowley, and Stipsicz \cite{BCS, BCSII}, who gave a bordism-theoretic characterization of such almost contact manifolds. Combining this with Theorem \ref{thm: inf_contact_flex}, we get a topological criterion for almost contact manifolds to admit infinitely many different contact structures in the same formal class. The following application follows immediately from Theorem \ref{thm: inf_contact_flex} and \cite{BCS}, \cite{Geiges}.
\begin{corollary}\label{cor: 57example}
If $(Y,J)$ with $c_1(Y, J) = 0$  is a simply-connected 5-manifold or a
simply-connected 7-manifold with torsion-free $\pi_2(Y)$, then $(Y, J)$ has infinitely many different contact structures with 
flexible Weinstein fillings. 
\end{corollary}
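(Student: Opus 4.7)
The plan is to reduce Corollary \ref{cor: 57example} to Theorem \ref{thm: inf_contact_flex} by verifying the hypothesis that $(Y,J)$ admits an almost Weinstein filling; once such a filling exists, Theorem \ref{thm: inf_contact_flex} supplies an infinite family $\{(Y,\xi_M)\}$ in the almost contact class of $(Y,J)$, all with flexible Weinstein fillings, pairwise distinguished by the integral cohomology of the model fillings $M$ of $(S^{2n-1},J_{std})$ used to produce them.

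First I would recall that, since an almost Weinstein filling is a purely topological object (an almost symplectic manifold with a compatible Morse function), the question of whether a given almost contact $(Y,J)$ bounds one is governed entirely by the appropriate almost contact bordism groups. Geiges's work \cite{Geiges} treats the five-dimensional case: he showed that every simply-connected closed $5$-manifold $Y$ admits an almost contact structure, that the almost contact structures on such $Y$ with $c_1=0$ are classified, and crucially that any such $(Y,J)$ bounds an almost Weinstein (in fact Weinstein) $6$-manifold. This directly handles the dimension $5$ statement. In dimension $7$, Bowden, Crowley and Stipsicz \cite{BCS, BCSII} give a bordism-theoretic criterion for an almost contact manifold to bound an almost Weinstein manifold; their main computation shows that a simply-connected $7$-manifold $Y$ with torsion-free $\pi_2(Y)$ and $c_1(Y,J)=0$ represents the trivial class in the relevant almost contact bordism group, and hence admits an almost Weinstein filling.

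With an almost Weinstein filling $W$ of $(Y,J)$ in hand, I would then invoke Theorem \ref{thm: inf_contact_flex} directly: for any almost Weinstein filling $M^{2n}$ of $(S^{2n-1},J_{std})$ it produces a contact structure $\xi_M$ on $Y$ in the almost contact class of $J$ with a flexible filling $W_M$ almost symplectomorphic to $W\natural M$, and $\xi_M,\xi_N$ are non-contactomorphic whenever $H^*(M;\Z)\not\cong H^*(N;\Z)$. Since $n\ge 3$ in both the 5- and 7-dimensional cases, and since there are infinitely many almost Weinstein fillings of $(S^{2n-1},J_{std})$ with pairwise non-isomorphic integral cohomology (for instance boundary connected sums of varying numbers of copies of a plumbing of $T^*S^n$'s realizing different middle-dimensional Betti numbers), this yields infinitely many contact structures in $(Y,J)$ with flexible Weinstein fillings.

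There is essentially no hard step here beyond citing the two inputs correctly: the genuine content is already contained in Theorem \ref{thm: inf_contact_flex} (whose proof rests on the $SH^+$ machinery of the paper) and in the topological bordism computations of \cite{Geiges,BCS,BCSII}. The only minor point requiring care is bookkeeping with $c_1$, namely that the almost Weinstein fillings supplied by \cite{Geiges,BCS} can be chosen with $c_1(W)=0$ so that the compatibility condition $c_1(Y,J)=0$ is preserved under the hypotheses of Theorem \ref{thm: inf_contact_flex}; this follows by standard obstruction-theoretic adjustment of the almost complex structure on $W$ in the simply-connected setting, where $c_1(W)\in H^2(W;\Z)$ can be killed using the freedom in the choice of almost complex lift.
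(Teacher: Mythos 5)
Your proposal is correct and follows essentially the same route as the paper: cite \cite{Geiges} in dimension $5$ and \cite{BCS} in dimension $7$ for the existence of a (almost) Weinstein filling of $(Y,J)$, and then apply Theorem \ref{thm: inf_contact_flex}. The only superfluous step is your final remark about adjusting the almost complex structure to arrange $c_1(W)=0$: no obstruction-theoretic argument is needed, since by Proposition \ref{prop: c1equivalence} (with $\partial_-W=\varnothing$ and $n\ge 3$) the vanishing of $c_1(Y,J)$ already forces $c_1(W,J)=0$ for any almost Weinstein filling $W$.
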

Also see Corollary \ref{cor: example}. The 5-dimensional case of Corollary \ref{cor: 57example} (without the statement about flexible fillings) was proven by van Koert in  Corollary 11.14 of \cite{vK}. As we noted before, there is at most one almost contact structure $(Y^5, J)$ with $c_1(Y^5, J) = 0$ on a simply-connected 5-manifold $Y^5$  (see Lemma 7 of \cite{G5}) and hence there is at most one structure to which Corollary \ref{cor: 57example} applies. 
On the other hand, Bowden, Crowley, and Stipsicz \cite{BCSII} showed that many manifolds admit almost contact structures that have no almost Weinstein fillings, in which case Theorem \ref{thm: inf_contact_flex} does not apply; for example, 
$S^{8k-1}, k \ge 2, $  admits an almost contact structure which has no almost Weinstein filling. 

Another application of Theorem \ref{thm: inf_contact_flex} is to Question 6.12 of \cite{KvK}, where Kwon and van Koert asked whether there are infinitely many different contact structures in $(S^{2n-1}, J_{std})$ for $n \ge 3$. Ustilovsky  \cite{U} proved this for $S^{4k+1}, k \ge 1,$ and Uebele \cite{Ue} proved this for $S^7, S^{11}, S^{15}$. Since $(B^{4k+4}, \lambda_{std})$ is a Weinstein filling of $(S^{4k+3}, \xi_{std})$, Theorem \ref{thm: inf_contact_flex} provides an affirmative answer to this question, even within the possibly smaller class of Weinstein-fillable contact structures. 
\begin{corollary}\label{cor: s4k+3}
For $n\ge 3$, $(S^{2n-1}, J_{std})$ has infinitely many contact structures
with flexible Weinstein fillings. For $n$ odd, this is true for \textit{all} almost contact classes; furthermore, these contact structures are not contactomorphic to the Ustilovsky structures.
\end{corollary}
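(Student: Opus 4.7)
The plan is to apply Theorem~\ref{thm: inf_contact_flex} directly, with a small topological input needed only for the second claim. For the first assertion, I will use the fact that the standard ball $(B^{2n}, \lambda_{std})$ is an (almost) Weinstein filling of $(S^{2n-1}, J_{std})$ with $c_1 = 0$. Feeding $Y = S^{2n-1}$, $J = J_{std}$, and $W = B^{2n}$ into Theorem~\ref{thm: inf_contact_flex} produces, for each almost Weinstein filling $M^{2n}$ of $(S^{2n-1}, J_{std})$, a contact structure $\xi_M$ in the standard almost contact class admitting a flexible Weinstein filling almost symplectomorphic to $B^{2n} \natural M \cong M$. Since the theorem supplies infinitely many such $M$ with pairwise non-isomorphic integral cohomology, this yields infinitely many non-contactomorphic contact structures in $(S^{2n-1}, J_{std})$ with flexible Weinstein fillings.

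For the claim about all almost contact classes when $n$ is odd (so $2n-1 = 4k+1$), the crucial step is purely topological: one must show that every almost contact class on $S^{4k+1}$ admits an almost Weinstein filling, after which Theorem~\ref{thm: inf_contact_flex} can be applied separately to each class. The hypothesis $c_1(Y, J) = 0$ is automatic since $H^2(S^{2n-1}; \mathbb{Z}) = 0$ for $n \geq 2$. For the existence of the filling, I would either invoke the bordism-theoretic characterization of Bowden--Crowley--Stipsicz to show that the relevant obstruction vanishes in these dimensions, or argue concretely using Ustilovsky's Brieskorn construction, which realizes every almost contact class on $S^{4k+1}$ by a contact manifold $\Sigma(a_0,\ldots,a_{2k+1})$ whose Milnor fiber provides an (almost) Weinstein filling inducing the correct almost contact boundary data. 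Applying Theorem~\ref{thm: inf_contact_flex} to each such filling then yields infinitely many contact structures with flexible Weinstein fillings in every almost contact class.

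To show these are not contactomorphic to the Ustilovsky structures, I would appeal to Remark~\ref{rem: ustilovskynotflex}: the Ustilovsky contact structures on $S^{4k+1}$ admit no flexible Weinstein fillings, whereas the structures constructed above do by construction. Hence the two families must be disjoint.

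The main obstacle is the topological input in the second paragraph, namely the systematic verification that every almost contact class on $S^{4k+1}$ is the boundary of some almost complex manifold inducing the prescribed almost contact structure. Once this is granted, the rest of the argument is a routine application of the previously established theorem and remark.
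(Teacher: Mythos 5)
Your handling of the first two claims matches the paper: apply Theorem~\ref{thm: inf_contact_flex} to the filling $(B^{2n},\lambda_{std})$ of $(S^{2n-1},\xi_{std})$, and for $n$ odd use the fact (from Ustilovsky's Brieskorn construction) that every almost contact class on $S^{2n-1}$ carries a Weinstein-fillable contact structure, then run the theorem in each class. The vanishing of $c_1$ is indeed automatic, and the topological input you flag as the "main obstacle" is exactly what the paper cites \cite{U} for, so that part is fine.

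The genuine gap is in the last step. You cannot appeal to Remark~\ref{rem: ustilovskynotflex} to conclude that the Ustilovsky structures have no flexible Weinstein fillings: that remark explicitly says its content is a \emph{consequence of the proof of this corollary}, so citing it here is circular. The statement that the Ustilovsky structures admit no flexible fillings is not established anywhere else in the paper and requires an actual argument, which is the real content of the third claim. The paper's argument runs as follows: the Ustilovsky structure $(S^{4k+1},\xi_i)$ has the Brieskorn manifold $M_i$ as a Weinstein filling, and by Uebele's Morse--Bott computation $SH^+_k(M_i)\neq 0$ for infinitely many positive $k$ (over $\mathbb{Z}/2$). If $\xi_i$ had a flexible filling $W$, then $(S^{4k+1},\xi_i)$ would be asymptotically dynamically convex by Corollary~\ref{cor: flexsemigood}; since both $W$ and $M_i$ are Weinstein and hence $\pi_1$-injective, Proposition~\ref{prop: nice_sh_independent} would force $SH^+(W)\cong SH^+(M_i)$. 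But $SH(W)=0$ for flexible $W$, so by Proposition~\ref{prop: shcomputation} $SH^+_k(W)\cong H^{n-k+1}(W)$ vanishes for $k\ge n+2$, contradicting the infinitely many nonvanishing degrees of $SH^+(M_i)$. Without some such argument (or another invariant distinguishing the Ustilovsky structures from structures with flexible fillings), your claim that "the two families must be disjoint" is unsupported.
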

\begin{remark}\label{rem: ustilovskynotflex}
The proof of this corollary also shows that although the Ustilovsky contact structures have Weinstein fillings, they do not have any flexible Weinstein fillings. 
\end{remark}

For an almost Weinstein filling $M$ of $(S^{2n-1}, J_{std})$, the contact structure $(Y, \xi_M)$  in Theorem \ref{thm: inf_contact_flex} has a   flexible Weinstein filling $W_M$ almost symplectomorphic to $W\natural M$.  Furthermore, $(Y, \xi_M)$ with different $H^*(M; \mathbb{Z})$ are non-contactomorphic. Therefore different $(Y, \xi_M)$ have fillings $W_M$ that are not even homotopy equivalent. Indeed, if the $W_M$ were almost symplectomorphic for different $M$, then they would be Weinstein homotopy equivalent by the h-principle for flexible Weinstein domains and so their contact boundaries would be contactomorphic; see Section \ref{ssec: domains}.

In light of this, one can ask whether there are exotic contact structures that bound almost symplectomorphic Weinstein domains, i.e. exotic Weinstein domains whose contact boundaries are also exotic. 
One such example is provided by $\partial \mathbb{C}_{k}^n$, where $\mathbb{C}_k^n$ are the exotic Weinstein structures on
$\mathbb{C}^n$ constructed by McLean \cite{MM} that are non-symplectomorphic for different $k \in \mathbb{N}$;  see Remark \ref{rem: Cieliebak_Eliash_McLean}. 
Although $\partial \mathbb{C}_k^n$ and $(S^{2n-1}, \xi_{std})$ are almost contactomorphic and admit almost symplectomorphic Weinstein fillings, Oancea and Viterbo \cite{OV} show that they are not contactomorphic. 
However, as noted in Remark \ref{rem: Cieliebak_Eliash_McLean}, it is unknown whether $\partial \mathbb{C}_k^n$ are non-contactomorphic for different $k$. More generally, it was unknown whether there exist infinitely many different contact structures bounding almost symplectomorphic Weinstein domains. Here we show that such examples do exist.

For the following theorem, let $\Omega^n$ denote the set of smooth manifolds $M^n$ such that 
\begin{itemize}
\item  $M^n$ is closed, simply-connected, and stably parallelizable
\item if $n$ is even, $\chi(M) = 2$ and if $n$ is odd, $\chi_{1/2}(M) \equiv 1 \mod 2$
\end{itemize}
Here $\chi_{1/2}(M)$ denotes the semi-characteristic $\sum_{i = 0}^{(n-1)/2}\dim H_i(M) \mod 2$.
Note these conditions are purely algebraic topological. We will also use $\Lambda M$ to denote the free loop space of $M$. 
\begin{theorem}\label{thm: boundedinfinite}
Suppose $(Y^{2n-1}, J), n \ge 4$ with $c_1(Y,J) = 0$, has an almost Weinstein filling $W$. Then for any $M^n \in \Omega^n$, there is a contact structure $(Y, \xi_M)$  such that 
\begin{itemize}
\item $\xi_M$ is in $(Y,J)$ for $n$ odd and in some fixed  $(Y, J')$ for $n$ even (depending only on $(Y, J)$ and not $M$)
\item if 
$
\dim H_k(\Lambda M; \mathbb{Q}) - \dim H_k(\Lambda N; \mathbb{Q}) > 2 \dim H^{n-k}(Y; \mathbb{Q}) + 
2\dim H^{n-k+1}(Y; \mathbb{Q})
$
for some $k$, then $\xi_M, \xi_N$ are non-contactomorphic 
\item $(Y, \xi_M)$ has a Weinstein filling $W_M$ almost symplectomorphic to $W\natural P$, where $P$ is a certain plumbing of $T^*S^n$ depending only the dimension $n$ 
(and not $M$).
\end{itemize}
In particular, for $n \ge 4$ there are infinitely many different contact structures in $(Y, J)$ or $(Y, J')$ that admit almost symplectomorphic Weinstein fillings. 
\end{theorem}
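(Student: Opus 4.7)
The plan is to construct $(Y,\xi_M)$ by attaching flexible handles to an intermediate Weinstein domain that incorporates $T^*M$, so that positive symplectic homology $SH^+$ detects the free loop space $\Lambda M$. First I would form the boundary connected sum $W' := W\natural T^*M$, an almost Weinstein domain with contact boundary $(Y\#ST^*M,\xi')$. For $M\in\Omega^n$, stable parallelizability combined with the Euler characteristic (for $n$ even) or semi-characteristic (for $n$ odd) constraint forces the almost Weinstein type of $T^*M$ to agree with that of a fixed plumbing $P_0$ of copies of $T^*S^n$ depending only on $n$, so $W'$ is almost symplectomorphic to $W\natural P_0$. I would then modify $W'$ by attaching a controlled sequence of flexible handles along loose Legendrians (existing by Murphy's h-principle \cite{Murphy11}) supported near the connect-sum region, so as to surger $Y\#ST^*M$ back to $Y$ without altering the almost symplectic type of the filling. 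Call the resulting Weinstein filling $W_M$, with contact boundary $(Y,\xi_M)$; by construction, $W_M$ is almost symplectomorphic to $W\natural P$ for a fixed plumbing $P$ depending only on $n$.

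\textbf{Step 2 (ADC and invariance of $SH^+$).} Next I would verify that $(Y,\xi_M)$ is asymptotically dynamically convex (ADC). The contact manifold $Y\#ST^*M$ is ADC because $ST^*M$ is ADC (for $M$ simply-connected with $c_1(T^*M)=0$, which follows from stable parallelizability) and contact connect sum preserves ADC, by the gluing results of Section \ref{ssec: independencelin}. Theorem \ref{thm: semi-surgery} then implies that the subsequent flexible surgeries preserve ADC, so $(Y,\xi_M)$ is ADC, and Proposition \ref{prop: nice_sh_independent} promotes $SH^+(\xi_M)$ to a genuine contact invariant, independent of the Weinstein filling.

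\textbf{Step 3 (distinguishing the $\xi_M$ via $SH^+$).} Viterbo's theorem gives $SH^+_*(T^*M;\mathbb{Q})\cong H_*(\Lambda M;\mathbb{Q})$ in positive degrees. Combining this with the additivity of $SH^+$ under boundary connected sum and the fact that flexible handle attachments contribute only the cellular cohomology of the attached cocores (Proposition \ref{prop: shcomputation}), I would derive the estimate
\[
\bigl|\dim SH^+_k(\xi_M;\mathbb{Q})-\dim H_k(\Lambda M;\mathbb{Q})\bigr|\le \dim H^{n-k}(Y;\mathbb{Q})+\dim H^{n-k+1}(Y;\mathbb{Q}).
\]
Applying this inequality to both $M$ and $N$ and using that $SH^+(\xi_M)\cong SH^+(\xi_N)$ whenever $\xi_M$ and $\xi_N$ are contactomorphic, the numerical hypothesis of the theorem forces $\xi_M$ and $\xi_N$ to be non-contactomorphic. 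To produce infinitely many such $M$, I would take rationally hyperbolic manifolds in $\Omega^n$ (such as connected sums of sphere products adjusted to meet the parallelizability and characteristic constraints), whose loop space Betti numbers grow exponentially and therefore eventually overwhelm the fixed bound determined by $H^*(Y;\mathbb{Q})$.

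\textbf{Main obstacle.} The hardest step will be Step 3: the explicit computation of $SH^+(W_M)$ in the non-flexible setting, and the tracking of contributions from the flexible handles used in Step 1 to cancel $ST^*M$ from the boundary. The ADC property of the intermediate boundary $Y\#ST^*M$ established in Step 2 is essential here, as it rules out low-degree Reeb orbits that could otherwise inflate $SH^+$ beyond the cohomological bound. A secondary technical issue is carrying out the surgery in Step 1 via loose Legendrian attachments in a way that returns the smooth boundary to $Y$ while keeping the almost symplectomorphism type equal to $W\natural P$; this should follow from standard handlebody theory combined with Murphy's h-principle, which is available because $n\ge 4$.
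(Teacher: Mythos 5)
Your Steps 2 and 3 essentially reproduce the paper's argument (ADC for $\partial T^*M$ via geodesic flow when $n\ge 4$, preservation of ADC under subcritical and flexible surgery, invariance of $SH^+$ from Proposition \ref{prop: nice_sh_independent}, and the tautological long exact sequence bound comparing $SH$ and $SH^+$), but Step 1 has a genuine gap, and it is the crux of the theorem. Your claim that membership in $\Omega^n$ ``forces the almost Weinstein type of $T^*M$ to agree with that of a fixed plumbing $P_0$'' is false: $T^*M$ retracts onto $M$, so $H_*(T^*M)\cong H_*(M)$, which varies wildly over $\Omega^n$ (the whole point is to use $M_i$ with $\dim H_2(M_i)\to\infty$), whereas a fixed plumbing of $T^*S^n$'s has homology only in degrees $0$ and $n$. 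Since almost symplectomorphic domains are in particular diffeomorphic, $T^*M$ cannot be almost symplectomorphic to any $P_0$ independent of $M$, so your conclusion that $W'=W\natural T^*M$ is almost symplectomorphic to $W\natural P_0$ does not hold. Likewise, your plan to ``surger $Y\#ST^*M$ back to $Y$'' by flexible handles near the connect-sum region is exactly the nontrivial step you cannot wave away: you must kill the $ST^*M$ summand smoothly, land in the correct almost contact class, and simultaneously control the almost symplectic type of the enlarged filling, and nothing in your outline explains why this is possible.

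What actually makes this work in the paper is the construction of \cite{EGL}: for $M\in\Omega^n$ (this is precisely where stable parallelizability and the $\chi(M)=2$ or $\chi_{1/2}(M)\equiv 1$ conditions enter) one attaches subcritical and flexible handles to $T^*M$ to obtain a Weinstein domain $T^*S^n_M$ that is almost symplectomorphic to the standard $T^*S^n$ and contains $M$ as an exact Lagrangian; since subcritical and flexible attachments do not change $SH$, one still has $SH(T^*S^n_M)\cong H_*(\Lambda M)$. One then takes a fixed plumbing $P$ of copies of $T^*S^n$ whose boundary is $S^{2n-1}$ in a standard almost contact class (a Brieskorn-type plumbing), replaces one $T^*S^n$ factor by $T^*S^n_M$ and flexibilizes the remaining handles to get $P_M$, and sets $W_M:=W\natural P_M$ with $W$ realized as a flexible Weinstein domain by the existence h-principle (you also omit this flexibilization of $W$, which is needed both for $SH(W)=0$ and for $\partial W$ to be ADC). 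Because $\partial P_M\subset(S^{2n-1},J_P)$, the boundary of $W_M$ is automatically the smooth manifold $Y$ in the correct almost contact class, and $W_M$ is almost symplectomorphic to $W\natural P$ with $P$ fixed; no ad hoc surgeries on $Y\# ST^*M$ are needed. With Step 1 replaced by this construction, your Steps 2 and 3 go through as in the paper; as a minor point, your proposed examples (connected sums of sphere products) need to be checked against the $\chi$ and semi-characteristic constraints, whereas boundaries of thickened wedges of low-dimensional spheres, as in the paper, satisfy them directly, and growth of $\dim H_2(M_i)$ already suffices since $\dim H_k(\Lambda M)\ge\dim H_k(M)$.
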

\begin{remark}\label{rem: boundedinfinite}\
\begin{enumerate}[leftmargin=*]
\item The fillings $W_M$ are \textit{not} flexible since otherwise they would be Weinstein homotopic by the h-principle for flexible domains and so $(Y, \xi_M)$ would be contactomorphic.  
In fact, $(Y, \xi_M)$ does not have \textit{any} flexible fillings. As a result, the contact structures in Theorem \ref{thm: boundedinfinite} are different from the structures in Theorem \ref{thm: inf_contact_flex}.
\item In the last part of Theorem \ref{thm: boundedinfinite} involving the infinite collection of contact structures, we do not claim that \textit{all}  fillings of a given contact manifold in this collection are almost symplectomorphic but rather than every contact manifold in this collection admits some Weinstein domain filling and these particular Weinstein domains are all almost symplectomorphic.
\end{enumerate}
\end{remark}

\subsection{Legendrians with flexible Lagrangian fillings}
\label{ssec: intro_legendrians}
 We also prove relative analogs of our results for Legendrians. 
In particular, we define the class of \textit{asymptotically dynamically convex Legendrians}, show that positive wrapped Floer homology $WH^+$ (the relative analog of $SH^+$) is an invariant for these Legendrians, and prove that Legendrians with \textit{flexible Lagrangian} fillings are asymptotically dynamically convex; see Definition \ref{def: leggood}, Proposition \ref{prop: Legindependence}, and Corollary \ref{cor: semiflexlaggood}.

We now give some geometric applications of these results. In this paper, we will assume all Legendrians $\Lambda$ and Lagrangians $L$ are connected, oriented, spin, and $c_1(Y, \Lambda) \in H^2(Y, \Lambda; \mathbb{Z})$ and $c_1(W, L) \in H^2(W, L; \mathbb{Z})$ vanish. 

As for contact manifolds, it is known that certain Legendrians remember the topology of their exact Lagrangian fillings. For example, a relative analog of the Eliashberg-McDuff-Floer theorem states that any exact Lagrangian filling $L^n \subset B^{2n}$ of the standard Legendrian unknot in $(S^{2n-1}, \xi_{std})$ is diffeomorphic to $B^n$; see 
\cite{CRGGConcordance}, \cite{Abouzaid}. Recently, Eliashberg, Ganatra, and the author introduced the class of \textit{flexible Lagrangians} \cite{EGL}.  
These Lagrangians are the relative analog of flexible Weinstein domains. 
In fact, they are always contained in flexible Weinstein domains and so are defined only for $n \ge 3$. 
Problem 4.13 of \cite{EGL} asked whether the Legendrian boundary of a flexible Lagrangian remembers the topology of the filling. The following relative analog of Theorem \ref{thm: main} gives an affirmative answer to this question assuming some minor topological conditions. 
\begin{theorem}\label{thm: flexlag}
If $\Lambda^{n-1} \subset (Y^{2n-1}, \xi), n \ge 5$, has a  flexible Lagrangian filling $L^n \subset W^{2n}$ and $\pi_1(L, \Lambda) = \pi_1(Y, \Lambda) = c_1(W, L) = c_1(Y, \Lambda) = 0$, then all exact Lagrangian fillings of $\Lambda$ in all flexible Weinstein fillings of $(Y,\xi)$ have the isomorphic cohomology, i.e. if $W'$ is a flexible filling of $(Y, \xi)$ and $L' \subset W'$ is an exact Lagrangian filling of $\Lambda$, then $H^*(L'; \mathbb{Z}) \cong H^*(L; \mathbb{Z})$.   
\end{theorem}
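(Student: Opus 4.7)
The plan is to adapt the proof of Theorem \ref{thm: main} to the Legendrian setting, replacing positive symplectic homology $SH^+$ with positive wrapped Floer homology $WH^+$. First, since $\Lambda$ admits a flexible Lagrangian filling $(W,L)$, Corollary \ref{cor: semiflexlaggood} implies that $\Lambda \subset (Y, \xi)$ is an asymptotically dynamically convex Legendrian. Then by Proposition \ref{prop: Legindependence}, $WH^+(\Lambda)$ is a contact invariant of $\Lambda$ that is independent of the exact Lagrangian filling used to compute it, so long as the topological hypotheses on the filling are satisfied.

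Second, I would establish the relative analog of Proposition \ref{prop: shcomputation} for flexible Lagrangian fillings: for the given flexible filling $(W,L)$, one computes $WH^+(\Lambda) \cong H^*(L; \mathbb{Z})$ (up to a fixed degree shift). This should follow because flexibility controls all the Reeb chord generators of positive action and forces the relevant differentials to degenerate to the Morse-Bott differential on $L$, exactly as in the absolute case where $SH^+$ of a flexible Weinstein domain reduces to singular cohomology.

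Third, one needs to extract $H^*(L'; \mathbb{Z})$ from the contact invariant $WH^+(\Lambda)$ when $L' \subset W'$ is an arbitrary exact Lagrangian filling inside a flexible Weinstein filling $W'$. The natural tool is the Viterbo-type long exact sequence
\[
\cdots \to H^{*}(L'; \mathbb{Z}) \to HW^*(L', L') \to WH^+(\Lambda) \to H^{*+1}(L'; \mathbb{Z}) \to \cdots,
\]
which reduces the statement to the vanishing $HW^*(L', L') = 0$. Since $W'$ is flexible one has $SH^*(W') = 0$, and the unital open-closed (or Viterbo restriction) map $SH^*(W') \to HW^*(L', L')$ sends the unit of $SH^*(W')$ to the unit of $HW^*(L', L')$. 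Thus the unit of $HW^*(L', L')$ vanishes, which forces the whole ring $HW^*(L', L')$ to vanish, so that the long exact sequence gives an isomorphism $H^*(L'; \mathbb{Z}) \cong WH^+(\Lambda) \cong H^*(L; \mathbb{Z})$.

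The main obstacle will be verifying that each ingredient works over $\mathbb{Z}$ and that the topological hypotheses $\pi_1(L, \Lambda) = \pi_1(Y, \Lambda) = c_1(W, L) = c_1(Y, \Lambda) = 0$ propagate to the arbitrary filling $(W', L')$. The spin and $c_1$ conditions are needed for well-defined integer gradings and orientations in $WH^+$ and $HW^*$; the $\pi_1$ conditions are needed both for the Morse-Bott computation giving $WH^+(\Lambda) \cong H^*(L; \mathbb{Z})$ and for invoking Proposition \ref{prop: Legindependence} on both fillings. One should check that the vanishing conditions on $(Y, \Lambda)$ together with the flexibility of $W'$ and exactness of $L'$ are enough to guarantee the analogous vanishing on $(W', L')$, so that the filling-independence statement applies uniformly; this is the step that most parallels the restriction of Theorem \ref{thm: main} to flexible fillings with $c_1 = 0$.
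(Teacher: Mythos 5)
Your proposal is correct and follows essentially the same route as the paper: using $\pi_1(L,\Lambda)=0$ and $n\ge 5$ (Smale's handle-trading trick) to make $L$ Morse subcritical, Corollary \ref{cor: semiflexlaggood} gives asymptotic dynamical convexity, filling-independence of $WH^+$ applies, and $SH(W)=SH(W')=0$ kills $WH(L,L;W)$ and $WH(L',L';W')$ via the module/unit structure, so the tautological exact sequence identifies $WH^+_k$ with $H^{n-k-1}$ of each Lagrangian. The only refinements are that comparing fillings in the two \emph{different} ambient domains $W$ and $W'$ really uses the ADC-\emph{pair} statement (the proposition immediately following Proposition \ref{prop: Legindependence}), which the flexible case of Corollary \ref{cor: semiflexlaggood} supplies, and that your separate Morse--Bott-style computation of $WH^+(L,L;W)\cong H^*(L;\mathbb{Z})$ is unnecessary, since the same $SH=0$ plus long-exact-sequence argument you apply to $L'$ works verbatim for $L$.
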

In fact, Theorem \ref{thm: flexlag} holds for $n = 3, 4$ 
if $L^n$ admits a proper Morse function whose critical points have index less than $n-1$; for $n \ge 5$, this is equivalent to 
$\pi_1(L, \Lambda) = 0$ by Smale's handle-trading trick. The result also holds with any field coefficients. Note that here $L'$ does not have be flexible. So this result is slightly stronger than its contact analog Theorem \ref{thm: main} which is concerned only with flexible Weinstein fillings. We also note that work of Ekholm and Lekili \cite{Ekholm_Lekili} implies that certain Legendrians (with special Reeb chord conditions that should be satisfied for Legendrians with flexible fillings) also remember the homology of the based loop space of their Lagrangian fillings. 

As in the contact case, we can use Theorem \ref{thm: flexlag} to construct infinitely many non-isotopic Legendrians in the same formal Legendrian class, i.e. with the same algebraic topological data; see Section \ref{subsection: loose}. The first exotic examples were 1-dimensional Legendrians in $(\mathbb{R}^3, \xi_{std})$
constructed by Chekanov \cite{Che}. Higher dimensional examples were found in \cite{EES} by Ekholm, Etnyre, and Sullivan who produced infinitely many non-isotopic Legendrians spheres and tori in $P^{2n} \times \mathbb{R}$ in the same formal class. However these Legendrians are all nullhomologous in $P^{2n} \times \mathbb{R}$. Bj\"{o}rklund \cite{Bj} showed that if $\Sigma$ is a closed surface, there are arbitrarily many 1-dimensional  Legendrians 
in $\Sigma \times \mathbb{R}$ that are formally isotopic  representing \textit{any} class in $H_1(\Sigma \times \mathbb{R})$. The following analog of Theorem \ref{thm: inf_contact_flex} generalizes these results. 
Here $\Omega^n$ is as in Theorem \ref{thm: boundedinfinite}. 

\begin{theorem}\label{thm: leginfinite}
Suppose $\Lambda^{n-1} \subset (Y^{2n-1},\xi), n \ge 5,$ is a formal Legendrian that has a formal Lagrangian filling $L^n$ in a flexible filling $W$ of $(Y, \xi)$
and $\pi_1(Y, \Lambda) = \pi_1(L, \Lambda) = c_1(W, L) = c_1(Y, L) 
= 0$. 
Then for any $M^n\in \Omega^n$, there is a Legendrian $\Lambda_M \subset (Y, \xi)$ such that 
\begin{itemize}
\item $\Lambda_M$ is formally isotopic to $\Lambda$
\item if $H^*(M;\mathbb{Z}) \not\cong H^*(N;\mathbb{Z})$, then $\Lambda_M, \Lambda_N$ are not Legendrian isotopic
\item $\Lambda_M$ has a flexible Lagrangian filling $L_M\subset W$ diffeomorphic to $L \natural (M \backslash D^n)$.
\end{itemize}
 In particular, there are infinitely many non-isotopic Legendrians in $(Y, \xi)$ that are formally isotopic to $\Lambda$ and have flexible Lagrangian fillings in $W$. 
\end{theorem}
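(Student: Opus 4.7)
The plan is to mirror the proof of Theorem \ref{thm: inf_contact_flex}: construct $\Lambda_M$ as the Legendrian boundary of a flexible Lagrangian obtained by formally boundary-connect-summing the given filling $L$ with $M \backslash D^n$, and then distinguish the resulting Legendrians via Theorem \ref{thm: flexlag} applied to the integral cohomology of these fillings. First I would realize $M \backslash D^n$ as a formal Lagrangian filling of the standard Legendrian unknot $\Lambda_0^{n-1} \subset (S^{2n-1}, \xi_{std})$. The conditions defining $\Omega^n$ are engineered precisely for this: stable parallelizability supplies the complex trivialization of $T(M \backslash D^n)\otimes \C$ needed to build a formal Lagrangian frame, simple connectivity removes $\pi_1$-obstructions to the h-principle below, and the Euler characteristic and semi-characteristic conditions are the classical Lees-type obstructions to extending the formal Lagrangian structure along the boundary $S^{n-1}$ so that it matches the standard filling by $D^n$. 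Choosing a Darboux ball $B \subset W$ disjoint from $L$ whose contact boundary contains a copy of $\Lambda_0$ and performing a formal boundary connect sum inside $B$, one obtains a formal Lagrangian $\widetilde{L}_M \subset W$ diffeomorphic to $L \natural (M \backslash D^n)$ whose formal Legendrian boundary is formally isotopic to $\Lambda$, since Legendrian connect sum with the unknot is formally trivial.

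Next, because $L$ is flexible and the attaching occurs inside a Darboux ball, $\widetilde{L}_M$ is a formal flexible Lagrangian; applying the existence h-principle for flexible Lagrangians from \cite{EGL} produces a genuine flexible Lagrangian $L_M \subset W$ diffeomorphic to $L \natural (M \backslash D^n)$ whose Legendrian boundary $\Lambda_M := \partial L_M$ is formally isotopic to $\Lambda$. All hypotheses required to apply Theorem \ref{thm: flexlag} to $\Lambda_M$, namely $\pi_1(Y, \Lambda_M) = \pi_1(L_M, \Lambda_M) = c_1(W, L_M) = c_1(Y, \Lambda_M) = 0$, are inherited from the corresponding hypotheses on $L$ together with the simply-connected, stably-parallelizable structure of $M$.

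For the distinguishing step, Theorem \ref{thm: flexlag} implies that any two exact Lagrangian fillings of $\Lambda_M$ inside flexible Weinstein fillings of $(Y, \xi)$ have isomorphic integral cohomology. Hence if $\Lambda_M$ and $\Lambda_N$ were Legendrian isotopic, then $L_N$ and $L_M$ (both sitting inside $W$) would have the same integral cohomology. A Mayer--Vietoris computation for boundary connect sums gives
\[
\widetilde{H}^k\bigl(L \natural (M \backslash D^n); \Z\bigr) \cong \widetilde{H}^k(L; \Z) \oplus \widetilde{H}^k(M; \Z) \qquad \text{for } 0 < k < n,
\]
so $H^*(M; \Z) \not\cong H^*(N; \Z)$ would force $H^*(L_M; \Z) \not\cong H^*(L_N; \Z)$, a contradiction. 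The last clause then follows from the fact that $\Omega^n$ contains infinitely many manifolds with pairwise distinct integral cohomology, for instance iterated connect sums of $S^k \times S^{n-k}$ tuned to satisfy the Euler or semi-characteristic constraint.

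The hard part will be the formal Lagrangian extension in the first step: one must verify that the $\Omega^n$ conditions are exactly the obstruction-theoretic input needed to equip $M \backslash D^n$ with a formal Lagrangian filling of the standard unknot relative to its standard filling by $D^n$, and that after the boundary connect sum with $L$ the resulting formal Lagrangian is genuinely flexible, with the relative Chern class and relative spin data matching along the attaching region. Once that formal input is secured, the h-principle of \cite{EGL} and the invariance Theorem \ref{thm: flexlag} dispatch the remaining work essentially verbatim from the proof of Theorem \ref{thm: inf_contact_flex}.
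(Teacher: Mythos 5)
Your proposal is correct and follows essentially the same route as the paper: the paper likewise invokes the h-principle of \cite{EGL} to realize $M\setminus D^n$ as a flexible Lagrangian in $B^{2n}$ whose Legendrian boundary is formally isotopic to the unknot, forms $L_M = L \natural (M\setminus D^n) \subset W \natural B^{2n} = W$ (doing the boundary connect sum with genuine flexible Lagrangians rather than formally and then re-applying the h-principle, as you do), and distinguishes the $\Lambda_M$ by the integral cohomology of their fillings via the ADC-Legendrian/$WH^+$ invariance that underlies Theorem \ref{thm: flexlag}, which is exactly what you invoke. The only cosmetic differences are that the paper checks Morse subcriticality of $L_M$ by Smale's handle-trading trick (this is the content behind the hypotheses you describe as ``inherited'') and produces the infinite family in $\Omega^n$ from boundaries of tubular neighborhoods of wedges of spheres rather than tuned connect sums of sphere products, neither of which changes the argument.
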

\begin{remark}
A similar result was proven by Eliashberg, Ganatra, and the author in \cite{EGL}. In their situation, $Y^{2n-1} = (S^{2n-1}, \xi_{std})$ and if $H(\Lambda M; \mathbb{Z}) \not \cong  H(\Lambda N; \mathbb{Z})$, then  $\Lambda_M, \Lambda_N$ are not Legendrian isotopic.
\end{remark}

As for contact manifolds, this theorem fails for $n = 2$.
 Because of the Bennequin inequality, there are formal Legendrians with no genuine Legendrian representatives
and hence the first part of Theorem \ref{thm: leginfinite} fails; see \cite{CE12} for example. Furthermore, there are formal Legendrians that have a unique Legendrian representation up to isotopy, in which case the second part fails; for example, all Legendrians in $(\mathbb{R}^3, \xi_{std})$ that are topologically unknotted and formally isotopic are Legendrian isotopic \cite{EFraser}. However, like Theorem \ref{thm: flexlag}, the first part of Theorem \ref{thm: leginfinite} does hold for $n = 3, 4$ if $L \backslash D^n$ admits a Morse function whose critical points have index less than $n - 1$; the second part about infinitely many Legendrians also holds for $n = 4$.

We can also use Theorem \ref{thm: flexlag} to 
give a new proof of the homotopy equivalence version of the nearby Lagrangian conjecture for simply-connected Lagrangians intersecting a cotangent fiber once. 
In the following, let $\pi: T^*M \rightarrow M$ be the projection to the zero-section. 
 
\begin{corollary}\label{cor: nearby}
Suppose $M^n, n \ge 2,$ is simply-connected and $L \subset T^*M$ is a  closed exact Lagrangian with zero Maslov class intersecting $T_q^*M$  transversely in a single point for some $q\in M$. Then 
$\pi_*: H_*(L;\mathbb{Z})\rightarrow H_*(M; \mathbb{Z})$ is an isomorphism; hence if $L$ is simply-connected, $\pi$ is a homotopy equivalence. 
\end{corollary}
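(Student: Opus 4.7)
\emph{Plan.} The strategy is to use Lagrangian surgery to convert the single-intersection hypothesis into a comparison of two exact Lagrangian fillings of a common Legendrian in $ST^*M$, and then to invoke Theorem~\ref{thm: flexlag}. I work in the disk cotangent bundle $W := DT^*M$, and after rescaling I assume $L \subset W$. The cotangent fiber disk $F := DT^*_q M \subset W$ is a Lagrangian disk with Legendrian boundary $\Lambda_q := \partial F \subset ST^*M$, the fibered Legendrian sphere over $q$. By hypothesis $L \cap F = \{p\}$ transversely at an interior point, and Polterovich Lagrangian surgery at $p$ -- which preserves exactness since both Lagrangians are exact and meet in a single transverse point -- yields an exact Lagrangian $L^{\sharp} \subset W$ with Legendrian boundary $\Lambda_q$, diffeomorphic to $L \setminus D^n$. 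The same surgery applied to the zero section $M$ and $F$ at their single transverse intersection $q$ produces a second exact Lagrangian filling $M^{\sharp} \subset W$ of the same $\Lambda_q$, diffeomorphic to $M \setminus D^n$.

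The crucial observation is that $M^{\sharp} \cong M \setminus D^n$ admits a handle decomposition with handles of index strictly less than $n$, since removing a disk from a closed $n$-manifold kills its top cell; thus $M^{\sharp}$ is a \emph{subcritical} Lagrangian. Combined with $\pi_1(M^{\sharp}, \Lambda_q) = 0$ (following from simple-connectedness of $M$) and the standing Maslov and relative Chern class hypotheses, the flexibility h-principle of \cite{EGL} allows $M^{\sharp}$ to be realized as a flexible Lagrangian filling of $\Lambda_q$ inside a flexible Weinstein ambient $\widetilde W$ of the appropriate contact boundary. I then compare $L^{\sharp}$ with $M^{\sharp}$ via Theorem~\ref{thm: flexlag} (applied after the necessary stabilization of $W$, or directly via the invariance of positive wrapped Floer cohomology $WH^+$ for asymptotically dynamically convex Legendrians supplied by Proposition~\ref{prop: Legindependence}) to conclude
\[
H^*(L \setminus D^n;\Z) \;\cong\; H^*(L^{\sharp};\Z) \;\cong\; H^*(M^{\sharp};\Z) \;\cong\; H^*(M \setminus D^n;\Z).
\]
The long exact sequences of the pairs $(L, D^n)$ and $(M, D^n)$ then upgrade this to an isomorphism $H^*(L;\Z) \cong H^*(M;\Z)$ in every degree.

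To promote this abstract cohomology isomorphism to the statement that $\pi_*$ is itself an isomorphism, I use a degree argument. The algebraic intersection of $L$ with the cotangent fiber $T^*_q M$ equals the degree of the proper map $\pi|_L \colon L \to M$, which is $\pm 1$ by hypothesis. A degree-$\pm 1$ map between closed oriented $n$-manifolds induces a surjection on integral homology (via the transfer map satisfying $\pi_* \circ \pi^! = \pm \mathrm{id}$), and combined with the equality $H_*(L;\Z) \cong H_*(M;\Z)$ this surjection is forced to be an isomorphism by the Hopfian property of finitely generated abelian groups. When $L$ is simply-connected, $\pi$ is then a map of simply-connected CW complexes inducing a homology isomorphism, hence a homotopy equivalence by Whitehead's theorem.

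The main obstacle is realizing $M^{\sharp}$ as a genuinely flexible Lagrangian filling of $\Lambda_q$ in a flexible Weinstein ambient, since the standard Weinstein structure on $DT^*M$ is not flexible (and, when $M$ is simply-connected and spin, $ST^*M$ admits no flexible Weinstein filling at all). This is where the subcriticality of $M \setminus D^n$ and the simple-connectedness of $M$ become essential: one must either stabilize the ambient Weinstein structure appropriately, or work directly at the level of Legendrian invariants of $\Lambda_q$ -- a step that crucially invokes the paper's framework of asymptotically dynamically convex Legendrians and their flexible Lagrangian fillings, rather than applying Theorem~\ref{thm: flexlag} as a black box.
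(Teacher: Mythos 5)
Your endgame (intersection number $\pm 1$ gives $\deg\pi=\pm1$, hence $\pi_*$ surjective, then the Hopfian property and Whitehead) coincides with the paper's, but the core comparison step has a genuine gap which your own ``main obstacle'' paragraph does not close. You want to compare the two fillings $L^{\sharp}$ and $M^{\sharp}$ of the fiber sphere $\Lambda_q\subset ST^*M$ inside $W=DT^*M$. Theorem \ref{thm: flexlag} is simply unavailable there: $DT^*M$ is not a flexible filling, and for simply-connected spin $M$ the contact manifold $ST^*M$ admits no flexible Weinstein filling at all (as noted in the introduction). Falling back on Proposition \ref{prop: Legindependence} only yields $WH^+(L^{\sharp},L^{\sharp};W)\cong WH^+(M^{\sharp},M^{\sharp};W)$ (granting that $\Lambda_q$ is ADC, which the paper does establish for $n\ge 4$, or for Morse subcritical $M$ and $n\ge3$), but this does not determine the ordinary cohomology of the fillings: the identification $WH^+_k(L',L';W)\cong H^{n-k-1}(L')$ comes from the tautological exact sequence and requires $WH(L',L';W)=0$, which in the paper is always deduced from $SH(W)=0$. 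Here $SH(T^*M)\cong H_*(\Lambda M)\neq 0$, and for instance the fiber itself has $WH(T^*_qM,T^*_qM;T^*M)\cong H_*(\Omega M)\neq 0$, so loop-space homology enters the exact sequence and the invariance of $WH^+$ alone does not give $H^*(L\setminus D^n)\cong H^*(M\setminus D^n)$. Neither of your proposed repairs fixes this: subcritical stabilization does not change $SH$ or $WH$, and ``working directly with the ADC invariants of $\Lambda_q$'' is exactly the missing step, not a proof of it. (Secondary points: exactness of the Polterovich surgery at a single transverse point requires choosing the handle compatibly with the sign of the difference of primitives, and your argument nowhere treats $n=2,3,4$, whereas the statement allows $n\ge2$.)

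The paper avoids all of this by working downstairs rather than upstairs: since $L$ meets $T^*_qM$ in a single transverse point, over a small disk $D^n\ni q$ it is a graph $\Gamma(df)$, and a compactly supported Hamiltonian isotopy makes $L\cap T^*D^n$ equal to the zero section over a smaller disk. One then restricts to the subcritical domain $T^*(M\setminus D^n)$, where $L\setminus D^n$ and the tautologically flexible zero section $M\setminus D^n$ are both exact fillings of the Legendrian $\partial(M\setminus D^n)$, which is ADC by Lemma \ref{lem: cotangentLaggood} (using $\pi_1(M)=0$ and handle trading, so $n\ge5$ here); since the ambient domain is subcritical, $SH=0$, $WH=0$, and Theorem \ref{thm: flexlag} converts $WH^+$-invariance into $H^*(L\setminus D^n)\cong H^*(M\setminus D^n)$, after which Mayer--Vietoris and Poincar\'e duality give $H_*(L)\cong H_*(M)$. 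The cases $n=2,3,4$ are handled by replacing $(M,L)$ with $(M\times S^3, L\times S^3)$. If you wish to keep your surgery picture, the comparison must likewise be carried out in a subcritical (or flexible) ambient domain obtained by removing the handle through $q$, not in $DT^*M$ itself.
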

The fact that $\pi$ is homotopy equivalence  was first proven by Fukaya, Seidel, and Smith \cite{FukSS}, whose result did not require $L$ to intersect $T_q^*M$ in a single point. 
Although our intersection condition is quite restrictive and immediately implies that $\pi_*$ is surjective,  our approach seems to be fairly elementary and does not use Fukaya categories or spectral sequences..
In addition, our approach seems to be adaptable and can be generalized to certain other Weinstein domains like plumbings; see Section \ref{sec: ADS_Weinstein}.
\\

This paper is organized as follows. In Section \ref{sec: background}, we present some background material. In Section \ref{ssec: independencelin}, we introduce asymptotically dynamically convex contact structures and prove their main properties. In Section \ref{sec: applications}, we prove the results stated in Section \ref{sec: mainresults} and \ref{sub: exotic}, assuming Theorem \ref{thm: semi-surgery}  that flexible surgery preserves  asymptotically dynamically convex contact structures. In Section \ref{sec: reeb_chord_loose}, we prove our main geometric result  about Reeb chords of loose Legendrians. In Section
\ref{sec: surgerysemigood}, we prove Theorem \ref{thm: semi-surgery}
modulo a technical lemma that we prove in the Appendix. In Section 
\ref{sec: legendrian_results}, we prove the relative versions of our results for Legendrians stated in Section \ref{ssec: intro_legendrians}. In Section \ref{sec: open_problems}, we present some open problems.

\section*{Acknowledgements}
I would like to thank my Ph.D. advisor Yasha Eliashberg for suggesting this problem and for many inspiring discussions. I am also grateful to Kyler Siegel and Laura Starkston for providing many helpful comments on earlier drafts of this paper. I also thank Matthew Strom Borman, Tobias Ekholm, Sheel Ganatra, Jean Gutt, Alexandru Oancea, Joshua Sabloff, and  Otto van Koert for valuable discussions.  This work was partially supported by a National Science Foundation Graduate Research Fellowship under grant number DGE-114747.

\section{Background}\label{sec: background}

\subsection{Liouville and Weinstein domains}\label{ssec: domains}

We first review the relevant symplectic manifolds and their relationship to contact manifolds. All our symplectic manifolds will be exact and either compact with boundary or open.  A \textit{Liouville domain} is a pair $(W^{2n}, \lambda)$ such that \begin{itemize}
\item $W^{2n}$ is a compact manifold with boundary
\item $d\lambda$ is a symplectic form on $W$ 
\item the Liouville field $X_\lambda$, defined by $i_{X}d\lambda = \lambda$, is outward transverse along $\partial W$
\end{itemize}
A \textit{Weinstein domain} is a triple $(W^{2n}, \lambda, \phi)$ such that
\begin{itemize}
\item $(W, \lambda)$ is a Liouville domain
\item $\phi: W \rightarrow \mathbb{R}$ is a Morse function with maximal level set $\partial W$ 
\item $X_\lambda$ is a gradient-like vector field for $\phi$.
\end{itemize}
Since $W$ is compact and $\phi$ is a Morse function with maximal level set $\partial W$, $\phi$ has finitely many critical points. Liouville and Weinstein \text{cobordisms} are defined similarly.  

The fact that $X_\lambda$ is outward transverse to $\partial W$ implies that $(Y, \alpha):= (\partial W, \lambda|_{\partial W})$ is a contact manifold. 
If a contact manifold $(Y, \xi)$ is contactomorphic to $\partial (W, \lambda)$, then  we say that $(W, \lambda)$ is a Liouville or Weinstein \textit{filling} of $(Y, \xi)$.
Because $X_\lambda$ is defined on all of $W$ and points outward along $\partial W$, the flow of $X_\lambda$ is defined for all negative time. In particular, the negative flow of $X_\lambda$ identifies a subset of $W$ with the \textit{negative} symplectization 
$(Y \times (0,1], d(t \alpha))$ of $(Y, \alpha)$; here $t$ is the second coordinate on $Y\times (0,1]$. We can also glue the positive symplectization $(Y \times \mathbb [1,\infty), d(t \alpha))$ of $(Y, \alpha)$ to $W$ along $\partial W$. 
The result is the \textit{completion} $(\widehat{W}, d\hat{\lambda})$ of $W$, an open exact symplectic manifold; here $\hat{\lambda} = \lambda$ in $W$ and $\hat{\lambda} = t\alpha$ in $Y\times [1, \infty)$. Note that $X_{\hat{\lambda}}$ is a complete vector field in $\widehat{W}$. 
In order to avoid trivial invariants like volume, one usually speaks of symplectomorphisms of completed  Weinstein domains rather than symplectomorphisms of domains themselves. 
In this paper, the negative symplectization of $Y$ in $W$ will play a more important role than the completion of $W$; see Remark \ref{rem: positivedeg}.

The natural notion of equivalence between Weinstein domains
$(W, \lambda_0, \phi_0),$ $(W, \lambda_1, \phi_1)$ is a 
\textit{Weinstein homotopy}, i.e. a 1-parameter family
of Weinstein structures $(W, \lambda_t, \phi_t)$, 
$t\in [0, 1],$ connecting them, where $\phi_t$ is allowed to have birth-death critical points. 
Weinstein domains that are Weinstein homotopic have exact  symplectomorphic completions and contactomorphic contact boundaries; see \cite{CE12}. We note that the notion of Weinstein homotopy between Weinstein \textit{manifolds} is more general and does not necessarily imply that the contact manifolds at infinity are contactomorphic (or even diffeomorphic); see \cite{Courte}.

\subsubsection{Weinstein handle attachment and contact surgery}

A Weinstein structure yields a special handle-body decomposition for $W$. First, recall that $\lambda$ vanishes on the $X_\lambda$-stable disc $D_p$ of a critical point $p$; see \cite{CE12}. In particular, $D_p$ is isotropic with respect to $d\lambda$ and so all critical points of $\phi$ have index less than or equal to $n$. If all critical points of $\phi$ have index \textit{strictly less} than $n$, then the Weinstein domain is \textit{subcritical}.  Also, $X_\lambda$ is transverse to any regular level $Y^c = \phi^{-1}(c)$ of $\phi$ and so $(Y^c, \lambda|_{Y^c})$ is a contact manifold; similarly $W^c = \{\phi \le c\}$ is a Weinstein subdomain of $W$. Since $\lambda$ vanishes on $D_p$, then  $\Lambda_p: = D_p \cap Y^c \subset (Y^c, \lambda|_{Y^c})$ is an isotropic sphere, where $c = \phi(p) - \epsilon$ for sufficiently small $\epsilon$. Furthermore, $\Lambda_p$ comes with a parametrization and framing, i.e. a trivialization of its normal bundle. Note that a framing of $\Lambda_p$ is equivalent to the framing of the conformal symplectic normal bundle of $\Lambda_p$; see \cite{Gbook}. Hence parametrized Legendrians come with a canonical framing.

Suppose that $c_1 < c_2$ are regular values of $\phi$ and 
$W^{c_2} \setminus W^{c_1}$ contains a unique critical point $p$ of $\phi$. Then $W^{c_2} \backslash W^{c_1}$ is an elementary Weinstein cobordism between $Y^{c_1}$ and $Y^{c_2}$ and the symplectomorphism type of $W^{c_2}$ is determined by the symplectomorphism type of $W^{c_1}$ along with the framed isotopy class of the isotropic sphere $\Lambda_p \subset Y^{c_1}$.
If the critical values of $\phi$ are distinct, then $W$ can be viewed as the concatenation of such elementary Weinstein cobordisms. 

On the other hand, one can explicitly construct such cobordisms and use them to modify Liouville domains or contact manifolds. Given a Liouville domain $X_-$ and a framed isotropic sphere $\Lambda$ in its contact boundary $Y_-$, we can attach 
an elementary Weinstein cobordism with critical point $p$ 
and $\Lambda_p = \Lambda$ to $X_-$ and obtain a new Liouville domain $X_+$. This operation is called \textit{Weinstein handle attachment} and $\Lambda$ is called the \textit{attaching sphere} of the Weinstein handle.
If $X_-$ was Weinstein, then so is $X_+$. 
The contact boundary $Y_+$ of $X_+$ is the result of \textit{contact surgery} along $\Lambda \subset Y_-$ and the Weinstein handle gives an elementary Weinstein cobordism between $Y_-$ and $Y_+$; see Proposition \ref{prop: techsurgery} for details.  
If the dimension of $\Lambda \subset Y_-^{2n-1}$ is less than $n-1$, the handle attachment, surgery, and $\Lambda$ itself are all called \textit{subcritical}.
Therefore, any  (subcritical) Weinstein domain can be obtained by  attaching (subcritical) Weinstein handles to the standard Weinstein structure on $B^{2n}$; similarly, the contact boundary of any (subcritical) Weinstein domain can be obtained by doing (subcritical) contact surgery to $(S^{2n-1}, \xi_{std})$.

\subsubsection{Formal structures}\label{sec: formal structures}
There are also formal versions of symplectic, Weinstein, and contact structures that depend on just the underlying algebraic topological data. 
 For example, an \textit{almost symplectic structure} $(W, J)$ on $W$ is an almost complex structure $J$ on $W$; this is equivalent to having a 
non-degenerate (but not necessarily closed) 2-form on $W$. 
An almost symplectomorphism between two almost symplectic manifolds $(W_1, J_1), (W_2, J_2)$ is a diffeomorphism $\phi: W_1 \rightarrow W_2$ such that $\phi^*J_2 $ can be deformed to $J_1$ through almost complex structures on $W_1$. 
An \textit{almost Weinstein domain} is a triple $(W, J, \phi)$, where $(W,J)$ is a compact almost symplectic manifold with boundary 
and $\phi$ is a Morse function on $W$ with no critical points of index greater than $n$ and maximal level set $\partial W$. An \textit{almost contact structure} $(Y, J)$ on $Y$ is an almost complex structure $J$ on the stabilized tangent bundle $TY \oplus \epsilon^1$ of $Y$. Therefore an almost symplectic domain $(W, J)$ has almost contact boundary $(\partial W, J|_{\partial W})$; it is an almost symplectic filling of this almost contact manifold. 
Note that any symplectic, Weinstein, or contact structure can also be viewed as an almost symplectic, Weinstein, or contact structure by considering just the underlying  algebraic topological data. 

Note that the first Chern class $c_1(J)$ is an invariant of almost symplectic, almost Weinstein, or almost contact structures.
 In this paper, we will often need to assume that $c_1(J)$ vanishes. The following proposition, which will be used several times in this paper, shows  that the vanishing of $c_1(Y, J)$ is often preserved under contact surgery and furthermore implies the vanishing of $c_1(W, J)$.  

\begin{proposition}\label{prop: c1equivalence}
Let $(W^{2n}, J), n \ge 3,$ be an almost Weinstein cobordism between $\partial_-W = (Y_-, J_-), \partial_+W = (Y_+, J_+)$. If $H^2(W,Y_-) = 0$, the vanishing of  $c_1(J_-)$ and $c_1(J_+)$ and $c_1(J)$ are equivalent. If  $\partial_-W =\varnothing$, the vanishing of 
$c_1(J_+)$ and $c_1(J)$ are equivalent. 
\end{proposition}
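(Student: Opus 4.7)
The plan is to combine two facts: the naturality of $c_1$ under restriction, and a computation of $H^2$ of the cobordism pair via the Weinstein handle decomposition.

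First I would observe that if $i_\pm : Y_\pm \hookrightarrow W$ denote the boundary inclusions, then the almost contact structure $J_\pm$ on the stabilized bundle $TY_\pm \oplus \epsilon^1$ is, up to homotopy, the pullback of the almost complex structure $J$ on $TW|_{Y_\pm} = TY_\pm \oplus \nu_\pm$ with the trivial outward-normal line bundle $\nu_\pm$. Since $c_1$ is unchanged by stabilization with a trivial complex line bundle, we get the fundamental identity
\begin{equation*}
c_1(Y_\pm, J_\pm) \;=\; i_\pm^{*}\, c_1(W, J) \quad \text{in } H^2(Y_\pm;\Z).
\end{equation*}
Therefore $c_1(W,J)=0$ always implies $c_1(J_-)=0$ and $c_1(J_+)=0$. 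The content of the proposition is the two converse implications.

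Next I would analyze the cohomology of the pair $(W,Y_+)$. Because $(W,\lambda,\phi)$ is Weinstein, all critical points of $\phi$ have index $\le n$, so $W$ is built from $Y_- \times [0,1]$ (or from $\varnothing$, when $\partial_- W=\varnothing$) by attaching handles of index $\le n$. Running the same argument with the reversed Morse function $-\phi$, or equivalently taking the Poincar\'e--Lefschetz dual, $W$ is obtained from $Y_+ \times [0,1]$ by attaching handles of index $\ge n$, i.e.\ $(W,Y_+)$ is a relative CW pair with cells only in dimensions $\ge n$. Hence
\begin{equation*}
H^k(W, Y_+;\Z) \;=\; 0 \quad \text{for all } k < n.
\end{equation*}
Since $n\ge 3$ we have $H^2(W,Y_+)=0$, with no extra hypothesis. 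The long exact sequence of the pair then gives that $i_+^{*}:H^2(W;\Z)\to H^2(Y_+;\Z)$ is injective, so combined with the naturality identity we obtain
\begin{equation*}
c_1(W,J)=0 \;\Longleftrightarrow\; c_1(Y_+,J_+)=0.
\end{equation*}
This already proves the second statement of the proposition, where $\partial_- W = \varnothing$.

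For the first statement, I would now feed in the hypothesis $H^2(W,Y_-)=0$. The long exact sequence of the pair $(W,Y_-)$,
\begin{equation*}
H^2(W,Y_-;\Z) \to H^2(W;\Z) \xrightarrow{\; i_-^{*}\;} H^2(Y_-;\Z),
\end{equation*}
then forces $i_-^{*}$ to be injective, so $c_1(W,J)=0 \Leftrightarrow c_1(Y_-,J_-)=0$ by the naturality identity. Chaining this with the previous equivalence gives that all three of $c_1(J_-)$, $c_1(J_+)$, $c_1(W,J)$ vanish together. The main conceptual step is the handle-decomposition observation that produces $H^2(W,Y_+)=0$ for free from $n\ge 3$; the only potentially subtle point is the stabilization argument identifying $c_1(J_\pm)$ with $i_\pm^{*}c_1(W,J)$, but this is standard obstruction theory and I would simply invoke it.
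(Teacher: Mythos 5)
Your proposal is correct and follows essentially the same route as the paper: naturality $c_1(J_\pm)=i_\pm^*c_1(W,J)$, the long exact sequences of the pairs, the hypothesis $H^2(W,Y_-)=0$ for injectivity of $i_-^*$, and the vanishing of $H^2(W,Y_+)$ coming from the Weinstein index bound $\le n$. The only cosmetic difference is that you obtain $H^2(W,Y_+)=0$ from the dual (upside-down) handle decomposition with cells of index $\ge n$, whereas the paper phrases the same fact via Poincar\'e--Lefschetz duality, $H^2(W,Y_+)\cong H_{2n-2}(W,Y_-)=0$ for $n\ge 3$.
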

\begin{proof}
Let $i_\pm: Y_\pm \hookrightarrow W$ be inclusions. Then $i_\pm^*c_1(J) = c_1(J_\pm)$ so the vanishing of $c_1(J)$ implies the vanishing of $c_1(J_-)$ and $c_1(J_+)$.  
To prove the converse, consider the cohomology long exact sequences of the pairs $(W, Y_-)$ and $(W, Y_+)$: 
$$
H^2(W, Y_\pm; \mathbb{Z})
\rightarrow 
H^2(W; \mathbb{Z}) \xrightarrow{i_\pm^*} H^2(Y_\pm; \mathbb{Z}).
$$
By assumption, $H^2(W, Y_-; \mathbb{Z})$ vanishes and hence $i_-^*$ is injective. By Poincar\'e-Lefschetz duality, 
$H^2(W, Y_+; \mathbb{Z}) \cong H_{2n-2}(W, Y_-;\mathbb{Z})$. Since $2n-2 \ge n+1$ for $n \ge 3$ and $W$ is a Weinstein cobordism, 
$H_{2n-2}(W, Y_-;\mathbb{Z})$ vanishes and hence $i_+^*$ is also injective. Then if either $c_1(J_-) = i_-^*c_1(J)$ or $c_1(J_+) = i_+^*c_1(J)$ vanish, so does $c_1(J)$.

If $\partial_-W = \varnothing$, 
we just need the vanishing of $H^2(W, Y_+; \mathbb{Z})$, which holds for $n\ge 3$.
\end{proof}

\subsection{Loose Legendrians and flexible Weinstein domains}\label{subsection: loose}
Murphy  \cite{Murphy11} discovered that a certain class of \textit{loose} Legendrians satisfy a h-principle. That is, the symplectic topology of these Legendrians is governed by algebraic topology.
There are several equivalent criteria for a Legendrian to be loose, all of which depend the existence of a certain local model inside this Legendrian. We will use the following local model from Section 2.1 of \cite{CMP}. 
Let $B^3 \subset (\mathbb{R}^3, \xi_{std} = \ker \alpha_{std})$ be a unit ball and let $\Lambda_0$ be the 1-dimensional Legendrian whose front projection is shown in Figure \ref{fig: stabilizationgeo}. Let $Q^{n-2}, n \ge 3,$ be a closed manifold
and $U$ a neighborhood of the zero-section $Q \subset T^*Q$. Then 
$\Lambda_0 \times Q \subset (B^3 \times U, \mbox{ker}(\alpha_{std} + \lambda_{std}))$ is a Legendrian submanifold. 
This Legendrian is the \textit{stabilization} over $Q$ of 
the Legendrian $\{y = z = 0 \} \times Q \subset (B^3 \times U, \mbox{ker}(\alpha_{std} + \lambda_{std}))$.
\begin{definition}
A Legendrian $\Lambda^{n-1} \subset (Y^{2n-1}, \xi), n \ge 3,$ is \textit{loose} if there is a neighborhood $V\subset (Y, \xi)$ of $\Lambda$ such that $(V, V\cap \Lambda)$ is contactomorphic to $(B^3 \times U, \Lambda_0 \times Q)$.
\end{definition}
\begin{remark}\label{rem: looseleg}
If $f: (U^{2n-1}, \xi_1) \rightarrow (V^{2n-1}, \xi_2)$ is an 
equidimensional contact embedding and $\Lambda \subset (U, \xi_1)$ is loose, then $f(\Lambda) \subset (V, \xi_2)$ is also loose.
\end{remark}

\begin{figure}
              \centering
              \includegraphics[scale=0.25]{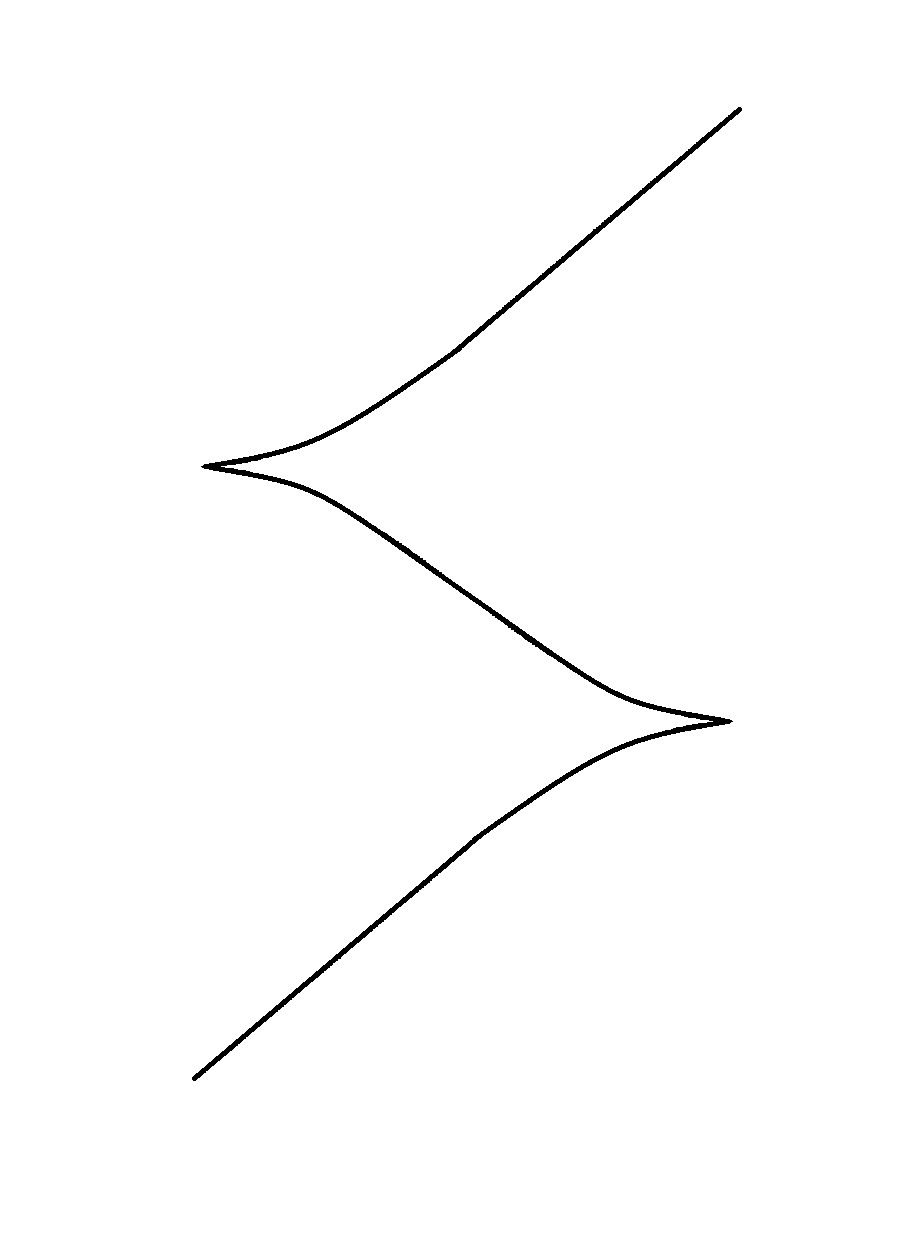}
              \caption{Front projection of $\Lambda_0$}
              \label{fig: stabilizationgeo}
 \end{figure}

A \textit{formal Legendrian embedding} is an embedding $f: \Lambda \rightarrow (Y, \xi)$ together with a homotopy of bundle monomorphisms $F_s: T\Lambda \rightarrow TY$ covering $f$ for all $s$ such that $F_0 = df$ and $F_1(T\Lambda)$ is a Lagrangian subspace of $\xi$.  A \textit{formal Legendrian isotopy} is an isotopy through formal Legendrian embeddings. Murphy's h-principle \cite{Murphy11} has an existence and uniqueness part:
\begin{itemize}
\item any formal Legendrian is formally Legendrian isotopic to a loose Legendrian
\item any two loose Legendrians that are formally Legendrian isotopic are genuinely Legendrian isotopic. 
\end{itemize}

We now define a class of Weinstein domains introduced in \cite{CE12} that are constructed by iteratively attaching Weinstein handles along loose Legendrians.

\begin{definition}\label{def: weinstein_flexible}
A Weinstein domain $(W^{2n}, \lambda, \phi)$ is  \textit{flexible} if there exist regular values $c_1, \cdots, c_{k}$ of $\phi$ such that $c_1 < \min \phi < c_2 < \cdots < c_{k-1} < \max \phi < c_{k}$ and for all $i = 1, \cdots, k-1$, $\{c_i \le \phi \le c_{i+1} \}$ is a Weinstein cobordism with a single critical point $p$ whose the attaching sphere $\Lambda_p$ is either subcritical or a loose Legendrian in $(Y^{c_i}, \lambda|_{Y^{c_i}})$.  
\end{definition} 

Flexible Weinstein \textit{cobordisms} are defined similarly. Also, Weinstein handle attachment or contact surgery is called flexible if the attaching Legendrian is loose. 
So any flexible Weinstein domain can be constructed by iteratively attaching subcritical or flexible handles to $(B^{2n}, \omega_{std})$.  A Weinstein domain that is Weinstein homotopic to a Weinstein domain  satisfying Definition \ref{def: weinstein_flexible} will also be called flexible.
Loose Legendrians have dimension at least $2$  
so if $(Y_+,\xi_+)$ is the result of flexible contact surgery on $(Y_-, \xi_-)$, then by Proposition \ref{prop: c1equivalence}
$c_1(Y_+)$ vanishes if and only if $c_1(Y_-)$ does.
Finally, we note that subcritical domains are automatically flexible.

Our definition of flexible Weinstein domains is a bit different from the original definition in \cite{CE12}, where several critical points are allowed in  $\{c_i \le \phi \le c_{i+1} \}$. 
There are no gradient trajectories between these critical points and their attaching spheres form a loose \textit{link} in $(Y^{c_i}, \lambda|_{Y^{c_i}})$, i.e each Legendrian is loose in the complement of the others. In this paper, we prefer to work with connected Legendrians, which is why we allow only one critical point in each cobordism  $\{c_i \le \phi \le c_{i+1} \}$; hence all critical points have distinct critical values. 
These two definitions are the same up to Weinstein homotopy.

Since they are built using loose Legendrians, which satisfy an h-principle, flexible Weinstein domains also satisfy an h-principle \cite{CE12}.
Again, the h-principle has an existence and uniqueness part:
\begin{itemize}
\item any almost Weinstein domain admits a flexible Weinstein structure in the same almost symplectic class
\item any two flexible Weinstein domains that are almost symplectomorphic are Weinstein homotopic (and hence have exact symplectomorphic completions and contactomorphic boundaries).
\end{itemize}

\subsection{Symplectic homology}\label{subsec: symhom}
In this section, we review the symplectic homology $SH(W)$ of a Liouville domain $(W, \lambda)$. We will follow the conventions for signs and grading in symplectic homology used in  \cite{CieliebakOancea}.
Let $(Y, \alpha):= (\partial W, \lambda|_{\partial W})$ be the contact boundary of $W$.  As before, let $(\widehat{W}, d\hat{\lambda})$ be the completion of $W$ obtained by attaching the positive symplectization $(Y \times [1, \infty), d(r \alpha))$ of $(Y, \alpha)$ to $\partial W$; here $r$ is the cylindrical coordinate on $Y \times [1, \infty)$. 
The contact manifold $(Y, \alpha)$ has a canonical \textit{Reeb} vector field $R_\alpha$ defined by $i_{R_\alpha}d\alpha = 0$ and $\alpha(R_\alpha) =1$; periodic orbits of $R_\alpha$ are called Reeb orbits. 
The \textit{action} of a Reeb orbit
 $\gamma$ is
\begin{equation}\label{eqn: contactactiondef}
A(\gamma):= \int_{S^1}\gamma^* \alpha.
\end{equation}
Note that $A(\gamma)$ is always positive and equals the period of $\gamma$. Let $Spec(Y, \alpha) \subset \mathbb{R}^+$ denote the set of actions of all Reeb orbits of $\alpha$. 

We say that a Reeb orbit $\gamma$ of $\alpha$ is \textit{non-degenerate} if the linearized Reeb flow from $\xi_p$ to $\xi_p$ for some  $p \in \gamma$ does not have $1$ as an eigenvalue; similarly, we say that the contact form $\alpha$ is \textit{non-degenerate} if all Reeb orbits of $\alpha$ are non-degenerate. A generic contact form is non-degenerate and we can assume that any contact form is non-degenerate after a $C^0$-small modification. If $\alpha$ is non-degenerate, then $Spec(Y, \alpha)$ is a discrete subspace of $\mathbb{R}^+$. 
All Reeb orbits that we discuss in this paper will be non-degenerate. However  we only work with orbits below a fixed action and so contact forms do not have to be non-degenerate, i.e there may be high-action orbits that are degenerate. 

\subsubsection{Admissible Hamiltonians and almost complex structures}
To define symplectic homology, we need to equip $W$ with a certain family of  functions and almost complex structures. Let  $\mathcal{H}_{std}(W)$ denote the class of \textit{admissible Hamiltonians}, which are functions on $\widehat{W}$ defined up to smooth approximation as follows:
\begin{itemize}
\item $H \equiv 0$ in $W$
\item $H$ is linear in $r$ with slope $s \not\in Spec(Y, \alpha)$ in $\widehat{W}\backslash W = Y\times [1, \infty)$.
\end{itemize}
More precisely, $H$ is a $C^2$-small Morse function in $W$ and $H=h(r)$ in $\widehat{W}\backslash W$ for some function $h$ that is increasing convex in a small collar $(Y \times [1, 1+\delta], r\alpha)$ of $Y$ and linear with slope $s$ outside this collar; for example, see \cite{Gutt} for details. Often, we will just say that $h$ is increasing convex near $Y$, by which we mean in such a collar. 

For $H \in \mathcal{H}_{std}(W)$,  the Hamiltonian vector field $X_H$ is defined by the condition $d\hat\lambda(\cdot, X_H ) = dH$. The time-1 orbits of $X_H$ are called the Hamiltonian orbits of $H$ and fall into two classes depending on their location in $\widehat{W}$: 
\begin{itemize}
\item In $W$, the only Hamiltonian orbits are constants corresponding to Morse critical points of $H|_W$
\item In $\widehat{W}\backslash W$, we have $X_H = h'(r)R_\alpha$, where $R_\alpha$ is the Reeb vector field of $(Y, \alpha)$. So all Hamiltonian orbits lie on level sets of $r$ and come in $S^1$-families corresponding to reparametrizations of some Reeb orbit of $\alpha$ with period $h'(r)$.
\end{itemize}
Since the slope $s$ of $H$ at infinity is not in $Spec(Y, \alpha)$, all Hamiltonian orbits lie in a small neighborhood of $Y$ in $\widehat{W}$.
After a $C^2$-small time-dependent perturbation of $H$, the orbits become \textit{non-degenerate}, i.e. 
the linearized Hamiltonian flow from $T_p W$ to $T_p W$, for some $p$ in the Hamiltonian orbit,  does not have $1$ as an eigenvalue.
These non-degenerate orbits also lie in a neighborhood of $W$ and so there are only finitely many of them. 
In fact, under this perturbation, each $S^1$-family of Hamiltonian orbits degenerates into two Hamiltonian orbits; see for example \cite{BO}.

We say that an almost complex structure $J$ is \textit{cylindrical} on the symplectization 
$(Y \times (0, \infty), r\alpha)$ if it preserves 
$\xi = \ker \alpha,$ $J|_\xi$ is independent of $r$ and compatible with $d(r\alpha)|_\xi$, and $J(r\partial_r) = R_\alpha$. Let $\mathcal{J}_{std}(W)$ denote the class of \textit{admissible} almost complex structures $J$ on $\widehat{W}$ which satisfy
\begin{itemize}
\item $J$ is compatible with $\omega$ on $\widehat{W}$
\item $J$ is cylindrical on  $\widehat{W}\backslash W = (Y \times [1, \infty), r\alpha)$. 
\end{itemize}

\subsubsection{Floer complex}
For $H \in \mathcal{H}_{std}(W), J\in \mathcal{J}_{std}(W)$, the Floer complex $SC(W,\lambda,  H, J)$  is generated as a free abelian group by Hamiltonian orbits of $H$ that are contractible in $W$. Note that we can work with integer coefficients rather than Novikov ring coefficients since all the symplectic manifolds in this paper are exact. 
We will often write this complex as $SC(H, J)$ when we do not need to specify $(W, \lambda)$. 

The differential is given by counts of Floer trajectories. In particular, for two Hamiltonian orbits $x_-, x_+$ of $H$, let $\widehat{\mathcal{M}}(x_-, x_+; H, J)$ be the moduli space of smooth maps 
$u: \mathbb{R}\times S^1  \rightarrow \widehat{W}$ such that $
\underset{s\rightarrow \pm \infty}{\lim}
 u(s, \cdot) = x_\pm$ and $u$ satisfies 
Floer's equation 
$$
\partial_s u + J(\partial_t u - X_H) =0.
$$
Here $s, t$ denote the $\mathbb{R}, S^1$ coordinates on $\mathbb{R}\times S^1$ respectively. 
Since the Floer equation is $\mathbb{R}$-invariant, there is a free $\mathbb{R}$-action on $\widehat{\mathcal{M}}(x_-, x_+; H, J)$ for $x_- \ne x_+$. 
Let $\mathcal{M}(x_-, x_+: H, J)$ be quotient by this $\mathbb{R}$-action, i.e. 
$\widehat{\mathcal{M}}(x_-, x_+; H, J)/ \mathbb{R}$.
After a small time-dependent perturbation of $(H,J)$, $\mathcal{M}(x_-, x_+, H, J)$ is a smooth finite-dimensional manifold. 

A maximal principle ensures that Floer trajectories do not escape to infinity in $\widehat{W}$. Here we use a quite general result sometimes called the `no escape' lemma, which we will also use frequently in Sections \ref{ssec: transfermaps} and \ref{ssec: independencelin}. This lemma holds for domains that are not necessarily cylindrical (for which it does not make sense to discuss maxima). It was  proven in Lemma 7.2 of \cite{Abouzaid_Seidel}; also, see Lemma 2.2 of \cite{CieliebakOancea}. 

For the following, let $V \subset (W, \lambda_W)$ be a \textit{Liouville subdomain}, i.e. $(V, \lambda_W|_V)$ is a Liouville domain. Then $(Z,\alpha_Z) = \partial (V, \lambda)$ is a contact manifold. 
Since $V$ is a Liouville subdomain, there is a collar of $Z$ in $W$ that is symplectomorphic to $(Z \times [1, 1+\delta], d(t\alpha_Z))$ for some small $\delta$. 
\begin{lemma}\cite{Abouzaid_Seidel}\label{lem: maximal_principle}
Consider $H: \widehat{W} \rightarrow \mathbb{R}$  such that  $H = h(t)$  is increasing near $Z$ and  $J \in \mathcal{J}_{std}(W)$ that is cylindrical near $Z$. If both asymptotic orbits of a $(H, J)$-Floer trajectory $u: \mathbb{R}\times S^1 \rightarrow \widehat{W}$ are contained in $V$, then $u$ is contained in $V$.  
\end{lemma}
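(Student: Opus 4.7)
The plan is to apply the integrated maximum principle via Stokes' theorem, in the spirit of the classical no-escape argument. The crucial algebraic input is the identity
\[
|\partial_s u|^2 \, ds \wedge dt = d\bigl(u^*\hat\lambda - (H \circ u)\, dt\bigr),
\]
valid for any $(H, J)$-Floer trajectory with $H$ autonomous, which I would derive by writing Floer's equation as $J\partial_s u = \partial_t u - X_H$, substituting into $|\partial_s u|^2 = d\hat\lambda(\partial_s u, J\partial_s u)$, and using $i_{X_H}d\hat\lambda = -dH$.

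Assuming for contradiction that $u$ leaves $V$, I would pick a regular value $c \in (1, 1 + \delta)$ of $\rho := t \circ u$ sufficiently close to $1$ so that $\Sigma := \{\rho > c\}$ is a compact $2$-submanifold of $\mathbb{R} \times S^1$ with $u(\Sigma) \subset Z \times [c, 1+\delta]$; that $\Sigma$ is compact and bounded away from $\pm\infty$ follows from the asymptotes lying in $V$. Stokes' theorem then gives
\[
E(u|_\Sigma) = c\int_{\partial \Sigma} u_Z^*\alpha_Z - h(c)\int_{\partial\Sigma} dt,
\]
using $\hat\lambda = t\alpha_Z$ and $\rho \equiv c$ on $\partial\Sigma$. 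The second term vanishes because $dt$ is a closed $1$-form on $\mathbb{R} \times S^1$.

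To evaluate the remaining term, I would unpack the Floer equation in the collar, decomposing $\partial_s u$ and $\partial_t u$ into their $\partial_t$-, $R_{\alpha_Z}$-, and $\xi_Z$-components. The cylindrical form of $J$ (so $J\partial_t = R_{\alpha_Z}/t$ and $J$ preserves $\xi_Z$) together with $X_H = h'(t) R_{\alpha_Z}$ reduces Floer's equation to three scalar relations, the relevant ones being $\partial_s \rho = \rho(\delta - h'(\rho))$ and $\beta = -\partial_t\rho/\rho$, where $\beta := \alpha_Z(\partial_s u_Z)$ and $\delta := \alpha_Z(\partial_t u_Z)$. Substituting into $\alpha_Z(du_Z(\tau)) = \tau_s \beta + \tau_t \delta$ with the induced oriented boundary tangent $\tau = (\partial_t \rho, -\partial_s \rho)/|\nabla \rho|$ yields, after simplification,
\[
\alpha_Z\bigl(du_Z(\tau)\bigr) = -\frac{|\nabla\rho|}{c} - \frac{h'(c) \, \partial_s \rho}{|\nabla\rho|}.
\]
The integral of the second summand around $\partial \Sigma$ vanishes by the planar divergence theorem applied to the constant vector field $\partial_s$. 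Therefore $E(u|_\Sigma) = -\int_{\partial \Sigma} |\nabla \rho|\, d\sigma$, which is strictly negative since $c$ is a regular value of $\rho$. This contradicts $E(u|_\Sigma) \geq 0$, so $\Sigma$ must be empty. Taking $c \to 1^+$ gives the claim.

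The main technical obstacle I expect is verifying that $\Sigma$ can be arranged to lie inside $u^{-1}(\mathrm{collar})$, where the hypotheses on $H$ and $J$ are in force. This uses continuity of $u$ and the observation that any excursion from $V$ must begin inside the collar, so the component of $\{\rho > c\}$ abutting $\partial V$ is localized near $Z$ for $c$ close to $1$. Given this localization, the rest is the explicit sign-check sketched above, and the monotonicity hypothesis $h' \geq 0$ enters only through a boundary term whose integral vanishes by the divergence theorem, making the argument robust with respect to the precise shape of $h$.
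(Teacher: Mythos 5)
Your overall route is the standard integrated maximum principle — Stokes applied to the primitive $u^*\hat\lambda-(H\circ u)\,dt$ of the energy density — which is exactly the Abouzaid--Seidel argument that the paper cites rather than reproves. The computational core of your write-up is correct with the paper's conventions: the identity $|\partial_s u|^2\,ds\wedge dt=d\bigl(u^*\hat\lambda-(H\circ u)\,dt\bigr)$ for autonomous $H$, the component form of the Floer equation in the collar ($\alpha_Z(\partial_s u_Z)=-\partial_t\rho/\rho$, $\partial_s\rho=\rho(\alpha_Z(\partial_t u_Z)-h'(\rho))$), the boundary-term formula for $\alpha_Z(du_Z(\tau))$, and the two vanishing observations ($\int_{\partial\Sigma}dt=0$ by Stokes on the compact $\Sigma$, and the cancellation of the $h'(c)$ term, which is the same vanishing in disguise). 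Your side remark is also accurate: in this autonomous, cylinder, both-ends-inside case the monotonicity of $h$ is never used; it becomes essential only in the variants with homotopies, Lagrangian boundary, or an asymptote outside $V$, which is where the paper actually invokes the more general lemmas.

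The genuine gap is the definition and localization of $\Sigma$. You set $\Sigma=\{\rho>c\}$ and claim it can be arranged that $u(\Sigma)\subset Z\times[c,1+\delta]$, arguing that any excursion from $V$ "must begin inside the collar" and is therefore localized near $Z$ for $c$ close to $1$. This does not follow: continuity only says the trajectory passes through the collar on its way out; nothing prevents it from leaving the collar entirely (reaching the region beyond $Z\times\{1+\delta\}$, where $H$ is no longer of the form $h(t)$ and $\rho=t\circ u$ is not even defined) and returning, and ruling out precisely such large excursions is the content of the lemma, so it cannot be assumed. The correct repair is standard: for a regular value $c$ of $\rho$ (defined on $u^{-1}$ of the collar), take $\Sigma:=u^{-1}\bigl(\widehat{W}\setminus \mathrm{int}(V\cup Z\times[1,c])\bigr)$. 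This is compact because both asymptotic orbits lie in $V\subset \mathrm{int}(V\cup Z\times[1,c])$, its boundary maps to $Z\times\{c\}$, and — the key point — the primitive $u^*\hat\lambda-(H\circ u)\,dt$ is defined on all of $\widehat{W}$ (global exactness of $\hat\lambda$, autonomous $H$, compatible $J$), so Stokes applies even though the interior of $\Sigma$ may map far outside the collar; your boundary computation then goes through verbatim, since it only uses the collar form of $\hat\lambda$, $H$, and $J$ along $Z\times\{c\}$. With this modification (and letting $c\to 1^+$ at the end, as you do) the proof is complete.
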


Applying this result to $V = W$, we can proceed as if $W$ were closed and conclude by the Gromov-Floer compactness theorem that $\mathcal{M}(x_-, x_+; H, J)$ has a codimension one compactification. This implies that $\mathcal{M}_0(x_-, x_+; H, J)$, the zero-dimensional component of $\mathcal{M}(x_-, x_+; H, J)$, has finitely many elements and the map 
$
d: SC(H, J) \rightarrow SC(H, J)  
$
defined by
$$
d x_+ = \sum_{x_-} \sharp \mathcal{M}_0(x_-, x_+; H, J) x_-
$$
on generators and extended to $SC(H, J)$ by linearity is a differential. 
Here $\sharp \mathcal{M}_0(x_-, x_+; H, J)$ denotes the signed count of elements of $\mathcal{M}_0(x_-, x_+; H, J)$; signs are obtained via  the theory of coherent orientations \cite{FH93}. 
So $(SC(H,J), d)$ is a chain complex. Note that the underlying vector space $SC(H,J)$ depends only on $H$ while the differential $d$ depends on both $H$ and $J$, which are required to define Floer trajectories. 
The resulting homology $SH(H, J)$ is independent of $J$ and compactly supported deformations of $H$.

If $c_1(W, \omega) = 0$, as will always be the case in this paper, $SH(H, J)$ has a $\mathbb{Z}$-grading. 
More precisely, if $c_1(W, \omega) = 0$, the canonical line bundle of $(W, \omega)$ is trivial. 
After choosing a global trivialization of this bundle, 
we can assign an integer called the Conley-Zehnder index $\mu_{CZ}(x)$ to each Hamiltonian orbit $x$; since we have already taken a small time-dependent perturbation of $H$, all Hamiltonian orbits are non-degenerate and so this index is defined. Then the degree of $x$ in $SC(H, J)$ is 
$$
|x| := \mu_{CZ}(x).
$$
For a general orbit $x$, $\mu_{CZ}(x)$ depends on the choice of trivialization of the canonical bundle. We will only consider orbits $x$ that are contractible in $W$, for which $\mu_{CZ}(x)$ is independent of the trivialization.
For a Hamiltonian orbit 
corresponding to a critical point $p$ of the Morse function $H|_W$, the Conley-Zehnder index coincides with $n- Ind(p)$, where $Ind(p)$ is the Morse index of $H|_W$ at $p$.

Furthermore, $\dim \mathcal{M}(x,y; H, J) = |y|-|x|-1$ so the differential, which counts the zero-dimensional components of $\mathcal{M}(x,y; H, J)$, decreases the degree by one. Hence the induced grading on the homology $SH$ is well-defined.  
This grading on $SH$ coincides with the one in \cite{BEE12} and has the opposite sign of the one in \cite{S_06}. With this convention, we have $SH_*(T^*M) \cong H_*(\Lambda M; \mathbb{Z})$ for any closed spin manifold $M$ 
 \cite{K_07, V_96}.

\subsubsection{Continuation map}\label{sssec: continuation_map}
Although $SH(H, J)$ is independent of $J$ and compactly supported deformations of $H$, $SH(H, J)$ does depend on the slope of $H$ at infinity and so is not an invariant of $W$. In particular,  $SH(H, J)$ only sees Reeb orbits of period less than the slope of $H$ at infinity. To incorporate all Reeb orbits, we need to consider Hamiltonians with arbitrarily large slope. 
More formally, this can be done by considering continuation maps between $SC(H, J)$ for different $H$. Given $H_-, H_+ \in \mathcal{H}_{std}(W)$, let $H_s \in \mathcal{H}_{std}(W), s\in \mathbb{R},$ be a family of Hamiltonians such that $H_s = H_-$ for $s \ll 0$ and $H_s = H_+$ for $s\gg 0$. Similarly, let $J_s \in \mathcal{J}_{std}(W)$ interpolate between $J_-, J_+$. For  Hamiltonian orbits $x_-, x_+$ of $H_-, H_+$ respectively, let $\mathcal{M}(x_-, x_+; H_s, J_s)$ be the moduli space of parametrized Floer trajectories, i.e. maps 
$u:   \mathbb{R}\times S^1 \rightarrow \widehat{W}$ 
$$
\partial_s u + J_s(\partial_t u - X_{H_s}) = 0
$$

To ensure that parametrized Floer trajectories do not escape to infinity, we again need to use a maximal principle. For this principle to hold, it is crucial that the homotopy of Hamiltonian functions is decreasing, i.e. $\partial H_s/ \partial s \le 0$. 
If $J_s$ is $s$-independent, we use the following parametrized version of `no escape' Lemma \ref{lem: maximal_principle}, which is proven in Proposition 3.1.10 of \cite{Gutt}. 
If $J_s$ does depend on $s$ and $V = W$, then we use the maximal principle from \cite{S_06}. We state both in the following lemma. 
\begin{lemma}\label{lem: maximal_principle_param}
\cite{Gutt}, \cite{S_06}
Consider a decreasing homotopy $H_s: \widehat{W} \rightarrow \mathbb{R}$ such that $H_s = h_s(t)$ 
is increasing in $t$ near $Z = \partial V$ and $H_s|_Z$ is $s$-independent; let $J \in \mathcal{J}_{std}(W)$ be cylindrical near $Z$. If 
$u: \mathbb{R}\times S^1 \rightarrow \widehat{W}$ 
is a $(H_s, J)$-Floer trajectory with both asymptotes in $V$, then $u$  is contained in $V$. If $V = W$, the same claim also holds for a homotopy $J_s \in J_{std}(W)$  that is cylindrical near $Z$.
\end{lemma}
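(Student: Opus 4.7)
I would prove both assertions by an integrated maximum principle of Abouzaid--Seidel type, adapted to the parametrized setting; the details follow Proposition 3.1.10 of \cite{Gutt}. Arguing by contradiction, suppose $u\colon \mathbb{R}\times S^1 \to \widehat{W}$ is a $(H_s, J)$-Floer trajectory with both asymptotes in $V$ whose image exits $V$. I would work in the cylindrical collar $Z\times [1, 1+\delta]$ of $Z = \partial V$, where $\hat{\lambda}=t\alpha_Z$, $H_s = h_s(t)$ with $h_s'>0$, $H_s|_Z$ is $s$-independent, and $J$ is cylindrical. Set $a = t\circ u$. Since both asymptotes satisfy $a\le 1$, Sard's theorem yields a regular value $c \in (1, 1+\delta)$ of $a$ for which $\Sigma := \{(s,\theta) : a(u(s,\theta)) \ge c\}$ is a nonempty compact surface with smooth boundary on which $u$ takes values in the collar.

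Writing $u|_\Sigma = (a, f)$ with $f\colon \Sigma \to Z$, I would decompose $\partial_s u$ and $\partial_t u$ along $\partial_t$, $R_{\alpha_Z}$, and $\xi = \ker\alpha_Z$. Using $X_{H_s} = h_s'(a) R_{\alpha_Z}$ together with cylindricity of $J$, the Floer equation splits into three scalar equations (one along each of these directions), which, when inserted into $u^*\omega = da\wedge f^*\alpha_Z + a\,f^*d\alpha_Z$, should yield the pointwise identity
\[
u^*\omega \;=\; \Bigl(\partial_s\bigl(h_s(a)\bigr) \;-\; (\partial_s h_s)(a) \;+\; \tfrac{(\partial_s a)^2+(\partial_t a)^2}{a} \;+\; a\,|\pi_\xi\partial_t f|^2\Bigr)\, ds\wedge d\theta.
\]
Integrating over $\Sigma$ and applying Stokes' theorem to the total derivative produces a boundary contribution of the form $\int_{\partial\Sigma} h_s(c)\, d\theta$; on the other hand, Stokes applied directly to $u^*\omega = u^*d(t\alpha_Z)$ gives $c\int_{\partial\Sigma} f^*\alpha_Z$. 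Matching these two evaluations of $\int_\Sigma u^*\omega$ and exploiting the Floer equation on $\partial\Sigma$, where $da$ is collinear with the outward conormal (so $\alpha(\partial_s f)$ and $\alpha(\partial_t f)$ are tightly controlled), the plan is to deduce that each non-negative summand in the integrand must vanish on $\Sigma$, forcing $u|_\Sigma$ to be constant and contradicting $u$ exiting $V$.

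\textbf{Main obstacle.} The essential new term relative to the $s$-independent Abouzaid--Seidel argument is the correction $-\iint_\Sigma (\partial_s h_s)(a)\, ds\, d\theta$, which has the wrong sign if uncontrolled. The hypothesis that $H_s|_Z$ is $s$-independent gives $(\partial_s h_s)(1) = 0$, and the global monotonicity $\partial_s H_s \le 0$ pins down $(\partial_s h_s)(t) \le 0$ on the region where $u|_\Sigma$ lives, making the correction non-negative and compatible with the bulk positivity. Carefully reconciling this correction with the Stokes boundary contributions is the technical heart of the argument. For the second statement, where $V = W$ and $J_s$ is allowed to depend on $s$, the integrated approach breaks down because $J_s$-cylindricity is only assumed near $\partial W$; in that case I would instead invoke Seidel's pointwise maximum principle, using that the same decomposition produces a uniformly elliptic differential inequality of the form $\Delta a \ge -K|da|$ on the super-level sets $\{a>1\}$, so that the strong maximum principle rules out any interior maximum of $a$ above $1$ and hence confines the Floer trajectory to $W$.
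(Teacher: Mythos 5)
The paper does not prove this lemma itself: it cites Gutt (Proposition 3.1.10) for the first assertion and Seidel \cite{S_06} for the $V=W$ case with $s$-dependent $J_s$, and your plan — an integrated Abouzaid--Seidel argument on the super-level set $\Sigma=\{t\circ u\ge c\}$ for the first part, and Seidel's maximum principle for the second — is exactly the route those references take. Your pointwise identity is in fact correct: writing $u=(a,f)$ in the collar, cylindricity of $J$ splits the Floer equation into $\alpha_Z(\partial_\theta f)=\partial_s a/a+h_s'(a)$, $\alpha_Z(\partial_s f)=-\partial_\theta a/a$ and $\pi_\xi\partial_s f+J\pi_\xi\partial_\theta f=0$, which yields precisely the formula you wrote for $u^*\omega$. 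The second part, deferring to Seidel's pointwise maximum principle for monotone homotopies with $s$-dependent contact-type $J_s$ on the cylindrical end, is also the intended argument.

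The gap is in your sign bookkeeping for the ``main obstacle.'' After integrating, the correction $-\iint_\Sigma(\partial_s h_s)(a)\,ds\,d\theta$ lands on the same side of the identity as the manifestly non-negative bulk terms, so its sign (which monotonicity indeed makes non-negative) is not the difficulty at all. The difficulty is on the boundary side: Stokes gives $\int_\Sigma u^*\omega=c\int_{\partial\Sigma}f^*\alpha_Z$ and $\int_\Sigma\partial_s(h_s(a))\,ds\wedge d\theta=\int_{\partial\Sigma}h_s(c)\,d\theta$, while along $\partial\Sigma$ the Floer equation gives $c\,f^*\alpha_Z=da(\nu)+c\,h_s'(c)\,d\theta$ with $da(\nu)\le 0$. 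In the $s$-independent case the terms $c\,h'(c)\,d\theta$ and $h(c)\,d\theta$ drop out because $\int_{\partial\Sigma}d\theta=0$; when $h_s$ depends on $s$ they instead contribute $\int_\Sigma\partial_s\bigl(c\,h_s'(c)-h_s(c)\bigr)\,ds\,d\theta$, and this is not controlled by $\partial_s H_s\le 0$ together with $H_s|_Z$ being $s$-independent alone — the term $c\,\partial_s h_s'(c)$ is the culprit, and your proposal never addresses it. What closes the argument is the additional near-$Z$ structure of the homotopies that actually occur (and that Gutt's hypotheses encode): one needs $\partial_s\bigl(t\,h_s'(t)-h_s(t)\bigr)\le 0$ on the collar, which holds for the paper's step homotopies because there $h_s$ is (up to smoothing) linear in $t$ with $h_s$ fixed on $Z$ and slope non-increasing in $s$; with that input the bracket $\bigl(\partial_s h_s\bigr)(c)-\bigl(\partial_s h_s\bigr)(a)-c\,\partial_s h_s'(c)$ is pointwise non-negative on $\Sigma$, forcing $0\ge\int_{\partial\Sigma}da(\nu)\ge B\ge 0$ and hence $a\equiv c$, the desired contradiction. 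So the strategy is right, but the step you label ``the technical heart'' is genuinely missing, and the resolution requires a hypothesis on the $s$-dependence of the slope near $Z$ rather than just monotonicity of $H_s$ and $s$-independence of $H_s|_Z$.
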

By applying the second part of Lemma \ref{lem: maximal_principle_param}, we can proceed as if $W$ were closed and conclude that $\mathcal{M}(x_-, x_+; H_s, J_s)$ has a codimension one compactification. Then the continuation map 
$
\phi_{H_s, J_s}: SC(H_+, J_+) \rightarrow SC(H_-, J_-)
$
defined by
$$
\phi_{H_s, J_s}(x_+) = \sum_{x_-} \sharp \mathcal{M}_0(x_-, x_+; H_s, J_s) x_-
$$
on generators and extended to $SC(H_+, J_+)$ by linearity is a chain map. Up to chain homotopy, this map is independent of $J_s$ and $H_s$. 
Note that there is no $\mathbb{R}$-action since the parametrized Floer equation is not $\mathbb{R}$-invariant. As a result, $\phi_{H_s, J_s}$ is degree-preserving.  
Finally, we define symplectic homology as the direct limit
$$
SH(W, \lambda):= \lim_{\rightarrow} SH(H, J). 
$$
The direct limit is taken over continuation maps 
$\phi_{H_s, J_s}: SH(H_+, J_+) \rightarrow SH(H_-, J_-)$ on homology. 
One key property is that $SH(W, \lambda)$ depends only on the symplectomorphism type of $(\widehat{W}, d\hat{\lambda})$
\cite{S_06}.

\begin{remark}
At several minor points in this paper, we will discuss a version of $SH(W)$ that is generated by \textit{all} Hamiltonian orbits, not just contractible ones; we will call this 
 \textit{full} $SH(W)$.   
\end{remark}

\subsection{Positive symplectic homology}\label{ssec: postive_sym_hom}

Positive symplectic homology $SH^+(W)$ is defined using the action functional. 
For a small time-dependent perturbation of 
$H\in\mathcal{H}_{std}(W)$, the action functional $A_H: C^\infty(S^1, \widehat{W}) \rightarrow \mathbb{R}$ is 
$$
A_H(x) :=  \int_{S^1} x^* \lambda - \int_{S^1} H(x(t)) dt.
$$
Under our conventions, the Floer equation is the \textit{positive} gradient flow of the action functional and so action increases along Floer trajectories, i.e.  if $u\in \mathcal{M}(x_-, x_+)$ is a non-constant Floer trajectory, then $A_H(x_+) > A_H(x_-)$. Let $SC^{<a}(H, J)$ be generated by orbits of action less than $a$. Since action increases along Floer trajectories, the differential decreases action and hence 
$SC^{<a}(H, J)$ is a subcomplex of $SC(H, J)$; we define $SC^{>a}(H, J)$ to be the quotient complex $SC(H, J)/ SC^{<a}(H, J)$. 

For $H\in \mathcal{H}_{std}(W)$, the constant orbits corresponding to Morse critical points $x \in W$ have action $-H(x)$.
The non-constant orbits that correspond to Reeb orbits 
have action close to the action of the corresponding Reeb orbit, which is positive.
In fact, for sufficiently small $\epsilon$, $SC^{< \epsilon}(H, J)$ corresponds to the Morse complex of $-H|_W$ with a grading shift. More precisely, 
$H_k(SC^{< \epsilon}(H, J)) \cong H^{n-k}(W; \mathbb{Z})$. 
Define $SC^+(H, J)$ to be the quotient complex 
$SC(H, J)/SC^{< \epsilon}(H, J)$ and let $SH^+(H, J)$ be the resulting homology. Using a direct limit construction, we can also define 
$SH^+(W)$. More precisely, suppose   $H_s$ is an decreasing homotopy such that $H_s = H_-$ for $s \ll 0$ and $H_s = H_+$ for $s\gg 0$. Then the continuation Floer trajectories are also action increasing and hence there is an induced chain map 
$\phi_{H_s, J_s}^+: SC^+(H_+, J_+) \rightarrow SC^+(H_-, J_-)$. 
As with $SH(W)$, we define $SH^+(W)$ by
$$
SH^+(W, \lambda):= \lim_{\rightarrow} SH^+(H, J).
$$
The direct limit is taken over the  continuation maps $\phi_{H_s, J_s}^+: SH^+(H_+, J_+) \rightarrow SH^+(H_-, J_-)$ on homology.

Like $SH(W)$, $SH^+(W)$ depends only on the symplectomorphism type of $(\widehat{W}, d\hat{\lambda})$. 
Note that as a vector space, $SC^+(H, J)$ essentially depends only on $(Y, \alpha)$ and not on the interior $(W, \lambda)$. This is because $SC^+(H,J)$ is generated by non-constant Hamiltonian orbits, which live in the cylindrical end of $W$ and correspond to Reeb orbits of $(Y, \alpha)$.
On the other hand, the differential for $SC^+(H, J)$ may depend on the filling $W$ of $(Y, \alpha)$ since Floer trajectories between non-constant orbits may go into the filling. So different Liouville fillings of $(Y, \xi)$ might have different $SH^+$. 
\begin{examples}\label{ex: surfaces}
The once-punctured genus g  surface $\Sigma_g$ is a Weinstein filling of $S^1$ for all g.
Since $\Sigma_0 = \mathbb{C}$ is subcritical,  $SH(\Sigma_0) = 0$ and so  $SH^+(\Sigma_0) \cong H(\Sigma_0; \mathbb{Z}) \cong \mathbb{Z}$ by the tautological long exact sequence, which we will discuss next. 
On the other hand, for $g\ge 1$, $SH(\Sigma_g)$ is infinite-dimensional by \cite{BO, S_06} and so $SH^+(\Sigma_g)$ is also infinite-dimensional by the tautological exact sequence. Therefore
 $SH^+(\Sigma_g)$ is different for $g = 0$ and $g \ge 1$.
\end{examples}

Of course the set of $SH^+$ of all Liouville  fillings of $(Y, \xi)$ is an invariant of $(Y, \xi)$; see Remark \ref{rem: semigood} (2) for details. But this requires knowing all fillings, which in general is completely beyond current technology.
However, in Subsection \ref{ssec: independencelin}, we will discuss certain contact structures $(Y, \xi)$ for which $SH^+(W)$ is independent of the filling and is an invariant of $(Y, \xi)$.

Now we compute $SH^+$ in a simple case.
The chain-level short exact sequence 
$$
0 \rightarrow SC^{< \epsilon}(H, J) \rightarrow SC(H, J) \rightarrow SC^+(H, J) \rightarrow 0
$$
induces the `tautological' long exact sequence in homology
$$
\cdots \rightarrow H^{n-k}(W; \mathbb{Z}) \rightarrow 
SH_k(W, \lambda) \rightarrow SH_k^+(W, \lambda) \rightarrow 
H^{n-k+1}(W; \mathbb{Z})\rightarrow \cdots .
$$
Using this long exact sequence, we can compute $SH^+(W)$ for Liouville domains with vanishing $SH$. 
\begin{proposition}\label{prop: shcomputation}
 If $W^{2n}$ is a Liouville domain with $SH(W) =0$, then 
\begin{equation}\label{eqn: linformula}
SH_k^+(W) 
\cong H^{n-k+1}(W; \mathbb{Z}).
\end{equation}
\end{proposition}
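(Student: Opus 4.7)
The plan is to read the conclusion directly off the tautological long exact sequence displayed just before the statement. That sequence comes from the chain-level short exact sequence
\[
0 \to SC^{<\epsilon}(H,J) \to SC(H,J) \to SC^+(H,J) \to 0,
\]
together with the identification $H_k(SC^{<\epsilon}(H,J)) \cong H^{n-k}(W;\mathbb{Z})$ for $\epsilon$ sufficiently small (from the Morse complex of $-H|_W$ with the standard grading shift). First I would note that the continuation maps used to form the direct limits defining $SH(W)$ and $SH^+(W)$ are compatible with the tautological short exact sequence, so after passing to the limit one obtains the long exact sequence stated in the excerpt,
\[
\cdots \to H^{n-k}(W;\mathbb{Z}) \to SH_k(W,\lambda) \to SH_k^+(W,\lambda) \to H^{n-k+1}(W;\mathbb{Z}) \to SH_{k-1}(W,\lambda) \to \cdots .
\]

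Then I would simply substitute the hypothesis $SH(W)=0$, which makes every $SH_j(W,\lambda)$ term in the sequence vanish. The relevant portion reduces to
\[
0 \to SH_k^+(W,\lambda) \to H^{n-k+1}(W;\mathbb{Z}) \to 0,
\]
and exactness yields the isomorphism $SH_k^+(W) \cong H^{n-k+1}(W;\mathbb{Z})$ claimed in \eqref{eqn: linformula}.

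There is essentially no obstacle, since all the analytical work is already packaged into the tautological exact sequence and the identification of the low-action subcomplex with a degree-shifted Morse complex of the admissible Hamiltonian restricted to $W$. The only point one should verify carefully is that the direct limit over admissible $(H,J)$ preserves exactness, which is standard because direct limits are exact on abelian groups and the continuation maps respect the action filtration up to the shift by $\epsilon$, which is absorbed in the limit. Once this is observed, the result follows immediately from the hypothesis $SH(W)=0$.
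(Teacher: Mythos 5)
Your proof is correct and matches the paper's intended argument exactly: the paper states the tautological long exact sequence immediately before the proposition and presents the conclusion as a direct consequence. Your added remark that direct limits of abelian groups are exact (so the long exact sequence survives passage to the limit) is a reasonable point to make explicit, but the approach is otherwise identical.
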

Since flexible $W$ have vanishing symplectic homology \cite{BEE12}, Proposition \ref{prop: shcomputation} shows that $SH^+(W)$ can be expressed in terms of the cohomology of $W$.  However, this is not enough to prove our main theorem that all flexible fillings of a given contact manifold have the same cohomology because in general $SH^+$ is not independent of the filling as we saw in Example \ref{ex: surfaces}.
The main purpose of this paper is show that in the setting of flexible fillings, this is indeed the case as we explain in Section \ref{ssec: independencelin}.
				
\begin{remark}\label{rem: flexibledegs}
If $W^{2n}$ is a Weinstein domain, then $W$ is homotopy equivalent to a $n$-dimensional CW complex and so $H^{n-k+1}(W; \mathbb{Z}) = 0$ for $k \le 0$. By Proposition \ref{prop: shcomputation}, this implies that $SH_k^+(W) = 0$ for $k \le 0$.
\end{remark}

\subsection{Transfer maps}\label{ssec: transfermaps}
 Viterbo \cite{viterbo1} showed that if $V \subset W$ is a Liouville subdomain, then there is a \textit{transfer map} $SH(W) \rightarrow SH(V)$. Gutt \cite{Gutt} defined a similar map for $SH^+$, which will be crucial to our later results. 
To define the transfer map, we introduce a new class of \textit{step} Hamiltonians and almost complex structures.
Since $V$ is a Liouville subdomain of $W$, there is a collar $U$ of $(Z, \alpha_Z) = (\partial V, \lambda_V|_{\partial V})$ in $W\backslash V$ such that $(U, \omega_W)$ is symplectomorphic to 
$(Z \times [1, 1+\epsilon_V], d(t \alpha_Z))$.
Let $\mathcal{H}_{step}(W, V)$ denote the class of smooth functions $H$ on
$\widehat{W}$ defined up to smooth approximation as follows: 
\begin{itemize}
\item $H \equiv 0$ in $V$
\item $H$ is linear in $t$ with slope $s_V$ in $U$
\item $H \equiv s_V \epsilon_V$ in $W  \backslash
( V \cup U)$
\item $H$ is linear in $r$ with slope $s_W$ in 
$\widehat{W} \backslash W = Y \times [1, \infty)$. 
\end{itemize}
See Figure \ref{fig: step_hamiltonian}. More precisely, $H$ is a $C^2$-small Morse function in $V$, $C^2$-close to $s_V\epsilon_V$ in $W\backslash (V\cup U)$, increasing convex in $t$ near $Z \times \{1\}$,  increasing concave in $t$ 
near $Z \times \{1 + \epsilon_V\}$, and 
increasing convex in $r$ near $Y \times \{1\}$; furthermore, $s_W, s_V \not \in Spec(Z, \alpha_Z) \cup Spec(Y, \alpha_Y)$.
 \begin{figure}
     \centering
     \includegraphics[scale=0.14]{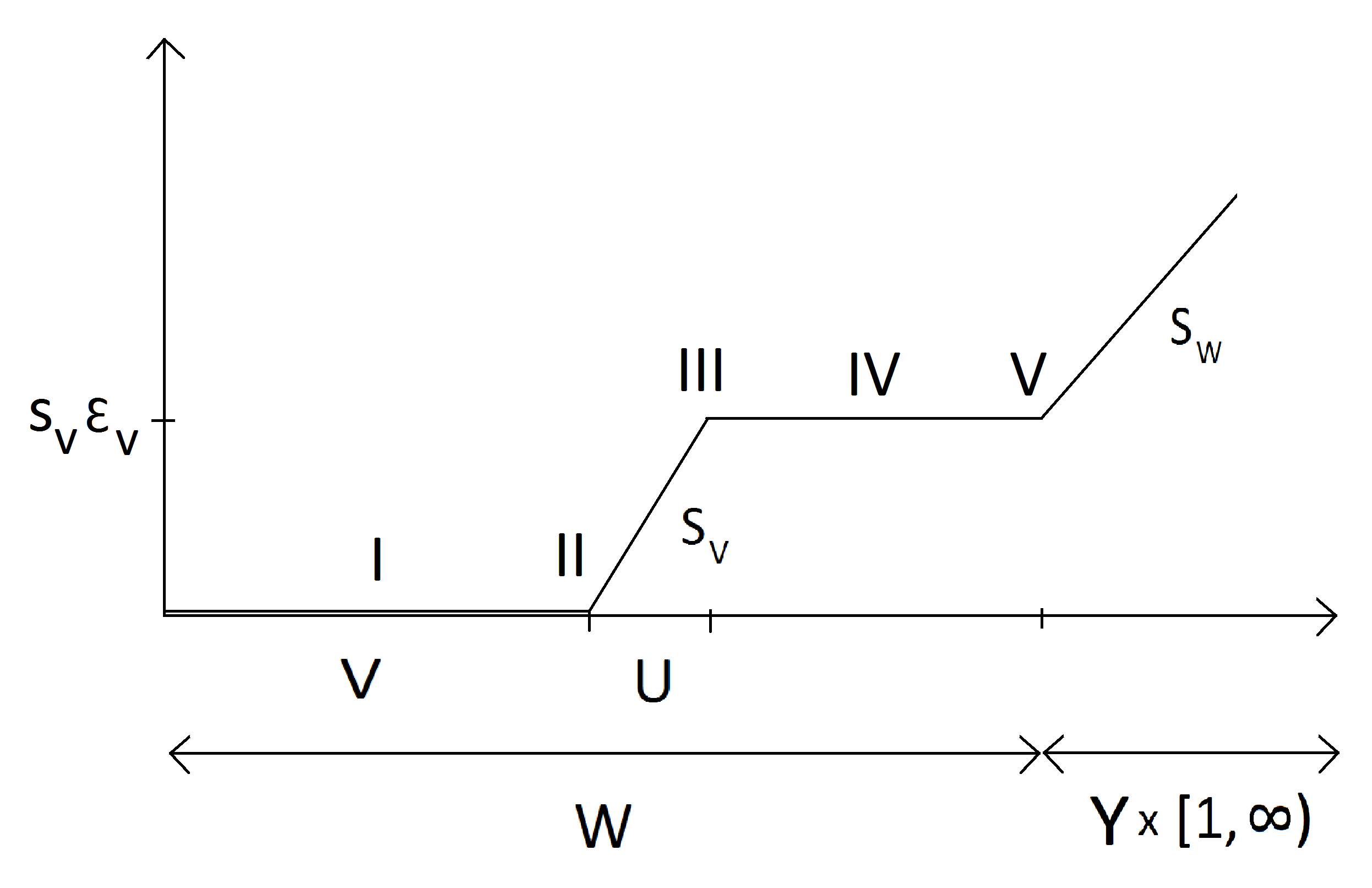}
     \caption{A step Hamiltonian in $\mathcal{H}_{step}(W,V)$ with slope $s_V,s_W$ near $\partial V, \partial W$ respectively and Hamiltonian orbits $I, II, III, IV, V$}
     \label{fig: step_hamiltonian} 
 \end{figure}
 As depicted in Figure \ref{fig: step_hamiltonian}, 
the Hamiltonian orbits of $X_{H}$ fall into five classes which we denote by I, II, III, IV, V:
\begin{itemize}
\item[(I)]  Morse critical points of $H|_V$
\item[(II)] Orbits near $\partial V  = Z \times \{1\}$ corresponding to parameterized Reeb orbits of $(Z, \alpha_Z)$
\item[(III)] Orbits near $\partial (V\cup U) = Z \times \{1+\epsilon_V\}$ corresponding to parameterized Reeb orbits of $(Z, \alpha_Z)$
\item[(IV)] Morse critical points of 
$H|_{W\backslash (V\cup U)}$
\item[(V)] Orbits near $\partial W= Y \times \{1\}$ corresponding to parameterized Reeb orbits of $(Y, \alpha_Y)$.
\end{itemize}

For $H_{W, V} \in \mathcal{H}_{step}(W, V)$, let $i(H_{W, V})\in \mathcal{H}_{std}(V)$ denote the Hamiltonian obtained by extending $H_{W,V}|_{V\cup U}$ to $\widehat{V}$. Similarly, let $\mathcal{J}_{step}(W,V)$ denote the class of almost complex structures $J \in \mathcal{J}_{std}(W)$ that are cylindrical in $U$, i.e. $J|_U$ preserves $\xi_Z = \ker \alpha_Z$, $J|_{\xi_Z}$ is $t$-independent, and $J(t\partial_t) = R_{\alpha_Z}$.
For $J_{W, V} \in \mathcal{J}_{step}(W, V)$, let $i(J_{W, V})\in \mathcal{J}_{std}(V)$ denote the almost complex structure obtained by extending $J_{W,V}|_{V\cup U}$ to $\widehat{V}$. 

Next we construct a map 
\begin{equation}\label{eqn: transfer_portion}
SH(H_{W,V}, J_{W, V})\rightarrow 
SH(i(H_{W , V}), i(J_{W , V})).
\end{equation}
This relies on the following proposition, some formulation of which is essential in all constructions of the transfer map. 
\begin{proposition}\cite{CieliebakOancea}
If $\epsilon_V\ge\frac{s_W}{s_V}$, then the subspace $SC^{III, IV, V}(H_{W, V}, J_{W,V})$ generated by III, IV, V orbits is a subcomplex of $SC(H_{W, V}, J_{W,V})$.
\end{proposition}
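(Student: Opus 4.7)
The plan is to show that the Floer differential preserves the subspace $SC^{III,IV,V}$; equivalently, there is no Floer trajectory $u\colon \mathbb{R}\times S^1 \to \widehat{W}$ with negative asymptote $x_-$ of type I or II and positive asymptote $x_+$ of type III, IV, or V. I would combine an action computation with a no-escape argument.

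First, I would compute $A_H(x) = \int x^*\lambda - \int H(x(t))\,dt$ on a representative of each class. For an orbit at height $t_0$ winding once around a Reeb orbit of period $T$ one has $A_H(x) = t_0 T - h(t_0)$. This yields, up to small Morse perturbations,
$$
A(I)\approx 0,\qquad A(II)\approx T\in(0,s_V),\qquad A(III)\approx (1+\epsilon_V)T - s_V\epsilon_V,
$$
$$
A(IV)\approx -s_V\epsilon_V,\qquad A(V)\approx T - s_V\epsilon_V,\ T\in(0,s_W).
$$
Invoking the hypothesis $\epsilon_V\ge s_W/s_V$, i.e.\ $s_V\epsilon_V\ge s_W$, gives $A(V)<0$ and $A(IV)\le -s_W<0$, while $A(I),A(II)>0$ for sufficiently small Morse perturbations. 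Since the Floer equation strictly increases action ($A(x_+)>A(x_-)$), this immediately rules out Floer trajectories from an I or II orbit to a IV or V orbit.

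The remaining case—trajectories from $x_-\in I\cup II$ to $x_+$ of type III—is the main obstacle, because $A(III)$ can approach $s_V$ as $T\to s_V$ and hence can exceed $A(I)\approx 0$ or $A(II)$ of small period, so the pure action argument fails. For this I would apply the no-escape lemma (Lemma \ref{lem: maximal_principle}) to the Liouville subdomain $V' = V\cup U$ with contact boundary $Z\times\{1+\epsilon_V\}$: along an interior collar of $\partial V'$ the Hamiltonian has the form $H = h(t)$ with $h$ monotone and $J$ is cylindrical, so any Floer trajectory whose asymptotes both lie in $V'$ stays in $V'$. After a mild adjustment of the concave smoothing one may arrange the type III orbits to sit inside $V'$, so together with all I, II orbits the candidate trajectory $u$ is confined to $V'$.

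The concluding step, which is the hard part of the argument, is to convert this confinement into an actual contradiction, since the concavity of $h$ near the III orbits precludes a direct application of the usual convex-type maximum principle. I would exploit the fact that in the linear part of the collar $U$, where $h'(t)=s_V\notin\mathrm{Spec}(Z,\alpha_Z)$ and $J$ is cylindrical, the Floer equation has no Hamiltonian orbits and, after a standard Reeb gauge change, becomes a $J$-holomorphic equation in the symplectization. Combining the energy identity $\int|\partial_\sigma u|^2 = A(x_+)-A(x_-)$ with the contact-action computation $\int x_{III}^{*}\alpha = T\le s_V$ across the slice $\partial V'$ produces the sharp action inequality that rules out such a trajectory, completing the argument that $SC^{III,IV,V}$ is a subcomplex.
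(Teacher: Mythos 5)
Your action computation for the IV and V orbits is correct and is exactly the paper's first step: with $\epsilon_V \ge s_W/s_V$ these orbits have negative action while I and II orbits have non-negative action, and since the differential decreases action there can be no trajectory from a III/IV/V input of type IV or V to an output of type I or II. You also correctly identify that the III orbits are the real issue, since their action can be close to $s_V$ and hence exceed that of low-action I or II orbits, and your confinement step (applying the no-escape Lemma \ref{lem: maximal_principle} to $V' = V\cup U$, where $H = h(t)$ is increasing and $J$ is cylindrical near $Z\times\{1+\epsilon_V\}$) is also part of the paper's argument.

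However, your concluding step for the III orbits is a genuine gap. Confinement to $V\cup U$ by itself is not a contradiction: a trajectory running from a II orbit (low action, inside $V$) up to a III orbit (higher action, just below $Z\times\{1+\epsilon_V\}$) is perfectly compatible with the energy identity, since the action difference is positive, and the "contact-action computation across the slice" you invoke does not produce any inequality that such a trajectory would violate — no pure action/energy estimate can rule it out. The missing ingredient is the asymptotic statement of Bourgeois--Oancea recorded as Lemma \ref{lem: rise_above} in the paper: because $h''<0$ at the level where the III orbits sit and $J_{W,V}$ is cylindrical there, any Floer trajectory whose \emph{positive} asymptote is a III orbit must either have $r\circ u$ constant or rise strictly above that level. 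It is this forced excursion above $\partial(V\cup U)$ that contradicts the no-escape confinement you already established, and this local analysis of trajectories near an asymptote at a concave level of $h$ (not a gauge change to a holomorphic equation plus an energy bound) is what closes the argument. As written, your final paragraph asserts a "sharp action inequality" without deriving one, so the proof is incomplete at precisely the point you flagged as the hard part.
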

\begin{remark}
In the proof of this proposition, we  follow the approach taken in  Lemma 4.1 of \cite{CieliebakOancea},
which helps us achieve this particular bound on $\epsilon_V$; this bound will be important in Section \ref{ssec: independencelin}.
\end{remark}
\begin{proof}
We need to show that the differential of $SC(H_{W, V}, J_{W,V})$ does not map III, IV, V orbits to 
I, II orbits. We first show that this holds for IV, V orbits using action considerations. The I, II orbits have non-negative action as we noted in Section \ref{ssec: postive_sym_hom}. Since the differential decreases action, it is enough to show that the IV, V orbits have negative action. The IV orbits are constant and occur in $W\backslash (V \cup U)$, where  $H= s_V\epsilon_V$. So these orbits have action $-s_V\epsilon_V < 0$. 
The V orbits are located near $\partial W$, where  the action equals $rh'(r) - h(r)$. There we have  $r= 1, h = s_V \epsilon_V$, and $h'(r) < s_W$
so that the action is less than $s_W - s_V \epsilon_V$. Since $\epsilon_V \ge \frac{s_W}{s_V}$, the action of the V orbits is also negative. 

For III orbits, we use Lemma 2.3 of \cite{CieliebakOancea}, which summarizes an argument from  \cite{BO}. 
\begin{lemma}\label{lem: rise_above}\cite{BO}
Let $H = h(r)$ be a Hamiltonian on $(Y \times (0, \infty), r\alpha)$ and let $J$ be a cylindrical almost complex structure. Let $u: \mathbb{R} \times S^1\rightarrow Y \times (0, \infty)$ be a $(H,J)$-Floer trajectory with 
$\underset{s\rightarrow \pm \infty}{\lim} r\circ u(s, \cdot) = r_\pm$. If $\pm h''(r_{\pm})< 0$, then either there exists $(s_0, t_0) \in \mathbb{R} \times S^1$ such that $r\circ u(s_0, t_0) > r_{\pm}$ or $r\circ u$ is constant and equal to $r_\pm$.
\end{lemma}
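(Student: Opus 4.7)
The strategy is to show that $\rho := r\circ u$ satisfies a scalar elliptic inequality, apply the strong maximum principle for the interior case, and handle the asymptotic ends with a Hopf-type boundary-point argument at $s = \pm\infty$ whose hypotheses are exactly the sign conditions $\pm h''(r_\pm) < 0$.

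First I would rewrite Floer's equation as $\partial_t u = J\partial_s u + h'(\rho) R_\alpha$ and decompose $\partial_s u = a_s \partial_r + b_s R_\alpha + \xi_s$ with $\xi_s \in \xi = \ker\alpha$. Using $J(r\partial_r) = R_\alpha$ (so that $J R_\alpha = -r\partial_r$) and the $r$-independence of $J|_\xi$, the $\partial_r$- and $R_\alpha$-components of Floer's equation yield $\rho_t = -\rho\, b_s$ and $b_t = \rho_s/\rho + h'(\rho)$. Equating the mixed partials $b_{st} = b_{ts}$ and multiplying by $\rho > 0$ gives the key identity
\begin{equation*}
\Delta\rho \;=\; \frac{\rho_s^2 + \rho_t^2}{\rho} \;-\; \rho\, h''(\rho)\, \rho_s,
\end{equation*}
i.e.\ $L\rho \geq 0$, where $L := \Delta + \rho\, h''(\rho)\, \partial_s$ is a linear second-order elliptic operator with continuous coefficients, and equality holds precisely when $\rho_s = \rho_t = 0$.

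The strong maximum principle then shows that if $\rho$ attains a local interior maximum, it is constant on a neighborhood and hence, by connectedness of $\mathbb{R}\times S^1$, globally constant; the equality case of the identity forces $r_+ = r_-$ and puts us in the second alternative of the lemma. It remains to rule out the asymptotic case in which $\rho \leq M := \max(r_+, r_-)$ everywhere but $M$ is approached only at $s = \pm\infty$. Assuming $M = r_+$ without loss of generality and setting $\epsilon := \rho - r_+ \leq 0$, one has $L\epsilon = L\rho \geq 0$, and for $s$ large the drift coefficient $\rho\, h''(\rho)$ is close to $r_+ h''(r_+) < 0$. A Hopf boundary-point argument at $s = +\infty$—made rigorous either by the change of variables $\tau = e^{-s}$ sending $+\infty$ to $0$, or by comparing $\epsilon$ against exponential barriers $-\delta e^{\lambda s}$ built from the constant-coefficient linearization—forces $\epsilon \equiv 0$ on some half-cylinder $[s_0,\infty)\times S^1$, and the equality case in the key identity then propagates this to all of $\mathbb{R}\times S^1$ by unique continuation. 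The analogous argument at $s\to -\infty$ uses $h''(r_-) > 0$.

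The main obstacle is this asymptotic step: applying a Hopf-type boundary principle at $s = \pm\infty$ rather than at a finite boundary, complicated by the fact that the autonomous Hamiltonian $h(r)$ has its Reeb-type orbits in $S^1$-families so the standard non-degenerate exponential asymptotic estimates for Floer trajectories are not directly available. The sign assumptions $\pm h''(r_\pm) < 0$ are exactly what is needed for the drift term of the linearized operator at each end to point in the direction that validates the comparison argument.
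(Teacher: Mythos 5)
Your proposal has the right overall shape (reduce to a differential inequality for $\rho=r\circ u$, interior strong maximum principle, then an argument at the ends where the sign of $h''$ enters), but two steps are not correct as written. Note first that the paper contains no proof to compare against: the lemma is imported from \cite{CieliebakOancea}, which in turn summarizes \cite{BO}, where it is obtained from the asymptotic behaviour of Floer cylinders near the $S^1$-families of Hamiltonian orbits. The first problem in your derivation is the claim that the mixed partials of $b$ commute: $b_s=\alpha(\partial_s u)$ and $b_t=\alpha(\partial_t u)$ are the components of $u^*\alpha$, which is not closed, and in fact
\begin{equation*}
\partial_s b_t-\partial_t b_s \;=\; d\alpha(\partial_s u,\partial_t u)\;=\;d\alpha\bigl(\pi_\xi \partial_s u,\,J\pi_\xi \partial_s u\bigr)\;\ge\;0 ,
\end{equation*}
so your displayed identity is false. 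Luckily the dropped term has the favorable sign, and the correct statement is the inequality $\Delta\rho+\rho\,h''(\rho)\rho_s\ \ge\ |\nabla\rho|^2/\rho\ \ge 0$, which is all the maximum principle needs; but your characterization of the equality case (and the later appeal to it and to ``unique continuation'') should be discarded. The clean statement you actually need is: if $\rho\le M:=\max(r_+,r_-)$ everywhere and $\rho$ attains the value $M$ at an interior point, then $\rho\equiv M$ by the strong maximum principle for $L=\Delta+\rho h''(\rho)\partial_s$.

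The genuine gap is the asymptotic step, which you yourself flag as the crux: neither of your proposed implementations works as described. Under $\tau=e^{-s}$ the operator becomes $\lambda^2\tau^2\partial_\tau^2+(\cdots)\tau\partial_\tau+\partial_t^2$, which degenerates at $\tau=0$, so Hopf's boundary-point lemma does not apply at the compactified end without substantial extra work; and $-\delta e^{\lambda s}$ is not a usable comparison function: for small $\lambda$ it is a subsolution, and for large $\lambda$ it tends to $-\infty$, so the boundary comparison at the far end of any half-cylinder fails. The step can be completed by a correctly normalized barrier, and this is where $h''(r_+)<0$ enters: assuming $\rho\le M=r_+$ and setting $\epsilon=\rho-r_+$, either $\epsilon$ vanishes at an interior point (then $\rho$ is constant by the strong maximum principle) or $\epsilon<0$ everywhere; in the latter case choose $s_1$ with $c:=\rho h''(\rho)\le-\kappa<0$ on $[s_1,\infty)\times S^1$ (this only needs uniform $C^0$-convergence $\rho\to r_+$, so your worry about Morse--Bott families and exponential decay is irrelevant for this route), and on $[s_1,s_2]\times S^1$ compare $\epsilon$ with $\psi(s)=\bigl(\max_t\epsilon(s_1,t)\bigr)\frac{e^{\kappa s_2}-e^{\kappa s}}{e^{\kappa s_2}-e^{\kappa s_1}}$, which satisfies $L\psi=\psi''+c\psi'\le 0$ precisely because $c\le-\kappa$, and dominates $\epsilon$ on both boundary circles. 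Letting $s_2\to\infty$ gives $\epsilon\le\max_t\epsilon(s_1,\cdot)<0$ on the whole half-cylinder, contradicting $\epsilon\to 0$ as $s\to+\infty$; the end $s\to-\infty$ is handled symmetrically using $h''(r_-)>0$. As submitted, however, the decisive estimate is missing and the two mechanisms you name in its place would fail.
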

Note that $H_{W, V}= h(t)$ is concave, i.e. $h''(t) < 0$, and $J_{W,V}$ is cylindrical near $Z \times \{1+ \epsilon_V\}$ where the III orbits occur. Hence Lemma \ref{lem: rise_above} shows that any $(H_{W,V}, J_{W,V})$-Floer trajectory which is asymptotic to a III orbit at its positive end must rise above this orbit. If this trajectory is asymptotic to a I or II orbit at its negative end, then the Floer trajectory must also travel below this III orbit since I, II orbits occur in $V \subset V\cup U$. This violates the `no escape' Lemma \ref{lem: maximal_principle} applied to $V\cup U$, which holds since $H_{W,V}$ is increasing and $J_{W,V}$ is cylindrical near $Z\times \{1+\epsilon_V\} = \partial (V\cup U)$. Therefore such trajectories cannot exist.
\end{proof}

Assuming that $\epsilon_V \ge \frac{s_W}{s_V}$, let $SC^{I,II}(H_{W, V}, J_{W,V})$ be the quotient complex\\
 $SC(H_{W, V}, J_{W,V})/ SC^{III, IV, V}(H_{W, V}, J_{W,V})$. This complex is  generated by the I, II orbits, which are precisely the generators of $SH(i(H_{W, V}), i(J_{W,V}))$. Note that $H_{W,V}= h(t)$ is increasing near $Z = \partial V$, where $I, II$ orbits occur.  Therefore, we can apply the `no escape'  Lemma \ref{lem: maximal_principle} to $V$, which shows that all $(H_{W,V}, J_{W,V})$-Floer trajectories in $\widehat{W}$ between $I,II$ orbits actually stay in $V$ and hence are $(i(H_{W, V}), i(J_{W,V}))$-trajectories. Therefore, we have an isomorphism
$SH^{I,II}(H_{W, V}, J_{W,V}) \cong SH(i(H_{W, V}), i(J_{W,V}))$. So the map in Equation \ref{eqn: transfer_portion} is obtained by projecting to the quotient $SH^{I,II}(H_{W, V}, J_{W,V}) $ and then using this isomorphism.

For any $H_W \in \mathcal{H}_{std}(W)$,  take $H_{W, V} \in \mathcal{H}_{step}(W, V)$ such that $\epsilon_V \ge \frac{s_W}{s_V}$ and $H_{W,V}, H_W$ differ by the constant $s_V \epsilon_W$ in $\widehat{W}\backslash W$. In particular, $H_{W, V} \ge H_V$ and so there exists a decreasing homotopy $H_s$ from $H_{W,V}$ to $H_W$; also take a homotopy $J_s \in \mathcal{J}_{std}(W)$ from $J_{W,V}$ to $J_W$. 
Then $(H_s, J_s)$-trajectories define the continuation map
$$
\phi_{H_s, J_s}: SH(H_W, J_W) \rightarrow SH(H_{W, V}, J_{W,V}).
$$
Since $H_s$ is decreasing, we can apply the second part of Lemma \ref{lem: maximal_principle_param} to $W$ to ensure that all trajectories stay in $W$ and hence that $\phi_{H_s}$ is well-defined; in Section \ref{ssec: independencelin}, we will also assume that $H_s = h_s(t)$ is increasing in $t$ and $J_s$ is $s$-independent near $Z$. By composing this continuation map with the projection to the quotient $SH^{I,II}(H_{W, V}, J_{W,V}) \cong SH(i(H_{W, V}), i(J_{W,V}))$, we get a map 
$$
SH(H_W, J_W) \rightarrow SH(i(H_{W, V}), i(J_{W,V})).
$$
This map commutes with continuation maps and hence induces a map on symplectic homology 
$$
\phi_{W, V}: SH(W) \rightarrow SH(V),
$$ 
which we call the transfer map. 

To construct a transfer map on $SH^+$, we first note that since the homotopy $H_s$ is decreasing, action increases along the parametrized Floer trajectories and hence we get a map 
$$
\phi_{H_s, J_s}^+: SH^{+}(H_W, J_W) \rightarrow SH^{+}(H_{W,V}, J_{W,V}).
$$
We can further quotient out III orbits from $SH^{+}(H_{W,V}, J_{W,V})$; this
is because the differential of $SC(H_{W,V}, J_{W,V})$ must map III orbits to III, IV, V orbits by Lemma \ref{lem: rise_above} and IV, V orbits have already been quotiented out in $SH^{>0}(H_{W, V}, J_{W,V})$ since they have negative action (for $\epsilon_V \ge \frac{s_W}{s_V}$). We use $SH^{II}(H_W, J_W)$ to denote the complex obtained by quotienting out III orbits from $SH^{+}(H_{W,V}, J_{W,V})$; $SH^{II}(H_W, J_W)$  is generated by II orbits since these have positive action. Also, note that II orbits are precisely the generators of $SH^+(i(H_{W, V}), i(J_{W,V}))$ and by the `no escape' Lemma \ref{lem: maximal_principle} applied to $V$, we have $SH^{II}(H_W, J_W) \cong SH^+(i(H_{W, V}), i(J_{W,V}))$. 
By composing $\phi_{H_s, J_s}^+$ with the 
projection to the quotient 
$SH^{II}(H_W, J_W) \cong SH^+(i(H_{W, V}), i(J_{W,V}))$, we get a map 
$$
SH^+(H_W, J_W) \rightarrow SH^+(i(H_{W,V}), i(J_{W,V})).
$$
Again this commutes with continuation maps in $W,V$ and hence we get the transfer map for $SH^+$
$$
\phi_{W,V}^+: SH^+(W) \rightarrow SH^+(V). 
$$

\subsubsection{Graphical subdomains}\label{sssec: graphical}
In the rest of this paper, we will be concerned with a special class of Liouville subdomains. Recall that the whole completion $(Y \times (0, \infty), d(r\alpha))$ of $(Y, \alpha) = (\partial W, \lambda|_{\partial W})$ embeds into $\widehat{W}$. Then any contact form $\beta$ for $(Y, \xi)$ gives rise to a \textit{graphical submanifold} $Y_\beta$ of $Y\times (0, \infty)$ defined by the condition $r \alpha|_{Y_{\beta}} = \beta$. Then the bounded component $W_\beta$ of $\widehat{W}\backslash Y_\beta$ is a \textit{graphical subdomain}. Note that the completion of any graphical subdomain is $\widehat{W}$. 

If $V \subset W$ is a graphical subdomain, then the transfer map 
$SH(W) \rightarrow SH(V)$ is an isomorphism. To see this, note that there is another graphical subdomain $W' \subset V \subset W$  defined by the condition $\lambda|_{\partial W'} = c\lambda|_{\partial W}$ for some constant $c<1$. Then $W\backslash W'$ is symplectomorphic to $(Y \times [c, 1], d(r\alpha))$, which is a part of the symplectization of $(Y, \alpha)$.
Hence the map $SH(W) \rightarrow SH(W')$ is an isomorphism. Since the transfer map is functorial with respect to embeddings of Liouville subdomains, the map $SH(W) \rightarrow SH(V)$ is injective; an analogous argument shows surjectivity. 
Similarly, $SH^+(W) \rightarrow SH^+(V)$ is an isomorphism for a graphical subdomain $V \subset W$.

\section{Asymptotically dynamically convex contact structures}\label{ssec: independencelin}

We now change our point of view and instead of focusing on Liouville domains themselves, we focus on contact manifolds and their Liouville fillings. As we saw in Example \ref{ex: surfaces}, in general $SH^+(W)$ depends on the filling $W$ and is not a contact invariant.  
Here we introduce a class of \textit{asymptotically dynamically convex} contact structures for which $SH^+$ is independent of the filling; see Definition \ref{def: semigood} and Proposition \ref{prop: nice_sh_independent} below. In the rest of the paper, we use ADC to abbreviate `asymptotically dynamically convex.'

\subsection{Properties of Reeb orbits}

We first review the degree of Reeb orbits, which is essential to the definition of ADC contact structures. For any contact manifold $(Y, \alpha)$ with $c_1(Y, \xi) = 0$, as will always be the case in this paper, the canonical line bundle of $\xi$ is trivial. After choosing a global trivialization of this bundle, we can assign an integer called the Conley-Zehnder index $\mu_{CZ}(\gamma)$ to each Reeb orbit $\gamma$ of $(Y, \alpha)$. 
Then we let the degree of $\gamma$ be the \textit{reduced} Conley-Zehnder index:
$$
|\gamma| := \mu_{CZ}(\gamma) + n-3.
$$
This grading convention coincides with the contact homology algebra grading \cite{EGH}. 
Note that to define $\mu_{CZ}(\gamma)$,
and hence $|\gamma|$, we need to assume that $\alpha$ is non-degenerate; see Section \ref{subsec: symhom}.
For a general Reeb orbit, $|\gamma|$ depends on the choice of trivialization of canonical bundle. In this paper we will only use orbits $\gamma$ that are contractible in $Y$, for which $|\gamma|$ is independent of the trivialization. Note that if $(Y, \alpha)$ is the contact boundary of some Liouville domain $(W, \lambda)$, the trivialization of the canonical bundle of $(W, \omega)$ induces a trivialization of the canonical bundle of $(Y, \xi)$.
In particular, $i^*c_1(W, \omega) = c_1(Y, \xi)$ where $i: Y\rightarrow W$ is the inclusion map and so $c_1(W, \omega) = 0$ implies $c_1(Y, \xi) = 0$. 

Let $\mathcal{P}^{< D}(Y, \alpha)$ be the set of Reeb orbits $\gamma$ of $(Y,\alpha)$ that are contractible in $Y$  and have  $A(\gamma)< D$; since $\alpha$ is non-degenerate, $\mathcal{P}^{< D}(Y, \alpha)$ is finite for any fixed $D$. 
Recall that $\phi: (Y, \alpha) \rightarrow (Y', \alpha')$ is a \textit{strict} contactomorphism if $\phi^*\alpha' = \alpha$. Then if $\phi: (Y, \alpha) \rightarrow (Y', \alpha')$ is a strict contactomorphism, then $\mathcal{P}^{< D}(Y, \alpha)$ and 
$\mathcal{P}^{< D}(Y', \alpha')$ are in grading-preserving bijection since $\phi$ preserves everything. We also have the following simple proposition, which will be used many times throughout. 

\begin{proposition}\label{prop: easy}
For any $D, s>0$,   
$\mathcal{P}^{< D}(Y, s\alpha)$ and $\mathcal{P}^{< D/s}(Y, \alpha)$ are in grading-preserving bijection. 
\end{proposition}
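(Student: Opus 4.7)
The key observation is that rescaling the contact form by a constant $s > 0$ rescales the Reeb vector field: from $i_{R_\alpha} d\alpha = 0$ and $\alpha(R_\alpha) = 1$, one computes $R_{s\alpha} = s^{-1} R_\alpha$. Hence the unparametrized Reeb trajectories of $\alpha$ and $s\alpha$ are literally the same subsets of $Y$, and the dynamics differ only by a global time rescaling by a factor of $s$.

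From this I would build the bijection as follows. Given a closed Reeb orbit of $\alpha$ of prime period $T_0$ traversed $k$ times with total period $T = kT_0$, the same image in $Y$ is a closed Reeb orbit of $R_{s\alpha}$ with prime period $sT_0$ and total period $sT$. This yields a natural bijection between closed Reeb orbits of $\alpha$ and closed Reeb orbits of $s\alpha$ which, viewed as loops in $Y$, share the same image and in particular the same free homotopy class; contractibility in $Y$ is therefore preserved. Comparing the actions using Equation \eqref{eqn: contactactiondef} (or equivalently via the remark that action equals period): if $\gamma$ has $\alpha$-period $T$, then the corresponding orbit has $s\alpha$-period $sT$, so the bijection sends $\{A_\alpha(\gamma) < D/s\}$ to $\{A_{s\alpha}(\gamma) < D\}$. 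After choosing a common non-degenerate $\alpha$ (so that $s\alpha$ is automatically non-degenerate as well), this restricts to a bijection between the finite sets $\mathcal{P}^{<D/s}(Y, \alpha)$ and $\mathcal{P}^{<D}(Y, s\alpha)$.

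It remains to check that the grading $|\gamma| = \mu_{CZ}(\gamma) + n - 3$ is preserved. Since $\ker(s\alpha) = \ker \alpha = \xi$ as subbundles of $TY$, and $d(s\alpha)|_\xi = s \cdot d\alpha|_\xi$ defines the same conformal symplectic structure on $\xi$, the canonical bundle and its trivialization coincide for the two contact forms; moreover, for contractible orbits $\mu_{CZ}$ is independent of the trivialization chosen, as recalled just above the proposition. Therefore the only thing to verify is that the linearized return maps along corresponding orbits agree. This is immediate from the time-rescaling: if $\phi^\alpha_t$ and $\phi^{s\alpha}_t$ denote the respective Reeb flows, then $\phi^{s\alpha}_t = \phi^\alpha_{t/s}$, so the linearization along $\xi$ over the full period $sT$ of the $s\alpha$-orbit equals the linearization along $\xi$ over period $T$ of the $\alpha$-orbit. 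The paths of symplectic matrices used to define $\mu_{CZ}$ are then reparametrizations of one another (after rescaling the interval $[0, sT]$ to $[0,T]$), which does not change the Conley--Zehnder index. Hence $\mu_{CZ}$, and therefore the degree, is preserved.

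The only mild subtlety is notational: one must fix once and for all a trivialization of the canonical bundle of $\xi$ that works for both $\alpha$ and $s\alpha$ simultaneously, but as noted this is automatic since $\xi$ itself is unchanged. No step is a genuine obstacle; the entire proposition is essentially the statement that the Conley--Zehnder index and the ratio action/period are invariant under a global time reparametrization of the Reeb flow.
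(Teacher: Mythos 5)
Your proof is correct and follows essentially the same approach as the paper: observe that $R_{s\alpha} = s^{-1}R_\alpha$ so Reeb orbits correspond via time-rescaling $[0,T]\leftrightarrow[0,sT]$, check the action bounds translate appropriately, and note that the Conley--Zehnder index depends only on the linearized flow on $\xi$ (which is unchanged), not on the speed of traversal. Your version spells out a few details the paper leaves implicit (preservation of contractibility, the reparametrization of the path of symplectic matrices), but the argument is the same.
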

\begin{proof}
Note that $R_{s\alpha} = \frac{1}{s}R_\alpha$. So if $\gamma_\alpha:[0, T]\rightarrow Y$ is a Reeb trajectory of $\alpha$ with action $T$, then $\gamma_{s\alpha} = \gamma_{\alpha}\circ m_{\frac{1}{s}}: [0, s T] \rightarrow Y$ is a Reeb trajectory of $s\alpha$ with action $sT$; here  $m_{\frac{1}{s}}: [0, sT] \rightarrow [0, T]$ is multiplication by $\frac{1}{s}$. 
The map $\gamma_\alpha \rightarrow \gamma_{s\alpha}$ is a bijection between the set of Reeb orbits. If $T < D/s$, then $sT < D$ and so it is a bijection between $\mathcal{P}^{< D/s}(Y, \alpha)$ and $\mathcal{P}^{< D}(Y, s\alpha)$.  This bijection is grading-preserving since the Conley-Zehnder index of a Reeb orbit is determined by the linearized Reeb flow on the trivialized contact planes $\xi$ but  does not depend on the speed of the flow.  
\end{proof}

We will also need the following notation. If $\alpha_1, \alpha_2$ are contact forms for $\xi$, then there exists a unique $f: Y \rightarrow \mathbb{R}^+$ such that $\alpha_2 = f \alpha_1$. We write $\alpha_2 > \alpha_1, \alpha_2 \ge \alpha_1$ if $f > 1, f\ge 1$ respectively. Note that if $\alpha_2 > \alpha_1, \alpha_2 \ge \alpha_1$, then for any diffeomorphism $\phi: Y' \rightarrow Y$, we have
$\phi^*\alpha_2 > \phi^*\alpha_1, \phi^*\alpha_2 \ge \phi^*\alpha_1$ respectively.

\subsection{Background and definitions}

\subsubsection{Contact structures with $SH^+$ independent of the filling}\label{sssec: sh_independent}

Before defining ADC contact structures, we discuss other related contact structures that have $SH^+$ independent of the filling.
As we noted in Section \ref{ssec: postive_sym_hom}, the  underlying vector space 
$SC^+(H, J)$ depends only on $(Y, \xi)$ but the differential may depend on the filling $W$. 
This is because the differential counts Floer trajectories which can go into $W$. 
Using a stretch-the-neck argument, Bourgeois and Oancea \cite{BO}, \cite{BO2}  showed that for certain special $J$, these Floer trajectories are in bijection with punctured Floer trajectories in the symplectization $Y \times (0, \infty)$ capped off by rigid 
$J$-holomorphic disks in $\widehat{W}$; see Proposition \ref{prop: neckstretch} below for more details. The Floer trajectories in $Y\times (0,\infty)$ do not depend on $W$. Hence the differential depends on $W$ precisely because of these  J-holomorphic disks in $\widehat{W}$. 
The disks are asymptotic to Reeb orbits of $(Y, \alpha)$ and since the disks are  rigid, these orbits have degree zero, assuming the appropriate transversality results; note that these orbits have a well-defined degree since they bound disks in $W$ and hence are contractible in $W$.  
So if all orbits that are contractible in $W$ have positive degree, there are no such $J$-holomorphic discs and so all Floer trajectories stay in $\widehat{W}\backslash W = Y \times [1, \infty)$. Therefore in this case, the differential does not depend on the filling $W$ and $SH^+(W)$ is an invariant of $(Y, \alpha)$.

However, this criterion  depends on the filling $W$ and hence is not very helpful unless we already know the filling. In order to get a criterion depending only on $Y$, we restrict to \textit{$\pi_1$-injective} fillings $W$ of $Y$ for which $i_*: \pi_1(Y) \rightarrow \pi_1(W)$ is injective.
Then all Reeb orbits that are contractible in $W$ are also contractible \textit{in $Y$}; furthermore,  their degrees can be computed in $Y$. The same holds for Hamiltonian orbits that are contractible in $W$ and so the grading of $SH^+(W)$ can be determined in $Y$. 
In this case, the necessary conditions on $(Y, \xi)$ are summarized in the following definition: 
\begin{definition}\label{def: dyn_convex }
\cite{CieliebakOancea, HWZ_convex}, 
A contact manifold $(Y, \xi)$ is \textit{dynamically convex} if there is a contact form $\alpha$ for $\xi$ such that all of contractible Reeb orbits of $\alpha$ have positive degree. 
\end{definition}
The class of dynamically convex contact structures were originally introduced in dimension three in \cite{HWZ_convex} and then defined in higher dimensions in \cite{Abreu_Mac, CieliebakOancea}. By the above discussion, we have the following proposition. 
\begin{proposition}\cite{BO2, EGH}\label{prop: zeroorbits}
If $(Y, \xi)$ is dynamically convex, then  all $\pi_1$-injective Liouville fillings of $(Y, \xi)$ have isomorphic $SH^{+}$. 
\end{proposition}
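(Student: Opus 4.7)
The plan is to exploit the neck-stretching argument sketched in the paragraph preceding Definition~\ref{def: dyn_convex } to reduce all filling-dependence of the positive Floer differential to counts of rigid pseudo-holomorphic planes in $\widehat{W}$ asymptotic to degree-$0$ contractible Reeb orbits, and then to use $\pi_1$-injectivity together with dynamical convexity to forbid the existence of such orbits.

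First I would fix a non-degenerate contact form $\alpha$ on $(Y,\xi)$ realizing dynamical convexity, so that every Reeb orbit of $\alpha$ contractible in $Y$ has degree $\geq 1$. For any Liouville filling $W$ of $(Y,\xi)$ and any admissible $H \in \mathcal{H}_{std}(W)$ of slope $s \notin Spec(Y,\alpha)$, the non-constant Hamiltonian orbits are in graded bijection with Reeb orbits of period less than $s$, so the underlying graded vector space of $SC^+(H,J)$ depends only on $(Y,\alpha)$ and $s$. The $\pi_1$-injectivity of $i\colon Y \hookrightarrow W$ ensures that the Hamiltonian orbits counted in $SC^+(H,J)$, namely those contractible in $W$, are in fact already contractible in $Y$, and one can compute their Conley--Zehnder indices using a trivialization of the canonical bundle pulled back from $Y$. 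Hence both the generators and their gradings are manifestly filling-independent.

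The main step is to prove that the Floer differential on $SC^+(H,J)$ between non-constant orbits is also filling-independent. For this I would stretch the neck along $Y \subset \widehat{W}$, choosing a sequence of admissible almost complex structures $J_\tau \in \mathcal{J}_{std}(W)$ making the collar of $Y$ ever longer. By the SFT compactness result of Bourgeois--Oancea recalled in the text before Definition~\ref{def: dyn_convex }, any sequence of Floer cylinders between non-constant orbits converges to a broken configuration consisting of a punctured $(H,J_\infty)$-Floer cylinder in the symplectization $Y\times(0,\infty)$ together with finitely many rigid finite-energy $J_\infty$-holomorphic planes in $\widehat{W}$ capping off the negative punctures. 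Each cap is asymptotic to a Reeb orbit of $\alpha$ that bounds a disk in $W$, hence is contractible in $W$; by $\pi_1$-injectivity it is contractible in $Y$, and by dynamical convexity it has degree $\geq 1$. On the other hand, rigidity forces the Fredholm index of the plane to be zero, which (after suitable index calculation) forces its asymptotic orbit to have degree exactly $0$. The two conclusions contradict each other, so no capping planes can occur, and the limiting differential is entirely supported in the symplectization and depends only on $(Y,\alpha)$.

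To pass from this chain-level identification to an isomorphism of $SH^+$, I would apply the same neck-stretching argument to the continuation maps $\phi^+_{H_s, J_s}$ defining the direct limit over $s\to\infty$; these count action-increasing parametrized cylinders between non-constant orbits, which break in exactly the same way, and the same degree argument rules out filling-dependent capping planes. Assembling everything produces, for any $\pi_1$-injective Liouville filling $W$, a canonical isomorphism between $SH^+(W)$ and an invariant of $(Y,\xi)$ built from the symplectization data alone, proving the proposition. The principal obstacle is the standard transversality/SFT-compactness issue: one must arrange $J_\infty$ so that the broken configurations appearing in the limit are cut out transversely and no negative-index multiply-covered capping planes spoil the dimension count; granted the transversality package of Bourgeois--Oancea and \cite{BEE12}, the argument goes through as described.
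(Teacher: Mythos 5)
Your neck-stretching strategy is broadly the one the paper uses, but the crucial step---ruling out the capping planes---is carried out via the wrong moduli space, and this is a genuine gap. You argue that a rigid $J$-holomorphic plane in $\widehat{W}$ must, by a Fredholm index computation, be asymptotic to a degree-zero Reeb orbit, and then use dynamical convexity and $\pi_1$-injectivity to forbid such orbits. That index argument requires transversality for $J$-holomorphic planes \emph{inside the filling}, which cannot be guaranteed: multiply covered caps can have negative expected dimension and still appear, and one cannot perturb them away by varying $J$ on $\widehat{W}$. The paper addresses exactly this in Remark~\ref{rem: H1}(2), which states that transversality for the disks in $W$ may fail and so the index-zero reasoning for the caps is not available; deferring to a ``transversality package'' at the end of your sketch does not fix this, since the Bourgeois--Oancea machinery gives transversality for (time-dependent) Floer trajectories, not for closed or punctured holomorphic curves in a general filling.

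The actual argument (Proposition~\ref{prop: neckstretch}, to which the paper defers for Proposition~\ref{prop: zeroorbits}) never inspects the caps. It instead computes the expected dimension of the punctured Floer cylinders living in the symplectization $Y\times(0,\infty)$, where transversality \emph{is} achievable by a time-dependent perturbation of $J$ (the time-dependence breaks the reparametrization symmetry, following \cite{BO2} and \cite{Ritter}). For a punctured cylinder with negative punctures at Reeb orbits $\gamma_i^k$, the virtual dimension is $|x_i|-|x_{i-1}|-\sum_k |\gamma_i^k|$; transversality forces this to be $\geq 0$ (or $\geq 1$ when there is a free $\mathbb{R}$-action to quotient). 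Rigidity of the unbroken Floer cylinder pins down $|x_+|-|x_-|$, and then \emph{positivity} of each $|\gamma_i^k|$ makes the total index count impossible unless there are no punctures at all. This is also precisely why the hypothesis is that contractible orbits have positive, rather than merely nonzero, degree: if your argument were valid, nonzero degree would suffice to exclude degree-zero caps, but the symplectization-side index inequality genuinely needs positivity. To repair your proof, replace the dimension count on the caps by the dimension count on the punctured Floer cylinders in $Y\times(0,\infty)$.
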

\begin{remark}\label{rem: H1}\
\begin{enumerate}[leftmargin=*]
\item Weinstein fillings $W^{2n}, n \ge 2,$ are $\pi_1$-injective since $\pi_1(W, \partial W) = 0$
if $2n - 1 > n$.  
\item Assuming transversality results, rigid J-holomorphic disks in $W$ have index zero and hence should be asymptotic to degree zero Reeb orbits. So the absence of such degree zero orbits should be enough to conclude that $SH^+(W)$ is independent of the filling. However in general transversality for $J$-holomorphic disks may fail and so we cannot use this argument. On the other hand, transversality does hold for Floer trajectories in $Y \times (0, \infty)$. If all Reeb orbits have \textit{positive} degree, then  transversality of these Floer trajectories can be used to exclude the formation of J-holomorphic discs in $W$ from Floer trajectories; see
Remark 12.5 of \cite{Ritter} and 
Proposition \ref{prop: neckstretch} below for details.
If all orbits have non-zero (rather than positive) degree, this argument does not seem to work. This is why Proposition \ref{prop: zeroorbits} requires all contractible orbits to have positive rather than just non-zero degree. 
\end{enumerate}
\end{remark}

For details about the proof of Proposition \ref{prop: zeroorbits}, see the proof of Proposition \ref{prop: nice_sh_independent} below, which follows similar lines. 
We also note that the first result of this type is due to
Eliashberg, Givental, and Hofer \cite{EGH}.  
They introduced the class of
\textit{nice} contact structures, which are similar to dynamically convex structures: these contact structures admit a contact form with no contractible Reeb orbits of degree $-1, 0, 1$.  Furthermore, they showed that  cylindrical contact homology, a $S^1$-equivariant J-holomorphic curve analog of $SH^+$,  can be defined for nice contact structures  (besides certain transversality issues)  and is a contact invariant, much like Proposition \ref{prop: zeroorbits} for $SH^+$. We also note that the contact homology algebra of dynamically convex contact forms has only the trivial augmentation \cite{EGH}. 

\begin{examples}\label{ex: surfaces_more} 
In Example \ref{ex: surfaces}, we saw that $SH^+(\Sigma_g)$ depends on whether $g  = 0$ or $g \ge 1$. The Reeb orbits of the contact boundary $S^1$ are not contractible in $S^1$ so this situation is slightly different from the setup we described above. However, all orbits are contractible in $\Sigma_0 = \mathbb{C}$. One orbit $\gamma$ has degree zero \cite{BO} and J-holomorphic disks asymptotic to this orbit affect the differential for $SH^+(\Sigma_0)$. For $g \ge 1$, the map $i_*: \pi_1(S^1) \rightarrow \pi_1(\Sigma_g)$ is injective and hence there are no $J$-holomorphic disks in $\Sigma_g$ asymptotic to Reeb orbits of $S^1$. In particular, for $g \ge 1$ the differentials for  $SH^+(\Sigma_g)$ agree, which is why $SH^+(\Sigma_g)$ are (ungraded) isomorphic for all $g \ge 1$, i.e. they are all infinite-dimensional with the same generators.
\end{examples}

\subsubsection{Definition of asymptotically dynamically convex contact structures}
One difficulty with dynamically convex contact structures is that they require precise control over the degrees of \textit{all} Reeb orbits. Except in certain special situations where all Reeb orbits can be described explicitly, we do not have much control over the Reeb orbits of a generic contact form or the degrees of these orbits. For example, contact surgery creates many wild orbits with arbitrarily large action. In particular, we do not have complete control over all orbits in the contact boundary of a flexible domain. 
However, since we are only interested in algebraic invariants, such complete control is not really required. For example, Bourgeois, Eliashberg, and Ekholm \cite{BEE12} showed that the orbits created by contact surgery with bounded action are much better behaved and by letting action go to infinity, they computed the relevant algebraic invariants in terms of these well-behaved orbits. 

We will take a similar approach here. We first define asymptotically dynamically convex contact structures which 
impose weaker constraints on their Reeb orbits than dynamically convex contact structures. Then we show that these weaker constraints are still enough to prove Proposition \ref{prop: zeroorbits} for ADC contact structures, i.e. $SH^{+}$ is independent of the filling for ADC contact structures. See Proposition \ref{prop: nice_sh_independent} below.
 
\begin{definition}\label{def: semigood}
A contact manifold $(Y,\xi)$ is \textit{asymptotically dynamically convex} if there exists a sequence of non-increasing contact forms $\alpha_1 \ge \alpha_2 \ge \alpha_3 \cdots$ for $\xi$ and increasing positive numbers $D_1 < D_2 < D_3 \cdots$ going to infinity such that all elements of $\mathcal{P}^{< D_k}(Y, \alpha_k)$ have positive degree. 
\end{definition}
\begin{remark}\label{rem: semigood}\

\begin{enumerate}[leftmargin=*]
\item Any contact structure that is contactomorphic to an ADC contact structure is also ADC: if $\phi: (Y, \xi) \rightarrow (Y', \xi')$ is a contactomorphism and $(\alpha', D')$ shows that $(Y', \xi')$ is ADC, then $(\phi^*\alpha', D')$ shows that $(Y,\xi)$ is ADC. Hence the property of being ADC is a contactomorphism invariant.
\item We also assume that all elements of $\mathcal{P}^{< {D_k}}(Y, \alpha_k)$ are non-degenerate for all $k$, which will suffice for applications; in general, $\alpha_k$ do not need to be non-degenerate. 
\item
Definition \ref{def: semigood} has three conditions involving the contact forms $\alpha_k$, constants $D_k$, and elements of $\mathcal{P}^{< D_k}(Y, \alpha_k)$. It is easy to show that if we drop any one of these conditions, then the other two are trivially satisfied for \textit{any} contact structure (for example, by positively scaling the contact forms) and so the definition of ADC becomes vacuous. However, requiring all three conditions to hold is a non-trivial constraint which does not always hold since there exist contact structures that are not ADC; see Example \ref{ex: not_nice} below.
\item 
For any constant $\epsilon < 1$, we can assume the sequence of contact forms satisfy the stronger inequality 
$\epsilon \alpha_k \ge \alpha_{k+1}$ for all $k$; in particular, we will often assume that the forms in Definition \ref{def: semigood} are decreasing instead of non-increasing.  Since $D_k$ tend to infinity, by taking a subsequence, we can first assume that $D_{k+1} \ge \frac{1}{\epsilon^2}D_k$. Now let $\alpha_k' := \epsilon^k \alpha_k, 
D_k' := \epsilon^k D_k$. Then 
$\epsilon \alpha_k' \ge \alpha_{k+1}'$ and $D_{k+1}' \ge \frac{1}{\epsilon} D_k'$ (so $D_k'$ tends to infinity). By Proposition \ref{prop: easy}, 
$\mathcal{P}^{< D_k'}(Y, \alpha_k')$ and  $\mathcal{P}^{< D_k}(Y, \alpha_k)$ are in grading-preserving bijection and so all elements of 
$\mathcal{P}^{< D_k'}(Y, \alpha_k')$ have positive degree. Hence $(\alpha_k', D_k')$ satisfies all conditions in Definition \ref{def: semigood} as well as the stronger inequality $\epsilon \alpha_k' \ge \alpha_{k+1}'$ as desired. 
\end{enumerate}
\end{remark}

We also note that asymptotically dynamically convex contact structures have some similarities to contact structures with \textit{convenient dynamics} defined in \cite{KvK}. These are also defined via a sequence of contact forms and are especially well-suited for situations involving Morse-Bott contact forms. 
It is quite likely that our main results about  asymptotically dynamically convex contact structures, 
Proposition \ref{prop: nice_sh_independent} and Theorem \ref{thm: semi-surgery}, can also be proven for convenient dynamics contact structures; indeed the related results Theorems \ref{thm: MLYau}, \ref{thm: MLYauindex2} were proven in \cite{KvK} for contact structures with convenient dynamics. 

\subsection{$SH^+$ independent of filling}

Although asymptotically dynamically convex contact structures are more general than dynamically convex contact structures, 
in this section we show that Proposition \ref{prop: zeroorbits} still holds for ADC structures. 

\begin{proposition}\label{prop: nice_sh_independent}
If $(Y^{2n-1}, \xi)$ is an asymptotically dynamically convex contact structure, then all 
$\pi_1$-injective Liouville filling of $(Y, \xi)$ have isomorphic $SH^{+}$. 
\end{proposition}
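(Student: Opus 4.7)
The plan is to represent $SH^+$ of any $\pi_1$-injective filling via a cofinal sequence of action-filtered subquotients, each of which I show is intrinsic to $(Y,\xi)$. Fix two $\pi_1$-injective Liouville fillings $W_1,W_2$ of $(Y,\xi)$ and let $(\alpha_k,D_k)_{k\ge 1}$ be the ADC sequence from Definition \ref{def: semigood}, with $\alpha_{k+1}\le\alpha_k$ and $D_k\to\infty$. For each $k$ and $i\in\{1,2\}$, the graphical subdomain $W_i^{(k)}\subset\widehat{W}_i$ with $\partial W_i^{(k)}=(Y,\alpha_k)$ satisfies $SH^+(W_i^{(k)})\cong SH^+(W_i)$ by Section \ref{sssec: graphical}, so I work with these throughout.

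For each $k$ and $i$, choose an admissible Hamiltonian $H_k\in\mathcal{H}_{std}(W_i^{(k)})$ of slope just above $D_k$ (and not in $\mathrm{Spec}(Y,\alpha_k)$) and a generic $J_k\in\mathcal{J}_{std}(W_i^{(k)})$. The non-constant Hamiltonian orbits of action less than $D_k$ are in bijection with $\mathcal{P}^{<D_k}(Y,\alpha_k)$, and by $\pi_1$-injectivity the orbits contractible in $W_i$ are exactly those contractible in $Y$; hence the underlying graded vector space of the action-filtered subquotient
\[
SC^{+,<D_k}(H_k,J_k) \;:=\; SC^{<D_k}(H_k,J_k)\big/ SC^{<\epsilon_0}(H_k,J_k)
\]
is identical for $W_1$ and $W_2$ and depends only on $(Y,\alpha_k)$. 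To compare the differentials, stretch $J_k$ along a family of graphical hypersurfaces converging to $\partial W_i^{(k)}$. By Proposition \ref{prop: neckstretch} (Bourgeois--Oancea), in the stretched limit every Floer trajectory between non-constant orbits of action $<D_k$ breaks into a Floer trajectory in the symplectization $(Y\times(0,\infty),d(r\alpha_k))$ whose negative ends are capped by rigid $J_k$-holomorphic planes in $W_i^{(k)}$ asymptotic to Reeb orbits of $\alpha_k$ contractible in $W_i$ (equivalently, contractible in $Y$) of action at most $D_k$. Since $c_1(W_i^{(k)})=0$, a plane asymptotic to such an orbit $\gamma$ has Fredholm index $\mu_{CZ}(\gamma)+(n-3)=|\gamma|$, so rigidity forces $|\gamma|=0$; but ADC requires $|\gamma|>0$ for every $\gamma\in\mathcal{P}^{<D_k}(Y,\alpha_k)$, so no such planes exist. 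As emphasized in Remark \ref{rem: H1}(2), the \emph{strict} positivity of degree, not merely non-vanishing, is what lets transversality in the symplectization suffice for excluding these disks. Consequently the stretched differential is computed entirely in $Y\times(0,\infty)$, and $SC^{+,<D_k}(H_k,J_k)$ is canonically isomorphic, as a chain complex, to an intrinsic object depending only on $(Y,\xi)$.

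To finish, assemble the filtered pieces into a direct system. The monotonicity $\alpha_{k+1}\le\alpha_k$ lets me build decreasing homotopies from $H_{k+1}$ down to $H_k$ that are increasing in the cylindrical coordinate near each intermediate graphical hypersurface, so the parametrized `no escape' Lemma \ref{lem: maximal_principle_param} produces filtration-preserving continuation maps $SC^{+,<D_k}(H_k,J_k)\to SC^{+,<D_{k+1}}(H_{k+1},J_{k+1})$; applying the same neck-stretching analysis to parametrized moduli shows these continuation maps are likewise intrinsic to $(Y,\xi)$. Since $D_k\to\infty$, every admissible Hamiltonian on $\widehat{W}_i$ is eventually dominated by some $H_k$, so $\{SC^{+,<D_k}(H_k,J_k)\}$ is cofinal in the direct system computing $SH^+(W_i)$. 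Passing to the direct limit yields $SH^+(W_1)\cong SH^+(W_2)$. The main obstacle will be the neck-stretching step: one must verify that Proposition \ref{prop: neckstretch}, combined with the strict positivity of Reeb orbit degrees guaranteed by ADC, genuinely excludes the rigid $J$-holomorphic planes in the filling that would otherwise couple the differential to the topology of $W_i$, despite the potential failure of transversality for such disks in general.
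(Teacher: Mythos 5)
Your proposal follows essentially the same approach as the paper: build a direct system of action-truncated positive Floer complexes over the ADC sequence $(\alpha_k,D_k)$ using the graphical subdomains, stretch the neck along their boundaries, and invoke Proposition \ref{prop: neckstretch} together with strict positivity of Reeb orbit degrees (and transversality in the symplectization rather than for disks in the filling, as in Remark \ref{rem: H1}(2)) to force all Floer trajectories below the action threshold into the symplectization; the diagonal direct limit you take is exactly the reformulation the paper records in Remark \ref{rem: positivedeg}(1) as equivalent to its bifiltered double limit. Where your sketch is thin---and where the paper spends most of its effort---is in making ``intrinsic, as a chain complex'' precise: the stretching threshold $R_0$ of Proposition \ref{prop: neckstretch} depends on the filling (Remark \ref{rem: neck_stretching}(2)), so the data $(H_k,J_k)$ for $W_1$ and $W_2$ must be chosen simultaneously to agree on the common symplectization, and one must then check (via the paper's homotopy-of-homotopies argument) that the continuation maps relating different stretching parameters are genuine identity maps between identical complexes, so that the resulting direct systems for the two fillings coincide term by term rather than merely being abstractly isomorphic.
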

\begin{remark}\label{rem:semigood}\
\begin{enumerate}[leftmargin=*]
\item Consider the more restrictive class of contact structures such that every element of $\mathcal{P}^{< D_k}(Y, \alpha_k)$ has degree \textit{greater than $1$}. For such contact structures, we can define a version of $SH^+$ that does not require a filling of $(Y, \xi)$ and hence is manifestly a contact invariant. Compare this to the definition of cylindrical contact homology proposed in Section 1.6 of \cite{JP}: 
$CH^{cyl}(Y, \xi) =
\underset{\rightarrow }{\lim} \ CH^{cyl, <D}(Y, \alpha)$, where $(Y, \alpha)$ has no contractible orbits of degree $-1,0,1$ with action less than $D$. 
Although this class is slightly more restrictive than the class of asymptotically dynamically convex contact structures,  this is not really an issue for applications except for $n = 3$. For example, we will show that all orbits of $Y$ produced by flexible surgery have degree greater than $n-3$ and hence greater than $1$ if $n \ge 4$. In particular, it is expected that for $n \ge 4$, our main result Theorem \ref{thm: semi-surgery} can be stated in terms of this more restrictive class of contact structures and hence the relevant contact manifolds do not need to have Liouville fillings. 
\item 
The set of Liouville fillings of a contact manifold (up to symplectomorphism of their completions) is a contact invariant. To see this, suppose that $\phi: (Y, \xi) \rightarrow (Y', \xi')$ is a contactomorphism and $W$ is a Liouville filling of $(Y, \xi)$. Then $W' := W \cup_\phi Y'\times [1/2, 1]$ is a Liouville filling of $(Y', \xi')$ and $W, W'$ have symplectomorphic completions. A similar construction takes $W'$ to a Liouville filling of $(Y,\xi)$ whose completion is symplectomorphic to that of $W$. Hence there is a bijection between fillings. Since Liouville domains with symplectomorphic completions have isomorphic $SH^+$, the set of (isomorphism classes of) $SH^+$ of Liouville fillings is also a contact invariant. If the contact manifold is ADC, then all Liouville fillings have isomorphic $SH^+$ and so Proposition \ref{prop: nice_sh_independent} implies that $SH^+$ of any Liouville filling 
is a contact invariant for ADC contact structures. We will use this to show that certain contact structures are exotic as in Theorems \ref{thm: inf_contact_flex}, \ref{thm: boundedinfinite}.
\end{enumerate}
\end{remark}

Before proving Proposition
\ref{prop: nice_sh_independent}, we first describe how to stretch an almost complex structure along a contact hypersurface. This procedure is called stretching-the-neck.

Let $V \subset \widehat{W}$ be a Liouville subdomain with contact boundary $(Z, \alpha_Z)$. Consider a collar of $Z$ in $V$ symplectomorphic to $( Z\times [1-\delta, 1], d(t \alpha_Z))$ for small $\delta$. Let $J \in \mathcal{J}_{std}(W)$ be cylindrical in $Z\times [1-\delta, 1]$ and
 set $J' := J|_{Z\times [1-\delta, 1]}$.  For $0 < R <1 -\delta$, we extend $J'$ to a cylindrical  almost complex structure on $Z \times [R, 1]$, which we also call $J'$. 
Let $\phi^R$ be any diffeomorphism $[R, 1] \rightarrow [1-\delta, 1]$ whose derivative equals $1$ near the boundary. Following the notation in \cite{CieliebakOancea}, we define $J^R \in \mathcal{J}_{std}(W)$ to be $(Id \times \phi^R)_*J'$ on $Z\times [1-\delta, 1]$ and $J$ outside $Z\times [1-\delta, 1]$; the fact that $\phi^R$ has derivative 1 near the boundary implies that $J^R$ is smooth.  If $J_s \in \mathcal{J}_{std}(W)$ is a homotopy that is cylindrical and $s$-independent in $Z\times [1-\delta, 1]$, then let $J_s^R$ be the homotopy obtained by applying the previous construction to $J_s$  for all $s$.

The following proposition formalizes Section \ref{sssec: sh_independent}, where we explained that
positive Reeb orbit degrees imply that $SH^+$ is independent of the filling. This proposition essentially shows that the Floer trajectories defining the various operations on $SC^+$ from Section \ref{ssec: postive_sym_hom}, \ref{ssec: transfermaps} are contained in $\widehat{W} \backslash V$ after sufficiently stretching-the-neck 
$Z = \partial V$, assuming the Reeb orbits of $\partial V$ have positive degree. As a result, these operations are independent of $V$. To suit the particular applications we have in mind, this proposition is stated using orbits satisfying a fixed action bound. 

More precisely, let $(H_s,J_s)$ be a homotopy with 
$(H_s, J_s) = (H_-, J_-)$ for $s\ll 0$ and
$(H_s, J_s) = (H_+, J_+)$ for $s \gg 0$ such that one of the following cases holds:
\begin{itemize}
\item[i)]  $W = V$ and $H_s = H_W  \in \mathcal{H}_{std}(W)$,
$J_s = J_W  \in \mathcal{J}_{std}(W)$ are $s$-independent and $(H_W,J_W)$-trajectories define the differential 
$d: SC^+(H_W, J_W) \rightarrow SC^+(H_W, J_W)$
\item[ii)] $W= V$ and $H_s \in \mathcal{H}_{std}(W), J_s \in \mathcal{J}_{std}(W)$ for all $s$ with $H_+ = H_{W,+}, H_- = H_{W,-}$, and $(H_s, J_s)$-trajectories define the continuation map $\phi_{H_s}: SC^+(H_{W,+}, J_+) \rightarrow SC^+(H_{W,-}, J_-)$
\item[iii)] $H_+ = H_{W} \in \mathcal{H}_{std}(W), J_+ = J_W \in \mathcal{J}_{std}(W)$ and  $H_- = H_{W,V} \in \mathcal{H}_{step}(W,V),$ $J_- = J_{W,V} \in \mathcal{J}_{step}(W,V)$ and
$(H_s, J_s)$-trajectories defines the continuation map 
$SC^+(H_{W}, J_W) \rightarrow SC^+(H_{W,V}, J_{W,V})$, 
which induces (after taking quotients) the transfer map 
$SC^+(H_{W}, J_W) \rightarrow 
SC^+(i(H_{W,V}), i(J_{W,V}))$. 
\end{itemize}
Furthermore, we assume that  $H_s \equiv 0$ in $Z\times 
[1-\delta, 1] \subset V$. We also take $J_s$ to be  $s$-independent and cylindrical in  $Z\times 
[1-\delta, 1+\epsilon_V] \subset V$, with $\epsilon_V$ as in Section \ref{ssec: transfermaps}, so that we can construct $J_s^R$. Finally, let $x_+, x_-$ be Hamiltonian orbits of $H_+, H_-$ respectively in the source and target of the maps induced by $(H_s, J_s)$. 

\begin{proposition}\label{prop: neckstretch}
Suppose $i_*: \pi_1(Z) \rightarrow \pi_1(W)$ is injective and all elements of $\mathcal{P}^{< D}(Z, \alpha)$ have positive degree.
If $A_{H_+}(x_+)- A_{H_-}(x_-) < D$, then there exists $R_0 \in (0, 1-\delta)$ such that for any $R \le R_0$, 
all rigid $(H_s, J_s^R)$-Floer trajectories are contained in $\widehat{W}\backslash V$. 
\end{proposition}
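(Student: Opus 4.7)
The plan is to argue by contradiction via a stretch-the-neck / SFT compactness argument along $Z = \partial V$, in the spirit of Bourgeois--Oancea as recalled in Section \ref{sssec: sh_independent}. Suppose the conclusion fails, so that for some sequence $R_k \to 0$ there exist rigid $(H_s, J_s^{R_k})$-Floer trajectories $u_k \colon \mathbb{R}\times S^1 \to \widehat{W}$ with asymptotes $x_\pm$ whose images meet $V$. Since $H_s\equiv 0$ on the stretched collar $Z\times[R_k,1]\subset V$, the neck portions of the $u_k$ are genuinely $J_s^{R_k}$-holomorphic, so after passing to a subsequence SFT compactness produces a limit holomorphic--Floer building $\mathbf{u}_\infty$ decomposing into three kinds of levels: a top \emph{Floer level} in the completion of $\widehat{W}\setminus V$ with a negative cylindrical end attached over $Z$; finitely many \emph{symplectization levels} of $J'$-holomorphic punctured curves in the symplectization of $(Z,\alpha_Z)$; and a bottom \emph{filling level} of $J$-holomorphic punctured curves in $\widehat{V}$. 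Since each $u_k$ meets $V$ and the neck length tends to $+\infty$, the filling level is non-empty.

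The next step is to bound the action of the breaking orbits. The standard action-energy estimate together with $\partial_s H_s \le 0$ shows that the action drop across the Floer level is at most $A_{H_+}(x_+)-A_{H_-}(x_-)<D$, while every symplectization or filling component has non-negative $d\alpha_Z$-energy. Combining these inequalities level by level, every Reeb orbit $\gamma$ appearing as an internal asymptote of $\mathbf{u}_\infty$ satisfies $A(\gamma)<D$, so $\gamma \in \mathcal{P}^{<D}(Z,\alpha_Z)$. Because each filling-level component is a punctured Riemann surface in $\widehat{V}$ whose asymptotic loops become nullhomotopic in $W$ after adjoining capping disks for $x_\pm$, the $\pi_1$-injectivity of $i_\ast\colon\pi_1(Z)\hookrightarrow \pi_1(W)$ lifts this nullhomotopy to $Z$ and shows that each such $\gamma$ is contractible in $Z$. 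Hence the reduced Conley--Zehnder index $|\gamma|$ is well-defined and, by the ADC hypothesis, strictly positive.

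The contradiction then comes from a virtual-dimension count. The total virtual dimension of $\mathbf{u}_\infty$ equals the virtual dimension of $u_k$, which is zero by rigidity (after accounting for the $\mathbb{R}$-translations on non-trivial symplectization levels). For each filling-level component $v$ with positive asymptotes $\gamma_1,\dots,\gamma_p$, the SFT index formula $\mathrm{ind}(v)=(n-3)\chi(v)+\sum_{i=1}^{p}\mu_{CZ}(\gamma_i)$, combined with $\mu_{CZ}(\gamma)=|\gamma|-(n-3)$, gives $\mathrm{ind}(v)=|\gamma_1|>0$ for a plane. In general, once the shared-asymptote contributions $\pm\mu_{CZ}(\gamma)$ cancel between consecutive levels of $\mathbf{u}_\infty$, the presence of a non-trivial filling component strictly increases the total virtual dimension above zero, contradicting the rigidity of $u_k$.

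The main obstacle is transversality for the filling-level components in $\widehat{V}$, where $J$ is fixed and multiply-covered curves need not be regular. As discussed at the end of Section \ref{sssec: sh_independent} and in Remark \ref{rem: H1}(2), one does not need to regularize these components directly: it suffices that transversality hold for a generic $s$-dependent $J_s$ away from the neck and for somewhere-injective Floer cylinders in the symplectization of $(Z,\alpha_Z)$. The ADC hypothesis --- that every breaking orbit has \emph{strictly positive} reduced Conley--Zehnder index, not merely non-zero --- is exactly what converts this Floer-level transversality into a dimensional obstruction preventing filling-level components from appearing in rigid limit buildings.
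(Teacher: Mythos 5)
Your overall strategy (neck-stretching, SFT compactness, action bounds on breaking orbits, contractibility via $\pi_1$-injectivity, then an index count using positivity of degrees) is the same as the paper's, but the step where you actually derive the contradiction has a genuine gap. You place the dimension count on the filling-level components in $\widehat{V}$: you invoke the SFT index formula for those curves, argue each has positive index, and conclude the total virtual dimension of the limit building exceeds the index of the rigid trajectories $u_k$. For this to produce a contradiction you need the filling-level curves to be regular, so that non-emptiness of their moduli spaces forces their (virtual) dimensions to be non-negative --- and this is exactly what cannot be assumed, since these are $J$-holomorphic curves for a fixed cylindrical $J$ on $\widehat{V}$ and may be multiply covered. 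Your last paragraph acknowledges this obstacle and asserts that Floer-level transversality together with the ADC hypothesis ``converts'' it into a dimensional obstruction, but that assertion is precisely the missing argument: you never say how the contradiction is relocated away from $\widehat{V}$. The paper's proof does this concretely: it runs the index count \emph{only} on the punctured Floer cylinders $\tilde{u}_i$ in the top level $\widehat{W\setminus V}$, where transversality is achieved by a time-dependent perturbation of the almost complex structure, and uses the formula $\dim = |x_i|-|x_{i-1}|-\sum_k |\gamma_i^k|$ (minus one after the $\mathbb{R}$-quotient when $i\neq j+1$). Summing over $i$, non-negativity of each transverse moduli space gives $|x_+|-|x_-|-\sum_{i,k}|\gamma_i^k|\ge m-1\ge 0$, while $|x_+|-|x_-|=0$, at least one puncture exists, and every $|\gamma_i^k|>0$; the filling-level curves never enter the count and need not be transverse.

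A secondary omission: before the telescoping index and action sums are legitimate, the paper must establish the structure of the top level, namely that every intermediate Hamiltonian orbit $x_1,\dots,x_{m-1}$ is a type II orbit near $Z$ (for $H_{W,V}$) or an orbit near $\partial W$ (for $H_W$) --- ruling out type I, III, IV, V orbits via action positivity and Lemma \ref{lem: rise_above} --- and that each Floer component $\Sigma_i$ is connected, which uses Lemma \ref{lem: rise_above} together with the parametrized no-escape Lemma \ref{lem: maximal_principle_param}. Your proposal treats the top level as a single Floer cylinder and skips these steps; without them the claim that the internal asymptotics pair off and that the grading difference telescopes to $|x_+|-|x_-|$ is not justified.
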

\begin{remark}\label{rem: neck_stretching}\
\begin{enumerate}[leftmargin=*]
\item Since $H_+, H_-$ have finitely many orbits, we can assume that 
$R_0$ is small enough so that Proposition \ref{prop: neckstretch}
holds for \textit{all} orbits of $H_+, H_-$ in the source, target respectively that satisfy the appropriate action bounds. 
\item In general $R_0$ depends not just on $W\backslash V$
and $(H_s, J_s)|_{W\backslash V}$  but also on the filling $V$ of $(Z, \alpha_Z)$ and $(H_s, J_s)|_V$. If we take a different filling $V'$, then the conclusion of Proposition \ref{prop: neckstretch} still holds but with possibly smaller $R_0$. 
However for  any $R, R' \le R_0$, all $(H_s, J_s^R)$ and $(H_s, J_s^{R'})$-trajectories in $\widehat{W}$ coincide: by Proposition \ref{prop: neckstretch}, these trajectories occur in $\widehat{W}\backslash V$ and $J_s^R = J_s^{R'}$ in this region since the stretching occurs in $Z \times [1-\delta, 1] \subset V$.
\item 
We can also stretch along $Y=\partial W$ and $Z = \partial V$  simultaneously to show that cases i), ii), iii) can be satisfied with the same almost complex structure on $\widehat{W}$.  More precisely, suppose $H_1= H_W$ defines the differential in case i), 
$H_2 = H_s$ defines the continuation map in case ii), and $H_3 = H_s'$ from $H_{W,V}$ to $H_W$ defines the transfer map in case iii). Let $J^{A,B} \in \mathcal{J}_{std}(W)$ be stretched in $Y \times [1-\delta, 1], Z\times [1-\delta, 1]$ using $\phi^A, \phi^B$ respectively. Then there exist $R_0 \in (0, 1-\delta)$ such that if $A \le R_0$ (and any $B$), all rigid 
$(H_1, J^{A,B})$-trajectories  
and $(H_2, J^{A,B})$-trajectories 
stay in $\widehat{W}\backslash W$ 
and if $B \le R_0$ (and any $A$), all rigid $(H_3, J^{A,B})$-trajectories  stay in $\widehat{W}\backslash V$.  
Here we assume that the $x_+$ is a Hamiltonian orbit of 
$H_W$ and $x_-$ is a Hamiltonian orbit of either $H_W, H_{W,-},$ or $H_V$ depending on whether we consider case i), ii) or iii). 
\end{enumerate}
\end{remark}

\begin{proof}[Proof of Proposition \ref{prop: neckstretch}]
We will only prove case iii) since cases i), ii) are very similar. In fact, case i) was essentially proven in Section 5.2 of \cite{BO} and our proof closely follows the argument there.

For case iii), $H_s$ is a decreasing homotopy with 
$H_- = H_{W,V}$ and $H_+ = H_W$ and $H_s = h_s(t)$ is increasing in $t$ near $Z$. Also, $x_+$ is an orbit of $H_{W}$ located near $Y = \partial W$ and corresponds to a parametrized Reeb orbit of $(Y, \alpha) = \partial W$ since the source of the map we care about is $SC^+(H_W, J_W)$. Similarly $x_-$ is a type II orbit of $H_{W,V}$ located near $Z = \partial V$ and corresponds to a parametrized Reeb orbit of $(Z, \alpha_Z) = \partial V$ since the target is $SC^{II}(H_{W,V}, J_{W,V}) \cong SC^+(i(H_{W,V}), i(J_{W,V}))$. Furthermore, $|x_+|-|x_-| = 0$ because the transfer map is defined by counts of rigid parametrized Floer trajectories. 

Now suppose that there exists a sequence $R_k \rightarrow 0$ and $(H_s, J_s^{R_k})$-Floer cylinders $u_k \in  \mathcal{M}_0(x_-,x_+; H_s, J_s^{R_k})$ such that $u_k$ is not contained in $\widehat{W}\backslash V$. 
Then by the SFT compactness theorem \cite{BEHWZ_03}, $u_k$ converges to a broken Floer building. Our intermediate goal is to show that this building is just a single punctured $(H_s, J_s)$-Floer trajectory in $\widehat{W\backslash V}$ capped off by $J$-holomorphic buildings in $\widehat{V}$.  

A priori, this broken Floer building consists of Floer components in the completion $\widehat{W\backslash V} = \widehat{W}\backslash V \cup Z \times (0,1]$ of the Liouville cobordism $W\backslash V$, $J$-holomorphic components in (possibly several levels of) $Z \times (0, \infty)$ and $\widehat{V}$, and Floer components in $\widehat{V}$.  
To see that the usual SFT compactness theorem applies, note that even though $(H_s, J_s)$ depends on $s$, $H_s \equiv 0$ and $J_s$ is cylindrical and $s$-independent in $Z\times [1-\delta, 1]$, where the neck-stretching occurs. Furthermore, since $H_s$ is decreasing, these  trajectories do not escape to infinity by `no escape' Lemma \ref{lem: maximal_principle_param}. Finally, the Hofer energy of $u_k$ in $Y \times [1-\delta, 1]$ is bounded by $A_{H_+}(x_+) - A_{H_-}(x_-)  +  \int (\frac{\partial H_s}{\partial s} )(u_k) ds \wedge dt$ and hence uniformly bounded by $A_{H_+}(x_+) - A_{H_-}(x_-)$ since $H_s$ is decreasing; see the proof of Lemma 2.4 in \cite{CieliebakOancea}.

We now explain this broken building in more detail. As we stretch the almost complex structure on $\widehat{W}$, the almost complex structure on the domain $\mathbb{R}\times S^1$ and the Hamiltonian $H_s$ on $\widehat{W}$ are fixed. As a result, the only breaking can be bubbling off a $J$-holomorphic component at some finite point in the domain or Floer breaking at infinity in the domain. In particular, only one Floer component is a $(H_s, J_s)$-trajectory while the rest are $(H_+, J_+)$ or $(H_-, J_-)$ trajectories.
If a Floer component happens to map to $Z \times (0, \infty)$, then it is $J'$-holomorphic since $H_s$ is constant there. Also, all Floer components are punctured Floer trajectories that are asymptotic to Reeb orbits at the punctures and either to Hamiltonian orbits or Reeb orbits at the two cylindrical ends; in particular, a given component is asymptotic to at most two Hamiltonian orbits.  See Figure \ref{fig: breaking} for a possible broken Floer building and the corresponding broken domain.  In this figure, we consider the general case when $H_s$ is non-constant in $V$, which is why there is a $H_+$-trajectory in $\widehat{V}$; in the rest of the proof, $H_s$ is always zero in $V$.  
 \begin{figure}
     \centering
     \includegraphics[scale=0.4]{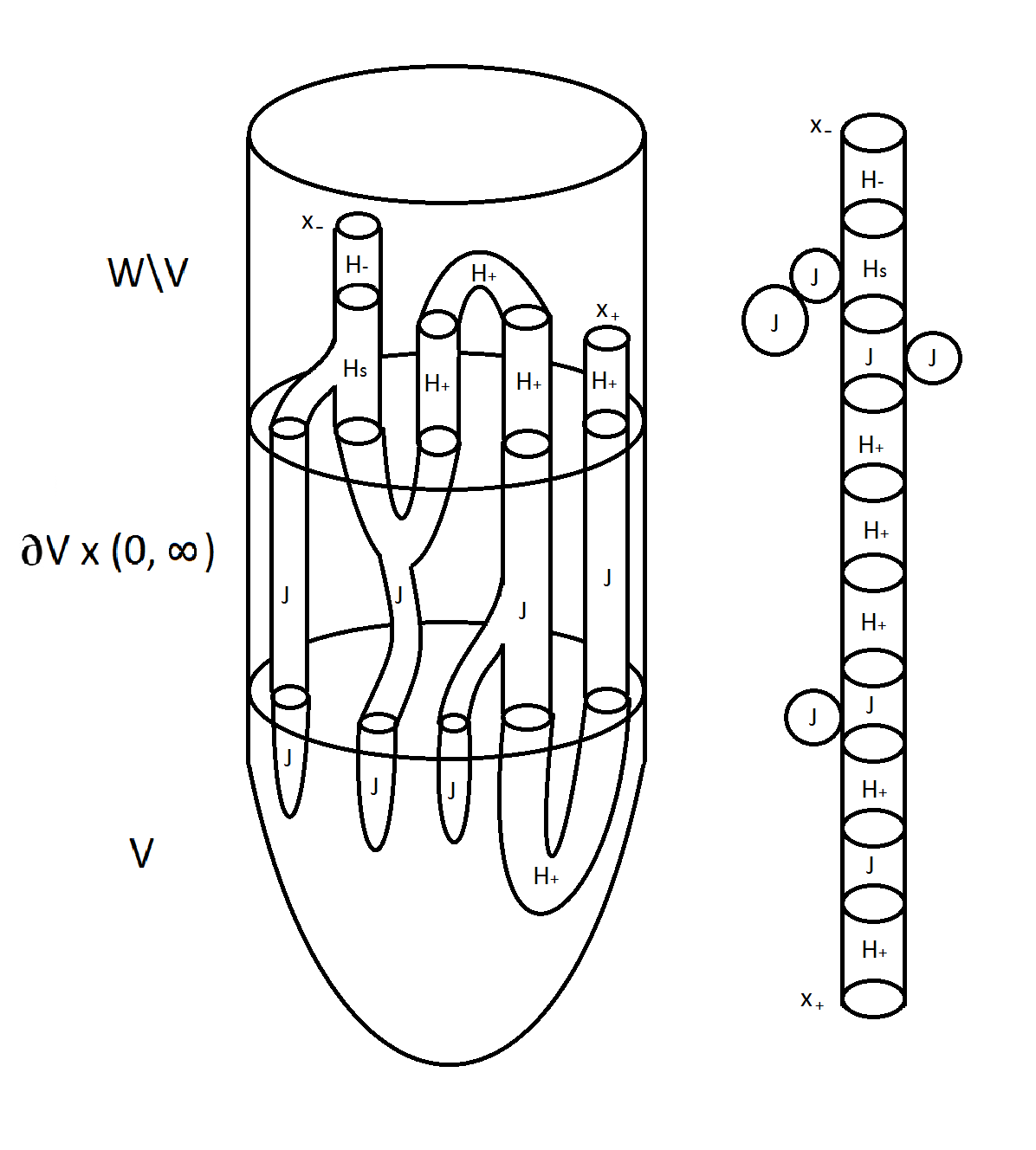}
     \caption{A broken Floer building and the corresponding broken domain
     }
     \label{fig: breaking}
 \end{figure}

For the proof, we shall be only concerned with the top part of the building in $\widehat{W\backslash V}$, which now describe explicitly.  First, note that we can extend $(H_s, J_s)$ from $\widehat{W}\backslash V$ to $\widehat{W\backslash V}$ since $(H_s, J_s)$ is constant near $\partial_- (W\backslash V) = Z$. To describe the broken Floer building, we use the notation from Section 5.2 of \cite{BO}, which describes the building when $H_s$ is $s$-independent. The broken Floer building in $\widehat{W\backslash V}$ corresponds to a tuple $(x_- = x_0, \tilde{u}_1, x_1,  \cdots, x_{m-1}, \tilde{u}_m, x_+= x_m), m \ge 1,$ with the following properties:
\begin{itemize}
\item $x_- = x_0, x_1, \cdots, x_j \in \widehat{W}\backslash V$ are Hamiltonian orbits of $H_- = H_{W,V}$ for some $0 \le j \le m-1$ and  $x_{j+1}, \cdots, x_+ =x_m\in \widehat{W}\backslash V$ are Hamiltonian orbits of $H_+ = H_W$. 
\item $\tilde{u}_i: \Sigma_i\backslash\{z_i^1, \cdots, z_i^{k_i}\} \rightarrow \widehat{W\backslash V}$, 
where $\Sigma_i =\mathbb{R} \times S^1$ or 
$\Sigma_i =\mathbb{R} \times S^1 \coprod
 \mathbb{R} \times S^1:= \overline{\Sigma}_i
  \coprod \underline{\Sigma}_i$, is a punctured $(H_-, J_-),(H_+, J_+)$-Floer cylinder  in $\widehat{W\backslash V}$ for  $i \le j, i \ge j+2$ respectively. 
  If $i = j+1$ and $\Sigma_{j+1}$ is connected, 
  $\tilde{u}_{j+1}$ is a punctured $(H_s, J_s)$-trajectory; if $\Sigma_{j+1}$ is disconnected, then either $\tilde{u}_{j+1}|_{\overline{\Sigma}_{j+1}}, \tilde{u}_{j+1}|_{\underline{\Sigma}_{j+1}}$ are  $(H_s, J_s), (H_-, J_-)$ - trajectories or 
   $\tilde{u}_{j+1}|_{\overline{\Sigma}_{j+1}}, \tilde{u}_{j+1}|_{\underline{\Sigma}_{j+1}}$ are $(H_+, J_+), (H_s, J_s)$-trajectories.
   \item If $\Sigma_i = \mathbb{R} \times S^1$, then 
$\underset{s\rightarrow - \infty}{\lim}
 \tilde{u}_i(s, \cdot) = x_{i-1}, 
\underset{s\rightarrow +\infty}{\lim}
 \tilde{u}_i(s, \cdot) = x_{i}$. 
\item If $\Sigma_i =\overline{\Sigma}_i
  \coprod \underline{\Sigma}_i$, then 
$\underset{s\rightarrow - \infty}{\lim} \tilde{u}_i|_{\underline{\Sigma}_i}(s, \cdot) = x_{i-1}, 
\underset{s\rightarrow + \infty}{\lim} \tilde{u}_i|_{\overline{\Sigma}_i}(s, \cdot) = x_{i}$. 
Furthermore, $\tilde{u_i}|_{\overline{\Sigma}_i}, 
\tilde{u_i}|_{\underline{\Sigma}_i}$ are asymptotic to Reeb orbits $\overline{\gamma}_i, \underline{\gamma}_i$ 
of $(Z, \alpha_Z)$ at their negative, positive ends respectively; in particular,  $\tilde{u_i}|_{\underline{\Sigma}_i}(s, \cdot) \subset Z\times (0,1] \subset \widehat{W\backslash V}$ for 
$s \gg 0$ and 
$\underset{s\rightarrow \infty}{\lim}
 t\circ \tilde{u_i}|_{\underline{\Sigma}_i}(s, \cdot)  = 0$,  
where $t$ is the second coordinate on $Z\times(0,1]$,  and similarly for $\tilde{u_i}|_{\overline{\Sigma}_i}$. 
\item Near the punctures $z_i^{k}$, $\tilde{u}_i$ is $J'$-holomorphic and asymptotic to Reeb orbits $\gamma_i^k$ of $(Z, \alpha_Z)$.
 \end{itemize}
For example in Figure \ref{fig: breaking}, $m = 4$ and $\Sigma_1, \Sigma_3$ are connected while $\Sigma_2, \Sigma_4$ are disconnected; also $j = 1$ so that $x_1$ is a Hamiltonian orbit of $H_-$ while $x_2$ is a Hamiltonian orbit of $H_+$.  It is also possible that the top part of the building has no $(H_s, J_s)$-component; this can happen when this component occurs in $\partial V \times (0, \infty)$ or in $\widehat{V}$. In this case, the proof is easier and so we focus on the situation described above when a $(H_s, J_s)$-component occurs in the top part of the building. 

A key part of the proof of this proposition involves restricting the possible broken Floer buildings, which a priori can be quite complicated as Figure \ref{fig: breaking} shows. As we noted before, we want to show that the building is actually 
just a punctured $(H_s, J_s)$-Floer trajectory in $\widehat{W\backslash V}$ capped off by $J$-holomorphic buildings in $\widehat{V}$.  Our first goal is to show that all $\Sigma_i$ are connected, which we do in two steps. 

First, we show that the Hamiltonian orbits $x_- = x_0, x_1, \cdots, x_j$ of $H_- = H_{W,V}$ are located near $Z = \partial V$  and 
correspond to parametrized Reeb orbits of $(Z, \alpha_Z)$; similarly, the orbits $x_{j+1}, \cdots, x_m = x_+$ of 
$H_+ =H_{W}$ are located near $Y= \partial W$ and correspond to parametrized Reeb orbits of 
$(Y, \alpha_Y)$. This already holds for $x_0$ and $x_m$ by assumption. Note that $A_{H_{W,V}}(x_{i-1}) < A_{H_{W,V}}(x_i)$ for $i \le j$ since, after gluing $J'$-holomorphic components in $Z \times (0, \infty)$ and Floer components in $\widehat{V}$ to the punctures, we can consider $\tilde{u}_i$ as a $(H_{W,V}, J_{W,V})$-Floer trajectory in $\widehat{W}$. Similarly, 
$A_{H_W}(x_{i-1}) < A_{H_W}(x_i)$ for $i \ge j+2$. Finally, 
$A_{H_{W,V}}(x_{j}) < A_{H_W}(x_{j+1})$  since $H_s$ is decreasing. From Section \ref{ssec: postive_sym_hom}, we know $A_{H_{W,V}}(x_-) > 0$ since $x_-$ is a II orbit. So by the previous inequalities, $A_{H}(x_i) > 0$ for all $i$ (where $H$ is $H_{W,V}$ or $H_W$ depending on whether $i \le j$ or 
$i \ge j+1$). In particular, for $i\le j$, $x_i$ cannot be a I, IV, or V orbit of $H_{W,V}$ which have non-positive action; we need $\epsilon_V \ge \frac{s_W}{s_V}$ for V orbits to have negative action but this already needed for the definition of the transfer map. Similarly, for $i \ge j+1$, $x_i$ cannot be a Morse critical point of $H_W$, which has non-positive action. Finally, we need to show that for $i \le j$, $x_i$ cannot be a III orbit of $H_{W,V}$.  Suppose a III orbit exists in the broken Floer building. Then there is a punctured $(H_{W,V}, J_{W,V})$-Floer trajectory from the last III orbit to a II orbit; to see this, note that the I, IV,V orbits have already been ruled out and we know that $x_-$ is a II orbit. But as we saw in Section \ref{ssec: transfermaps}, this is impossible by Lemma \ref{lem: rise_above} and Lemma \ref{lem: maximal_principle}.

Now we show that all Floer components are connected, i.e. $\Sigma_i = \mathbb{R} \times S^1$ for all $i$.
Suppose $\Sigma_i = \overline{\Sigma}_i \coprod 
\underline{\Sigma}_i$ is disconnected. We do only the case when $i = j+1$ and  $\tilde{u}_{j+1}|_{\overline{\Sigma}_{j+1}}, \tilde{u}_{j+1}|_{\underline{\Sigma}_{j+1}}$ are $(H_{W}, J_{W}), (H_s, J_s)$-trajectories since the other cases are similar. The punctured Floer trajectory 
$\tilde{u}_{j+1}|_{\underline{\Sigma}_{j+1}}$ is asymptotic to $x_j$ at its negative end and to the orbit $\underline{\gamma}_{j+1}$ at its positive end. By the discussion in the previous paragraph, the Hamiltonian orbit $x_{j}$ of $H_- = H_{W,V}$ is located near $Z = \partial V$. Note that $t|_{x_{j}}$ is constant because near $Z$, $H_{W,V}$ equals the function $h(t)$ and so $x_-$ lies in a level set of $t$. In particular, 
$\underset{s\rightarrow -\infty}{\lim} t\circ \tilde{u}_{j+1}|_{\underline{\Sigma}_{j+1}(s, \cdot)}  = t(x_{j})$. On the other hand, 
$\underset{s\rightarrow +\infty}{\lim} t\circ \tilde{u}_{j+1}|_{\underline{\Sigma}_{j+1}(s, \cdot)}  = 0$ since $\tilde{u}_{j+1}$ is asymptotic to  $\underline{\gamma}_{j+1}$ at its positive end. Since $t(x_{j}) > 0$, there exists $p \in \underline{\Sigma}_{j+1}$ such that $t(\tilde{u}_i|_{\underline{\Sigma}_{j+1}}(p)) < t(x_{j})$. At the same time, there also exists $q \in  \underline{\Sigma}_{j+1}$ such that $t(\tilde{u}_{j+1}(q)) > t(x_{j})$. To see this, consider $s \ll 0$ so that $H_s = H_-$. As we previously noted, $x_{j}$ occurs near $Z = \partial V$ and $H_- = h(t)$ is convex increasing here. So we can  apply  Lemma \ref{lem: rise_above} to $H_-$, which shows that  $\tilde{u}_{j+1}$ must rise above $x_{j}$
and hence there exists such $q$. The existence of $p$ and $q$ imply that $\tilde{u}_{j+1}$ violates the parametrized `no escape' Lemma \ref{lem: maximal_principle_param}; this lemma applies here because $H_s = h_s(t)$ is a decreasing homotopy that is increasing in $t$
near $Z\times \{1\}$, where $x_{j}$ is located, and $h_s(t)|_{Z\times \{1\}} = 0$  and $J|_{Z\times \{1\}}$ are $s$-independent. This is a contradiction and so $\Sigma_{j+1}$ must be connected. 
 
 Recall that the punctured Floer trajectories $\tilde{u}_i$ is asymptotic to the Reeb orbits $\gamma_i^1, \cdots, \gamma_i^{k_i}$ at its punctures $z_i^1, \cdots, z_i^{k_i}$. The original sequence $u_k \in  \mathcal{M}_0(x_-,x_+; H_s, J_s^{R_k})$  is not contained in $\widehat{W}\backslash V$ and so $k_i \ge 1$ for some $i$. Also, because
 the entire glued Floer building has genus 0, all
  $\gamma_i^k$ are contractible in $W$. Because $\pi_1(Z) \rightarrow \pi_1(W)$ is injective, $\gamma_i^k$ are actually contractible in $Z$.

We now show that all $\gamma_i^k$ have positive degree. If \textit{all} contractible Reeb orbits of $(Z, \alpha_Z)$ have positive degree, this follows trivially. However, 
the assumption in this proposition is that only elements of $\mathcal{P}^{< D}(Z, \alpha_Z)$ have positive degree. 
Therefore, to show that $\gamma_i^k$ have positive degree, it suffices to show that $A(\gamma_i^k) < D$. 
First, we note that $A_H(x_{i}) - A_H(x_{i-1}) - \sum_{k=1}^{k_i} A_H(\gamma_i^k)$ equals the Floer energy of $\tilde{u}_i$ for $i \ne j+1$ and hence is positive;  here $H = H_{W,V}$ or $H = H_W$ depending on whether $i \le j$ or $i \ge j+2$. We view $\gamma_i^k$ as a loop in $\widehat{W}$ lying in $Z = \partial V$ (see the proof of Lemma 2.4 in \cite{CieliebakOancea}). Since $H \equiv 0$ in $Z \times [1-\delta, 1]$, we have $A_H(\gamma_i^k) = A(\gamma_i^k) > 0$ and so $A_{H}(x_{i}) - A_{H}(x_{i-1}) > 0$ for $i \ne j+1$. 
Similarly,
 $A_{H_{W}}(x_{j+1}) - A_{H_{W,V}}(x_{j}) - \sum_{k=1}^{k_{j+1}} A(\gamma_i^k) + \int (\frac{\partial H_s}{\partial s})(u) ds \wedge dt$ equals the Floer energy of $\tilde{u}_{j+1}$ and hence is positive. Since $\frac{\partial H_s}{\partial s} \le 0$  and  $A(\gamma_i^k)> 0$,  then $A_{H_W}(x_{j+1}) - A_{H_{W,V}}(x_{j}) > 0$ as well.
 By assumption of the proposition, we have $A_{H_+}(x_+)- A_{H_-}(x_-)  < D$. 
Since $\sum_{i=1}^m (A_H(x_{i}) - A_H(x_{i-1})) = A_{H_+}(x_+)- A_{H_-}(x_-) < D$ and $A_H(x_{i}) - A_H(x_{i-1}) > 0$ for all $i$, we have  $A_H(x_{i}) - A_H(x_{i-1}) < D$ for all $i$. 
This implies $ \sum_{k=1}^{k_i} A(\gamma_i^k) < A_H(x_{i}) - A_H(x_{i-1}) < D$. Finally, since $A(\gamma_i^k) > 0$, we have $A(\gamma_i^k) < D$ as desired and so all $\gamma_i^k$ have positive degree.

The moduli space of punctured Floer cylinders asymptotic at their positive end, negative end, and punctures to  $x_i, x_{i-1},\gamma_i^{k}$ respectively has virtual dimension
\begin{equation}\label{eqn: dim_moduli}
|x_i| - |x_{i-1}| - \sum_{k = 1}^{k_i} |\gamma_i^{k}|.
\end{equation}
See \cite{BO}, Section 5.2. For $i \ne j+1$, we can quotient out by a free $\mathbb{R}$-action, in which case the resulting moduli space has virtual dimension one less than in Equation \ref{eqn: dim_moduli}. 
Just like in the non-punctured situation, transversality for these punctured Floer moduli spaces 
is guaranteed by using a time-dependent perturbation of the almost complex structure; see \cite{BO2} and \cite{Ritter}. Hence Equation \ref{eqn: dim_moduli} must be non-negative or positive, depending on whether $i = j+1$ or $i \ne j+1$. In particular, 
\begin{equation}\label{eqn: moduli_dim2}
|x_+| - |x_-| - \sum_{i=1}^m \sum_{k = 1}^{k_i} |\gamma_i^{k}| = \sum_{i=1}^m \left(|x_i| - |x_{i-1}| - \sum_{k = 1}^{k_i} |\gamma_i^{k}|\right) \ge m-1.
\end{equation}
By assumption, $|x_+| - |x_-| = 0$. Furthermore, 
 $|\gamma_i^k|$ are all positive and $m \ge 1$.
  So the left-hand-side of Equation \ref{eqn: moduli_dim2} is negative while the right-hand-side is non-negative.
 This is a contradiction and hence there exists $R_0$ such that all $(H_s,J_s^R)$-Floer trajectories stay in $\widehat{W}\backslash V$ for any $R \le R_0$. 
\end{proof}

Using Proposition \ref{prop: neckstretch}, we now prove Proposition \ref{prop: nice_sh_independent}. This proposition involves some basic homological algebra and is similar to the proof in \cite{BEE12} that linearized contact homology can be computed via the \textit{essential complex}.

\begin{proof}[Proof of Proposition 
\ref{prop: nice_sh_independent}]
Let $(Y, \xi)$ be an asymptotically dynamically convex contact structure. Let $\alpha_1 \ge \alpha_2 \ge \alpha_3 \cdots$ be contact forms for $(Y, \xi)$ and $D_1 < D_2 < D_3 \cdots $ a sequence of real numbers tending to infinity such that all elements of $\mathcal{P}^{< D_i}(Y, \alpha_i)$ have positive degree as in Definition \ref{def: semigood}. By Remark \ref{rem: semigood}, we can assume $\frac{1}{16}\alpha_i \ge \alpha_{i+1}$ for all $i$. 

Let $(W, \lambda)$ be any Liouville filling of $(Y, \xi)$. Let $\alpha = \lambda|_Y$ be the contact form on $Y$ induced by $(W, \lambda)$. Since $W$ is Liouville, we can symplectically embed the whole symplectization $(Y\times (0, \infty), r \alpha)$ of $(Y, \alpha)$ into the completion $\widehat{W}$. Recall from 
Section \ref{ssec: transfermaps} that any contact form $\beta$ for $(Y, \xi)$ gives rise to a graphical submanifold $Y_\beta$ of $Y\times (0, \infty)$ defined by $r \alpha|_{Y_{\beta}} = \beta$. 
Now let $Y_i  := Y_{\alpha_i} \subset Y\times (0, \infty) \subset \widehat{W}$ be the graphical submanifolds corresponding to the forms $\alpha_i$. 
The neighborhoods $(Y_i \times [1/2, 2], r\alpha_i) \subset (Y\times (0, \infty), r\alpha)$ of $Y_i$ are Liouville subcobordisms. In fact since $\frac{1}{16}\alpha_i \ge \alpha_{i+1}$, these subcobordisms are disjoint. For example, $(Y_{\alpha/16} \times [1/2, 2], r \alpha/16)$ is a Liouville subcobordism of $(Y\times (0, \infty), r\alpha)$ as $(Y\times [1/32, 1/8], r\alpha)$, which is disjoint from $(Y \times [1/2, 2], r\alpha)$ since $1/8 < 1/2$. 
Let  $W_i$ be the graphical subdomain equal to the bounded component of $\widehat{W}\backslash Y_i$. So $W_i$ is a Liouville filling of $(Y_i, \alpha_i)$ and all the completions $\widehat{W}_i$ coincide with $\widehat{W}$. Since  $\frac{1}{16}\alpha_i \ge \alpha_{i+1}$,  $W_i \supset W_{i+1}$ and the collars of $W_i$ are disjoint, i.e. 
$W_i \backslash (Y_i \times [1/2,1]) \supset W_{i+1} \cup (Y_{i+1} \times [1, 2])$. 

We now give an outline of the proof of Proposition \ref{prop: nice_sh_independent}. The idea is to construct certain Hamiltonians $H_i^{D_j} \in \mathcal{H}_{std}(W_i)$ and almost complex structures $J_i^{R_i} \in \mathcal{J}_{std}(W_i)$ (and the appropriate homotopies between them) that give rise to the following commutative diagram: 
\begin{equation}\label{eqn: CD_large}
\begin{CD}
SH^{+}(W_1; H_1^{D_1}, J_1^{R_1}) @>>> SH^{+}(W_1; H_1^{D_2}, J_1^{R_1})
@>>> \cdots @. \qquad @.  \underset{\longrightarrow j}{\lim}\; SH^{+}(W_1; H_1^{D_j}, J_1^{R_1}) \\ 
 @VVV  @VVV  @. \qquad @.  @VVV  \\
SH^{+}(W_2; H_2^{D_1}, J_2^{R_2}) @>>>  SH^{+}(W_2; H_2^{D_2}, J_2^{R_2})
 @>>> \cdots @. \qquad @.  \underset{\longrightarrow j}{\lim}\; SH^{+}(W_2; H_2^{D_j}, J_2^{R_2})\\
 @VVV  @VVV  @. \qquad @.  @VVV  \\
 \vdots  @. \vdots @. @. \qquad @.  \vdots \\
\underset{\longrightarrow i}{\lim}
   \; SH^{+}(W_i; H_i^{D_1}, J_i^{R_i}) @>>>  
 \underset{\longrightarrow i}{\lim} 
  \; SH^{+}(W_i; H_i^{D_2}, J_i^{R_i})
 @>>> \cdots
\end{CD}
\end{equation}
See \cite{BEE12} for a similar diagram involving linearized contact homology. The homotopies are chosen so that horizontal maps are continuation maps and the vertical maps are transfer maps. Furthermore, the $H_i^{D_j}$ are taken so that the horizontal limit
$\underset{\longrightarrow j}{\lim}\; SH^{+}(W_i; H_i^{D_j}, J_i^{R_i})$ is isomorphic to $SH^+(W_i)$ and the right-most vertical maps are all isomorphisms. 
Since direct limits commute, we can switch the $i$ and $j$ indices. As a result, we have
\begin{equation}\label{eqn: directlimit3}
SH^{+}(W) \cong 
\lim_{\longrightarrow i} SH^+(W_i)
\cong \lim_{\longrightarrow i} \lim_{\longrightarrow j} 
SH^{+}(W_i; H_i^{D_j}, J_i^{R_i}) 
\cong
\lim_{\longrightarrow j} \lim_{\longrightarrow i}
 SH^{+}(W_i; H_i^{D_j}, J_i^{R_i}).
\end{equation} 
Since direct limits do not depend on the first 
finitely many terms,
 $ \underset{\longrightarrow i}{\lim}
 SH^{+}(W_i; H_i^{D_j}, J_i^{R_i})$ is isomorphic to  
 $\underset{ {\substack{   \longrightarrow i\\
                   i \ge j }}}{\lim}
      SH^{+}(W_i; H_i^{D_j}, J_i^{R_i})$.  
Combining this with Equation \ref{eqn: directlimit3}, we get
\begin{equation}\label{eqn: directlimit4}
SH^+(W) \cong \underset{\longrightarrow j}{\lim}
\lim_{\substack{   \longrightarrow i\\
                   i \ge j }} 
      SH^{+}(W_i; H_i^{D_j}, J_i^{R_i}).
\end{equation}

The key point is that since $(Y, \xi)$ is ADC, we can construct $H_i^{D_j}, J_i^{R_i}$ so that the right-hand-side of Equation \ref{eqn: directlimit4} is independent of the filling $W$. More precisely, because all elements of $\mathcal{P}^{<D_j}(\alpha_i)$ have positive degree for $j \le i$, we can stretch-the-neck as in Proposition \ref{prop: neckstretch}  and take $H_{i}^{D_i}, J_i^{R_i}$ so that the Floer trajectories defining the differential on $SC^{+}(W_i; H_i^{D_j}, J_i^{R_i})$ and the continuation maps 
$SH^{+}(W_i; H_i^{D_j}, J_i^{R_i}) \rightarrow 
SH^{+}(W_i; H_i^{D_{j+1}}, J_i^{R_i})$ stay in $\widehat{W_i}\backslash W_i$ for $j \le i$. 
Similarly, we take the appropriate homotopies so that the trajectories defining the transfer map
$SH^{+}(W_i; H_i^{D_j}, J_i^{R_i}) 
\rightarrow 
SH^{+}(W_{i+1}; H_{i+1}^{D_j}, J_{i+1}^{R_{i+1}})$
stay in $\widehat{W}_{i}\backslash W_{i+1}$  for $j \le i$; more precisely, this map is chain-homotopic to a map defined by trajectories that stay in  $\widehat{W}_{i}\backslash W_{i+1}$ and so on homology, this map is defined by such trajectories. 

If $(Y, \xi)$ has another Liouville filling $V$, we repeat this procedure. We first take graphical subdomains $V_i$ of $\widehat{V}$ so that $\partial V_i = Y_i$. This implies that $\widehat{V}\backslash V_i
=\widehat{W}\backslash W_i$ and  $V_i \backslash V_{i+1} = W_i \backslash W_{i+1}$. Then we construct similar Hamiltonians $\overline{H}_i^{D_j} \in \mathcal{H}_{std}(V_i) $ 
and almost complex structures
$ \overline{J}_i^{R_i}\in \mathcal{J}_{std}(V_i)$
 that agree with $H_i^{D_j}, J_i^{R_i}$ in these regions. For $j \le i$, $(\overline{H}_i^{D_j},  \overline{J}_i^{R_i})$-Floer trajectories and  $(H_i^{D_j},  J_i^{R_i})$-Floer trajectories both occur in $\widehat{W}_i \backslash W_i$ and hence coincide.   
 This shows that $SH^+(V_i; \overline{H}_i^{D_j}, \overline{J}_i^{R_i})$ and 
$SH^+(W_i; H_i^{D_j}, J_i^{R_i})$ can be canonically identified for $j \le i$; the underlying chain complexes have the same generators for all $i,j$ and the same differential for $j \le i$.  Similarly,  the continuation maps and the transfers maps for $W,V$ can also be canonically identified for $j \le i$.
 As a result, the commutative diagrams 
 $\{SH^+(V_i; \overline{H}_i^{D_j}, \overline{J}_i^{R_i}),j \le i \},$
 $\{SH^+(W_i; H_i^{D_j}, J_i^{R_i}), j \le i \}$,
 are identical. Therefore we can conclude that 
$$
\lim_{\longrightarrow j} 
\lim_{\substack{   \longrightarrow i\\
                   i \ge j }} 
      SH^{+}(V_i; \overline{H}_i^{D_j}, \overline{J}_i^{R_i}) 
  =
  \lim_{\longrightarrow j} 
  \lim_{\substack{   \longrightarrow i\\
                     i \ge j }} 
        SH^{+}(W_i; H_i^{D_j}, J_i^{R_i})
$$  
and so by Equation \ref{eqn: directlimit3}, $SH^+(V) \cong SH^+(W)$ as desired.  
\\

To complete the proof, we need to  construct the special Hamiltonians and almost complex structures discussed above and show that they satisfy all the claimed properties. We do this just for $W$ since the procedure for $V$ is exactly the same.

Let $H_i^{D_j} \in \mathcal{H}_{std}(W_i)$ have slope $D_j$ in $\widehat{W_i}\backslash W_i$; in particular, $H_i^{D^j} \equiv 0$ in $Y_i \times [1/2,1] \subset W_i$. 
All non-constant Hamiltonian orbits of $H_i^{D_j}$ lie in $\widehat{W_i}\backslash W_i$
 and  have  action which is positive and bounded by $D_j$; indeed since $H_i^{D_j}\equiv 0$ in $W_i$, 
the Hamiltonian action of an Hamiltonian orbit is close to the action of the corresponding Reeb orbit, which is positive and bounded by $D_j$. In particular, any non-constant orbits $x_-, x_+$ of $H_i^{D_j}$ satisfy $A_H(x_+) - A_H(x_-) < D_j$. 
Also let $H_i^{D_j, D_{j+1}}$ be a decreasing homotopy of Hamiltonian functions in $\mathcal{H}_{std}(W_i)$ from  $H_- = H_i^{D_{j+1}}$ to $H_+ = H_i^{D_j}$; again we have
$H_i^{D_j, D_{j+1}}\equiv 0$ on $Y_i \times [1/2,1] \subset W_i$.
 
Let $J_i \in \mathcal{J}_{std}(W_i)$ be cylindrical  in $Y_i \times [1/2,1]$. For $R \in (0,1/2)$, let 
$J_i^R$ denote the result of stretching $J_i$ in $Y_i \times [1/2,1]$ using a diffeomorphism $\phi^R$ from  $[R,1]$ to $[1/2,1]$. 
Since $(Y, \xi)$ is ADC, all elements of $\mathcal{P}^{< D_i}(Y_i, \alpha_i)$  have positive degree;  in fact all elements of $\mathcal{P}^{< D_j}(Y_i, \alpha_i)$  have positive degree for all $j \le i$ because  the $D$'s are increasing. As discussed in the previous paragraph,  $A_H(x_+) - A_H(x_-) < D_j$ for non-constant orbits of $x_-, x_+$ of $H_i^{D_j}$. 
So for $j \le i$ we can apply case i) of Proposition \ref{prop: neckstretch} to $V = W = W_i$ and $H_s = H_i^{D_j}$ and conclude that there exists $R_i^j \in (0,1/2)$ such that all 
 $(H_i^{D_j}, J_i^{R_i^j})$-Floer trajectories in $\widehat{W_i}$ between non-constant orbits are contained in $\widehat{W_i}\backslash W_i$; in fact this holds
for $(H_i^{D_j}, J_i^{R})$-Floer trajectories for any $R \le R_i^j$. Note that we need $W$ to be a $\pi_1$-injective filling of $Y$ to use Proposition \ref{prop: neckstretch}. Similarly,  by case ii) of Proposition \ref{prop: neckstretch}, there exists $R_{i}^{j,j+1} \in (0, 1/2)$ such  that the analogous claim holds for all  $(H_i^{D_j, D_{j+1}}, J_i^{R_i^{j,j+1}})$-Floer trajectories for $j \le i$; here we consider $J_i^{R_i^{j,j+1}}$ as an $s$-independent homotopy.  
 Let $R_i : = \min\{R_i^j, R_i^{j,j+1}, j \le i\} >0$.
 Then for $j \le i$, all  $(H_i^{D_j}, J_i^{R_i})$
 and $(H_i^{D_j, D_{j+1}}, J_i^{R_i})$-Floer trajectories in $\widehat{W_i}$ between non-constant orbits are contained in $\widehat{W_i}\backslash W_i$; note that these are precisely the trajectories that define the differential for
 $SC^{+}(W_i; H_i^{D_j}, J_i^{R_i})$ and the horizontal continuation map 
 $SH^{+}(W_i; H_i^{D_j}, J_i^{R_i})
 \rightarrow 
 SH^{+}(W_i; H_i^{D_{j+1}}, J_i^{R_i})$. 
 Since the $D_j$'s tend to infinity, the sequence $SH^{+}(W_i; H_i^{D_j}, J_i^{R_i})$ is cofinal in $j$  and so for all $i$
$$
\lim_{\rightarrow j} SH^{+}(W_i; H_i^{D_j}, J_i^{R_i})
= SH^{+}(W_i),
$$
where the direct limit maps are induced by the homotopies $(H_i^{D_j, D_{j+1}}, J_i^{R_i})$.

Next we define the vertical transfer maps 
 $SH^{+}(W_i; H_i^{D_j}, J_i^{R_i}) \rightarrow SH^{+}(W_{i+1}; H_{i+1}^{D_j}, J_{i+1}^{R_{i+1}})$. 
Let $S_{i}^{D_j} \in \mathcal{H}_{step}(W_i, W_{i+1})$
such that $S_i^{D_j}$ has slope $D_j$ in $\widehat{W}_{i} \backslash W_{i} = Y_i \times [1, \infty)$, slope 
$D_{j+1}$ in $Y_{i+1} \times [1,2]$, and is constant otherwise; as a result, $S_i^{D_j}$ and $H_i^{D_j}$ differ by the constant $D_{i+1}$ in $(Y_i \times [1, \infty), r\alpha_i)$. 
In the notation of Section \ref{ssec: transfermaps}, $i(S_i^{D_j})= H_{i+1}^{D_j}$, $s_{W_i} = s_{W_{i+1}} = D_j$, and $\epsilon_{W_{i+1}} = 1$. 
Note that we can take $\epsilon_{W_{i+1}} = 1$ because the collar $Y_{i+1} \times [1, 2]$ is disjoint from the collar $Y_{i} \times [1/2, 1]$. 
For $A, B \in (0, 1/2)$, let 
$J_i^{A, B} \in \mathcal{J}_{step}(W_i, W_{i+1})$ be defined as in part iii) of Remark \ref{rem: neck_stretching}, i.e.  $J_i^{A,B}$ is stretched in $Y_{i}\times [1/2,1], Y_{i+1}\times [1/2,1]$ using $\phi^A, \phi^B$ respectively. Again this is possible because $Y_{i+1}\times [1,2]$ is disjoint from $Y_i \times [1/2, 1]$. 
Let $H_{i,i+1}^{D_j}$ be a decreasing homotopy from $S_i^{D_j}$ to $H_i^{D_j}$ such that $H_{i, i+1}^{D_j} \equiv 0$ in $W_{i+1}$ for all $s\in \mathbb{R}$ and $H_{i, i+1}^{D_j} = h_s(t)$ in $Y_{i+1} \times [1,2]$ is increasing in $t$, where $t$ is the coordinate on the $[1,2]$-component. 

 Because $1 = \epsilon_{W_{i+1}} \ge \frac{s_{W_i}}{s_{W_{i+1}}} = 1$, by the discussion in Section \ref{ssec: transfermaps} 
 $(H_{i,i+1}^{D_j}, J_i^{A, B})$-Floer trajectories
define the transfer map 
\begin{equation}\label{eqn: SHtransfer}
SH^{+}(W_i; H_i^{D_j}, 
J_i^{A, B}) 
\rightarrow 
SH^{+}
(W_{i+1}; i(S_i^{D_j}), i(J_i^{A, B}) ) = 
SH^{+}(W_{i+1}; H_{i+1}^{D_j}, J_{i+1}^B ).
\end{equation}
Here we consider $J_i^{A, B} \in J_{step}(W_i,W_{i+1})$ as an $s$-independent homotopy.
By part iii) of Remark \ref{rem: neck_stretching}), for $j \le i$
there exists $R_i' \in (0,1/2)$ 
such that if $A\le R_i'$ (and any $B$), all rigid 
$(H_i^{D_j}, J_i^{A,B})$-trajectories defining the differential for $SC^+(H_i^{D_j}, J_i^{A, B})$ stay in $\widehat{W}_{i} \backslash W_i$ and if $B \le R_i'$ (and any $A$), all  rigid 
$(H_{i,i+1}^{D_j}, J_i^{A, B})$-Floer trajectories defining the transfer map Equation \ref{eqn: SHtransfer} stay in $\widehat{W_i}\backslash W_{i+1}$.  We can also assume that 
$R_i' \le \min\{R_i, R_{i+1}\}$. 
Then the transfer map  $SH^{+}(W_i; H_i^{D_j}, J_i^{R_i}) \rightarrow SH^{+}(W_{i+1}; H_{i+1}^{D_j}, J_{i+1}^{R_{i+1}})$ will be the composition of the following maps: 
\begin{equation}\label{eqn: CD}
\begin{CD}
SH^{+}(W_i; H_i^{D_j}, J_i^{R_i}) @>>> 
SH^{+}(W_i; H_i^{D_j}, J_i^{R_i'}) @>>> 
SH^{+}(W_i; H_i^{D_j}, J_i^{R_i', R_i'}) \\ 
 @. @.  @VVV  \\
SH^{+}(W_{i+1}; H_{i+1}^{D_j}, J_{i+1}^{R_{i+1}}) @<<< 
SH^{+}(W_{i+1}; H_{i+1}^{D_j}, J_{i+1}^{R_{i}'}) @=
SH^{+}(W_{i+1}; i(S_{i}^{D_j}), i(J_{i}^{R_{i}', R_i'}))  \\
\end{CD}
\end{equation}
Here  the right vertical map is given by Equation \ref{eqn: SHtransfer} and the horizontal maps are induced by homotopies of almost complex structures.

Now we show that this transfer map 
is defined by trajectories that stay in $\widehat{W}_i \backslash W_{i+1}$ if $j \le i$. First we note that by choice of $R_i'$, the right vertical map in Equation \ref{eqn: CD}
is defined by trajectories in $\widehat{W}_i \backslash W_{i+1}$. 
Hence it will suffice to show that the horizontal maps in Equation \ref{eqn: CD} 
are identity maps between identical vector spaces. We will focus mainly on the upper left horizontal map $SH^{+}(W_i; H_i^{D_j}, J_i^{R_i}) \rightarrow SH^{+}(W_i; H_i^{D_j}, J_i^{R_i'})$ since the situation for the other horizontal maps is similar. 
First, we note that the domain and target of this map are identical vector spaces. They have the same Hamiltonian $H_i^{D_j}$ and so the underlying chain complexes have the same generators. Since $R_i' \le R_i$, these chain complexes also have the same differentials defined by trajectories in the symplectization $\widehat{W}_i \backslash W_i$. 

We now show that $SH^{+}(W_i; H_i^{D_j}, J_i^{R_i}) \rightarrow SH^{+}(W_i; H_i^{D_j}, J_i^{R_i'})$ is the identity map between identical vector spaces. This map is clearly an isomorphism since it is induced by a homotopy of almost complex structures with fixed Hamiltonians. To show that this map is the identity, for each $t \in [0,1]$ we consider the homotopy of  homotopies $J_\cdot^t$ from 
$J_i^{R_i}$ to $J_i^{(1-t)R_i + t R_i'}$ obtained by stretching $J_i^{R_i}$ to $J_i^{(1-t)R_i +tR_i'}$ in $Y_i \times [1/2,1]$. More precisely, let 
$f_t: (-\infty, +\infty) \rightarrow [R_i', R_i]$ be a family of smooth non-decreasing functions that are constant near $\pm \infty$ and $f_t(-\infty) = (1-t)R_i +t R_i', f_t(+\infty) = R_i$; let $J_s^t := J_i^{f_t(s)}$. 
Then $(H_i^{D_j}, J_s^1)$-trajectories induces the desired map
$SH^{+}(W_i; H_i^{D_j}, J_i^{R_i}) \rightarrow SH^{+}(W_i; H_i^{D_j}, J_i^{R_i'})$
since $J_s^1$ is a homotopy from $J_i^{R_i}$ to $J_{i}^{R_i'}$. Also, 
$J_s^0 = J_i^{R_i}$ is the constant homotopy and so index 0 $(H_i^{D_j}, J_s^0)$-Floer trajectories are constant in $s$; hence these trajectories induce the identity map 
$SH^{+}(W_i; H_i^{D_j}, J_i^{R_i}) \rightarrow SH^{+}(W_i; H_i^{D_j}, J_i^{R_i})$. Now consider $\cup_{t \in [0,1]} \mathcal{M}(x_-, x_+; H_i^{D_j}, J_s^t)$. The boundary of the 1-dimensional part of this moduli space includes the 0-dimensional parts of 
$\mathcal{M}(x_-, x_+; H_i^{D_j}, J_s^0)$ and  
$\mathcal{M}(x_-, x_+; H_i^{D_j}, J_s^1)$. It also includes broken Floer trajectories made up of index $(-1)$ elements of $\mathcal{M}(H_i^{D_j}, J_s^t)$ for some $t \in [0,1]$ and index 1 elements of $\mathcal{M}(H_i^{D_j}, J_{+\infty}^t) = \mathcal{M}(H_i^{D_j}, J_i^{R_i})$ or 
$\mathcal{M}(H_i^{D_j}, J_{-\infty}^t) = \mathcal{M}(H_i^{D_j}, J_i^{(1-t)R_i+ tR_i'})$, i.e. trajectories that are rigid up to $\mathbb{R}$-translation. Since $(1-t)R_i + tR_i' \le R_i$ (because $R_i' \le R_i$), 
rigid $(H_i^{D_j}, J_i^{(1-t)R_i+ tR_i'})$-Floer trajectories stay in the symplectization $\widehat{W}_i\backslash W_i$ for all $t \in [0,1]$. Since $J_i^{(1-t)R_i+ tR_i'}$ and $J_i^{R_i}$ coincide in the symplectization, these trajectories coincide with $(H_i^{D_j}, J_i^{R_i})$-trajectories, which induces the differential on $SC^+(H_i^{D_j}, J_i^{R_i})$. In particular, this implies that the count of $(H_i^{D_j}, J_s^t)$-Floer trajectories with index $-1$ induces a chain homotopy between the identity map and the desired map $SH^{+}(W_i; H_i^{D_j}, J_i^{R_i}) \rightarrow SH^{+}(W_i; H_i^{D_j}, J_i^{R_i'})$; see Remark \ref{rem: chainhomotopy} below. Thus, on the homology level, this map is just the identity map.

\begin{remark}\label{rem: chainhomotopy}\
Suppose $J_s, J_s'$ are two homotopies from $J_-$ to $J_+$; they induce the chain maps
$\phi_{J_s}, \phi_{J_s'}: SC^+(H_+, J_+) \rightarrow SC^+(H_-, J_-)$. Then the above proof that the map $SH^{+}(W_i; H_i^{D_j}, J_i^{R_i}) \rightarrow SH^{+}(W_i; H_i^{D_j}, J_i^{R_i'})$ is the identity is similar to the proof that $\phi_{J_s}, \phi_{J_s'}$ are chain-homotopic. This proof involves taking a homotopy of homotopies $J_{\cdot}^t$ that interpolates between $J_s, J_s'$ and has $J_{\pm \infty}^t = J_\pm$ for all $t$. In our setting, the homotopy of homotopies does not have this second property since $J_{-\infty}^t = J_i^{(1-t)R_i + tR_i'}$ depends on $t$. However, this is not an issue in our case because the index 1 components of broken trajectories that can occur in the compactification of the 1-dimensional moduli space are independent of $t$, i.e. rigid 
$(H_i^{D_j}, J_i^{(1-t)R_i+ tR_i'})$-Floer trajectories stay in $\widehat{W}_i \backslash W_i$  for all $t$ since we have sufficiently stretched the neck and hence agree with $(H_i^{D_j}, J_i^{R_i})$ -trajectories. 
\end{remark}

To show that the upper right map 
$SH^{+}(W_i; H_i^{D_j}, J_i^{R_i'}) \rightarrow SH^{+}(W_i; H_i^{D_j}, J_i^{R_i', R_i'})$ in Equation \ref{eqn: CD} is the identity map, we consider a similar homotopy of homotopies $J_\cdot^t$ from 
$J_i^{R_i'}$ to $J_{i}^{R_i', tR_i'}$ and use the fact that 
$(H_i^{D^j}, J_i^{R_i', B})$-trajectories stay in the symplectization $\widehat{W}_i\backslash W_i$ for any $B$. 
Similarly, the bottom left horizontal maps in Equation \ref{eqn: CD} is also the identity map. Therefore the composition of the maps in Equation \ref{eqn: CD} equals just the right vertical map, which is defined by trajectories that stay in $\widehat{W}_i \backslash W_{i+1}$. Furthermore, this composition coincides with the usual transfer map on homology. The transfer map 
$
SH^{+}(W_i; H_i^{D_j}, J_i^{R_i} )
\rightarrow 
SH^{+}(W_{i+1}; H_{i+1}^{D_j}, J_{i+1}^{R_{i+1}})
$
is defined using \textit{any} homotopy from $(H_i^{D_j}, J_i^{R_i})$ to 
$(S_{i}^{D_j}, J_i^{R_i', R_{i+1}})$ and hence is independent of the choice of homotopy. Here we have constructed a particular homotopy as a composition of several homotopies, namely the homotopies inducing the maps in Equation \ref{eqn: CD}. 

Now we show that the commutative diagram in Equation \ref{eqn: CD_large} has all the desired properties. As we noted before in the construction of $H_i^{D_j}$ and $H_{i}^{D_j, D_{j+1}}$, the horizontal limits 
$\underset{\rightarrow j}{\lim} SH^{+}(W_i; H_i^{D_j}, J_i^{R_i})
$
are isomorphic to $SH^{+}(W_i)$ for all $i$. Also, the induced right-most vertical maps $SH^+(W_i)\rightarrow SH^+(W_{i+1})$ in the commutative diagram are isomorphisms. 
To see this, note that they are transfer maps induced by the inclusion $W_{i+1} \subset W_i$. Since $W_{i}$ are all graphical subdomains of $\widehat{W}$, these maps are isomorphisms as discussed in Section \ref{sssec: graphical}. Finally, by choice of $R_i$,  for $j \le i$ the differential for $SC^{+}(W_i; H_i^{D_j}, J_i^{R_i})$ and the continuation maps $SH^{+}(W_i; H_i^{D_j}, J_i^{R_i}) \rightarrow SH^{+}(W_i; H_i^{D_{j+1}}, J_i^{R_i})$ are defined by trajectories that stay in $\widehat{W}_i\backslash W_i$. 
Similarly, for $j \le i$ the trajectories defining the vertical transfer maps stay in $\widehat{W}_{i} \backslash W_{i+1}$. So for such choices, the commutative diagram has all the desired properties.

If $V$ is another Liouville filling of $(Y, \xi)$, we repeat this procedure using Hamiltonians $\overline{H}_i^{D_j}$
and almost complex structures $\overline{J}_i^{\overline{R}_i}$ on $\widehat{V}$ that agree with those for $\widehat{W}$ at infinity as we explained in the outline. Note that the stretching-factors $R_i$ provided by Proposition \ref{prop: neckstretch} can depend on the filling; see 2) of Remark \ref{rem: neck_stretching}. For example, to ensure that 
 $(H_{i,i+1}^{D_j}, J_i^{A,B})$-trajectories and 
$(\overline{H}_{i,i+1}^{D_j}, \overline{J}_i^{\overline{A},\overline{B}})$-trajectories stay in $\widehat{W}_i\backslash W_{i+1} = \widehat{V}_i \backslash V_{i+1}$ 
\textit{and} 
$J_i^{A,B}, \overline{J}_i^{\overline{A},\overline{B}}$ coincide in 
$\widehat{W}_i\backslash W_{i+1} = \widehat{V}_i \backslash V_{i+1}$, we need to have 
$A = \overline{A} \le \min\{R_i',\overline{R}_i'\}$
and 
$B = \overline{B} \le \min\{R_i',\overline{R}_i'\}$.  Therefore, we should construct the Hamiltonians and almost complex structures for $W$ and $V$ simultaneously. Once this is done, the trajectories stay at infinity and hence coincide. Then $SH^+(W) \cong SH^+(V)$ as explained in the outline. 
\end{proof}

\begin{remark}\label{rem: positivedeg}\
\begin{enumerate}[leftmargin=*]
\item The direct limit $\underset{\rightarrow i}{\lim}
 SH^{+}(W_i; H_i^{D_i}, J_i^{R_i})$ of the \textit{diagonal} maps in the commutative diagram Equation \ref{eqn: CD_large}
 is isomorphic to 
 $\underset{\rightarrow i}{\lim} \underset{\rightarrow j}{\lim}
 SH^{+} (W_i; H_i^{D_j}, J_i^{R_i})$
and hence also equals $SH^{+}(W)$. In fact, this diagonal direct limit can be taken as the definition of $SH^{+}(W)$ even when $(Y, \xi)$ is not ADC; a similar statement is mentioned in \cite{JP} for  cylindrical contact homology. 
\item
The proof of Proposition \ref{prop: nice_sh_independent} requires us to embed every $(Y, \alpha_k)$ into $\widehat{W}$. In general, the sequence $\alpha_k$  might tend to zero in the sense that $\underset{k\rightarrow \infty}{\lim}
\alpha_k(x)/\alpha_1(x) = 0$ for some $x \in Y$. In this case, we need to embed the whole negative symplectization $(Y\times (0,1), r\alpha)$ into $\widehat{W}$, which only works because $W$ is a Liouville domain. This is another reason why we consider only Liouville domains in this paper (as opposed to the weaker notion of symplectic domains with contact type boundary).
\end{enumerate}
\end{remark}

\subsubsection{Homological obstruction}\label{sssec: homological_obstruction}
We now give a basic obstruction for a contact structure to be ADC and give an example of a contact structure that is not ADC. 

Recall that we initially started with a time-independent Hamiltonian $H$ whose Hamiltonian orbits came in $S^1$-families corresponding to parametrizations of Reeb orbits of $\alpha$ (below a fixed action). As explained in Section \ref{subsec: symhom}, after a small time-dependent perturbation of $H$, these families breaks up into finitely many non-degenerate orbits. For example, 
using a Morse function on $S^1$ with two critical points, the $S^1$-family corresponding to Reeb orbit $\gamma$ 
breaks up into two Hamiltonian orbits 
$\gamma_m, \gamma_M$ corresponding to 
the minimum and maximum of this Morse function. Then \begin{eqnarray}
\mu_{CZ}(\gamma_m) &=& \mu_{CZ}(\gamma)\\
\mu_{CZ}(\gamma_M) &=& \mu_{CZ}(\gamma)+1.
\end{eqnarray}
See \cite{BO}, p. 10. Here the Conley-Zehnder indices $\mu_{CZ}(\gamma_m), \mu_{CZ}(\gamma_M)$ are computed with respect to the linearized Hamiltonian flow on $TW$ while $\mu_{CZ}(\gamma)$ is computed using the linearized Reeb flow on $\xi$. 

If $(Y, \xi)$ is an ADC contact structure, then Reeb orbits $\gamma \in \mathcal{P}^{< D_k}(Y, \alpha_k)$ have positive degree $|\gamma| = \mu_{CZ}(\gamma)+n-3$. Hence $\mu_{CZ}(\gamma) \ge 4-n$ and so the corresponding Hamiltonian orbits $\gamma_m, \gamma_M$ satisfy 
$\mu_{CZ}(\gamma_m), \mu_{CZ}(\gamma_M) \ge 4-n$. As a result, the generators of the right-hand-side of Equation \ref{eqn: directlimit4} have degree at least $4-n$. 
So any Liouville filling $W$ of an ADC contact structure has $SH^{+}_k(W) = 0$ for $k \le 3-n$.   This observation can be used to show that certain contact manifolds are not ADC. 
\begin{examples}\label{ex: not_nice}
Suppose $W$ is a Liouville filling of $(Y, \xi)$ with $SH(W)=0$ and $H^k(W; \mathbb{Z})\ne 0$ for $k = 2n-2$ or $k = 2n-1$; note that $W^{2n}$ can be Weinstein only if $n =2$. Then by Equation \ref{eqn: linformula}, either 
$SH_{3-n}^{+}(W) \cong H^{2n-2}(W; \mathbb{Z})$ or $SH_{2-n}^{+}(W) \cong H^{2n-1}(W; \mathbb{Z})$ is non-zero and so $(Y, \xi)$ is not ADC. More concretely, if $W^4$ is Weinstein with $H^{2}(W; \mathbb{Z}) \ne 0$ and all critical handles attached along stabilized Legendrians, then $SH(W) = 0$ and so $\partial W^4$ is not ADC. 
\end{examples}

Therefore,  the non-vanishing of $SH^{+}_k(W)$ for $k \le 3-n$ is a basic homological obstruction for a contact structure to be asymptotically dynamically convex. Note that if $(Y, \xi)$ has a flexible Weinstein filling $W$, then $SH^{+}_k(W) = 0$ for $k \le 0$ by Remark \ref{rem: flexibledegs}. Since $n \ge 3$ for flexible domains, there is no homological obstruction for $(Y, \xi)$ to be ADC (at least from the filling $W$). 
The purpose of this paper is to show that the contact boundaries of flexible domains are in fact ADC; see Theorem \ref{thm: semi-surgery} below.

\subsection{Effect of contact surgery}\label{ssec: prop_nice_contact}
Here we state our main result that asymptotically dynamically convex contact structures are preserved under flexible surgery. We will later use this to prove that the contact boundary of a flexible domain is ADC; see Corollary \ref{cor: flexsemigood}. 

We first discuss subcritical surgery.
Although not all contact structures are ADC as we saw in Example \ref{ex: not_nice}, M.-L.Yau \cite{MLYau} proved that contact manifolds $(Y^{2n-1}, \xi)$ with subcritical fillings and $n \ge 2$  are ADC; this is precisely why these contact manifolds have linearized contact homology independent of the filling as we stated in the Introduction. More precisely, Yau showed that the new Reeb orbits (up to high action) that appear after subcritical surgery occur in the belt sphere of the subcritical handle and have positive degree; see Proposition \ref{prop: MLYau_action} below. This degree positivity essentially shows that ADC contact structures are preserved under subcritical contact surgery, although this result was not stated explicitly in \cite{MLYau}. We will prove this result in Section \ref{sec: surgerysemigood}.

\begin{theorem}\cite{MLYau}\label{thm: MLYau}
If $(Y_-^{2n-1},\xi_-), n \ge 2,$ is an asymptotically dynamically convex contact structure and $(Y_+, \xi_+)$ is the result of index $k\ne 2$ subcritical contact surgery on $(Y_-,\xi_-)$,  then $(Y_+, \xi_+)$ is also asymptotically dynamically convex. 
\end{theorem}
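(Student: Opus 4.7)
The plan is to transport the ADC witness on $(Y_-,\xi_-)$ through the surgery one form at a time. Start with a witnessing sequence $\alpha_1^- \ge \alpha_2^- \ge \cdots$ with action thresholds $D_1 < D_2 < \cdots \to \infty$ for $\xi_-$; using Remark \ref{rem: semigood}(4), we may assume $\alpha_{m+1}^- \le \tfrac{1}{16}\alpha_m^-$. For each $m$, realize the prescribed index-$k$ contact surgery as the boundary of a small subcritical Weinstein handle attached to $(Y_-,\alpha_m^-)$ along an isotropic sphere in the given framed isotopy class, producing a contact form $\alpha_m^+$ on $Y_+$ (Proposition \ref{prop: techsurgery}). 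Choose the handle sizes $\epsilon_m$ to shrink so rapidly that, outside an arbitrarily small neighborhood $U_m$ of the surgery locus, $\alpha_m^+$ coincides with $\alpha_m^-$ and the resulting inequality $\alpha_{m+1}^+ \le \alpha_m^+$ is preserved; the hypothesis $c_1(Y_+,\xi_+)=0$ is automatic from $c_1(Y_-,\xi_-)=0$ by Proposition \ref{prop: c1equivalence} since the attaching sphere has codimension at least two.

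The crux is to classify the Reeb orbits of $\alpha_m^+$ below an action threshold $D_m'$ comparable to $D_m$ and check that they all have positive reduced Conley-Zehnder degree. This is precisely the content of Yau's local analysis, invoked later as Proposition \ref{prop: MLYau_action}. In the standard subcritical handle model the Reeb flow near the belt sphere $\Sigma_m \cong S^{2n-1-k}$ is Morse-Bott, and the new short orbits of $\alpha_m^+$ appear as iterates of a simple orbit threading $\Sigma_m$; after a small Morse perturbation, each degenerate family breaks into finitely many non-degenerate orbits indexed by critical points of a Morse function on $\Sigma_m$. Orbits of $\alpha_m^+$ of action below $D_m'$ then fall into two classes: persistent orbits inherited from $\alpha_m^-$, whose gradings are unchanged and which have positive degree by the ADC hypothesis; and the new belt-sphere orbits, whose degrees are computable in the local model.

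A direct Conley-Zehnder calculation in this local model shows that for every iteration $\ell \ge 1$ and every Morse critical point on $\Sigma_m$, the corresponding non-degenerate Reeb orbit has \emph{strictly positive} reduced degree, provided $k \ne 2$. The excluded value $k=2$ is exactly the case in which the iteration formula combined with the Morse index shift on $\Sigma_m$ produces a new orbit of reduced degree zero, violating the ADC condition; this is also why the separate Theorem \ref{thm: MLYauindex2} is required to handle index two subcritically. Finally, by letting the perturbation of the Morse-Bott family be finer for larger $m$, the threshold $D_m'$ below which this orbit classification is valid can be made to tend to infinity, so $(\alpha_m^+, D_m')$ is an ADC witness for $\xi_+$.

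The principal obstacle is the uniformity of the Conley-Zehnder positivity across all iterates of the new belt-sphere orbits: one must control not just the simple orbit but the full tower of multiple covers, and it is exactly in the single case $k=2$ that some iterate drops to degree zero. A secondary, bookkeeping obstacle is to make the nested handle constructions for the different $\alpha_m^-$ compatible enough to deliver a genuinely non-increasing sequence of non-degenerate contact forms on $Y_+$ together with matching action-grading control, which is the technical work carried out in Section \ref{sec: surgerysemigood}.
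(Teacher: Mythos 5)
Your proposal follows the same overall strategy as the paper: attach a nested family of shrinking subcritical Weinstein handles to the witnessing forms $\alpha_m^-$, invoke Yau's orbit classification (Proposition \ref{prop: MLYau_action}) to see that the new belt-sphere orbits have positive degree, and conclude. You also correctly note that $c_1$ is preserved by Proposition \ref{prop: c1equivalence} for $k\ne 2$. However there are two problems, one a genuine misunderstanding and one a real gap.

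\textbf{The reason $k=2$ is excluded is wrong.} You claim that for $k=2$ the iteration formula combined with the Morse index shift on the belt sphere $\Sigma_m$ ``produces a new orbit of reduced degree zero.'' This is incorrect. From Proposition \ref{prop: MLYau_action} the new orbits $\gamma^j$ satisfy $|\gamma^j| = 2n-k-4+2j$ for $j\ge 1$; for $k=2$ and $j=1$ this is $2n-4$, which is already $\ge 2$ since subcriticality forces $n\ge 3$. So the belt-sphere orbits \emph{do} have positive degree when $k=2$, and the local Conley--Zehnder calculation is not the obstruction. The true obstruction, discussed in the paper immediately after Theorem \ref{thm: MLYau}, is topological: if the attaching circle $\Lambda^1$ is nullhomotopic in $Y_-$, $c_1(Y_+,\xi_+)$ need not vanish; and if $\Lambda^1$ is non-torsion in $\pi_1(Y_-)$, an old \emph{non-contractible} Reeb orbit of $Y_-$ in the free homotopy class of $\Lambda^1$ can become contractible in $Y_+$ with non-positive degree (the trivialization of the canonical bundle becomes pinned by the disk bounded in the handle). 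Theorem \ref{thm: MLYauindex2} exactly patches this by adding hypotheses on $c_1(Y_+)$ and contractibility.

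\textbf{The nesting of the handles is not free.} You write that one can ``choose the handle sizes $\epsilon_m$ to shrink so rapidly that\ldots\ the resulting inequality $\alpha_{m+1}^+\le\alpha_m^+$ is preserved,'' and later flag compatibility as a ``secondary bookkeeping obstacle.'' In fact this is where the real technical work lives and shrinking alone does not deliver it. Proposition \ref{prop: techsurgery} requires a key hypothesis beyond $\alpha_{m+1}^-\le\alpha_m^-$: that $\alpha_m^-|_{U^\epsilon(\Lambda,\alpha_1^-)} = c_m\,\alpha_1^-|_{U^\epsilon(\Lambda,\alpha_1^-)}$ for constants $c_m$, i.e.\ the forms must be \emph{proportional} to $\alpha_1^-$ in a fixed neighborhood of $\Lambda$. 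Without this, the handles $H_{c_m}^{\epsilon_m}$ cannot be nested compatibly inside a single Liouville completion and there is no reason for the new forms to be non-increasing. The paper's proof achieves this normalization via Proposition \ref{prop: nice_to_supernice}, which in turn relies on the scaling-map Lemma \ref{lem: scalingmap} from the Appendix; this produces contactomorphisms $h_m$ supported near $\Lambda$ with uniform (factor $4$) control on $h_m^*\alpha$, and one must then replace $\alpha_m^-$ by suitable rescalings of $(h_m\circ\cdots\circ h_1)^*\alpha_m^-$. Your proposal identifies that something must be done here but does not supply (or even name) this mechanism, which is the step that actually makes the sequence of surgered forms decreasing while keeping control of the action filtration.
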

\begin{remark}\
\begin{enumerate}[leftmargin=*]
\item
Since $k \ne 2$, 
$c_1(Y_-, \xi_-) = 0$ implies 
$c_1(Y_+, \xi_+) = 0$ by Proposition \ref{prop: c1equivalence} and so it makes sense to say that $(Y_+, \xi_+)$ is ADC.
\item The non-degeneracy condition mentioned in Remark 
\ref{rem: semigood} is preserved under surgery. 
\item 
Lemma 5.18 of \cite{KvK} shows that contact structures with convenient dynamics are also preserved under subcritical contact surgery. 
\end{enumerate}
\end{remark}
The $k =2$ case is a bit more complicated. For example, if 
the isotropic attaching circle $\Lambda^1$ is contractible in $Y_-$, then $c_1(Y_-, \xi_-) = 0$ does not necessarily imply $c_1(Y_+, \xi_+) = 0$. In this case, it does not make sense to ask whether $(Y_+, \xi_+)$ is ADC.
However, there is a unique framing of the conformal symplectic normal bundle of $\Lambda^1$ that makes $c_1(Y_+, \xi_+) = 0$. 
On the other hand, if $k = 2$ and 
$\Lambda^1$ is non-contractible in $Y_-$
(or more precisely is non-torsion in $\pi_1(Y_-)$), then $c_1(Y_+, \xi_+)  = 0$. However Theorem \ref{thm: MLYau} still does not always hold. To see this, suppose there is a Reeb orbit $\gamma_- \subset (Y_-, \xi_-)$ in the same free homotopy class as $\Lambda^1$. Since $\gamma_-$ is a torsion-free element of $\pi_1(Y_-)$, there is a trivialization of the canonical bundle of $Y_-$ such that $\mu_{CZ}(\gamma_-)$ is an arbitrary integer; in particular we can choose a trivialization such that $\mu_{CZ}(\gamma_-) \le - n+3$ and so  $|\gamma_-| \le 0$.  Now choose a framing of the conformal symplectic normal bundle of $\Lambda^1$ compatible with the chosen trivialization of the canonical bundle of $(Y_-, \xi_-)$ and suppose that 
$(Y_+, \xi_+)$ is the result of surgery on $\Lambda^1$ with this framing. We can assume that $\gamma_- \cap \Lambda^1 = \varnothing$ and so $\gamma_-$ corresponds to an orbit $\gamma_+ \subset (Y_+, \xi_+)$. Then $\gamma_+$ is contractible in $Y_+$ via a parallel copy of the core of the Weinstein 2-handle in $Y_+$. Using this disk, we have $|\gamma_+| = |\gamma_-| \le 0 $ and so $(Y_+, \xi_+)$ is not ADC. Besides these topological issues, Theorem \ref{thm: MLYau} holds for the $k = 2$ case as well. The following result will be proven in Section \ref{sec: surgerysemigood}.

\begin{theorem}\cite{MLYau}\label{thm: MLYauindex2}
Let $(Y_-,\xi_-)$ be an asymptotically dynamically convex contact structure with $(\alpha_k, D_k)$ as in Definition \ref{def: semigood} and $(Y_+, \xi_+)$ be the result of index $2$ contact surgery on $\Lambda^1 \subset Y_-$. Suppose $c_1(Y_+, \xi_+) = 0$ and either $\Lambda^1$ or all orbits of $(Y_-, \alpha_k)$ with action less than $D_k$ are contractible for all $k$. Then $(Y_+, \xi_+)$ is also asymptotically dynamically convex. 
\end{theorem}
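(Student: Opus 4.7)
The plan follows the same broad strategy as the proof of Theorem \ref{thm: MLYau}. Starting from the ADC sequence $(\alpha_k, D_k)$ on $(Y_-, \xi_-)$, I will construct a new sequence $(\alpha_k^+, D_k^+)$ on $(Y_+, \xi_+)$ by modifying each $\alpha_k$ in a small tubular neighborhood of a Legendrian representative of $\Lambda^1$, using the explicit Cieliebak/Yau model for index-$2$ contact surgery. The heart of the argument is a classification, up to a suitable action cutoff $D_k^+$, of Reeb orbits of $\alpha_k^+$ into two families, followed by a verification that both families consist of orbits that are contractible in $Y_+$ and of positive reduced Conley--Zehnder index.

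First, after a small Legendrian isotopy I can assume $\Lambda^1$ is disjoint from every Reeb trajectory of $\alpha_k$ of action below $D_k$, and I can attach an index-$2$ Weinstein handle producing a form $\alpha_k^+$ that equals $\alpha_k$ outside a neighborhood $U$ of $\Lambda^1$. Appealing to Proposition \ref{prop: MLYau_action} in the index-$2$ setting, the orbits of $\alpha_k^+$ of action $<D_k^+$ split into family (a), orbits of $\alpha_k$ supported in the complement of $U$, and family (b), orbits supported in the handle and essentially iterates of a Reeb orbit on the belt sphere $S^{2n-3}$. Family (b) orbits are contractible inside the handle, hence in $Y_+$; their reduced Conley--Zehnder indices grow linearly with the action, and with the handle model scaled so that $D_k^+$ is compatible with the handle parameters, all of them have positive degree exactly as in the higher-index subcritical calculation.

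The substantive new point is family (a), where the contractibility hypotheses enter. In the first case, $\Lambda^1$ is contractible in $Y_-$, so the Weinstein cobordism $W$ from $Y_-$ to $Y_+$ has $H^2(W, Y_-;\mathbb{Z}) = 0$; combined with $c_1(Y_+, \xi_+) = 0$, Proposition \ref{prop: c1equivalence} yields $c_1(W) = 0$ and the chosen trivialization of $\det_\mathbb{C} \xi_+$ restricts, via $W$, to the trivialization used to compute degrees on $Y_-$. Since $[\Lambda^1]$ is already trivial in $\pi_1(Y_-)$, the surgery does not modify $\pi_1$, so every family (a) orbit remains contractible with the same reduced Conley--Zehnder index and hence positive degree. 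In the second case, every orbit in $\mathcal{P}^{<D_k}(Y_-, \alpha_k)$ is contractible in $Y_-$; a general position argument in the dimension range $2n - 1 \ge 5$ lets me choose a capping disk disjoint from $\Lambda^1$, which therefore survives unchanged into $Y_+$, and the trivialization of the canonical bundle extends identically across this disk on both sides, so the Conley--Zehnder index of each such orbit is preserved.

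Collecting the two families, for each $k$ every element of $\mathcal{P}^{<D_k^+}(Y_+, \alpha_k^+)$ is contractible in $Y_+$ and of positive degree; after choosing $D_k^+ \to \infty$ and applying the rescaling of Remark \ref{rem: semigood} (3), $(\alpha_k^+, D_k^+)$ becomes a valid ADC sequence for $(Y_+, \xi_+)$. The main obstacle I expect is not the dynamical count but the compatibility of grading data across the handle: as the paragraph preceding the theorem explains, without $c_1(Y_+, \xi_+) = 0$ together with at least one contractibility hypothesis, the trivialization of $\det_\mathbb{C} \xi_+$ can differ from the one on $Y_-$ in a way that shifts family (a) Conley--Zehnder indices by an arbitrary integer and destroys positivity. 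Carefully aligning trivializations and capping disks in each of the two alternative cases, rather than the perturbative dynamics of the handle attachment itself, is where the bulk of the work lies.
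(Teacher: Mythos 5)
Your orbit-by-orbit analysis (old orbits away from the handle, belt-sphere orbits in the handle, with the contractibility and $c_1$ hypotheses used to keep gradings of the old orbits well defined and unchanged) matches the role these hypotheses play in the paper, where they are simply the conditions needed to invoke Proposition \ref{prop: MLYau_action} in the $k=2$ case. But there is a genuine gap, and it sits exactly where you declare the proof finished: the claim that ``after choosing $D_k^+\to\infty$ and applying the rescaling of Remark \ref{rem: semigood}, $(\alpha_k^+,D_k^+)$ becomes a valid ADC sequence.'' Definition \ref{def: semigood} requires the forms to be \emph{non-increasing}, $\alpha_1^+\ge\alpha_2^+\ge\cdots$, and if you attach a handle separately to each $\alpha_k$ along its own neighborhood $U^{\epsilon(D_k)}(\Lambda^1,\alpha_k)$, the resulting forms on $Y_+$ differ in the handle region in an uncontrolled way and need not be comparable at all. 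The rescaling remark cannot repair this: it takes an already non-increasing sequence and makes it decreasing by a fixed factor, and the paper explicitly warns that if one is allowed to rescale forms freely to force monotonicity, the ADC condition becomes vacuous (moreover uncontrolled rescaling destroys the requirement that the action bounds tend to infinity). The paper itself flags this as the real difficulty: the degree and contractibility count is essentially M.-L.\ Yau's, and ``the situation is complicated by the fact that ADC contact structures are defined by a sequence of non-increasing contact forms.''

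The paper resolves this with two ingredients that are absent from your proposal. First, Proposition \ref{prop: nice_to_supernice} (proved in the Appendix via the scaling contactomorphism of Lemma \ref{lem: scalingmap}, whose key feature is the distortion bound $h^*\alpha<4\alpha$ \emph{independent} of how much one shrinks) is used to replace the $\alpha_k$ by forms that are constant multiples $c_k\alpha_1$ of one another on a \emph{fixed} neighborhood of $\Lambda^1$, while keeping monotonicity and action bounds going to infinity after dividing by $4^k$. Second, Proposition \ref{prop: techsurgery} (in its subcritical version, Remark \ref{rem: subcritical_supernice}) attaches \emph{nested} model handles $H_{c_k}^{\epsilon_k}$ inside a single ambient Liouville structure, so that the surgered boundaries $(Y_+,\alpha_k')$ are related by a Liouville flow and hence genuinely decreasing; only then does the degree computation of Proposition \ref{prop: MLYau_action} finish the argument. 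By contrast, the place you identify as ``where the bulk of the work lies'' — aligning trivializations and capping disks — is handled in the paper by the stated hypotheses with little effort. So the proposal is not wrong in its dynamical bookkeeping, but it omits the construction that makes the surgered forms into an admissible ADC sequence, which is the actual content of the proof.
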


The main purpose of the paper is to generalize Theorems \ref{thm: MLYau}, \ref{thm: MLYauindex2} to the flexible case. 
We will prove the following theorem in Section \ref{sec: surgerysemigood} based on results in Section \ref{sec: reeb_chord_loose}.

\begin{theorem}\label{thm: semi-surgery}
If $(Y_-,\xi_-)$ is an asymptotically dynamically convex contact structure and $(Y_+, \xi_+)$ is the result of flexible contact surgery, then 
$(Y_+, \xi_+)$ is also asymptotically dynamically convex.
\end{theorem}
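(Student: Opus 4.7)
The plan is to reduce to the critical case (a single handle attached along a loose Legendrian) and then combine three ingredients: the ADC property of $(Y_-, \xi_-)$ to control old Reeb orbits surviving the surgery, the BEE correspondence from Proposition~\ref{prop: BEE} to enumerate the genuinely new Reeb orbits, and Lemma~\ref{lem: maingeo} to control their degrees. Since a flexible handle is either subcritical or attached along a loose Legendrian, Theorems~\ref{thm: MLYau} and \ref{thm: MLYauindex2} dispose of the subcritical case, so I would assume that $(Y_+, \xi_+)$ is obtained from $(Y_-, \xi_-)$ by contact surgery along a loose Legendrian sphere $\Lambda^{n-1} \subset (Y_-, \xi_-)$.

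To produce the defining data for $(Y_+, \xi_+)$, I would start from a sequence $(\alpha_k, D_k)$ witnessing that $(Y_-, \xi_-)$ is ADC, with $\alpha_{k+1} \le \epsilon\, \alpha_k$ for some small $\epsilon < 1$ as arranged in Remark~\ref{rem: semigood}(4). For each $k$ I would apply Lemma~\ref{lem: maingeo} to obtain, after a Legendrian isotopy of $\Lambda$ inside the loose class, a loose representative $\Lambda_k \subset (Y_-, \alpha_k)$ whose Reeb chords of action below some large threshold $D_k^\chi \gg D_k$ all have positive degree. Then I would perform the contact surgery on $\Lambda_k$ inside a small Weinstein neighborhood, choosing the surgery parameter so that the resulting contact form $\alpha_k'$ on $Y_+$ agrees with $\alpha_k$ away from the surgery locus, and so that the sequence $\alpha_k'$ is still non-increasing; the latter is possible because the surgery neighborhoods and the factor $\epsilon$ can be fixed independently of $k$.

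Next I would classify the Reeb orbits $\gamma$ of $(Y_+, \alpha_k')$ of action below a carefully chosen threshold $D_k' \le D_k^\chi$ using Proposition~\ref{prop: BEE}: such $\gamma$ is either (i) in bijection with a Reeb orbit of $(Y_-, \alpha_k)$ disjoint from a neighborhood of $\Lambda_k$, or (ii) corresponds to a cyclic word of Reeb chords $(c_1, \dots, c_m)$ of $\Lambda_k$ traversing the handle region. Orbits of type (i) have positive degree since $(Y_-, \xi_-)$ is ADC. For type (ii), the BEE grading formula expresses $|\gamma|$ as a sum of the chord degrees $|c_i|$ plus a topological term of the form $(m-1)(n-3) + 1 - m$ (adjusted by the appropriate Maslov contribution, which vanishes since $c_1 = 0$). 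Because each $|c_i| \ge 1$ by the previous step and $n-3 \ge 0$, the inequality $\sum_i |c_i| + (m-1)(n-3) + 1 - m \ge 1$ holds, so $|\gamma| > 0$. Choosing $D_k' \to \infty$ along a subsequence then exhibits $(Y_+, \xi_+)$ as ADC.

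The main obstacle is the simultaneous coordination of three parameters as $k$ varies: the Legendrian isotopy provided by Lemma~\ref{lem: maingeo} must be compatible with the scaling $\alpha_k \to \alpha_{k+1}$, the action window below which chords have positive degree must grow at least as fast as $D_k$, and the surgery modification inside a fixed tubular neighborhood of $\Lambda_k$ must preserve the monotonicity $\alpha_{k+1}' \le \alpha_k'$. The first two issues are handled by observing that looseness is $C^0$-open and that Lemma~\ref{lem: maingeo} allows the positive-degree threshold to be pushed up to any prescribed action by a further isotopy supported in a Darboux chart; the third reduces to performing the surgery inside a region where we have enough freedom to interpolate between $\alpha_k'$ and $\alpha_{k+1}'$, which is the technical heart of the construction deferred to Section~\ref{sec: surgerysemigood}.
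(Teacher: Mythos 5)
Your high-level strategy matches the paper's: reduce to surgery along a single loose Legendrian, use Lemma~\ref{lem: maingeo} to make the chords positive-degree, then feed the new orbits through Proposition~\ref{prop: BEE}. But the coordination problem you defer to Section~\ref{sec: surgerysemigood} is not just a technical wrinkle; as you have set it up, the argument does not close. The issue is that Lemma~\ref{lem: maingeo} hands you a \emph{different} loose representative $\Lambda_k$ for each $k$, and the Legendrian isotopy from $\Lambda$ to $\Lambda_k$ is necessarily large (it must leave every small neighborhood of $\Lambda$, since $\Lambda_k$ is loose there but $\Lambda$ is not). Consequently the ambient contactomorphism carrying $\Lambda$ to $\Lambda_k$ can dilate $\alpha_k$ by an arbitrarily large factor, the Weinstein neighborhoods of the $\Lambda_k$ sit in $Y_-$ in wildly different ways, and there is no a priori reason the surgered forms $\alpha_k'$ on $Y_+$ can be made non-increasing. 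Your claim that ``the surgery neighborhoods and the factor $\epsilon$ can be fixed independently of $k$'' is exactly the thing that fails.

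The mechanism the paper uses to fix this is different from what you sketch, and it requires a genuinely stronger input than Lemma~\ref{lem: maingeo}: Lemma~\ref{lem: techkey} produces not just the new loose Legendrian but an ambient contactomorphism $h$ carrying $\Lambda$ to it with an explicit, $k$-independent dilation bound $h^*\alpha < 4\alpha$ (proved via the contact-Hamiltonian cutoff estimate in Lemma~\ref{lem: scalingmap}). One then keeps $\Lambda$ fixed and pulls the forms back, setting $\alpha_k'' := 4^{-k}(\phi_k\circ\cdots\circ\phi_1)^*\alpha_k$; the geometric factor $4^{-k}$ exactly compensates the at-most-$4$ dilation per step, so the sequence stays decreasing, while Propositions~\ref{prop: easy} and~\ref{prop: easychords} show the degree conclusions transport across the pullback and rescaling. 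A further normalization (Proposition~\ref{prop: nice_to_supernice}) makes the $\alpha_k''$ constant multiples of one another near $\Lambda$, which is what lets the Weinstein handles nest inside one another (Proposition~\ref{prop: techsurgery}) and produce a decreasing sequence of contact forms on $Y_+$. Without the dilation bound there is no control, and without the pullback-and-rescale trick you cannot even compare the forms, since they live near different Legendrians. Finally, a small correction: the BEE grading formula in Proposition~\ref{prop: BEE} is simply $|\gamma_w| = \sum_i |c_i| + n - 3$, not the expression with $(m-1)(n-3)+1-m$ that you wrote; the positivity then follows immediately since each $|c_i|\ge 1$ and $n\ge 3$.
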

\begin{remark}\label{rem: semi-surgery}
By Proposition \ref{prop: c1equivalence}, 
 $c_1(Y_-,\xi_-) = 0$ implies $c_1(Y_+, \xi_+) = 0$ so it makes sense to say that $(Y_+, \xi_+)$ is ADC. Also, if the contact surgery is done along the loose Legendrian sphere
 $\Lambda^{n-1} \subset (Y^{2n-1}_-, \xi_-)$, then by Remark 
 \ref{rem: c1_lambda} below $c_1(Y_-, \xi_-) = 0$ and $n \ge 3$ imply that $c_1(Y_-, \Lambda) = 0$, which we need for the proof of Theorem \ref{thm: semi-surgery}.
\end{remark}
Theorems
\ref{thm: MLYau}, \ref{thm: MLYauindex2}, and \ref{thm: semi-surgery} \textit{do not} hold for the 
dynamically convex  contact structures from Definition 
\ref{def: dyn_convex }. As we noted before, contact surgery creates wild orbits with large action whose degrees we cannot control and so we cannot say anything about \textit{all} orbits. However by filtering out these wild orbits, we can prove Theorems
\ref{thm: MLYau}, \ref{thm: MLYauindex2}, and \ref{thm: semi-surgery} for ADC contact structures.

\section{Applications}\label{sec: applications}

Assuming Theorems \ref{thm: MLYau}, \ref{thm: MLYauindex2}, and \ref{thm: semi-surgery}, we now prove the applications stated in the Introduction.

Let $W$ be a flexible domain. As we mentioned in Section \ref{ssec: independencelin}, there is no homological obstruction for $(Y, \xi) = \partial W$ to be ADC (at least with respect to the filling $W$). 
Since $W$ satisfies an h-principle, one might hope that it is possible to remove all Reeb orbits of $(Y, \xi)$ that are homologically unnecessary (for fixed action) and thereby show that $(Y, \xi)$ is ADC. In the next corollary, which follows almost immediately from Theorem \ref{thm: semi-surgery}, we prove that this is indeed the case.

\begin{corollary}\label{cor: flexsemigood}
If $(Y^{2n-1},\xi), n \ge 3,$ has a flexible filling, then $(Y, \xi)$ is asymptotically dynamically convex. 
\end{corollary}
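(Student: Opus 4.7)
The plan is to induct on a Weinstein handle decomposition of the given flexible filling $W$, starting from the base case $(S^{2n-1}, \xi_{std}) = \partial B^{2n}$.

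First I would verify the base case. For the round contact form $\alpha_{std}$ on $S^{2n-1}$ the Reeb flow is the Hopf flow, and every closed Reeb orbit satisfies $\mu_{CZ}(\gamma) \ge 2n$, so $|\gamma| = \mu_{CZ}(\gamma) + n - 3 \ge 3n - 3 > 0$ for $n \ge 3$. Hence $(S^{2n-1}, \xi_{std})$ is dynamically convex and a fortiori ADC, witnessed by the constant sequence $\alpha_k \equiv \alpha_{std}$ together with any choice $D_k \to \infty$.

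By Definition \ref{def: weinstein_flexible}, the flexible filling $W$ admits a Morse function $\phi$ with regular values $c_0 < c_1 < \cdots < c_N$ such that each sublevel $W_i := \{\phi \le c_i\}$ is obtained from $W_{i-1}$ by attaching a single Weinstein handle whose attaching sphere in $(Y_{i-1}, \xi_{i-1}) := \partial W_{i-1}$ is either subcritical or a loose Legendrian. Setting $(Y_0, \xi_0) = (S^{2n-1}, \xi_{std})$ and $(Y_N, \xi_N) = (Y, \xi)$, I would argue inductively that each $(Y_i, \xi_i)$ is ADC by invoking Theorem \ref{thm: MLYau} when the attaching sphere has subcritical index $k \ne 2$, Theorem \ref{thm: MLYauindex2} when $k = 2$, and Theorem \ref{thm: semi-surgery} when the attaching sphere is loose. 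The vanishing of $c_1(Y_i, \xi_i)$ needed at each step propagates via Proposition \ref{prop: c1equivalence} when $k \ne 2$; for $k = 2$ it follows because the filling $W$ itself has $c_1(W) = 0$, which forces the framings in any Weinstein handle decomposition of $W$ to be compatible with $c_1(Y_i, \xi_i) = 0$ (see Remark \ref{rem: semi-surgery}).

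The main obstacle is verifying the hypothesis of Theorem \ref{thm: MLYauindex2} at the subcritical index-$2$ steps, namely that either the attaching circle $\Lambda^1 \subset Y_{i-1}$ is contractible or, for every $k$, each Reeb orbit of $\alpha_k$ below action $D_k$ is contractible in $Y_{i-1}$. To arrange this I would choose $\phi$ with critical points in increasing order of index, which is always possible by Morse rearrangement, so that subcritical $1$-handles are attached first, then subcritical $2$-handles, and so on up to the flexible $n$-handles. Within the index-$2$ layer I would then apply the handle-trading and cancellation procedures of Chapter 9 of \cite{CE12} to cancel any $1$-$2$ handle pair whose $2$-handle attaching circle is essential in $\pi_1(Y_{i-1})$; this preserves the flexible Weinstein homotopy class of $W$ while reducing to the case where every remaining index-$2$ attaching circle is contractible in $Y_{i-1}$. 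This reordering and cancellation step is the delicate ingredient that must be carried out carefully to close the argument.
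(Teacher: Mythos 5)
Your base case and inductive skeleton coincide with the paper's, but the way you discharge the hypothesis of Theorem~\ref{thm: MLYauindex2} at the index-$2$ handles does not work and leaves a genuine gap.

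You try to use the first alternative in Theorem~\ref{thm: MLYauindex2} (arrange that every $2$-handle attaching circle $\Lambda^1$ is contractible in the current level set) by cancelling $1$--$2$ handle pairs whenever the $2$-handle attaching circle is essential. Handle cancellation, however, only eliminates a $1$--$2$ pair when the attaching circle of the $2$-handle intersects the belt sphere of the $1$-handle geometrically once, and there is no way to arrange this in general: if $\pi_1(W)$ has a nontrivial relation (e.g.\ $W$ built from a single $1$-handle with generator $a$ and a single $2$-handle attached along $a^2$, so $\pi_1(W)=\mathbb{Z}/2$), the $2$-handle's attaching circle is essential in $\sharp^k(S^{2n-2}\times S^1)$ yet cannot be cancelled, and it remains non-contractible after any Weinstein homotopy. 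So ``reducing to the case where every remaining index-$2$ attaching circle is contractible'' is not achievable.

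The paper sidesteps this entirely by using the \emph{other} disjunct of Theorem~\ref{thm: MLYauindex2}: after a self-indexing rearrangement, all $2$-handles are attached to $(S^{2n-2}\times S^1\sharp\cdots\sharp S^{2n-2}\times S^1,\xi_{std})$, and by Proposition~\ref{prop: MLYau_action} every Reeb orbit below the relevant action bound lies in a belt sphere $S^{2n-2}$ of some $1$-handle, hence is contractible for $n\ge 3$. This hypothesis holds regardless of what $\Lambda^1$ is, so no cancellation is needed. To repair your argument you should replace the cancellation step with this verification that all low-action Reeb orbits at the relevant level are contractible, which is exactly what Proposition~\ref{prop: MLYau_action} provides.
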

\begin{proof}
Let $W$ be a flexible filling of $(Y, \xi)$. So $(Y, \xi)$ is obtained  by successively doing subcritical or flexible contact surgery to $(S^{2n-1}, \xi_{std})$. Let $(Y_1, \xi_1), \cdots , (Y_k, \xi_k)$ denote the resulting contact manifolds with  $(Y_1, \xi_1) = (S^{2n-1}, \xi_{std})$ and $(Y_k, \xi_k) = (Y, \xi)$. 
 It can be shown explicitly that $(S^{2n-1}, \xi_{std})$ is ADC; see \cite{BEE12}.
By  repeatedly applying Theorem \ref{thm: MLYau} for subcritical surgery and Theorem \ref{thm: semi-surgery} for flexible surgery, we see that each $(Y_i, \xi_i)$ is also ADC. Note that $c_1(W) = 0$ implies that $c_1(Y_i) = 0$ for all $i$ so it makes sense to say that $(Y_i, \xi_i)$  is ADC. In particular, $(Y,\xi)$ is ADC. 

In the presence of 2-handles, we need to use Theorem \ref{thm: MLYauindex2}, which requires that all Reeb orbits are contractible.  After possibly a Weinstein homotopy of $(W, \lambda, \phi)$, we can assume that $\phi$ is self-indexing, i.e. critical points $p$ of $\phi$ with index $k$ have $\phi(p) = k$. 
Hence all 2-handles are attached to the contact manifold $(S^{2n-2} \times S^1 \sharp \cdots \sharp S^{2n-2} \times S^1, \xi_{std}) $ obtained by doing 1-surgery to $(S^{2n-1}, \xi_{std})$. By Proposition \ref{prop: MLYau_action}, all orbits of $(S^{2n-2} \times S^1 \sharp \cdots \sharp S^{2n-2} \times S^1, \xi_{std})$ are located in the $S^{2n-2}$ belt spheres of these 1-handles and hence are contractible since $\pi_1(S^{2n-2}) =0$ for $n \ge 3$. Therefore Theorem \ref{thm: MLYauindex2} applies in this situation. 
\end{proof}

The analogs of Corollary \ref{cor: flexsemigood} and Theorem \ref{thm: semi-surgery} do not hold for $n=2$ if the filling of $(Y^3, \xi)$ has critical index handles. As in Example \ref{ex: not_nice}, consider a Weinstein manifold $W^4$ such that  $H^2(W; \mathbb{Z}) \ne 0$ and all critical 2-handles are attached along stabilized 1-dimensional Legendrians. Then $\partial W$ is not ADC because the homological obstruction
from Section \ref{sssec: homological_obstruction} does not vanish. On the other hand, in higher dimensions, stabilized Legendrians are loose and so $W$ is the 4-dimensional analog of a flexible Weinstein domain. However  since stabilized 1-dimensional Legendrians do not satisfy an h-principle, $W$ does not satisfy an h-principle either and so  is not called flexible. However, if $(Y^3, \xi)$ has a \textit{subcritical} filling, i.e. $(Y^3, \xi) = (S^2 \times S^1 \sharp \cdots \sharp S^2 \times S^1, \xi_{std})$, then $(Y^3, \xi)$ is ADC by Theorem \ref{thm: MLYau}. 
For $n = 1$, these results  fail even if the filling is subcritical.  As we saw in Example \ref{ex: surfaces_more}, $S^1$ has a subcritical filling $\Sigma_0 = \mathbb{C}$ but also has Reeb orbits of degree zero and $SH^+$ depends on the filling.  

Corollary \ref{cor: flexsemigood} can be used to prove the following stronger version of Theorem \ref{thm: main}.
\begin{corollary}\label{thm: mainstronger}
If $(Y^{2n-1}, \xi)$ has a flexible filling $W^{2n}$, then all $\pi_1$-injective Liouville fillings $X$ of $(Y, \xi)$ with $SH(X) = 0$ satisfy $
H^*(X; \mathbb{Z}) \cong H^*(W; \mathbb{Z})$.
\end{corollary}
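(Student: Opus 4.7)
The plan is to chain together three results already in hand: the fact that flexibly filled contact manifolds are asymptotically dynamically convex (Corollary \ref{cor: flexsemigood}), the filling-independence of $SH^+$ for ADC contact structures (Proposition \ref{prop: nice_sh_independent}), and the explicit formula for $SH^+$ in terms of singular cohomology when $SH$ vanishes (Proposition \ref{prop: shcomputation}).

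First I would observe that the flexible filling $W$ is itself a $\pi_1$-injective Liouville filling of $(Y,\xi)$. Indeed, $W$ is Weinstein of dimension $2n \ge 6$ (since flexible Weinstein domains are only defined for $n \ge 3$), so by Remark \ref{rem: H1} the inclusion $\partial W \hookrightarrow W$ is $\pi_1$-injective. Next, Corollary \ref{cor: flexsemigood} tells us that $(Y,\xi)$ is asymptotically dynamically convex. Applying Proposition \ref{prop: nice_sh_independent} to the two $\pi_1$-injective Liouville fillings $W$ and $X$ then gives a graded isomorphism
\[
SH^+(W) \;\cong\; SH^+(X).
\]

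For the final step I would invoke the tautological long exact sequence via Proposition \ref{prop: shcomputation}. By \cite{BEE12} the flexible Weinstein domain $W$ has $SH(W) = 0$, and by hypothesis $SH(X) = 0$ as well. Hence for every $k$,
\[
H^{n-k+1}(W;\mathbb{Z}) \;\cong\; SH^+_k(W) \;\cong\; SH^+_k(X) \;\cong\; H^{n-k+1}(X;\mathbb{Z}),
\]
and reindexing yields $H^*(W;\mathbb{Z}) \cong H^*(X;\mathbb{Z})$ as claimed.

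There is essentially no obstacle here beyond checking the hypotheses; the entire content has been packaged into the three cited results. The only subtle point to verify is that the isomorphism $SH^+(W) \cong SH^+(X)$ is genuinely graded (not merely an abstract isomorphism of abelian groups), so that the degree-by-degree comparison through Proposition \ref{prop: shcomputation} produces a matching of cohomology in each degree. This follows from the construction of the filling-independence isomorphism in the proof of Proposition \ref{prop: nice_sh_independent}, which is built from continuation and transfer maps that all preserve the $\mathbb{Z}$-grading coming from the Conley--Zehnder index, valid here because $c_1$ vanishes on both fillings (which itself follows from $c_1(Y,\xi) = 0$ and Proposition \ref{prop: c1equivalence}).
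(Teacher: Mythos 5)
Your proof is correct and takes essentially the same route as the paper's: combine Corollary~\ref{cor: flexsemigood}, Proposition~\ref{prop: nice_sh_independent}, and Proposition~\ref{prop: shcomputation} with the vanishing of $SH$ for both fillings. The extra remarks about $\pi_1$-injectivity of $W$ and about the isomorphism being graded are sound but simply make explicit what the paper leaves implicit.
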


\begin{proof} 
By Corollary \ref{cor: flexsemigood} and Proposition \ref{prop: nice_sh_independent}, $SH_*^{+}(X) \cong SH_*^{+}(W)$.
Since $SH(W)$ vanishes by \cite{BEE12} and 
$SH(X)$ vanishes by assumption, we have
$SH_k^{+}(X) \cong 
H^{n-k+1}(X; \mathbb{Z})$ and
$SH_k^{+}(W) \cong 
H^{n-k+1}(W; \mathbb{Z})$
for all $k$ by Proposition \ref{prop: shcomputation}. So $H^{n-k+1}(X; \mathbb{Z}) \cong H^{n-k+1}(W; \mathbb{Z})$ for all $k$. 
\end{proof}

\begin{proof}[Proof of Theorem \ref{thm: main}]
Flexible Weinstein $W$ have vanishing 
$SH$ by \cite{BEE12} and are $\pi_1$-injective by Remark \ref{rem:semigood}. 
Hence Corollary \ref{thm: mainstronger} applies. 
\end{proof}

\begin{proof}[Proof of Corollary \ref{cor: example}]
The first statement follows from the fact that all flexible 2-connected Weinstein domains with boundary $Y$ are of the form $W_n$ with its unique flexible structure, for some $n$. The second statement uses Theorem \ref{thm: main}: since $(Y, \xi_n)$ has a flexible filling $W_n$ and $c_1(Y, \xi_n) = 0$, it remembers the cohomology of all its flexible fillings. As we explained before the statement of this corollary, the cohomology determines the symplectomorphism type for these examples.
\end{proof}

\begin{proof}[Proof of Corollary \ref{cor: displaceable_flexible_diffeomorphism}]

Suppose that $W_{flex}$ is displaceable in its completion $\widehat{W}_{flex} = W_{flex} \cup Y \times [1, \infty)$, i.e. there exists an embedding $\phi$ of $\widehat{W}_{flex}$
smoothly isotopic to the standard inclusion 
$W_{flex} 
\hookrightarrow \widehat{W}_{flex}$ such that 
$i(W_{flex}) \cap \phi(W_{flex}) = \emptyset$. Concretely, we will assume that $\phi(W_{	flex}) \subset W_{flex}^R := W_{flex} \cup
 Y \times [1,R]$ for some $R > 1$; hence 
$\phi(W_{	flex}) \subset Y\times [1,R] \subset W_{flex}^R$. 

Now suppose that $X$ is another flexible filling of $(Y, \xi)$  and consider the cut-and-pasted domain 
$X' := X \cup (W_{flex}^R \backslash i(W_{flex})) =  X \cup Y \times [1,R] \cong X$. 
Let $C: = X' \backslash \phi(W_{flex}) = (W_{flex}^R  \backslash ( i(W_{flex}) \cup \phi(W_{flex})))\cup X$ be the cobordism between 
$\partial_+ C = \partial W_{flex}^R$  and $\partial_- C = \partial \phi(W_{flex})$. Note that $C \cup \phi(W_{flex}) = X'\cong X$. We will show that $(C, \partial_- C)$ is an h-cobordism and hence an concordance since $\pi_1(Y) \cong \pi_1(W_{flex})= 0$ and $\dim Y \ge 5$. This will imply that $X \cong \phi(W_{flex}) \cup C$
is diffeomorphic to $\phi(W_{flex}) \cong W_{flex}$ as desired.   

By excision, $H_*(C, \partial_- C) \cong 
H_*(X', \phi(W_{flex}))$ and hence it suffices to show that the latter group vanishes. 
If $k \ge n+1$,  $H_k(X') \cong H_k(\phi(W_{flex})) = 0$ by Theorem \ref{thm: main} and so 
$H_k(X', \phi(W_{flex})) = 0$ for $k \ge n +2$.  Now we consider the case $k \le n-1$. 
We will use the commutative diagram in Figure \ref{diagram: large}. Here the main row is the long exact sequence of $(X', \phi(W_{flex}))$ and the main  column is the long exact sequence of $(X', Y\times [1,R])$. 
\begin{equation}\label{diagram: large}
\begin{CD}
@. @.  H_{k-1}(Y \times [1, R]) @. @.\\
@. @.  @AAA @. @.\\
@. @.  H_k(X', Y\times [1,R]) @. @.\\
@. @.  @AAA  @. @.\\
H_{k+1} (X', \phi(W_{flex}) ) @>>>
H_k(\phi(W_{flex} ) ) @>>> H_k(X') @>>>
H_k(X', \phi(W_{flex}))  \\
@. @VVV   @AAA  @. @.\\
@.  H_k(Y \times [1, R]) @= H_k(Y \times [1,R]) @. @.  \\
@. @VVV   @AAA @.\\
@. H_k(W_{flex}^R)  @. H_{k+1}(X', Y\times [1,R])
\end{CD}
\end{equation}
As seen in Figure \ref{diagram: large}, the 
map $H_k(\phi(W_{flex}))\rightarrow H_k(X')$ can be factored as 
$H_k(\phi(W_{flex}) ) \rightarrow H_k(Y \times [1,R]) \rightarrow H_k(X')$, both induced by inclusions. 
The second map $H_k(Y \times [1,R]) \rightarrow H_k(X')$ is an isomorphism for $k \le n-2$ by the long exact sequence of $(X', Y \times [1,R])$ and the fact that $H^{2n-k}(X') = H^{2n-k}(X) = 0$ for $k \le n - 1$. 
The first map $H_k(\phi(W_{flex}) ) \rightarrow H_k(Y \times [1,R])$
can be composed with the map $H_k(Y \times [1,R])  \rightarrow H_k(W_{flex}^R)$ induced by the inclusion $Y\times [1,R] \subset W_{flex}^R$. This composition 
 is an isomorphism since $\phi$ is smoothly isotopic to standard inclusion $i: W_{flex} \hookrightarrow W_{flex}^R$. The map $H_k(Y \times [1,R]) \rightarrow H_k(W_{flex}^R)$ is an isomorphism for $k \le n -2$  by the long exact sequence of $(W_{flex}^R, Y \times [1,R])$ and the fact that $H^{2n-k}(W_{flex}^R) = 0$ for $k \le n -1$. Hence $H_k(\phi(W_{flex}))\rightarrow H_k(Y \times [1,R])$ is also an isomorphism for $k \le n-2$.  Therefore  $H_k(\phi(W_{flex}))\rightarrow H_k(X')$ is an isomorphism for $k \le n-2$ and so $H_k(X', \phi(W_{flex})) = 0$ for $k \le n-2$ by the long exact sequence of $(X', \phi(W_{flex} ))$. For the $k = n-1$ case, we 
again use the long exact sequence of $(X', \phi(W_{flex} ))$ combined with the fact that $H_{n-1}(X') = 0$, which follows from the assumption that $H_{n-1}(W_{flex}) = 0$ and Theorem \ref{thm: main}.

Finally, we consider the cases $k = n, n +1$. 
Note that $H_{n+1}(X') \cong H_{n+1}(W_{flex}) = 0$ since $W_{flex}$ is Weinstein; also,   $H_{n-1}(\phi(W_{flex})) \cong H_{n-1} (W_{flex})$ vanishes by assumption. 
Hence to show that $H_{n}(X', \phi(W_{flex}))$ and  $H_{n+1}(X', \phi(W_{flex}))$ vanish, it suffices to show the map $H_n(\phi(W_{flex} ) ) \rightarrow H_n(X')$ is an isomorphism.
First we show that the map $H_n(\phi(W_{flex})) \rightarrow H_n(Y \times [1,R])$ is an isomorphism. The map $H_n(Y\times [1,R]) \rightarrow H_n(W_{flex}^R)$ is injective since $H_{n+1}(W_{flex}^R, Y\times [1,R]) \cong H^{n-1}(W_{flex}^R)$, which vanishes by assumption. It is surjective since the composition 
$H_n(\phi(W_{flex})) \rightarrow H_n(Y\times [1,R]) \rightarrow H_n(W_{flex}^R)$ is the identity map. Hence it is an isomorphism and so $H_n(\phi(W_{flex})) \rightarrow H_n(Y \times [1,R])$ is also an isomorphism. 

Now we show that the map $H_n(Y \times [1,R]) \rightarrow H_n(X')$ is surjective. We first note that $H_n(X', Y\times [1,R]) \cong H^n(X')$ and by Remark \ref{rem: fieldcoefficients,homology}, $H^n(X') \cong H^n(W_{flex})$. Next we show that $H^n(W_{flex}) \cong H_{n-1}(Y)$. 
The long exact sequence of $(W_{flex}, Y)$ is
$$
H_n(W_{flex})\rightarrow H_n(W_{flex}, Y) \rightarrow H_{n-1}(Y) \rightarrow H_{n-1}(W_{flex}).
$$
The rightmost term vanishes by assumption. The leftmost map is zero. To see this, note that it can be identified with the intersection form map $H_n(W_{flex}) \rightarrow Hom(H_n(W_{flex}), \mathbb{Z})$ under the isomorphisms 
$Hom(H_n(W_{flex}), \mathbb{Z}) \cong H^n(W_{flex};\mathbb{Z}) \cong H_n(W_{flex}, Y)$; the first isomorphism uses the universal coefficient theorem for cohomology along with the fact that  $\mbox{Ext}^1_{\mathbb{Z}}(H_{n-1}(W_{flex}), \mathbb{Z})$ vanishes since $H_{n-1}(W_{flex})$ vanishes by assumption. 
Now note that the intersection form map is zero since $W_{flex}$ is displaceable in its completion. Therefore the leftmost map is zero as well. Hence $H^n(W_{flex}) \cong H_n(W_{flex}, Y)  \cong H_{n-1}(Y)$ and so $H_n(X', Y \times [1,R]) \cong H^n(X') \cong H^n(W_{flex}) \cong H_{n-1}(Y)$. 
So the top vertical map 
$H_n(X', Y \times [1,R]) \rightarrow H_{n-1}(Y\times [1,R])$ in Figure \ref{diagram: large} is surjective since 
$H_{n-1}(X') \cong H_{n-1}(W')$ vanishes by assumption. So this map is a surjective map of isomorphic finitely-generated abelian groups. 
Finitely-generated abelian groups are \textit{Hopfian}, meaning that surjective endomorphisms are automatically isomorphisms, and so this map is in fact an isomorphism.  
So by the long exact sequence of $(X', Y\times [1,R])$, we have that $H_n(Y\times [1,R]) \rightarrow H_n(X')$ is surjective.  

Finally, since 
$H_n(\phi(W_{flex}))\rightarrow H_n(Y\times [1,R])$ is an isomorphism and 
$H_n(Y\times [1,R]) \rightarrow H_n(X')$ is surjective, the map $H_n(\phi(W_{flex})) \rightarrow H_n(X')$ is also surjective. 
By Theorem \ref{thm: main}, $H_n(\phi(W_{flex})) \cong H_n(X')$. Again using the Hopfian property, we conclude that this map is in fact an isomorphism, which finishes the $k= n+1, n$ cases. 

This shows that $X$ is diffeomorphic to $W_{flex}$. To complete the proof, we need to show that they are almost symplectomorphic. 
Since $X, W_{flex}$ are flexible Weinstein domains, this will show that they are Weinstein deformation equivalent  by the uniqueness h-principle \cite{CE12}; in particular, they will have symplectomorphic completions.

Using the diffeomorphism between $X$ and 
$C \cup \phi(W_{flex}) \cong \phi(W_{flex})$, the almost symplectic structure 
on $X$ pushes forward to the almost symplectic structure on $\phi(W_{flex})$ induced  from the inclusion  $\phi(W_{flex})\subset W_{flex}$, except in a small collar neighborhood of $\partial \phi(W_{flex})$ corresponding to the concordance $C$. 
However any almost symplectic structure on a concordance can be deformed relative to the negative boundary to a product structure. 
In particular, this push-foward structure can be deformed through almost symplectic structures to the induced-from-inclusion 
symplectic structure on $\phi(W_{flex})$. Since $\phi$ is isotopic to the identity map, $\phi(W_{flex})$ with this almost symplectic structure is almost symplectomorphism to $W_{flex}$ with the original symplectic structure. Hence $X$ and $W_{flex}$ are almost symplectomorphic, as desired. 
\end{proof}

\begin{proof}[Proof of Theorem \ref{thm: inf_contact_flex}]

Suppose $W^{2n}, n \ge 3,$ is an almost Weinstein filling of $(Y, J)$ and $M^{2n}$ is an almost Weinstein filling of $(S^{2n-1}, J_{std})$. 
By the Weinstein existence h-principle, there are flexible Weinstein domains that are almost symplectomorphic to $W, M$, respectively; we will also use $W,M$ to denote these flexible domains. Let $W_M := W\natural M$. Since $W_M$ is obtained by attaching a 1-handle to flexible $W$ and $M$, it is also flexible. Also note that $\partial W_M$ is in $(Y, J)$ since $\partial M$ is in $(S^{2n-1}, J_{std})$. This proves the first and third claim of Theorem \ref{thm: inf_contact_flex} with $(Y, \xi_M):= \partial W_M$. 
By Theorem \ref{thm: main}, all flexible fillings of $(Y, \xi_M) = \partial W_M$ have cohomology $H^*(W_M)$, which equals 
$H^*(W) \oplus H^*(M)$ except in degree zero. In particular, if $H^*(M)\not \cong H^*(N)$, then $H^*(W_M) \not \cong H^*(W_N)$ and so $\xi_M, \xi_N$ are not contactomorphic, which proves the second claim of Theorem \ref{thm: inf_contact_flex}. 

Finally, to construct infinitely many contact structures in $(Y, J)$ with flexible fillings, it suffices to produce infinitely many almost Weinstein fillings $M^{2n}$ of $(S^{2n-1}, J_{std}), n\ge 3$, with different $H(M)$. In particular, it is enough to find an infinite sequence of such $M_i$ with 
$\underset{i\rightarrow \infty}{\lim} \dim H^k(M_i) = \infty$ for some $k$. For $n$ odd, we can take $M_i$ to be (the almost Weinstein type of) certain Brieskorn manifolds. More precisely, 
let $M_i = \{z_0^i + z_1 ^2 +\cdots z_n^2 = \epsilon, \|z\| \le 1\} \subset B^{2n+2}$ for some small $\epsilon > 0$. Then if $i \equiv 1 \mod 2(n-1)!$, $\partial M_i$ is in $(S^{2n-1}, J_{std})$; we note that $\partial M_i = (S^{2n-1}, \xi_i)$ are precisely the contact structures studied by Ustilovsky in \cite{U}. These Brieskorn manifolds are plumbings of  an increasing number of $T^*S^n$'s and so $\underset{i\rightarrow \infty}{\lim} \dim H ^n(M_i) = \infty$.
For $n$ even, Brieskorn manifolds do not seem to suffice since their contact boundary do not seem to be in $(S^{2n-1}, J_{std})$. Instead, one can start with a certain Weinstein domain $M$ constructed  in \cite{Geiges} for $n \equiv 2 \mod 4$ and in \cite{DG} for $n \equiv 0 \mod 4$
for which $\partial M$ is in $(S^{2n-1}, J_{std})$ and $\dim H_{n}(M) \ge 1$. These domains are constructed using the Weinstein existence h-principle and so $M$ is already flexible. 
Let $M_i:=  \natural_{j=1}^i M$ be the $i$-fold boundary connected sum of $M$. Then $\partial M_i$ is in $(S^{2n-1}, J_{std})$ and $\dim H^n(M_i)  = i \dim H^n(M) \ge i$. In particular, $\underset{i\rightarrow \infty}{\lim} \dim H^n(M_i) = \infty$ as desired. 
\end{proof}
\begin{remark}\label{rem: ustilovksy}\

\begin{enumerate}[leftmargin=*]
\item 
Such an infinite collection of $M_i^{2n}$ 
only exist for $n \ge 3$: if $M^4$ is a Liouville filling of $(S^3, \xi)$, then $\widehat{M}^4$ is symplectomorphic to $(\mathbb{C}^2, \omega_{std})$ and $(S^3, \xi)$ must be $(S^3, \xi_{std})$.
\item We can generalize Theorem \ref{thm: inf_contact_flex} by considering a wider class of both $Y$ and $M$. Instead of $(Y, \xi)$ with a flexible filling, we can consider $(Y^{2n-1}, \xi), n \ge 3$, that is ADC and has a $\pi_1$-injective Liouville filling $W$ such that 
$SH_k(W), SH_{k+1}(W)$ are finite-dimensional for some 
$k$. Instead of almost Weinstein fillings $M$ of $(S^{2n-1}, J_{std})$, we can consider Liouville fillings $M$ of $(S^{2n-1}, J_{std})$ such that $SH(M) =0$ and $\partial M$ is ADC. We define $W_M = W\natural M$ and $(Y, \xi_M) = \partial W_M$ as before. Then $(Y, \xi_M)$ is in the same almost contact class as $(Y, \xi)$. Furthermore, $(Y, \xi_M)$ is also ADC since $(Y, \xi)$ and $\partial M$ are both ADC and 
by Theorem \ref{thm: MLYau}, subcritical surgery preserves ADC contact forms. If  $|\dim H^{n-k}(M) - \dim H^{n-k}(N)| > \dim SH_k(W)+ \dim SH_{k+1}(W)$, then $\xi_M, \xi_N$ are not contactomorphic, which generalizes Theorem \ref{thm: inf_contact_flex}.
 To see this, first note that $SH(W_M) = SH(W) \oplus SH(M) = SH(W)$ since subcritical handle attachment does not change $SH$ as shown by Cieliebak \cite{CieChord}. Then by the tautological long exact sequence, 
\begin{equation}\label{eqn: sh_inequality}
-\dim SH_{k+1}(W) 
\le \dim H^{n-k}(W_M) - \dim SH^{+}_{k+1}(W_M) 
\le 
\dim SH_k(W).
\end{equation}
Then $|\dim H^{n-k}(M) - \dim H^{n-k}(N)| > \dim SH_k(W)+ \dim SH_{k+1}(W)$ implies that
$
|\dim H^{n-k}(W_M) - \dim H^{n-k}(W_N)| > \dim SH_k(W)+ \dim SH_{k+1}(W).
$
Then by Equation \ref{eqn: sh_inequality} we can conclude that 
$SH_{k+1}^{+}(W_M) \not \cong SH_{k+1}^{+}(W_N)$. Since $\xi_M, \xi_N$ are ADC, Proposition \ref{prop: nice_sh_independent} implies that they are not contactomorphic. 
\end{enumerate}
\end{remark}

Now we prove the rest of the results stated in the Introduction.  

\begin{proof}[Proof of Remark \ref{rem: Cieliebak_Eliash_McLean}]

We first review the pairwise non-symplectomorphic  $W_k$ from \cite{CE12}. Suppose $(Y^{2n-1}, J), n \ge 3,$ has an almost Weinstein filling $W$. Then by the Weinstein existence h-principle, there is a flexible Weinstein domain almost symplectomorphic to $W$; we will also use $W$ to denote this flexible domain. Note that $\partial W$ is in $(Y, J)$. 
McLean \cite{MM} constructed a Weinstein structure $\mathbb{C}_M^n$ on $\mathbb{C}^n$ in the standard almost symplectic class with $SH(\mathbb{C}_M^n) \ne 0$; McLean worked over $\mathbb{Z}/2$ and we do the same in this section.
Since $SH$ is a unital ring with unit in degree $n$, we have $SH_n(\mathbb{C}_M^n) \ne 0$.
Then $W_k: = W \natural \mathbb{C}_M^n \natural \cdots \natural \mathbb{C}_M^n$, the boundary connected sum of $W$ with $k$ copies of $\mathbb{C}_M^n$, is almost symplectomorphic to $W$. 
Since $W$ is flexible, we have $SH(W) = 0$ and so $SH(W_k) = \oplus_{i=1}^k SH(\mathbb{C}_M^n)$. In particular, $SH_n(W_k) = \oplus_{i=1}^k SH_n(\mathbb{C}_M^n)$. 
The tautological long exact sequence for symplectic homology is
$$
H^0(W_k; \mathbb{Z}/2) 
\rightarrow SH_n(W_k)
\rightarrow SH_n^{+}(W_k)
\rightarrow H^1(W_k; \mathbb{Z}/2).
$$
Because  $\dim H^0(W_k; \mathbb{Z}/2) = 1$, this exact sequence implies 
\begin{equation}\label{eqn: inequalitysh}
\dim SH^{+}_n(W_k) 
 \ge \dim SH_n(W_k) -1.
\end{equation}
Since $SH_n(\mathbb{C}_M^n) \ne 0$, we have
$\dim SH_n(\mathbb{C}_M^n) \ge 1$ and so
$\dim SH_n(W_k) \ge k$. 
Hence by Equation (\ref{eqn: inequalitysh}) 
\begin{equation}\label{eqn: shbound2}
\dim SH_n^{+}(W_k) \ge k-1.
\end{equation}

Now suppose that the Cielieback-Eliashberg-McLean contact structure $(Y, \xi_k)= \partial W_k$ has a flexible filling $W$.
Then $(Y, \xi_k)$ is ADC by Corollary \ref{cor: flexsemigood}
and therefore 
$\dim SH_n^{+}(W_k) =\dim SH_n^{+}(W)$
by Proposition \ref{prop: nice_sh_independent}.
Since $SH(W) = 0$, by the tautological exact sequence $SH_n^{+}(W) \cong H^1(W; \mathbb{Z}/2)$. Since $W$ is Weinstein and $n \ge 3$, then $H^1(W; \mathbb{Z}/2) \cong H^1(Y; \mathbb{Z}/2)$ and so $\dim  SH_n^{+}(W) =
 \dim H^1(Y; \mathbb{Z}/2)$.
By Equation \ref{eqn: shbound2}, this implies that 
$\dim H^1(Y; \mathbb{Z}/2) \ge k -1$. 
So if $k \ge \dim H^1(Y; \mathbb{Z}/2) + 2$,   then $(Y, \xi_k) = \partial W_k$ cannot have \textit{any} flexible fillings. So at most $\dim H^1(Y; \mathbb{Z}/2) + 1$ of the Cieliebak-Eliashberg-McLean contact structures can have flexible fillings. 
\end{proof}
\begin{remark}
We do not know whether $(Y, \xi_k) = \partial W_k$ is ADC but in this proof, we only use that $\partial W$ is ADC. 
\end{remark}

\begin{proof}[Proof of Corollary \ref{cor: 57example}]
An almost contact structure $(Y, J)$ on 
a simply-connected 5-manifold or
 simply-connected 7-manifold with $\pi_2(Y)$ torsion-free has a Weinstein filling by \cite{Geiges}, \cite{BCS} respectively. We apply Theorem \ref{thm: inf_contact_flex} to classes with
 $c_1(J) = 0$
\end{proof}

\begin{proof}[Proof of of Corollary \ref{cor: s4k+3}]
Since $(S^{2n-1}, \xi_{std})$ has a Weinstein filling $(B^{2n}, \lambda_{std})$ and $c_1(\xi_{std}) = 0$,
Theorem \ref{thm: inf_contact_flex} produces infinitely many contact structures in $(S^{2n-1}, J_{std})$.  For $n$ odd, every almost contact class on $S^{2n-1}$ can be realized by a Weinstein-fillable contact structure \cite{U} and so the claim holds for every almost contact class. 

Next we show that the Ustilovsky contact structures $(S^{2n-1}, \xi_i)$, $n$ odd, do not have flexible fillings and hence are different from the contact structures we constructed in the previous paragraph. 
As we discussed in the proof of Theorem \ref{thm: inf_contact_flex}, $(S^{2n-1}, \xi_i)$ has the Brieskorn manifold $M_i$ as a Weinstein filling.  In Theorem 3.1 of \cite{Ue}, Uebele computed $SH_k^{+}(M_i)$ using Morse-Bott techniques and showed that for each $i$, $SH_k^+(M_i) \ne 0$ for infinitely many positive $k$; Uebele worked over $\mathbb{Z}/2$ and we do the same in this proof. 

 Now suppose that $(S^{2n-1}, \xi_i)$ has a flexible filling $W$. Then by Corollary \ref{cor: flexsemigood}, $(S^{2n-1}, \xi_i)$ is ADC. Since $W, M_i$ are both Weinstein and hence $\pi_1$-injective by Remark \ref{rem: H1}, then $SH^{+}(W) \cong SH^{+}(M_i)$ by Proposition 
\ref{prop: nice_sh_independent}. 
But $SH_k^{+}(W) = 0$ for $k \ge n+2$ by Proposition \ref{prop: shcomputation} while $SH_k^{+}(M_i) \ne 0$ for infinitely many positive $k$, which contradicts 
 $SH^{+}(W) \cong SH^{+}(M_i)$.
\end{proof}

\begin{proof}[Proof of Theorem \ref{thm: boundedinfinite}]
Let $(Y, J)$ have an almost Weinstein filling $W$. 
We first outline the construction of the contact structure $(Y, \xi_M)$, $M^n \in \Omega^n$. As described in the statement of Theorem \ref{thm: boundedinfinite}, $(Y, \xi_M)$ will be the boundary of a certain Weinstein domain $W_M$ that is almost symplectomorphic to $W\natural P$, where $P$ is a certain plumbing of $T^*S^n$'s. All of the $T^*S^n$'s in the plumbing will be flexible except for one, which will have an exotic non-flexible symplectic structure constructed by Eliashberg, Ganatra, and the author in \cite{EGL}.

We first review the exotic symplectic structures on $T^*S^n$ from \cite{EGL}. For 
$n \ge 3$, consider any closed manifold $M^n \in \Omega^n$, i.e. $M^n$ is closed, simply-connected, stably-parallelizable, and $\chi(M) = 2$ or $\chi_{1/2}(M) \equiv 1$ depending on whether $n$ is even or odd respectively. 
By  \cite{EGL}, there is a Weinstein domain $T^*S_M^n$ 
that is almost symplectomorphic to $(T^*S^n, \omega_{std})$ and contains $M$ as an exact Lagrangian. 
More explicitly, $T^*S_M^n$ is obtained by attaching subcritical and flexible handles to $T^*M$. 
Note that \cite{EGL} requires that the complexified tangent bundle of $M^n\backslash D^n$ is trivial, which follows from our condition that $M$ is stably parallelizable; see Theorem 9.1.5, \cite{Hu94}.

Since attaching subcritical or flexible handles does not change symplectic homology
by \cite{CieChord}, \cite{BEE12},  we have $SH(T^*S_M^n) \cong SH(T^*M)$. More precisely, subcritical and flexible handle attachment does not change \textit{full} $SH$, which generated by \textit{all} Hamiltonian orbits not just contractible ones as in our definition of $SH$. 
However since $\pi_1(T^*S_M^n) = \pi_1(S^n) =0$ and $\pi_1(T^*M) = \pi_1(M) = 0$ by assumption, this is not an issue here. Otherwise, non-contractible orbits of $\partial T^*M$ can become contractible in $\partial T^*S_M^n$, where they can have arbitrary degree depending on the framing of 2-handles that are attached to $T^*M$ to form $T^*S_M^n$; see Section \ref{ssec: independencelin} for a similar discussion. Furthermore $c_1(T^*S_M^n)$ and $c_1(T^*M)$ always vanish and so there are $\mathbb{Z}$-gradings on $SH(T^*S_M^n)$ and $SH(T^*M)$ and the isomorphism above is grading-preserving. 
Finally because $M$ is stably parallelizable, $M$ is spin and so $SH_*(T^*M) \cong H_*(\Lambda M; \mathbb{Z})$ by \cite{V_96}, \cite{K_07}. Therefore we have $SH_*(T^*S_M^n) \cong H_*(\Lambda M; \mathbb{Z})$.

Now we show that the contact boundary $\partial T^*S_M^n$ is ADC for $n \ge 4$. We first recall that for any closed manifold $M^n, n \ge 4$, the contact boundary $\partial T^*M$ of $T^*M$ is ADC \cite{EGH}. 
To see this, choose a metric on $M$ and consider the induced  contact form on $\partial T^*M$. The Reeb orbits of this contact form correspond to geodesics of $M$ with respect to the chosen metric. Using the trivialization of the canonical bundle of $T^*M$ induced by the Lagrangian cotangent fiber foliation,  the Conley-Zehnder index $\mu_{CZ}(\gamma)$ of a Reeb orbit $\gamma$ is the Morse index of the corresponding geodesic, which is always non-negative.
Since the Reeb orbits are graded by the \textit{reduced} Conley-Zehnder index
$|\gamma| = \mu_{CZ}(\gamma)+n-3$, 
the degree of \textit{all} orbits is positive for $n \ge 4$ with respect this trivialization and so $\partial T^*M$ is ADC; note that we do not need to take a sequence of contact forms as allowed in Definition \ref{def: semigood}. We will show that $\partial T^*M$ is ADC using a different approach in Section \ref{subsec: legwithflexfill}. 
Since $T^*S_M^n$ is obtained by attaching subcritical or flexible critical handles to $T^*M$ and $\partial T^*M$ is ADC, then $\partial T^*S_M^n$ is also ADC by Theorem \ref{thm: MLYau} and Theorem \ref{thm: semi-surgery}. For 2-handles, we use Theorem \ref{thm: MLYauindex2},
which also requires $\partial T^*M$ and hence $M$ to be simply-connected.

These exotic symplectic structures on $T^*S^n$ can be used to construct exotic structures on a plumbing of $T^*S^n$. As noted in 
\cite{Eexotic}, there exists a plumbing $P$ of copies of $T^*S^n$ such that $\partial P$ is in $(S^{2n-1}, J_P)$. Furthermore, for $n$ odd we can assume $J_P = J_{std}$; for example, we can take $P$ to be one of the Brieksorn manifolds $M_i$ discussed in the proof of Theorem \ref{thm: inf_contact_flex}.
Since $P$ is a plumbing of $T^*S^n$, we can view $P$ as the result of successively attaching some n-handles to $T^*S^n$, where $S^n$ is one of the zero sections of the plumbing; that is, $P = T^*S^n \cup C$ for some Weinstein cobordism $C$. For $M^n \in \Omega^n$, let 
$P_M  := T^*S_M^n \cup C_f$ be the exotic Weinstein structure on $P$ obtained by replacing $T^*S^n$ with $T^*S_M^n$ and $C$ with its flexibilization $C_f$. Since $T^*S_M^n, C_f$ are almost symplectomorphic to $(T^*S^n, \omega_{std}), C$ respectively, then $P_M$ is almost symplectomorphic to $P$ and $\partial P_M$ is in $(S^{2n-1}, J_P)$.
Finally, since  $P_M$ is obtained by attaching flexible handles to $T^*S^n_M$ and $\partial T^*S_M^n$ is ADC, then $\partial P_M$ is also ADC by Theorem \ref{thm: semi-surgery}.

Now suppose that $(Y, J)$ has an almost Weinstein filling $W$. By the Weinstein existence h-principle,
there is a flexible Weinstein domain almost symplectomorphic to $W$; we will also use $W$ to denote this domain. Note that $\partial W$ is in $(Y,J)$. Let $W_M := W \natural P_M$. Since $P_M$ is almost symplectomorphic to $P$, $W_M$ is almost symplectomorphic to $W \natural P$. 
If $n$ is odd, the contact boundary $\partial W_M$ is in $(Y,J)$ because $\partial P_M$ is in $(S^{2n-1}, J_{std})$. If $n$ is even, $\partial W_M$ is in $(Y,J')$, where $J' := J \sharp J_P$, because $\partial P$ is in $(S^{2n-1}, J_P)$; note that $c_1(J') = c_1(J) + c_1(J_P) = 0$. 
 This proves the first and third claims of Theorem \ref{thm: boundedinfinite} with $(Y, \xi_M) = \partial W_M$.   
Also, because $W_M$ is the boundary connected sum of  $P_M, W$ and $\partial P_M, \partial W$ are ADC (because $W$ is flexible), $\partial W_M$ is also ADC by Theorem \ref{thm: MLYau}. Finally, note that because $W$ and $C_f$ are flexible, $SH(W_M) \cong SH(P_M) \cong  SH(T^*S^n_M) \cong H(\Lambda M)$.

Consider two $M^n, N^n \in \Omega$.  Then $\partial W_M, \partial W_N$ are ADC contact structures and $SH^{+}$ is a contact invariant for ADC structures by Proposition \ref{prop: nice_sh_independent}. So if $SH^{+}(W_M) \not \cong SH^{+}(W_N)$, then $\partial W_M$ and $\partial W_N$ are non-contactomorphic.
The tautological long exact sequence for $SH(W_M)$ shows that 
\begin{equation}\label{eqn: SHinequality}
|\dim SH_k(W_M) - \dim SH_k^{+}(W_M)| \le \dim H^{n-k}(W_M) + \dim H^{n-k+1}(W_M) 
\end{equation}
for all $k$. 
Also, $H^k(W_M) \cong H^k(W\natural P) \cong H^k(W)$ for $k \le n-1$ since $P$ is $(n-1)$-connected. Hence 
$\dim H^{n-k}(W_M) + \dim H^{n-k+1}(W_M) = 
\dim H^{n-k}(W) + \dim H^{n-k+1}(W)$ for $k \ge 2$. 
So if
$$
|\dim H_k(\Lambda M) - \dim H_k(\Lambda N)| 
> 2\dim H^{n-k}(W) + 2\dim H^{n-k+1}(W)
$$ 
for some $k \ge 2$, then by Equation \ref{eqn: SHinequality} 
$SH_k^{+}(W_M) \not \cong SH_k^{+}(W_N)$ and so $\partial W_M, \partial N$ are not contactomorphic. Note that since $k \ge 2$, 
$ \dim H^{n-k}(W) = \dim H^{n-k}(Y)$ and $ \dim H^{n-k+1}(W) \le \dim H^{n-k+1}(Y)$. This proves the second claim of Theorem \ref{thm: boundedinfinite}.

To prove the last claim in Theorem \ref{thm: boundedinfinite}, it suffices to find infinitely  many
 $M^n \in \Omega^n$ such that $H_k(\Lambda M)$ are sufficiently different for some $k \ge 2$. 
 Note that since $M^n$ is simply-connected, 
 $\dim H_i(\Lambda M)$ is finite for all $i$; this is true for all simply-connected manifolds by applying
the theory of Serre classes to the relevant fibrations. 
 In particular, it is enough to find an infinite sequence $M^n_i \in \Omega^n$ such that $\underset{i\rightarrow \infty}{\lim} \dim H_k(\Lambda M_i) = \infty$ for some $k \ge 2$. 
  For any topological space $X$, $\dim H_k(\Lambda X) \ge \dim H_k(X)$
since the inclusion of constant loops $X \rightarrow \Lambda X$ and the evaluation map $\Lambda X \rightarrow X$ compose to the identity. So it suffices to find $M_i \in \Omega$ with $\underset{i\rightarrow \infty}{\lim} \dim H_k(M_i) = \infty$.

For $n \ge 6$ even, consider the CW-complex $X_i = \vee_{j= 1}^i (S^2\vee S^3)$. Note that $\chi(X_i) = 1$.  We can explicitly embed $X_i$ into $\mathbb{R}^{n+1}$ for $n \ge 3$. 
 A tubular neighborhood of $X_i\subset \mathbb{R}^{n+1}$ is a compact manifold $W^{n+1}_i \subset \mathbb{R}^{n+1}$. 
Since $W_i$ retracts to $X_i$, $\pi_1(W_i) = \pi_1(X_i) = 0$, $\dim H_2(W_i)  = \dim H_2(X_i) = i$, and $\chi(W_i) = \chi(X_i) = 1$. We now show that $M_i^n = \partial W_i^{n+1}$ is in $\Omega$ and $\underset{i\rightarrow \infty}{\lim} \dim H_2(M_i) = \infty$.
First of all, $M_i$ is stably parallelizable since $W$ is parallelizable. Furthermore, since $n \ge 5$, $\pi_1(M_i) = \pi_1(W_i) = 0$ since $W^{n+1}$ just has $0, 2, 3$-handles. For any manifold $X^{n+1}$ with $n$ even, $\chi(\partial X) = 2\chi(X)$ and so in our case $\chi(M_i) = 2 \chi(W_i) = 2$. So $M_i \in \Omega$. Finally since $n \ge 6$, 
$\dim H_2(M_i) = \dim  H_2(W_i) = i$ because $W_i$ has just $0, 2, 3$-handles 
and so 
$\underset{i\rightarrow \infty}{\lim} \dim H_2(M_i) = \infty$ as desired. 

For $n \ge 5$ odd, we take $X_i =  \vee_{j= 1}^i S^2$ and let $W_i, M_i$ be as before. To show that $M_i \in \Omega$, we just need
 $\chi_{1/2}(M_i) = \sum_{k=0}^{(n-1)/2} \dim H_k(M_i)  \equiv 1 \mod 2$. For $n \ge 5$, $H_k(M_i) \cong H_k(W_i)$ for $k \le (n-1)/2$ because $W^{n+1}$ just has $0, 2$-handles. Then 
$$
\chi_{1/2}(M_i) = \sum_{k=0}^{(n-1)/2}\dim H_k(M_i) = \sum_{k=0}^{(n-1)/2}\dim H_k(W_i)
= 1 + i
$$
So if $i$ is even, then $\chi_{1/2}(M_i) \equiv 1 \mod 2$. Also, since $n \ge 5$ and $W_i$ has only 0,2-handles, 
$\dim H_2(M_i) = \dim H_2(W_i) = i$ and so 
$\underset{i\rightarrow \infty}{\lim} \dim H_2(M_i) = \infty$ as desired. 
\end{proof}
\begin{remark}\
It is possible to construct infinitely many examples for $n = 4$ in a slightly different way. We can weaken the condition that $\pi_1(M) = 0$ to $H^1(M; \mathbb{Z})= 0$. In this case, \textit{full} $SH(T^*M)$ has a canonical grading and the isomorphisms $SH(T^*S_M^n) \cong SH(T^*M) \cong H(\Lambda M)$ are all grading-preserving; furthermore, $\partial T^*S_M^n$ is still ADC since \textit{all} orbits of $\partial T^*M$ have positive degree for $n \ge 4$. There are infinitely many $M$ with $H^1(M; \mathbb{Z}) = 0$ and sufficiently different 
$H_0(\Lambda M)$ which provide the necessary examples; see the proof of Theorem 4.7 in \cite{EGL}. In  Theorem \ref{thm: boundedinfinite}, we stated for simplicity that we need 
$H_k(\Lambda M)$ to be sufficiently different for $k \ge 2$ in order to distinguish the $\xi_M$ for simplicity but in fact $k \ge 0$ also works if $|\dim H_k(\Lambda M) - \dim H_k(\Lambda N)| \ge 2(\dim H(W) + \dim H(P))$, where $P$ is a fixed plumbing of $T^*S^n$. For $n =3$, our approach does not work since we need $n \ge 4$ for $\partial T^*M$ to be ADC. 
\end{remark}

\begin{proof}[Proof of Remark \ref{rem: boundedinfinite}]
Note that if $(Y, \xi)$ has a flexible filling $W$,  \textit{all} Liouville fillings $X$ of $(Y, \xi)$ have finite-dimensional $SH(X)$. To see this, note that $SH^{+}(W)\cong H(W)$ by the tautological exact sequence and so $SH^{+}(W)$ is finite-dimensional (working over $\mathbb{Q})$.  Also, $(Y, \xi)$ is ADC because it has a flexible filling and so $SH^{+}(W) \cong SH^{+}(X)$.
Therefore $SH^{+}(X)$ is finite-dimensional and so $SH(X)$ is also finite-dimensional again by the tautological exact sequence. 
On the other hand, for the fillings $W_M$ described in the proof of Theorem \ref{thm: boundedinfinite}, $SH(W_M) = H(\Lambda M; \mathbb{Q})$. Hence if $H(\Lambda M)$ is infinite-dimensional, $(Y, \xi_M) = \partial W_M$ has no flexible fillings. Finally, we note that because $M \in \Omega$ is simply-connected, $H(\Lambda M; \mathbb{Q})$ is always infinite-dimensional \cite{DSullivan}.
\end{proof}

\section{Reeb chords of loose Legendrians}\label{sec: reeb_chord_loose}

To prove Theorem \ref{thm: semi-surgery}, we need to understand the new Reeb orbits produced by doing flexible surgery.  As we explain below in Section \ref{ssec: chords_contact_surgery}, the new Reeb orbits formed by doing critical contact surgery correspond to words of Reeb chords of the Legendrian attaching sphere (up to cyclic permutation). For flexible surgery, the attaching sphere is a loose Legendrian and therefore, we need to understand Reeb chords of loose Legendrians spheres, which we study in Section \ref{sec: modifying}.

\subsection{Properties of Reeb chords}\label{ssec: prop_reeb_chords}

We first present some background about Reeb chords. Let  $\Lambda^{n-1}$ be a Legendrian in $(Y^{2n-1}, \xi)$. We will assume throughout that $\Lambda$ is connected; this is sufficient for most of our applications since contact surgery on  a Legendrian link is equivalent to a sequence of surgeries on connected Legendrians. 
Let $\alpha$ be a non-degenerate contact form for $(Y, \xi)$.  
We say that a Reeb chord $c$ of $\Lambda$ with endpoints $x_0, x_1\in \Lambda$ is \textit{non-degenerate} if the subspaces $T_{x_1}\Lambda$ and the image of $T_{x_0}\Lambda$ under the linearized Reeb flow are transverse; similarly, we say that $\Lambda$ is \textit{non-degenerate} if all Reeb chords of $\Lambda$ are non-degenerate. A generic Legendrian is non-degenerate and after $C^0$-small Legendrian isotopy, we can assume that any Legendrian is non-degenerate.  All Reeb chords that we discuss in this paper will be non-degenerate; in general Legendrians do not need to be non-degenerate since we only work with chords below some action bound.

As for Reeb orbits, we can define the contact action of a Reeb chord $c$ by
$$
A(c) := \int_c \alpha.
$$
We let $\mathcal{P}^{< D}(\Lambda, Y, \alpha)$ denote the set of Reeb chords of $\Lambda$ that have action less than $D$ and are zero in $\pi_1(Y, \Lambda)$. As in the contact case, the non-degeneracy of $\Lambda$ implies that $\mathcal{P}^{< D}(\Lambda, Y, \alpha)$ is a finite set for any fixed $D>0$.  Note that the definition of $\pi_1(Y, \Lambda)$ requires fixing a basepoint in $\Lambda$; we consider a chord $c$ as an element of $\pi_1(Y, \Lambda)$ by concatenating $c$ with a path in $\Lambda$ to this basepoint. The condition that $c$ is zero in $\pi_1(Y, \Lambda)$ is independent of the choice of this path and is equivalent to the existence of a disk $D^2\subset Y$ such that $\partial_- D^2 = c$ and $\partial_+ D^2 \subset \Lambda$. 
Also, let $\mathcal{W}^D(\Lambda, Y, \alpha)$ denote the set of cyclic equivalence classes of Reeb chord words that have total action less than $D$ and are zero in $\pi_1(Y, \Lambda)$; that is, 
$$
\mathcal{W}^D(\Lambda, Y, \alpha) = 
\{w:= c_1 \cdots c_k | 
A(w):=\sum_i A(c_i) < D \} / \sim
$$
with the equivalence given by 
$c_1 c_2 \cdots c_n \sim c_2 \cdots c_n c_1$. We consider $c_1 c_2 \cdots c_n$ as an element of $\pi_1(Y, \Lambda)$ by inserting paths in $\Lambda$ into this word that connect one endpoint of $c_1$ to the basepoint and connect the endpoint of $c_i$ to the endpoint of $c_{i+1}$ for all $i$. Note that the condition of being zero in $\pi_1(Y, \Lambda)$ is independent of the choice of these paths and is preserved under cyclic permutation. 

If both $c_1(Y) \in H^2(Y; \mathbb{Z})$ and $c_1(Y, \Lambda) \in H^2(Y, \Lambda; \mathbb{Z})$ of $\Lambda$ vanish, then we assign an integer Conley-Zehnder index $\mu_{CZ}(c)$ to each Reeb chord $c$ of $\Lambda \subset (Y, \xi)$; see \cite{EO_15}. Then the degree of $c$ is the Conley-Zehnder index minus one, i.e. 
$$
|c|= \mu_{CZ}(c)-1.
$$
This grading convention coincides with the grading used in Legendrian contact homology. 
The degree of a word of chords is the sum of the degrees of the chords in the word, i.e. $|c_1 \cdots c_k| = \sum_{i=1}^k |c_i|$. 
 As for Reeb orbits, in general the degree for chords depends on a trivialization of the canonical bundle but for chords that are zero in $\pi_1(Y, \Lambda)$, the degree is independent of the trivialization; this is also true for elements of $\mathcal{W}^{<D}(\Lambda, Y, \alpha)$.

\begin{remark}\label{rem: c1_lambda}
Note that $c_1(Y, \Lambda)$ maps to $c_1(Y)$ under the restriction $H^2(Y, \Lambda; \mathbb{Z}) \rightarrow H^2(Y; \mathbb{Z})$. If $H^1(\Lambda; \mathbb{Z}) = 0$ and $c_1(Y) =0$, then the cohomology long exact sequence shows that $c_1(Y, \Lambda)=0$ and so the Reeb chords of $\Lambda$ can be $\mathbb{Z}$-graded. 
\end{remark}

 Note that if $f: (Y, \alpha, \Lambda) \rightarrow (Y', \alpha', \Lambda')$ is a strict contactomorphism, 
 then $\mathcal{P}^{< D}(\Lambda, Y, \alpha)$ and
 $\mathcal{P}^{< D}(\Lambda', Y', \alpha')$ are in grading-preserving bijection and  similarly for $\mathcal{W}^D$. 
We also have analog of Proposition \ref{prop: easy} for Reeb chords. 

\begin{proposition}\label{prop: easychords}
For any $D, s>0$,   
$\mathcal{P}^{< D}(\Lambda, Y, s \alpha)$ and  $\mathcal{P}^{< D/s}(\Lambda, Y, \alpha)$ are in grading-preserving bijection. 
\end{proposition}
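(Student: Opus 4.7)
The plan is to mimic the proof of Proposition \ref{prop: easy} essentially verbatim, with the obvious adaptations for chords in place of closed orbits. First I would note the pointwise identity $R_{s\alpha} = \tfrac{1}{s} R_\alpha$, so Reeb trajectories of $s\alpha$ are exactly reparametrizations (by the multiplication map $m_{1/s}$) of Reeb trajectories of $\alpha$. Since $\Lambda$ is Legendrian with respect to $\xi = \ker\alpha = \ker(s\alpha)$, the set of Reeb chords of $\Lambda$ for $s\alpha$ is in bijection with the set of Reeb chords of $\Lambda$ for $\alpha$ via this reparametrization, and a chord of $\alpha$-action $T$ becomes a chord of $s\alpha$-action $sT$. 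Hence chords with action less than $D/s$ for $\alpha$ correspond precisely to chords with action less than $D$ for $s\alpha$.

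Next I would check that this bijection preserves the subset $\mathcal{P}^{<D}$, i.e.\ the condition of being trivial in $\pi_1(Y,\Lambda)$. This is immediate since reparametrization does not change the image of the chord in $Y$; the two chords represent the same path from $\Lambda$ to $\Lambda$ up to reparametrization, hence the same class in $\pi_1(Y,\Lambda)$.

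Finally I would verify that the bijection is grading-preserving. The degree $|c| = \mu_{CZ}(c) - 1$ of a chord $c$ is determined by the linearized Reeb flow on $(\xi, d\alpha|_\xi)$ along the chord, together with the transverse Lagrangian boundary conditions $T_{x_0}\Lambda$ and $T_{x_1}\Lambda$; all of these are intrinsic to $\xi$ and $\Lambda$. Replacing $\alpha$ by $s\alpha$ only reparametrizes the Reeb flow (the new flow is the $\alpha$-flow run at $1/s$ times the speed), so the linearized path of symplectic transformations on $\xi$ traversed along the chord is exactly the same path, just traversed at different speed. The Conley–Zehnder index depends only on the homotopy class of this path of symplectic maps with the prescribed Lagrangian endpoints, and is therefore unchanged.

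The argument is entirely routine and there is no genuine obstacle; the only small point to be careful about is that the statement concerns chords trivial in $\pi_1(Y,\Lambda)$, for which the grading is independent of the trivialization of the canonical bundle, so no issue arises from the fact that we are rescaling $\alpha$. This completes the proposed proof.
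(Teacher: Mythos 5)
Your proof is correct and matches the paper's intent exactly: the paper states this proposition without proof as the immediate analog of Proposition \ref{prop: easy}, whose proof you mimic faithfully (reparametrization via $R_{s\alpha}=\tfrac{1}{s}R_\alpha$, action scaling by $s$, and invariance of the Conley--Zehnder index under reparametrization of the linearized flow with Lagrangian boundary conditions). Your extra remark about preservation of triviality in $\pi_1(Y,\Lambda)$ is a worthwhile point the paper leaves implicit.
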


\subsection{Reeb chords and contact surgery}\label{ssec: chords_contact_surgery}

The following proposition, which was proven by Bourgeois, Ekholm, and Eliashberg \cite{BEE12}, allows us to translate our question about Reeb orbits to a question about Reeb chords of the attaching Legendrian. 

Before giving the precise statement, we fix some notation. Choose a Riemannian metric on $\Lambda$. Then let $U^{\epsilon}(\Lambda) \subset (J^1(\Lambda), \alpha_{std})$ be 
$\{ \|y\| < \epsilon, |z| < \epsilon\}$, where we use the metric on $\Lambda$ to define $\|y\|$. If $\Lambda \subset (Y,\alpha)$ is Legendrian, let $U^\epsilon(\Lambda, \alpha) \subset (Y, \alpha)$ be a neighborhood of $\Lambda$ that is strictly contactomorphic to $U^\epsilon(\Lambda)$. For the most part, the specific  contactomorphism between $U^\epsilon(\Lambda, \alpha)$ and $U^\epsilon(\Lambda)$ will not be important and just the existence of such a contactomorphism will be enough. 

 \begin{proposition}\cite{BEE12}\label{prop: BEE}
Let $\Lambda^{n-1} \subset (Y^{2n-1}_-, \alpha_-), n \ge 3$, be a Legendrian sphere. For any $D>0$, there exists $\epsilon = \epsilon(D)>0$ such that if 
$(Y_+, \alpha_+)$ is the result of contact surgery on $U^\epsilon(\Lambda, \alpha_-)$, then there is a (shifted) grading preserving bijection between 
$\mathcal{P}^{< D}(Y_+, \alpha_+)$ and $\mathcal{P}^{< D}(Y_-, \alpha_-) \cup \mathcal{W}^{<D}(\Lambda, Y_-, \alpha_-)$: if 
$\gamma_w$ is the orbit corresponding to a word of chords $w = c_1 \cdots c_k$, then 
$$
|\gamma_w| = |w|+n-3 = \sum_{i=1}^k |c_i| + n-3.
$$
\end{proposition}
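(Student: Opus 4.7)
The plan is to follow the Bourgeois--Ekholm--Eliashberg handle model approach, constructing an explicit contact form $\alpha_+$ on $Y_+$ whose Reeb dynamics below action $D$ can be read off directly from the Reeb chord structure of $\Lambda$ in $(Y_-,\alpha_-)$. First I would fix the standard model of the Weinstein $n$-handle attached along $\Lambda$, equipped with a Liouville form extending $\lambda_-$ whose restriction to the upper boundary gives $\alpha_+$. This form can be arranged so that $\alpha_+ \equiv \alpha_-$ outside $U^\epsilon(\Lambda,\alpha_-)$ while, in the handle region, the Reeb vector field has an explicit normal form: integral curves enter the handle through one point on the ``copy'' of $\Lambda$ near the attaching locus and exit through another, after spending time of order $\epsilon$ inside. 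The parameter $\epsilon=\epsilon(D)$ must be chosen small enough so that the time and action accumulated inside the handle are negligible compared to $D$ and to the gaps in $\operatorname{Spec}(Y_-,\alpha_-)$ and in the set of chord actions of $\Lambda$ below $D$.

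Next I would classify the Reeb orbits of $\alpha_+$ of action less than $D$. An orbit that never visits the handle region is literally a Reeb orbit of $\alpha_-$ missing $\Lambda$ and, for $\epsilon$ small, every element of $\mathcal{P}^{<D}(Y_-,\alpha_-)$ can be represented this way, giving the first summand in the claimed bijection. An orbit $\gamma$ that visits the handle does so finitely often, say $k$ times; between two consecutive handle visits, $\gamma$ runs along a genuine Reeb trajectory of $\alpha_-$ starting and ending on $\Lambda$, i.e.\ along a Reeb chord $c_i$, and the total action is $A(\gamma) = \sum_i A(c_i) + O(k\epsilon)$. The cyclic sequence $w=c_1\cdots c_k$ is well defined modulo rotation of where one begins to count and the requirement that $\gamma$ closes up is exactly the condition that $w$ represents the trivial class in $\pi_1(Y_-,\Lambda)$. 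Conversely, any such cyclic word with action below $D-O(k\epsilon)$ can be uniquely completed to a Reeb orbit $\gamma_w$ of $\alpha_+$ by inserting the model handle-crossing arcs, producing the second summand.

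The main obstacle is the grading formula $|\gamma_w|=\sum|c_i|+n-3$, which via the reduced index conventions $|\gamma|=\mu_{CZ}(\gamma)+n-3$ and $|c|=\mu_{CZ}(c)-1$ amounts to showing $\mu_{CZ}(\gamma_w)=\sum_i\mu_{CZ}(c_i)-k$. The plan here is to cap $w$ off by assembling, for each $c_i$, a disk bounded by the chord together with an arc in $\Lambda$, and then to glue in the cocore disks of the handle to produce a genuine disk filling of $\gamma_w$. Using the trivialization of the canonical bundle along $\gamma_w$ coming from this disk (which is globally well-defined since $c_1(Y_-,\Lambda)=0$), the linearized Reeb flow along $\gamma_w$ decomposes as an alternating composition of the chord pieces (each contributing $\mu_{CZ}(c_i)$) and $k$ handle-crossing pieces. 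Each handle-crossing piece is a symplectic shear in the explicit linear model, which contributes $-1$ to the Maslov-type count after the cyclic composition is converted from a sequence of paths between Lagrangian subspaces into a loop of symplectic matrices. The bookkeeping for this conversion, together with the transversality needed to guarantee non-degeneracy of all orbits in $\mathcal{P}^{<D}(Y_+,\alpha_+)$ after a generic perturbation, constitutes the main technical work.
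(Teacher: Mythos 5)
The paper does not prove this proposition: it is quoted from \cite{BEE12} and used as a black box, so there is no internal proof to compare against. Your proposal reconstructs the \cite{BEE12} handle-model argument, and the overall structure is the right one: an explicit contact form $\alpha_+$ agreeing with $\alpha_-$ off the handle and with the model form inside; a classification of closed Reeb orbits of action $<D$ by the number $k$ of handle crossings, with $k=0$ giving old orbits and $k\ge 1$ giving orbits encoded by cyclic words of $k$ chords; shrinking $\epsilon$ so the action spent in the handle falls below the relevant spectral gaps; and a Conley--Zehnder computation along the concatenated linearized flow. Your index arithmetic is consistent with the stated formula: since $|\gamma|=\mu_{CZ}(\gamma)+n-3$ and $|c|=\mu_{CZ}(c)-1$, the claim $|\gamma_w|=\sum_i|c_i|+n-3$ is equivalent to $\mu_{CZ}(\gamma_w)=\sum_i\mu_{CZ}(c_i)-k$, which is what your proposed $-1$ per handle crossing would deliver; verifying that contribution in the explicit handle model, together with the path-of-Lagrangians to loop-of-symplectics conversion, is exactly the technical content of the \cite{BEE12} computation, and you correctly flag it as the main work remaining.

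There is one concrete misstatement in the bijection. You write that ``the requirement that $\gamma$ closes up is exactly the condition that $w$ represents the trivial class in $\pi_1(Y_-,\Lambda)$.'' That is not right: \emph{every} cyclic word of chords produces a closed Reeb orbit of $\alpha_+$ once the model handle-crossing arcs are inserted, regardless of $\pi_1$. The geometric closing is automatic; the relative $\pi_1$ condition enters only because $\mathcal{P}^{<D}(Y_+,\alpha_+)$ is defined to consist of orbits \emph{contractible} in $Y_+$ and $\mathcal{W}^{<D}(\Lambda,Y_-,\alpha_-)$ of words vanishing in $\pi_1(Y_-,\Lambda)$. For $n\ge 3$ with $\Lambda\cong S^{n-1}$ simply connected, the map $\pi_1(Y_-)\to\pi_1(Y_-,\Lambda)$ is injective and $\pi_1(Y_-)\cong\pi_1(Y_+)$, so $\gamma_w$ contractible in $Y_+$ is equivalent to $w$ trivial in $\pi_1(Y_-,\Lambda)$, as the paper discusses in the paragraph immediately following the proposition. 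The bijection should therefore be stated between all orbits and all cyclic words below the action threshold, with contractibility and relative $\pi_1$-triviality cutting out corresponding subsets on each side; as written, your formulation would incorrectly suggest that non-$\pi_1$-trivial words do not correspond to any closed orbit at all.
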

\begin{remark}\label{rem: parametrization}\
\begin{enumerate}[leftmargin=*]
\item 
Strictly speaking, we should consider $\Lambda^{n-1} \subset Y^{2n-1}$ as a \textit{parametrized} Legendrian sphere, i.e. fix a diffeomorphism $S^{n-1} \rightarrow \Lambda^{n-1}$. 
This gives an identification of $U^\epsilon(\Lambda)$ with $U^\epsilon(S^{n-1})$, which we need in order to attach the Weinstein handle. Said differently, \textit{parametrized} Legendrian have a canonical framing, which makes contact surgery well-defined;  see \cite{Gbook}. However since the Reeb chords of $\Lambda^{n-1}$ do not depend on the parametrization of $\Lambda^{n-1}$, the conclusion of Proposition \ref{prop: BEE} holds for any parametrization. In this paper,  we are primarily concerned with Reeb chords and orbits  and so will rarely mention parametrizations. 
We also note that there are examples of different parameterizations of the same Legendrian that are smoothly isotopic in the ambient contact manifold but not Legendrian isotopic (through parametrized Legendrians); see \cite{ekholm_exotic_spheres}.  However, the notion of looseness is independent of parameterization and any two parametrizations of a loose Legendrian are Legendrian isotopic by Murphy's h-principle \cite{Murphy11}. 
\item 
Proposition \ref{prop: BEE} also requires that  all elements of
 $\mathcal{P}^{<D}(Y_-, \Lambda, \alpha_-)$ are non-degenerate 
and all elements of  $\mathcal{P}^{<D}(Y_-, \alpha_-)$ are disjoint from $\Lambda$.
This is true generically and can be achieved here by a $C^0$-small Legendrian isotopy of $\Lambda$ that does not affect the contactomorphism type of $(Y_+, \xi_+)$.
In this paper, we will always assume that these conditions hold  and explain how our modifications of Legendrians or contact forms preserve these conditions. 
Also, if all elements of  $\mathcal{P}^{<D}(Y_-, \alpha_-)$  are non-degenerate, then so are  all elements of  $\mathcal{P}^{<D}(Y_+, \alpha_+)$.
\item The contact form $\alpha_+$ in Proposition \ref{prop: BEE} is quite special. It equals $\alpha_-$  in $Y_+ \backslash U^\epsilon(\Lambda, \alpha_-)$ and has a certain standard form in handle region of $Y_+$; we will give a precise model in Proposition \ref{prop: techsurgery} below. 
\end{enumerate}
\end{remark}

Note that $c_1(Y_-) = 0$ implies 
$c_1(Y_+) = 0$ for $n \ge 3$ by Proposition \ref{prop: c1equivalence}. So the Reeb orbits of 
$(Y_+, \xi_+)$ can be $\mathbb{Z}$-graded. 
Similarly, $H^1(\Lambda; \mathbb{Z}) = 0$ and $c_1(Y_-) = 0$ imply that $c_1(Y_-, \Lambda) = 0$ by Remark \ref{rem: c1_lambda} and  so the Reeb chords of $\Lambda$ can also be $\mathbb{Z}$-graded. 
Also, $\pi_1(Y_-) \xrightarrow{\sim}  \pi_1(W) \xleftarrow{\sim}  \pi_1(Y_+)$, where $W$ is the elementary cobordism between $Y_-$ and $Y_+$. 
So any contractible orbit of $Y_+$ that corresponds to an old orbit of $Y_-$ must also be contractible in $Y_-$.  Otherwise, a non-contractible orbit of $Y_-$ might become contractible in $Y_+$ violating the claimed bijection in Proposition \ref{prop: BEE}.
Similarly, if $\gamma_w\in \pi_1(Y_+)$ is a new orbit of $(Y_+, \xi_+)$ corresponding to a word of chords $w$, then the corresponding element of $\pi_1(Y_-)$ under the isomorphism $\pi_1(Y_-) \cong \pi_1(Y_+)$ maps to $w \in \pi_1(Y_-, \Lambda)$ under the map $\pi_1(Y_-) \rightarrow \pi_1(Y_-, \Lambda)$. So if $\gamma_w$ is contractible in $Y_+$, then $w$ is zero in $\pi_1(Y_-, \Lambda)$. 

We note that Proposition \ref{prop: BEE} also holds for $n = 2$ under the additional assumptions that $c_1(Y_+, \xi_+) = 0$ and $\Lambda$ is contractible in $Y_-$. This case corresponds to critical surgery that is not flexible (since $n = 2$), which is generally not studied in this paper. One exception is Example \ref{ex: not_nice} but there $Y_-^3 = (S^3, \xi_{std})$  and so $\Lambda^1$ is contractible in $Y_-^3$; furthermore, by picking $\Lambda^1$ in the correct formal class, we can ensure that $c_1(W^4) = c_1(Y^3_+) = 0$.

 The analog of Proposition \ref{prop: BEE} for subcritical surgery was essentially proven by M.-L.Yau, who showed that all new orbits (up to fixed action) occur in the belt sphere of the subcritical handle and computed their degrees. 

\begin{proposition}\cite{MLYau}\label{prop: MLYau_action}
Let $\Lambda^{k-1} \subset (Y^{2n-1}_-, \alpha_-), n \ge 2,$ be an isotropic sphere with $k < n, k \ne 2$.
For any $D>0$ and integer $i > 0$, there exists $\epsilon = \epsilon(D, i)>0$ such that if $(Y_+, \alpha_+)$ is the result of contact surgery on $U^\epsilon(\Lambda, \alpha)$, then 
there is a grading preserving bijection between 
$\mathcal{P}^{< D}(Y_+, \alpha_+)$ and $\mathcal{P}^{< D}(Y_-, \alpha_-) \cup \{\gamma^1, \cdots, \gamma^i \}$, where 
$|\gamma^j| = 2n-k-4+2j$. 
If $k = 2$, we also need to assume that $c_1(Y_+, \xi_+) = 0$ and either $\Lambda$ is contractible or all orbits of $(Y_-, \alpha)$ with action less than $D$ are contractible. 
\end{proposition}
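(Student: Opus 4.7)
The strategy is to build an explicit contact model for the subcritical Weinstein handle region and then analyze the Reeb dynamics directly in this model, producing the new orbits as iterates along the belt sphere. This closely mirrors M.-L.~Yau's original argument \cite{MLYau}, and also parallels the approach behind Proposition \ref{prop: BEE}: the orbits that survive on $(Y_+, \alpha_+)$ either come from $(Y_-, \alpha_-)$ and avoid the surgery region, or they live in a prescribed local model determined by the attaching data.

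More concretely, the first step is to write down a contact form $\alpha_+$ on $Y_+$ which agrees with $\alpha_-$ outside of $U^\epsilon(\Lambda, \alpha_-)$, and which inside the handle region takes a standard explicit form in coordinates adapted to the isotropic neighborhood theorem. As in \cite{BEE12, MLYau}, the Reeb vector field of this model form has the property that in a collar of $\partial U^\epsilon(\Lambda, \alpha_-)$ it points inward along the handle and rotates around the belt sphere $B \cong S^{2n-k-1}$. By choosing $\epsilon$ sufficiently small (depending on $D$), any Reeb orbit of $\alpha_+$ that enters the handle region and has action less than $D$ must be contained in a tubular neighborhood of $B$ and correspond to an iterated Reeb orbit of the standard contact structure on $S^{2n-k-1}$. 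Orbits that do not enter the handle region are exactly the elements of $\mathcal{P}^{<D}(Y_-, \alpha_-)$, which is why they survive unchanged. The threshold $\epsilon = \epsilon(D, i)$ is chosen so that at least $i$ iterates of the belt-sphere Reeb orbit have action less than $D$.

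The main computational step is the index calculation. In the model, the Reeb flow near $B$ is Morse-Bott with critical manifold $B \cong S^{2n-k-1}$. After a $C^2$-small perturbation, using a Morse function on $B$ with exactly two critical points, each iterate degenerates into a pair of non-degenerate orbits, and a direct linearization gives Conley-Zehnder indices that differ from the iterated Robbin-Salamon index of $B$ by shifts of $0$ and $\pm 1$. Tracking degrees carefully with the reduced grading $|\gamma| = \mu_{CZ}(\gamma) + n-3$ yields the formula $|\gamma^j| = 2n-k-4+2j$. The condition $k < n$ ensures we are in the subcritical range so that the handle attachment is well-defined without a Legendrian framing choice, and $k \neq 2$ ensures by Proposition \ref{prop: c1equivalence} that $c_1(Y_+, \xi_+)$ vanishes automatically and hence the Conley-Zehnder grading is well-defined on all orbits contractible in $Y_+$; when $k = 2$ these contractibility and $c_1$ issues must be imposed by hand, exactly as in the statement.

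The hard part will be the index computation: one must make sure the trivialization of the canonical bundle along a new orbit $\gamma^j$ (viewed as a loop in $Y_+$, or rather capped by a disk in $Y_+$) is compatible with the chosen trivialization on $Y_-$, so that the shifts really add up to $2n-k-4+2j$. This requires exhibiting an explicit capping disk for $\gamma^j$ in the Weinstein handle (the belt sphere bounds the cocore disk) and checking the trivialization on this disk extends the one chosen on $Y_-$. The rest, namely the bijection of old orbits with elements of $\mathcal{P}^{<D}(Y_-, \alpha_-)$ and the action estimates excluding stray orbits from the handle, follows from a straightforward analysis of the model Reeb dynamics and shrinking $\epsilon$. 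The $k = 2$ caveat handles the single case where the contractibility of the attaching circle can feed back into the grading of old orbits, precisely as in the discussion preceding Theorem \ref{thm: MLYauindex2}.
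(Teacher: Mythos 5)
Your high-level strategy matches the paper (both defer to the explicit Weinstein-handle model of \cite{MLYau} and the orbit--chord correspondence of \cite{BEE12}), but the Morse--Bott picture you set up in the middle is wrong in a way that would derail the index computation. You identify the critical locus as the belt sphere $B \cong S^{2n-k-1}$. Notice first that for $k$ odd this sphere is even-dimensional, so it cannot carry a contact structure, let alone be the locus of closed Reeb orbits. What the paper (following Yau) actually uses is that the belt sphere \emph{contains} a contact sphere $(S^{2n-2k-1}, \xi_{std})$, and that the new Reeb orbits below the action cut-off all live in \emph{that} smaller sphere and are \emph{iterates $\gamma^1, \ldots, \gamma^i$ of a single orbit} (realized, e.g., by choosing an irrational-ellipsoid model). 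Your claim that ``after a $C^2$-small perturbation, using a Morse function on $B$ with exactly two critical points, each iterate degenerates into a pair of non-degenerate orbits'' is incompatible with the statement being proved: that would produce $2i$ new orbits, not the $i$ claimed. You are conflating the Reeb-orbit count with the Hamiltonian-Floer-level story (where each non-constant Reeb orbit contributes two perturbed Hamiltonian orbits $\gamma_m, \gamma_M$ -- cf.\ Section~\ref{sssec: homological_obstruction}); at the Reeb-orbit level needed here, the new orbits are a single iterate sequence. Moreover, if one did want the Morse--Bott formalism for $S^{2n-2k-1}$, the relevant Morse function lives on the orbit space $\mathbb{CP}^{n-k-1}$ and has $n-k$ critical points, not two. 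With the wrong critical manifold, your proposed derivation of $|\gamma^j| = 2n-k-4+2j$ from Robbin--Salamon indices plus $\pm 1$ shifts would not come out right. The rest of your sketch (the $\epsilon = \epsilon(D,i)$ choice forcing low-action orbits into a tubular neighborhood, the old orbits surviving unchanged, the $k=2$ caveat, the trivialization-compatibility concern) is consistent with the paper and correctly scoped, but the Morse--Bott step as written is a genuine gap.
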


\begin{remark}
As in the critical case, we need to consider $\Lambda^{k-1}$ as a parametrized sphere. Furthermore, the conformal symplectic normal bundle of $\Lambda^{k-1} \subset Y^{2n-1}$ should be trivial, which is not always the case. Finally, we need to choose a trivialization for this bundle; see \cite{Gbook}. In our main applications Theorem \ref{thm: main} and \ref{thm: boundedinfinite}, this normal bundle will always be trivial since we already know all surgeries comes from an existing Weinstein domain and we just need to check that all Reeb orbits have positive degree. As a result, we will rarely mention this extra data since it does not affect Reeb chords and orbits. One exception is the $k = 2$ case, in which the trivialization of the normal bundle can affect the degrees of certain Reeb orbits, which we generally avoid; see the $k =2$ case in Proposition \ref{prop: MLYau_action} and the discussion before Theorem \ref{thm: MLYauindex2}. 
\end{remark}
\begin{proof}[Proof of Proposition \ref{prop: MLYau_action}]
 As explained in \cite{MLYau}, the surgery belt sphere 
 $S^{2n-k-1}$ contains a contact sphere 
$(S^{2n-2k-1}, \xi_{std})$. By taking the appropriate sequence of contact forms on $(Y_+, \xi_+)$, the Reeb orbits of  $(Y_+, \xi_+)$ correspond to the old Reeb orbits of $(Y_-, \xi_-)$ as well as new Reeb orbits of $(S^{2n-2k-1}, \xi_{std})$  inside this belt sphere (up to action $D$). The latter correspond to iterations $\gamma^1, \cdots, \gamma^i$ of a single Reeb orbit $\gamma$; see \cite{BEE12} or \cite{MLYau}. Furthermore, $\mu_{CZ}(\gamma^j) =  n-k - 1 + 2j$ and hence 
$|\gamma^j| = 2n - k -4 + 2j$. Also, by shrinking the handle, the action of $\gamma$ can be made arbitrarily small  and hence we can ensure that arbitrarily large iterations of $\gamma$ have action less than $D$; that is, $i$ can be taken to be arbitrarily large if $\epsilon$ is small enough. 
\end{proof}

If $W$ is subcritical, then $SH_{n-k+1}^+(W) \cong H^{k}(W; \mathbb{Z})$ by Proposition \ref{prop: shcomputation}. 
Indeed the proof of Proposition \ref{prop: simultaneoussurgerygood} shows that index $k$ contact surgery creates Reeb orbits $\gamma^j$ with Conley-Zehnder index $n-k-1+ 2j, j \ge 1$. As we saw in Section \ref{sssec: homological_obstruction}, each Reeb orbit $\gamma^j$ gives rise to two Hamiltonian orbits $\gamma_m^j, \gamma_M^j$ with degrees $n-k-1+ 2j, n-k + 2j$ respectively. 
Then $\gamma_m^1$ has degree $n-k+1$ and is the generator of $SH_{n-k+1}^+(W)$; the rest of the orbits $\gamma_m^j, j \ge 2,$ and $\gamma_M^j, j \ge 1,$ cancel out algebraically in $SH^+(W)$. 

Also, note that the degree of $\gamma^j$ is always positive for $n \ge 2$. Furthermore, $\gamma^j$ is contractible in $Y_+$ since $\pi_1(S^{2n-1 -k}) = 0$. Hence Proposition \ref{prop: MLYau_action} is almost enough to prove Theorems \ref{thm: MLYau}, \ref{thm: MLYauindex2} but the situation is complicated by the fact that ADC contact structures are defined by a sequence of \textit{non-increasing} contact forms, i.e. it is not clear that if we start with a sequence of non-increasing contact forms, then the  contact forms after surgery 
are also non-increasing. We resolve this issue in Section \ref{sec: surgerysemigood}. 

As discussed in Section \ref{ssec: independencelin}, we must deal with the cases $k = 2$ and $k \ne 2$ in Proposition \ref{prop: MLYau_action} separately. For $k = 2$, we need to make the extra assumptions stated in the proposition  in order to ensure that $c_1(Y_+, \xi_+) = 0$ and that all contractible orbits of $Y_+$ that corresponds to an old orbit of $Y_-$ are also be contractible in $Y_-$.

\subsection{Removing Reeb chords of loose Legendrians}\label{sec: modifying}

Although loose Legendrians satisfy an h-principle, they still have some non-trivial J-holomorphic curve invariants (analogous to the fact that $SH^+(W)$ does not vanish for flexible $W$). These invariants place some non-trivial restrictions on the Reeb chords of such Legendrians. 
For example, given a Liouville domain $X$ and 
a Legendrian $\Lambda \subset \partial X$, 
the Legendrian contact homology $LHA(\Lambda, X)$ is an invariant up to Legendrian isotopy; see \cite{BEE12} for details. For our purposes, it is enough to note that the chain complex of $LHA$ is generated by words of Reeb chords of $\Lambda$ (including the empty word). 
If $\Lambda$ is a loose Legendrian, then it is shown in \cite{EES, Murphy11} that 
$LHA(\Lambda; X)$ vanishes, which implies that $\Lambda$ must have a Reeb chord of degree $1$. Another J-holomorphic curve  invariant of $\Lambda$ is $LH^{Ho, +}(\Lambda, X)$, which is generated by two copies of words of Reeb chords; see \cite{BEE12}. If $SH(X) = 0$, then $LH^{Ho,+}(\Lambda, X)$ is non-zero only in degree $2$; this follows from the fact that $LH^{Ho,+}(\Lambda, X)$ can be realized as the mapping cone of a certain transfer map; see Theorem 5.4
of \cite{BEE12}. In particular, both of these invariants vanish in non-positive degrees. Since loose Legendrians satisfy an h-principle, one might hope that it is possible to remove all homologically unnecessary Reeb chords  and in particular, show that all chords of $\Lambda$ have positive degree (again for fixed action). The follow lemma  shows that it is indeed possible to do this via a $C^0$-small formal Legendrian isotopy 
and hence by a genuine Legendrian isotopy in the case of loose Legendrians. 

\begin{lemma}\label{lem: maingeo}
For any Legendrian $\Lambda^{n-1} \subset (Y^{2n-1}, \alpha), n \ge 3,$ and any positive $\epsilon$ and $D$, there exists a Legendrian $\Lambda' \subset U^\epsilon(\Lambda, \alpha)$ such that $\Lambda'$ is both loose and formally isotopic to $\Lambda$ in $U^\epsilon(\Lambda, \alpha)$ and all elements of $\mathcal{P}^{< D}(\Lambda', Y, \alpha)$ have positive degree.
\end{lemma}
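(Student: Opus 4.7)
The plan is to use Murphy's h-principle for loose Legendrians together with a careful local modification that controls Reeb chord degrees below the fixed action threshold $D$. The proof should proceed in three stages.

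First, I would reduce to the loose case. Using Murphy's existence h-principle (valid since $n\ge3$), inside the neighborhood $U^\epsilon(\Lambda,\alpha)\cong U^\epsilon(\Lambda)\subset J^1(\Lambda)$ every formal Legendrian class contains a loose representative; this lets me replace $\Lambda$ by a loose Legendrian $\Lambda_0\subset U^\epsilon(\Lambda,\alpha)$ that is formally Legendrian isotopic to $\Lambda$ in $U^\epsilon(\Lambda,\alpha)$. After a further $C^0$-small perturbation (which preserves looseness and the formal class and does not change the contactomorphism class of the ambient surgery), I can assume $\Lambda_0$ is non-degenerate so that $\mathcal{P}^{<D}(\Lambda_0,Y,\alpha)$ is a finite set. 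The "bad" chords are those of degree $\le 0$ in this finite set.

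Second, I would produce $\Lambda'$ by a local modification that kills the bad chords. The most transparent implementation uses generating families and Morse theory: represent $\Lambda_0$ by a generating family $F$ on $\Lambda\times\mathbb{R}^N$, so Reeb chords of $\Lambda_0$ correspond to nontrivial critical points of the difference function $\Delta F$, their actions correspond to critical values, and their Conley--Zehnder degrees are determined by the fibered Morse indices. Using the h-principle for loose Legendrians (which lets one realize any formal modification by a genuine Legendrian isotopy, as long as one loose chart is left intact), perform birth-death/handle-trading moves on $F$ to cancel every critical point of $\Delta F$ whose index would give a chord of degree $\le 0$ and critical value $<D$, pairing each such bad critical point with a newly introduced cancelling partner of index one higher. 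Equivalently, one can use the local model $\Lambda_0\times Q\subset B^3\times T^*Q$ of a loose Legendrian and adjust the Morse function on $Q$ and the stabilization shape in $B^3$ so that the product degree formula yields only positive values below action $D$; a scaling of the stabilization shrinks the geometric size of the new chords and pushes any remaining potentially bad chords above the action window. In either implementation, the construction stays inside $U^\epsilon$ by making all modifications $C^0$-small.

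The main obstacle is to ensure that the cancellation procedure does not produce new chords of non-positive degree with action $<D$, and does not destroy looseness. Looseness is preserved by always performing the modification disjoint from a fixed loose chart of $\Lambda_0$ and invoking Murphy's uniqueness h-principle. Controlling newly-created chords is the genuinely delicate step: one must balance the requirement that the modification be $C^0$-small (to remain in $U^\epsilon$) against the need that any chords introduced by the cancellation either have positive degree or have action exceeding $D$. This is achieved by a scaling argument in the local model, in which the action of the auxiliary chords is shown to depend on a parameter that can be tuned independently of their degree. The crucial input is Murphy's h-principle \cite{Murphy11}, which permits these prescribed formal modifications to be realized as genuine Legendrian isotopies throughout.
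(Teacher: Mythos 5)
Your overall framing (work inside $U^\epsilon$, invoke Murphy's $h$-principle, control chord degrees below the action threshold $D$) is in the right spirit, but the central mechanism you propose is different from the paper's and, as written, does not work.

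The paper does \emph{not} attempt to cancel or remove the bad Reeb chords of $\Lambda$. The chords in $\mathcal{P}^{<D}(\Lambda,Y,\alpha)$ are ambient Reeb trajectories whose existence cannot be undone by a $C^0$-small modification of $\Lambda$: a chord whose endpoints lie on $\Lambda$ and whose negative endpoint is possibly far away in $Y$ persists (with slightly moved endpoints) under any small perturbation near its positive endpoint. Your suggestion of generating-family birth-death/handle-trading moves to ``pair each bad critical point with a newly introduced cancelling partner'' is where the argument fails: introducing a birth pair $(q,q')$ near $c^+$ gives you a new cancelling pair, but to trade and then cancel $c$ against $q'$ you would need a suitable gradient flow line between $c$ and $q'$, and nothing in the $h$-principle or the local model guarantees this. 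Indeed, the paper avoids cancellation entirely.

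What the paper actually does (Lemmas \ref{lem: reghtpy}, \ref{lem: indexcalculation}) is to add front-projection zig-zag stabilizations $\Gamma_N$ near the positive endpoints $c_i^+$ of the bad chords. The chord $c_i$ survives, but since a generic path from $c_i^+$ to $\partial A$ now crosses $2N$ down-cusps, Equation \eqref{eqn: degformula} shows $|c_i|'-|c_i|=2N$, so the degree is \emph{raised}, not the chord removed. The new local chords $d_{p_j}^i$ created by the zig-zags all have degree $\ge 1$ by Equation \eqref{eqn: degindex}, and a scaling argument confines the set of sub-$D$ chords to precisely $\{c_i\}\cup\{d_{p_j}^i\}$. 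Note the direction of your scaling remark is also backwards: shrinking the stabilization pattern makes the new chords' actions \emph{small} (so they land below $D$), it does not push any chord above the action window, and in particular it cannot affect the actions of the pre-existing bad chords $c_i$.

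Finally, you do not address the formal-isotopy obstruction. The front stabilization produces a Legendrian \emph{regular homotopy} $f_{Q,N,t}$, not an isotopy, and its self-intersection index is computed in Lemma \ref{lem: indexcalculation} to be $(-1)^{(n-1)(n-2)/2}N\chi(Q)$. Only by choosing $Q^{n-2}$ with $\chi(Q)=0$ (e.g.\ $S^1$ for $n=3$, $S^1\times S^{n-3}$ for $n\ge4$) can Whitney's theorem be applied to upgrade the regular homotopy to a formal Legendrian isotopy inside $U^\epsilon$. Your plan of first replacing $\Lambda$ by a loose $\Lambda_0$ in the same formal class and then modifying further does not sidestep this: any subsequent stabilization within $U^\epsilon$ must again have vanishing self-intersection index, and the only tool Murphy's uniqueness $h$-principle gives you is a large isotopy that leaves $U^\epsilon$, which the lemma explicitly forbids.
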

\begin{remark}\

\begin{enumerate}[leftmargin=*]
\item 
Lemma \ref{lem: maingeo} also holds for $\mathcal{W}^{<D}(\Lambda', Y, \alpha)$ instead of $\mathcal{P}^{< D}(\Lambda', Y, \alpha)$; in this case, we work with an arbitrary trivialization of the canonical bundle of $(Y, \alpha)$ but the degree of elements of $\mathcal{W}^{<D}(\Lambda', Y, \alpha)$ is independent of this trivialization since they vanish in $\pi_1(Y,\Lambda')$.
\item 
If we drop the condition that the formal Legendrian isotopy stays in a small neighborhood of $\Lambda \subset Y$, then Lemma \ref{lem: maingeo} becomes vacuous in some cases. For example, if $Y = P \times \mathbb{R}$, where $P$ an exact symplectic manifold, there is a contact isotopy expanding the contact form. 
So for any fixed $D$, there is Legendrian isotopy to some $\Lambda'$ such that $\mathcal{P}^{< D}(\Lambda', Y, \alpha)$ is empty; however, this isotopy is not $C^0$-small. This is analogous to the situation in Remark \ref{rem: semigood}, where  the definition of ADC becomes vacuous if we are allowed to scale the contact forms.
\item 
If $\Lambda$ is loose in $(Y, \alpha)$, by the uniqueness portion of Murphy's h-principle for loose Legendrians, $\Lambda$ and $\Lambda'$ are Legendrian isotopic in $Y$. However, this isotopy is not $C^0$-small; in particular, it must leave $U^\epsilon(\Lambda)$ since $\Lambda'$ is loose in $U^\epsilon(\Lambda)$ but $\Lambda$ is not. 
\item Our proof of Lemma \ref{lem: maingeo}, which uses Legendrian stabilization, does not work for  $n = 2$ because stabilization does not preserve the formal isotopy class. In fact, this is precisely why Murphy's h-principle and the theory of flexible Weinstein domains break down for $n =2$; see Remark \ref{rem: n=2}. Nonetheless,  for $n \ge 2$  we can construct a Legendrian \textit{regular homotopy} to $\Lambda'$; see Lemma \ref{lem: reghtpy} below.
\item We can also assume that all elements of $\mathcal{P}^{< D}(Y, \Lambda', \alpha)$ are non-degenerate and all elements of $\mathcal{P}^{< D}(Y, \alpha)$ are disjoint from $\Lambda'$. 
\end{enumerate}
\end{remark}

We prove Lemma \ref{lem: maingeo}
by modifying the Legendrian to increase the degrees of its Reeb chords. This is done in two steps. First, we prove Lemma \ref{lem: reghtpy}, a weaker version 
of 
Lemma \ref{lem: maingeo}, that uses a Legendrian regular homotopy instead of a formal Legendrian isotopy.  
Then we calculate the self-intersection invariant of this regular homotopy and show that if $n \ge 3$, it can be chosen equal zero, which implies the existence of a formal Legendrian isotopy between the two Legendrians.
\begin{lemma}
\label{lem: reghtpy}
For any Legendrian $\Lambda^{n-1} \subset (Y^{2n-1}, \alpha), n \ge 2,$ and any positive $\epsilon$ and $D$, there exists a Legendrian $\Lambda' \subset U^\epsilon(\Lambda, \alpha)$ such that $\Lambda'$ is both loose and regular homotopic to $\Lambda$ in $U^\epsilon(\Lambda, \alpha)$ and  all elements of $\mathcal{P}^{< D}(\Lambda', Y, \alpha)$ have positive degree. 
\end{lemma}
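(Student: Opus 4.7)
The approach is via iterated local Legendrian stabilization. By a preliminary $C^0$-small Legendrian perturbation (which can be taken to preserve looseness of any loose chart and to not affect the conclusion) we may arrange that $\Lambda$ is non-degenerate and is disjoint from all Reeb orbits of $(Y,\alpha)$ of action less than $D$, so that $\mathcal{P}^{<D}(\Lambda, Y, \alpha) = \{c_1,\dots,c_N\}$ is a finite set and we have concrete chord endpoints to work with. The strategy is to remove or raise the degree of each chord of non-positive degree by a sequence of compactly supported modifications, each of which is a Legendrian regular homotopy taking place in an arbitrarily small neighborhood of $\Lambda$.

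The basic move is the following. Around any chosen point $p \in \Lambda$ we take a small Darboux chart $V \cong (B^3 \times U, \ker(\alpha_{std}+\lambda_{std}))$, with $U$ a neighborhood of the zero section in $T^*D^{n-2}$, in which $\Lambda$ is identified with $\{y=z=0\} \times D^{n-2}$. Replacing this slice of $\Lambda$ by $\Lambda_0 \times D^{n-2}$ (where $\Lambda_0$ is the zigzag from the definition of looseness) yields a Legendrian $\Lambda^{stab}$ which differs from $\Lambda$ only inside $V$; since the zigzag $\Lambda_0$ is not Legendrian isotopic to the trivial Legendrian in $B^3$, this operation is in general only a Legendrian regular homotopy, not a formal Legendrian isotopy (this is precisely why Lemma \ref{lem: reghtpy} gives only a regular homotopy and is the obstacle that will be lifted in the proof of Lemma \ref{lem: maingeo}). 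The modification automatically produces a loose chart inside $V$, making $\Lambda^{stab}$ loose, and by choosing $V$ sufficiently small it keeps $\Lambda^{stab}$ inside $U^\epsilon(\Lambda,\alpha)$.

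The effect on Reeb chords is local and can be computed from the explicit zigzag model of $\Lambda_0 \times D^{n-2}$ in $B^3 \times U$: chords disjoint from $V$ are unchanged, the stabilization introduces a Morse--Bott family of short new Reeb chords supported in $V$ (one such family for each crossing of the zigzag $\Lambda_0$), and any chord of $\Lambda$ with an endpoint in $V$ is modified in a controlled way corresponding to the change of Lagrangian reference frame forced by the zigzag. A direct Conley--Zehnder computation in this model shows that, after a small Morse perturbation of the Morse--Bott family (equivalently, by tilting $\Lambda_0 \times D^{n-2}$ slightly inside the loose chart), the new chords split into finitely many non-degenerate chords whose degrees can be made to exceed any prescribed integer by iteration or by choice of auxiliary Morse data on $D^{n-2}$. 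Moreover the action of the new chords can be made arbitrarily small and so remain below $D$ throughout.

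We now inductively process the chords $c_i$ of non-positive degree. For each such $c_i$, choose a small Darboux chart $V_i$ around one endpoint that is disjoint from the endpoints of the other chords in $\mathcal{P}^{<D}$, and perform the stabilization inside $V_i$. The stabilization leaves the remaining chords untouched, raises the Conley--Zehnder index of the modified chord by the prescribed amount, and introduces only finitely many new chords of small action and positive degree. Iterating if necessary (to guarantee strict positivity), and shrinking $\epsilon$ at each step, produces after finitely many steps a Legendrian $\Lambda'$ which is loose, regular homotopic to $\Lambda$ inside $U^\epsilon(\Lambda,\alpha)$, and has all chords of action less than $D$ of positive degree. The main obstacle is the explicit Conley--Zehnder calculation in the zigzag model that guarantees the positivity and strict index shifts; this is the content of the local computations referred to in Section \ref{sec: reeb_chord_loose}, modeled on the analogous computations of Murphy \cite{Murphy11} and Ekholm--Etnyre--Sullivan \cite{EES}.
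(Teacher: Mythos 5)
Your proposal follows essentially the same strategy as the paper's proof: do a local zigzag stabilization near chord endpoints, compute the degree shift via the front-projection formula of Ekholm--Etnyre--Sullivan, observe that new chords have small action and positive degree, and iterate the stabilization enough times to push the pre-existing non-positive chords into positive degree. The packaging differs slightly — you use $\Lambda_0 \times D^{n-2}$ where the paper uses a closed $Q^{n-2}$ (which it needs later for the Euler characteristic argument in Lemma \ref{lem: maingeo}, not here), and you build up the degree shift by iterating single stabilizations while the paper packages $N$ zigzags into one model $\gamma_N$ — but these are cosmetic.

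Two imprecisions worth flagging. First, you say to stabilize near ``one endpoint'' of each non-positive chord $c_i$, but the sign of the degree shift $D - U$ in the EES formula depends on whether the path from the positive endpoint $c_i^+$ to $\partial A$ traverses down-cusps or up-cusps; the paper is careful to place the zigzags at the \emph{positive} endpoint with downward-pointing cusps so that $D - U = 2N > 0$. With the wrong endpoint or orientation of zigzags, you can decrease the degree instead. Second, your suggestion that degrees ``can be made to exceed any prescribed integer \ldots by choice of auxiliary Morse data on $D^{n-2}$'' is not right: Morse index on $D^{n-2}$ contributes a shift bounded by $n-2$, so only the number of zigzags (iteration) provides the unbounded shift you need. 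Finally, a small disjointness point: you need to arrange the supports $V_i$ to be disjoint not just from the other chords' \emph{endpoints} but from the entire chords $c_j$, $j \neq i$, with action $< D$, since a chord passing through $V_i$ would have its degree altered by the modification there. None of these are fatal — the overall plan is sound and matches the paper's argument.
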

\begin{proof}[Proof of Lemma \ref{lem: reghtpy}]
Possibly after a $C^0$-small Legendrian isotopy, we can assume that $\Lambda$ is non-degenerate and is disjoint from $\mathcal{P}^{< D}(Y, \alpha)$. Now suppose that $\mathcal{P}^{< D}(\Lambda, Y, \alpha) = \{c_1, \cdots, c_k\}$; note that this is a finite set because all Reeb chords are non-degenerate. To prove this lemma, we will do a local modification near the positive endpoints of $c_1, \cdots, c_k$ that makes their degree positive and does not create any new Reeb chords of non-positive degree. 

Let $B = [0,1] \subset \mathbb{R}$.
We begin by describing a Legendrian in $(J^1(B), \alpha_{std}) \subset (\mathbb{R}^3, \alpha_{std})$ via its front projection. The front projection of $\Lambda^{n-1} \subset (\mathbb{R}^{2n-1}, \alpha_{std}), \alpha_{std} = dz - \sum_{i=1}^{n-1} y_i dx_i,$ is the image of $\Lambda$ in $\mathbb{R}^{n}$ under the projection to the $(x, z)$-plane. The front projection is an immersion except at a finite collection of points where it is a cuspidal singularity; away from these cuspidal points, it is the graph of a multi-valued function. Note that the Legendrian can be recovered from its front projection by setting $y_i = \frac{\partial z}{\partial x_i}$. 
Now let $\gamma_m' \subset B \times [-1,1] \subset \mathbb{R}^2$ be the concatenation of a path from the origin to $(\frac{1}{4}, -\frac{1}{8})$ with slope between $-\frac{2}{3}$ and $0$ and a path from  $(\frac{1}{4}, -\frac{1}{8})$ to  $(\frac{1}{2},0)$ 
with slope with slope between $0$ and $\frac{2}{3}$, 
that contains $m$ non-overlapping zig-zags. In particular, assume that the $k$th zig-zag occurs occurs in the rectangular region 
$(\frac{1}{4} + \frac{2k-2}{8m}, \frac{1}{4} + \frac{2k-1}{8m}) \times 
(-\frac{1}{4} +  \frac{2k-2}{8m}, -\frac{1}{4} + \frac{2k-1}{8m}) \subset \mathbb{R}^2$. Finally, we assume that $\gamma_m'$ has zero slope near $(0,0)$ and $(\frac{1}{2},0)$. Let $\gamma_m$ be the path obtained by concatenating $\gamma_m'$ with its reflection about the vertical line $x= \frac{1}{2}$. See Figure \ref{fig: front}. 
Note that because of the constraint on the slope of $\gamma_m$, the Legendrian lift of $\gamma_m$ (which we also denote by $\gamma_m$) is contained in  $
B \times [-1,1]^2 \subset J^1(B)$.
 \begin{figure}
     \centering
     \includegraphics[scale=0.25]{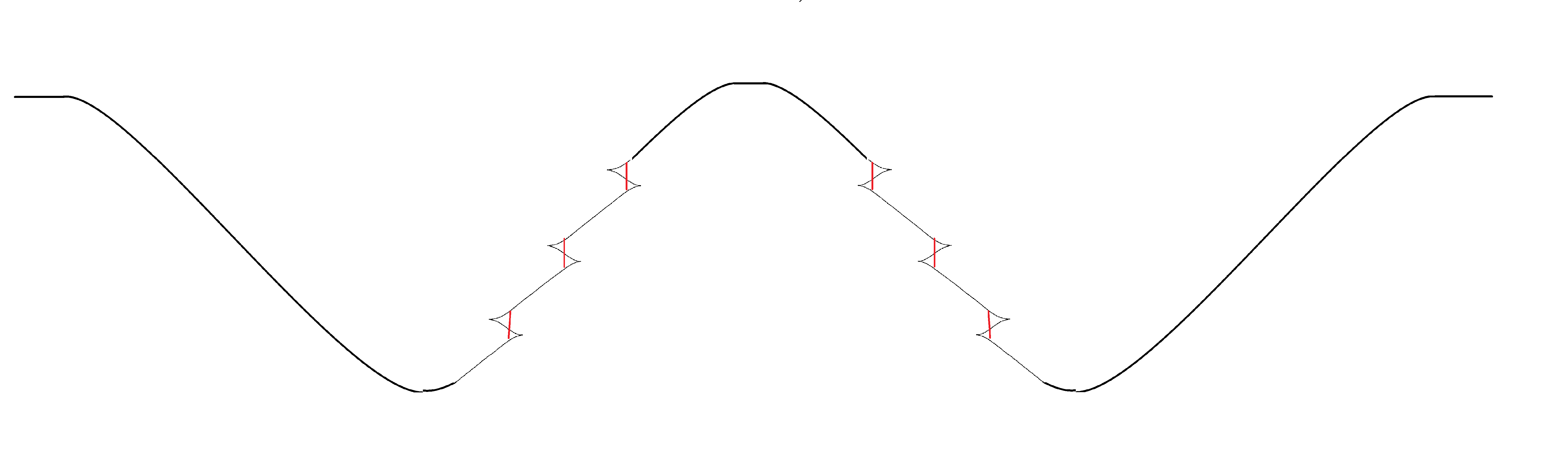}
     \caption{The front projection of $\gamma_m$ when $m = 3$; Reeb chords are drawn as red lines}
     \label{fig: front}
 \end{figure}
 
Now we study the Reeb chords of $\gamma_m$. Suppose that two branches of $\gamma_m$ are given in the front projection by the graphs of $h_1, h_2$. Since $R_{\alpha_{std}} = \partial_z$, the Reeb chords of $\gamma_m$ with endpoints on these two branches correspond to critical points of the height difference function $h_1 - h_2$.  In particular, for each zig-zag in the front projection, there is one Reeb chord in the Legendrian lift of $\gamma_m$ into $\mathbb{R}^3$; see the vertical red lines in Figure \ref{fig: front}. Therefore, the lift has $2m$ Reeb chords. Note that the zig-zags can be made arbitrarily small and therefore the action of the Reeb chords can be made arbitrarily small. 

We now show that each chord of $\gamma_m$ has degree $1$. As we noted, each chord $c$ corresponds to a critical point $p_c$ of the height difference $h_1-h_2$; we will assume that near $p_c$, the graph of $h_1$ defines the branch with greater $z$-coordinate. This critical point is non-degenerate if the Reeb flow is non-degenerate and so $\mbox{Ind}_{h_1 - h_2}(p_c)$ is defined. Lemma 3.4 of \cite{EES} gives the following formula for the degree of $c$ in terms of the front projection of $\gamma_m$: 
\begin{equation}
    \label{eqn: degformula}
  |c| = D-U + \mbox{Ind}_{h_1 - h_2}(p_c) -1,
\end{equation}
  where $D, U$ is the number of times a generic path in $\gamma'_m$ from $a$ to $b$ traverses a cusp downward, upward respectively. 
  Note that order matters and it is important that we consider $h_1-h_2$ and not $h_2-h_1$. 
 From Figure \ref{fig: front}, we see that each Reeb chord $c$ of $\gamma_m$ has $D = 2,  U = 0$ and 
 $\mbox{Ind} = 0$ since the height difference from top endpoint to bottom endpoint has a local minimum at the Reeb chord. Therefore $|c| = 1$. 

Let $Q^{n-2}$ be a oriented, closed $n-2$ dimensional manifold that has an embedding into $\mathbb{R}^{n-1}$ and let $A: =B \times Q$. Note that $A$ also has an embedding into $\mathbb{R}^{n-1}$ since the normal bundle of $Q^{n-2} \subset \mathbb{R}^{n-1}$ is trivial. 
Then $(J^1(A), \alpha_{std}) = (J^1(B) \times T^*Q, \alpha_{std} - \lambda_{std})$. 
Let $j_A: A \hookrightarrow J^1(A), j_Q: Q \hookrightarrow T^*Q$ denote the inclusions of the respective zero sections. Let $p := (\frac{1}{2}, p_0) \in A$ for some $p_0 \in Q$.

Now consider the Legendrian embedding 
$\Gamma_m: A \rightarrow J^1(A)$ given by 
$$
\Gamma_m(x, z) = (\gamma_m(x), j_Q(z) ) \subset  B \times [-1,1]^2\times T^*Q \subset J^1(A),
$$
which agrees with $j_A$ near $\partial A$. Since $\gamma_m$ has zero slope near $(\frac{1}{2}, 0)$, $\Gamma_m$ also agrees with $j_A$ near $p$. The Reeb chords of $\Gamma_m$ are degenerate; in particular, $\Gamma_m$ has a $Q$-family of Reeb chords for each Reeb chord of $\gamma_m$. We can use Morse function on $Q$ to make a $C^2$-small perturbation of $\Gamma_m$ in the 
$Q$-direction in a neighborhood of the \textit{positive} chord endpoints. Then all Reeb chords of $\Gamma_m$ are non-degenerate. Furthermore, each of these chords has positive degree. To see this, suppose $h$ is a Morse function on $Q$ with  critical points $p_1, \cdots, p_q$.  Each critical point $p_j$ corresponds to $2m$ Reeb chords $d_{p_j}^1, \cdots, d_{p_j}^{2m}$ of $\Gamma_m$ (one for each zig-zag). Since all of the chords of $\gamma_m$  have degree $1$, by Equation \ref{eqn: degformula} 
\begin{equation}
\label{eqn: degindex}
|d_{p_j}^i| = 1 + \mbox{Ind}_h(p_j).
\end{equation}
To see this, note that the only term in Equation \ref{eqn: degformula} that changes for chords of $\Gamma_m$ from chords of $\gamma_m$ is the Morse index, which increases by  $\mbox{Ind}_h(p_j)$, the index of the height difference of $\Gamma_m$ in the $Q$-direction. Since $\mbox{Ind}_h(p_j)$ is always non-negative, we have $|d_{p_j}^i| \ge 1$. Therefore, $\Gamma_m$ has $2m q$ Reeb chords, all with positive degree.

\begin{remark}
Note that $\Gamma_m$  is loose (relative to its boundary) for $n \ge 3$ and hence from Section \ref{sec: reeb_chord_loose}, we know that the Legendrian contact homology of $\Gamma_m$ vanishes.
On the other hand, any Morse function $h$ on $Q$ has a minimum $p$ for which $\mbox{Ind}_h(p) = 0$.
Therefore if $c$ is a Reeb chord of $\Gamma_m$ corresponding to $p$
(any one of the $2m$ such chords), then by Equation (\ref{eqn: degindex}) $|c| = 1$. In fact, $d^{leg} c = 1$, where $d^{leg}$ is the differential for Legendrian contact homology of $\Gamma_m$. So $c$ is the chord responsible for killing the Legendrian contact homology of $\Gamma_m$. 
\end{remark}

 \begin{figure}
         \centering
         \includegraphics[scale=0.22]{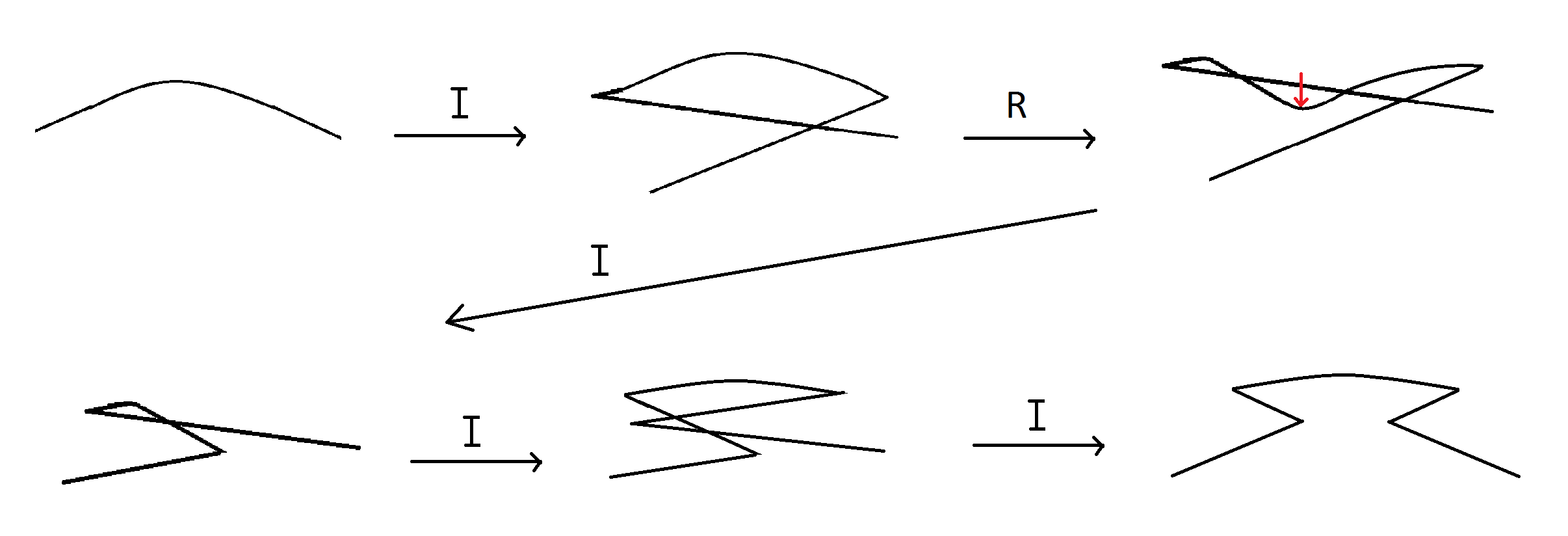}
         \caption{Legendrian regular homotopy $f_{B,t}$ from $j_B$ to  $\Gamma_1$ in the front projection;
        I, R denote Legendrian isotopy, Legendrian regular homotopy with single self-intersection respectively}
         \label{fig: isotopy}
     \end{figure}

Now we show that 
$j_A$ and $\Gamma_m$ are Legendrian regular homotopic inside $J^1(A)$ relative $\partial A$. 
First note that $\gamma_m$ is Legendrian regular homotopic to the inclusion of the zero section 
$j_B: B \rightarrow J^1(B)$ relative $\partial B$. 
Figure \ref{fig: isotopy} depicts one possible Legendrian regular homotopy near a single cusp-pair. We have broken down the regular homotopy into several steps. All steps are Legendrian Reidemeister moves, which are Legendrian isotopies, except for the second step, which is Legendrian stabilization near a cusp and hence a Legendrian regular homotopy.  Let $f_{B,t}$ denote the entire Legendrian regular homotopy from $j_B$ to $\gamma_m$. 
Note that $f_{B,t}$ has $m$ transverse self-intersection points, one for each zig-zag; see the arrow in third diagram in Figure \ref{fig: isotopy}. Finally, we define the Legendrian regular homotopy $f_{A,t}$ from $j_A$ to $\Gamma_m$ to be $f_{B,t}$ on the $B$ component of $A$ and the constant on the $Q$ component. Furthermore, by 
using a Morse function on $Q$ when performing the stabilization step of the regular homotopy, we can assume that all intersection points of $f_{A,t}$ are transverse; see Section 7.4 of \cite{CE12}. Again, if we use a Morse function on $Q$ with $q$ critical points, then $f_{A,t}$ will have $mq$ transverse self-intersection points. 

Having described the local modification, we want apply it to the original Legendrian $\Lambda \subset (Y, \alpha)$. To do so, we need a \textit{strict} 
 version of the Weinstein neighborhood theorem.
\begin{proposition}\cite{W}
\label{thm: Geiges}
For any Legendrian $\Lambda \subset (Y, \alpha)$, there exists a neighborhood $U$ of $\Lambda$ and a neighborhood $V$ of the zero section of $J^1(\Lambda)$ and a strict contactomorphism  $\phi: (U, \alpha) \rightarrow (V, \alpha_{std})$ mapping $\Lambda \subset U$ to the zero section of $J^1(\Lambda)$. 
\end{proposition}

Recall that $\mathcal{P}^{< D}(\Lambda, Y, \alpha) = \{c_1, \cdots, c_k\}$. 
Let $c_i^+ \in \Lambda$ be the positive endpoint of $c_i$; positive/negative endpoints are defined by the condition that  the Reeb flow takes the negative endpoint to the positive endpoint.
Identity a small neighborhood of $c_i^+$ in $\Lambda^{n-1}$ with $\mathbb{R}^{n-1}$. Then because $A$ embeds into $\mathbb{R}^{n-1}$, for each $i$, there exists an embedding of $A$ into a small neighborhood of $c_i^+$ in $\Lambda$ taking $p \in A$ to $c_i^+$. In particular, by Proposition \ref{thm: Geiges}, there exist disjoint $U_1, \cdots, U_k \subset \Lambda$ such that $c_i^+ \in U_i$ and $(U_i, U_i \cap \Lambda, c_i^+)\subset (Y, \alpha)$ is strictly contactomorphic to 
$(U^\delta(A), A, p) \subset J^1(A)$ for some sufficiently small $\delta$,
smaller than the $\epsilon$ in the statement of this lemma. Since there are finitely many $c_i$, we also can assume that  $c_i \cap U_j = \varnothing$ for $i \ne j$; there may be higher action chords that intersect $U_j$ but we do not care about these chords.

Let $N: = -\underset{1\le i\le k}{\min} |c_i|$ +1. We can assume that $\underset{1\le i\le k}{\min} |c_i| \le 0$ since otherwise all chords already have positive degree; so $N \ge 1$. We will use $\Gamma_N$ to increase the degree of $c_i$. By scaling $\Gamma_N$ and $f_{A,t}$ by $(x_i, y_i, z) \rightarrow (x_i, a y_i, az)$ for some sufficiently small $a$, we can assume that their images are contained in 
$U^\delta(A) = \{\|y\|, |z| < \delta  \} \subset J^1(A)$. Using the strict contactomorphism 
between
$(U^\delta(A), A, p)$
and 
 $(U_i, U_i \cap \Lambda, c_i^+)\subset (Y, \alpha)$, we can transfer the local Legendrian regular homotopy $f_{A,t}$ to a Legendrian regular homotopy $f_{Q,N,t}$ from $\Lambda \subset Y$ to a Legendrian embedding $\Lambda_{Q,N} \subset Y$. Note that $f_{Q,N,t}$ is supported in the union of the $U_i$'s and hence in $U^\epsilon(\Lambda, \alpha) \subset (Y, \alpha)$. 

We now show that $\Lambda_{Q,N}$ has the desired properties. First note that $\Lambda_{Q,N}$ is loose in $U^\epsilon(\Lambda, \alpha)$ by the discussion in Section \ref{subsection: loose}.  
Now we study the Reeb chords of $\Lambda_{Q,N}$. Since $\Lambda, \Lambda_{Q,N}$ agree near the chord endpoints $c_i^+$ and $c_1, \cdots, c_k$ are Reeb chords of $\Lambda$, they are also Reeb chords of $\Lambda_{Q,N}$. Furthermore, they have the same action and hence also belong to $\mathcal{P}^{< D}(\Lambda_{Q,N}, Y, \alpha)$. By making zig-zags in $\gamma$ arbitrarily small, we can get the action of the $2Nq$ Reeb chords $d_{p_j}^i$ arbitrarily small so that $d_{p_j}^i \in \mathcal{P}^{< D}(\Lambda_{Q,N}, Y, \alpha)$ as well; note that $d_{p_j}^i$ is zero in $\pi_1(Y, \Lambda_{Q,N})$ since $d_{p_j}^i$ is contained in a small chart where topologically, $(Y, \Lambda_{Q,N})$ looks like $(B^{2n-1}, B^{n-1})$. In fact, by scaling $\Gamma_N$ down enough, we can assume that
\begin{equation}\label{eqn: chordsenumerate}
\mathcal{P}^{< D}(\Lambda_{Q,N}, Y, \alpha) = \{c_1, \cdots, c_k, d_{p_j}^i \}.
\end{equation}
To see this, suppose that there is a Reeb chord in $\mathcal{P}^{< D}(\Lambda_{Q,N}, Y, \alpha)$ which does not appear on the right-hand-side of Equation \ref{eqn: chordsenumerate} no matter how much we scale $\Gamma_N$. As a result, we get a sequence of chords which either converge to a chord 
$c_i \in \mathcal{P}^{< D}(\Lambda, Y, \alpha)$ or shrink to the empty set. In the first case, we note that because $\Gamma_N$ agrees with $A$ near $p$ (which equals to $c_i^+$), scaling does nothing here. So eventually the chords in the sequence coincide with a chord $c_i \in \mathcal{P}^{< D}(\Lambda, Y, \alpha)$. In the second case, chords in the sequence have arbitrarily small action and therefore are eventually contained in $U_i$ and hence must coincide with one of $d_{p_j}^i$.
Since $\Lambda$ is disjoint from all elements of $\mathcal{P}^{< D}(Y, \alpha)$,  we can also assume that this is true for $\Lambda_{Q,N}$ by scaling $\Gamma_N$ down enough. Also, note that all elements of $\mathcal{P}^{< D}(\Lambda_{Q,N}, Y, \alpha)$ are non-degenerate.

Finally, we show that all elements of $\mathcal{P}^{< D}(\Lambda_{Q,N}, Y, \alpha)$ have positive degree.
 Note that  since $\Lambda_{Q,N}$ is Legendrian regular homotopic to $\Lambda$ and $\Lambda$ has vanishing Maslov class $c_1(Y, \Lambda)$, so does $\Lambda_{Q,N}$; hence it makes sense to talk about the degree of Reeb chords of $\Lambda_{Q,N}$. 
 As we noted in Equation \ref{eqn: degindex}, the degrees of the $d_{p_j}^i$ are always positive. Although $c_i$ are Reeb chords of both $\Lambda$ and $\Lambda_{Q,N}$, they have different degrees; in particular, let $|c_i|, |c_i|'$ denote the degree of $c_i$ as a Reeb chord of $\Lambda, \Lambda_{Q,N}$ respectively. Now we show that  
\begin{equation}\label{eqn: degchange}
|c_i|' - |c_i| = 2N
\end{equation}
for all $i$. Since $\Lambda$ and $\Lambda_{Q,N}$ differ just inside the $U_i$'s but agree inside a smaller neighborhood of the $c_i^+$, then the change in degrees can be calculated locally in $U_i$; see the general formula in \cite{BEE12}. In particular, it is important that the rest of the $U_j$ are disjoint from the entire chord $c_i$ since otherwise, the $U_j$ modification can also affect the degree of $c_i$. 
Since $U_i \subset (\mathbb{R}^{2n-1},\alpha_{std})$, we can use Equation \ref{eqn: degformula} to show that $|c_i|' - |c_i| = D - U$, where $D,U$ are the number of down,up cusps respectively that a generic path from $c_i^+$ to $\partial A$ traverses in the front projection. Since $c_i^+$ corresponds to $p = (\frac{1}{2}, p_0) \in B \times Q$, we can take the path to be $\gamma_N$. Because $\gamma_N$ has $N$ zig-zags, each of which have two down-cusps and no up-cusps, we see that $D-U = 2N$, which proves Equation \ref{eqn: degchange}. Since  $N = -\mbox{min}|c_i| +1$, we have that $|c_i|' \ge (-N+1) + 2N = N+1 \ge 2$; in particular $|c_i|'$ is positive. Note that by taking $N$ arbitrarily large and introducing more $d_{p_j}^i$, we can make $|c_i|'$ arbitrarily large.
\end{proof}

\begin{remark}
The proof of Lemma \ref{lem: reghtpy} involved modifying the Legendrian by adding zig-zags near the endpoints of Reeb chords. 
A similar modification is performed in Lemma 4.2 of \cite{EM} in order to change the action of Lagrangian self-intersection points, which can then be cancelled. Our modification was used to change chord degree rather than action. Furthermore, the modification in \cite{EM} is not $C^0$-small, which we shall need; see the proof of Lemma \ref{lem: techkey} in the Appendix. 
Nonetheless, this cancellation can be used to remove Reeb chords of Legendrians in certain special contact manifolds. For example, Reeb chords of $\Lambda \subset P \times \mathbb{R}$ correspond to self-intersection points of the Lagrangian projection of $\Lambda$ in $P$. The procedure from \cite{EM} can be used to remove most (but not all) Reeb chords of $\Lambda$, as done for $P = \mathbb{C}^n$ in \cite{RGEstimating}.
\end{remark}

Although $\Lambda$ and 
$\Lambda'=\Lambda_{Q,N}$ are Legendrian regular homotopic, they are not necessarily formally Legendrian isotopic. For instance, if $\Lambda, \Lambda_{Q,N}$ are null-homologous in $Y$, then their Thurston-Bennequin invariants are an obstruction to the existence of a formal isotopy between them; see Appendix B of \cite{CE12}. However, we will show in Lemma \ref{lem: maingeo} that for appropriate $Q$, $\Lambda$ and $\Lambda_{Q,N}$ are actually formally Legendrian isotopic.

We begin by reviewing some facts about the self-intersection index, which is closely related to the relative Thurston-Bennquin invariant; see Appendix B of \cite{CE12}. If $f_t$ is a (smooth) regular homotopy of an $(n-1)$-dimensional manifold $\Lambda^{n-1}$ in an oriented $(2n-1)$-dimensional manifold and $f_t$ has transverse self-intersection points, the self-intersection index $I(f_0, f_1; f_t)$ of $f_t$ is defined to be the signed count of the number of self-intersection points of $f_t$. Note that in our situation, the contact form $\alpha$ provides $Y$ with a canonical orientation given by $\alpha\wedge(d\alpha)^{n-1}$. This index takes values in $\mathbb{Z}$ if $n$ is even and $\Lambda$ is orientable and in $\mathbb{Z}_2$ otherwise. If $Y^{2n-1}, n \ge 3$, is simply-connected and oriented, Whitney proved that $f_t$ can be deformed to an isotopy relative to its endpoints.
However, this deformation is not necessarily through regular homotopies. In fact Whitney also proved that if $Y^{2n-1}, n \ge 3,$ is simply-connected and oriented, $f_t$ can be deformed through regular homotopies with fixed endpoints to an isotopy if and only if $I(f_0, f_1; f_t)$ vanishes; see Chapter 7 of \cite{CE12}. 
This vanishing will be relevant to our situation because in certain cases the resulting isotopy can be lifted to a formal Legendrian isotopy. 

We now compute the self-intersection index for the regular homotopy $f_{Q, N, t}$ constructed in the previous lemma. Since $f_{Q,N,t}$ is a collection of distinct Legendrian regular homotopies occurring in the disjoint subsets $U_i$, we compute just $I(\Lambda, \Lambda_{Q, N}; f_{Q,N,t}|_{U_i})$ in the following lemma; here 
$f_{Q,N,t}|_{U_i}$ means 
$f_{Q,N,t}|_{U_i}: U_i \cap \Lambda \rightarrow U_i$.  
Furthermore, since all the homotopies in the $U_i$'s are modelled on the same homotopy, $I(\Lambda, \Lambda_{Q,N}; f_{Q,N,t}|_{U_i})$ is independent of $i$.

\begin{lemma}
\label{lem: indexcalculation}
The self-intersection index of $f_{Q,N,t}|_{U_i}$ is 
$$
I(\Lambda, \Lambda_{Q,N}; f_{Q, N, t}|_{U_i}) 
= (-1)^{(n-1)(n-2)/2}  N \cdot \chi(Q).
$$ 
\end{lemma}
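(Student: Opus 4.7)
The plan is to exploit the product structure $A = B \times Q$, $J^1(A) = J^1(B) \times T^*Q$ to reduce the computation to a one-dimensional model times a Morse-theoretic count on $Q$, and then track the sign carefully.

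First I would establish the one-dimensional case. The regular homotopy $f_{B,t}\colon B \to J^1(B)$ is broken down into Legendrian Reidemeister moves, all of which are Legendrian isotopies, together with $N$ cusp-pair stabilizations (one per zig-zag of $\gamma_N$). Each stabilization contributes exactly one transverse self-intersection point (arrow in Figure \ref{fig: isotopy}) and the others contribute none. Fixing orientations so that each stabilization contributes $+1$, one obtains $I(j_B, \gamma_N; f_{B,t}) = N$.

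Next I would analyse $f_{A,t} = f_{B,t}\times \mathrm{id}_Q$. Its self-intersection locus is the union of $N$ copies of $\{*\}\times Q \subset B\times Q$, one at each zig-zag, and is highly non-transverse. The transverse perturbation introduced in the proof of Lemma~\ref{lem: reghtpy} is given by a Morse function $h\colon Q\to \mathbb{R}$: near each of these $N$ copies of $Q$, one perturbs the graph of $f_{A,t}$ in the $Q$-direction by (the differential of) $h$, so that after perturbation the self-intersection points in each copy are in bijection with $\mathrm{Crit}(h)$. Thus there are $N\cdot |\mathrm{Crit}(h)|$ transverse self-intersection points, grouped into $N$ clusters each modeled on $Q$.

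The heart of the argument is the sign computation at a critical point $p\in \mathrm{Crit}(h)$. Locally near such an intersection point, the two sheets of the Legendrian split as a product: one factor is the pair of sheets arising from the $1$-dimensional crossing in $J^1(B)$ (which contributes the base sign $+1$ computed above), and the other factor is the graph of $dh$ in $T^*Q$ meeting the zero section of $T^*Q$ transversely at $p$. The sign at $p$ of this latter intersection in $T^*Q$ is $(-1)^{\mathrm{Ind}_h(p)}$ (with a chosen orientation convention on $T^*Q$). Combining the two factors in $J^1(B)\times T^*Q$ and comparing with the ambient orientation $\alpha\wedge(d\alpha)^{n-1}$ on $Y$ introduces a universal Koszul-type reordering sign that depends only on $\dim\Lambda = n-1$. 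A direct bookkeeping of the permutation of tangent factors needed to pass from $TB\oplus TQ\oplus T^*Q\oplus \mathbb{R}$ (the natural order coming from the product) to the orientation order on $J^1(A)$ produces the sign $(-1)^{(n-1)(n-2)/2}$. Summing,
\[
I\bigl(\Lambda,\Lambda_{Q,N}; f_{Q,N,t}|_{U_i}\bigr)
=(-1)^{(n-1)(n-2)/2}\cdot N \cdot \sum_{p\in \mathrm{Crit}(h)}(-1)^{\mathrm{Ind}_h(p)}
=(-1)^{(n-1)(n-2)/2}\, N\,\chi(Q),
\]
using the Morse-theoretic identity $\sum_p(-1)^{\mathrm{Ind}_h(p)}=\chi(Q)$.

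The main obstacle I anticipate is pinning down the universal sign $(-1)^{(n-1)(n-2)/2}$: this is a purely linear-algebraic calculation but requires committing to specific orientation conventions on $J^1(\Lambda)$, on $T^*Q$, and on $Y$ (via the contact volume form), and then carefully reordering an $(n-1)+(n-1)+1$ tuple of oriented subspaces. Everything else---the one-dimensional count and the reduction to $\chi(Q)$ via Morse theory on $Q$---is standard once the product structure $A=B\times Q$ is set up and the perturbation is fixed.
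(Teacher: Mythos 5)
Your proposal reaches the statement by a genuinely different route from the paper. You re-derive the single-stabilization self-intersection formula from scratch: decompose the local picture at a self-intersection point into a $J^1(B)$-factor (the one-dimensional zig-zag crossing), a $T^*Q$-factor whose sign you read off as $(-1)^{\mathrm{Ind}_h(p)}$, and a universal reordering (Koszul) sign, then sum over $\mathrm{Crit}(h)$ and over the $N$ zig-zags to get $(-1)^{(n-1)(n-2)/2}N\chi(Q)$. The paper instead observes that the stabilization step here is essentially the stabilization procedure of Cieliebak--Eliashberg and simply \emph{cites} Lemma~7.14 of \cite{CE12}, which already computes the local index at each self-intersection point of a stabilization over $M$ and shows the total is $(-1)^{(n-1)(n-2)/2}\chi(M)$; the only new point the paper has to address is that its stabilization pushes ``down'' rather than ``up,'' contributing an $(-1)^n$ which it checks is immaterial. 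Your approach is more self-contained, but it shifts all the difficulty onto two sign claims that you do not actually verify: (i) ``fixing orientations so that each stabilization contributes $+1$'' is not a free normalization --- the self-intersection index is independent of the orientation of $\Lambda$ (both tangent sheets flip together), so the base one-dimensional sign must be computed, not chosen; and (ii) the universal sign $(-1)^{(n-1)(n-2)/2}$ is asserted to come out of ``a direct bookkeeping'' rather than derived. Those two steps are precisely the content of \cite{CE12}, Lemma~7.14, so your write-up has a gap exactly where the paper leans on a citation. To make the from-scratch route rigorous you would need to carry out the reordering of the ordered tuple of subspaces of $T_yY$ explicitly, orienting $Y$ by $\alpha\wedge(d\alpha)^{n-1}$, which is exactly what \cite{CE12} does; otherwise the shortest argument is to cite their lemma and handle only the minor ``up versus down'' discrepancy as the paper does.
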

\begin{proof}
Since $\gamma_{N}$ has $N$ zig-zag pairs,  $f_{Q,N,t}|_{U_i}$ can be broken up into $N$ repetitions of the Legendrian regular homotopy  depicted in Figure \ref{fig: isotopy} and so $I(\Lambda, \Lambda_{Q,N}, f_{Q, N,t}|{U_i})$ is just $N$ times the self-intersection index of that homotopy. 
Note that all steps in Figure \ref{fig: isotopy} are Legendrian isotopies except for the stabilization step. This step is essentially the same as the stabilization procedure defined in \cite{CE12}. There stabilization over $M^{n-1} \subset B^{n-1}$ is defined by pushing a Legendrian $B^{n-1}$ upward past a parallel Legendrian $B^{n-1}$ along $M$. Lemma 7.14 of \cite{CE12} proves that the self-intersection index of this stabilization is $(-1)^{(n-1)(n-2)/2}\chi(M)$ by computing the local index of \textit{each} self-intersection point.
 Our stabilization procedure is slightly different; namely, we push a Legendrian $B\times Q$ \textit{down} past a parallel Legendrian $B\times Q$ along a smaller subset $B\times Q$. The self-intersection index of each self-intersect point is computed locally and hence these local indices coincide; the fact that the stabilization in \cite{CE12} moves up and ours moves down contributes a sign change of $(-1)^n$, which has no effect since the index is
$\mathbb{Z}/2$-valued for $n$ odd. Therefore, the \textit{total} self-intersection indices of the two stabilizations also coincide and our lemma follows.
\end{proof}

Now we complete the proof of Lemma \ref{lem: maingeo} using Lemma \ref{lem: indexcalculation}.

\begin{proof}[Proof of Lemma \ref{lem: maingeo}]

We first show that if $I(\Lambda, \Lambda_{Q,N}; f_{Q,N,t}|_{U_i}) = 0$ for all $i$, then $\Lambda$ and $\Lambda_{Q,N}$ are formally Legendrian isotopic in $U^\epsilon(\Lambda, \alpha)$. 
Let $V_i \subset U^\epsilon(\Lambda) \subset Y$ be a subset containing $U_i$ 
such that $(V_i,U_i)$ is strictly contactomorphic to 
$(U^\delta(B^{n-1}), U^\delta(A))  \subset (J^1(B^{n-1}), J^1(A))$; here we think of $A \subset B^{n-1}$ as subsets of $\Lambda$.  
Since $V_i$ is an arbitrarily small neighborhood of  $c_i^+$, we can also assume that $V_i \cap U_j = \varnothing$ for $i\ne j$. Note that $V_i \simeq  B^{2n-1}$
is simply-connected, $I(\Lambda, \Lambda_{Q,N}; f_{Q,N,t}|_{V_i}) = I(\Lambda, \Lambda_{Q,N}; f_{Q,N,t}|_{U_i}) = 0$, and $f_{Q,N,t}|_{\partial V_i \cap \Lambda}$ is $t$-independent. So we can apply a relative version of Whitney's Theorem to 
the regular homotopy $f_{Q, N,t}|_{V_i}: (V_i\cap \Lambda, \partial V_i \cap \Lambda) \rightarrow (V_i, \partial V_i)$; note that this requires $n \ge 3$.  Hence there is a family of (smooth) regular homotopies $f_{s,t}:  V_i\cap \Lambda \rightarrow V_i$ 
from $f_{Q, N,t}|_{V_i}$ to a smooth isotopy $g_t$ such that $f_{s,t}|_{\partial  V_i \cap \Lambda}$ is $(s,t)$-independent and therefore agrees with the inclusion $\partial V_i \cap \Lambda \rightharpoondown \partial V_i$; furthermore $f_{s, t}$ has fixed endpoints, i.e. 
$f_{s, t}$ agrees with $\Lambda, \Lambda_{Q,N}$ for $t = 0, 1$ respectively,  for all $s$.
Also, note that since we are working locally in $V_i$, we do not need $Y$ to be simply-connected as required for the global Whitney Theorem. Since $V_i$ is strictly contactomorphic to the ball $(U^\delta(B^{n-1}), \alpha_{std}) \subset (\mathbb{R}^{2n-1}, \alpha_{std})$, we can trivialize the Legendrian planes in $V_i$. 
Using this trivialization and the family of regular homotopies $f_{s,t}$ (whose image is contained in $V_i$), we get a family of 
tangent planes over $g_t$ that start at $Tg_t$ and end at Lagrangian planes since $f_{Q,N,t}  = f_{1, t}$ is a Legendrian regular homotopy. 
In particular, $g_t$ is a formal Legendrian isotopy in each $U_i$ and so the global isotopy obtained by composing all the $g_t$'s for all $i$ is a formal Legendrian isotopy between $\Lambda$ and $\Lambda_{Q,N}$.

In view of the above discussion and Lemma \ref{lem: indexcalculation}, to complete the proof of this lemma, we just need to find an orientable manifold $Q^{n-2}$ that embeds into $\mathbb{R}^{n-1}$ and has $\chi(Q) = 0$. If $n \ge 4$, we can take $S^1\times S^{n-3}$ and if $n =3$, we can take $S^1$. 
\end{proof}

\begin{remark}\label{rem: n=2}\
\begin{enumerate}[leftmargin=*]
\item Note that if $n =2$, all manifolds $Q^0$ have $\chi(Q^0)> 0 $. This is precisely why Lemma \ref{lem: maingeo} only holds for $n \ge 3$. 
\item 
The last part of the proof of Lemma \ref{lem: maingeo} is similar to the Appendix of \cite{Murphy11}, which describes all formal Legendrians in $(\mathbb{R}^{2n-1}, \alpha_{std})$ up to formal Legendrian isotopy. Both rely on the fact that the Legendrian planes of $(\mathbb{R}^{2n-1}, \alpha_{std})$ can be trivialized. The Appendix of \cite{Murphy11} uses this to address the question of when a given smooth isotopy lifts to a formal Legendrian isotopy. Our situation is simpler because we already know that our smooth isotopy can be deformed through regular homotopies to a Legendrian regular homotopy. 
\item 
The proof of Lemma \ref{lem: maingeo} also shows that after Legendrian isotopy, \textit{all} chords of a loose Legendrian $\Lambda\subset (P\times \mathbb{R}, \alpha_{std})$ have positive degree 
(not just those with bounded action). This is because a generic Legendrian $\Lambda \subset (P\times \mathbb{R}, \alpha_{std})$ has finitely many Reeb chords and so we do not need to work  below a fixed action bound. 
But due to the existence of wild chords with large action, it is not clear whether this is possible for a Legendrian in a general contact manifold.
\end{enumerate}
 
\end{remark}

\section{Contact surgery and asymptotically dynamically convex structures}\label{sec: surgerysemigood} 

In this section, we prove Theorems
\ref{thm: MLYau}, \ref{thm: MLYauindex2}, and \ref{thm: semi-surgery} that subcritical and flexible surgery preserve asymptotically dynamically convex contact structures.
We mostly focus on the flexible case since the main argument in the subcritical case was proven by M.-L.Yau.
The underlying geometric content in the flexible case is in Lemma \ref{lem: maingeo}, which shows that any loose Legendrian can be isotoped to another Legendrian whose Reeb chords with fixed action bound have positive degree. Using Proposition \ref{prop: BEE}, this translates into the condition that all Reeb orbits with fixed action bound of the resulting surgered contact manifold have positive degree. This is almost enough to prove that the resulting contact structure is ADC, except that we have little control over the resulting contact forms $\alpha_k$ and so the non-increasing condition 
$ \alpha_1 \ge \alpha_2 \ge \alpha_3 \cdots$ in Definition \ref{def: semigood} is not necessarily satisfied. As was mentioned in Remark \ref{rem: semigood}, all conditions in the Definition \ref{def: semigood}
are crucial since otherwise this definition becomes vacuous. However, the following more refined version of Lemma \ref{lem: maingeo}, which we will prove in the Appendix,  does include control over the contact forms. 

\begin{lemma}\label{lem: techkey}
Consider Legendrians $\Lambda_0 \subset (Y, \alpha)$ and $\Lambda \subset U^{\epsilon/4}(\Lambda_0, \alpha)$ such that $\Lambda$ is loose and formally Legendrian isotopic to $\Lambda_0$ inside $U^{\epsilon/4}(\Lambda_0, \alpha)$. 
Then for any $D>0$ and $\delta < 1$, there is a contactomorphism $h$ 
of  $(Y, \alpha)$ such that 
\begin{itemize}[leftmargin=10.5mm]
\item $h$ is supported in $U^\epsilon(\Lambda_0, \alpha)$ and $h^* \alpha < 4\alpha$
\item all elements of 
$\mathcal{P}^{< D}(h(\Lambda), Y, \alpha)$ have positive degree
\item  $h(\Lambda) \subset U^{\delta}(\Lambda_0, \alpha)$ is both loose and formally Legendrian isotopic to $\Lambda_0$ in $U^{\delta}(\Lambda_0, \alpha)$. 
\end{itemize}
\end{lemma}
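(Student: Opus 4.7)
The plan is to combine Lemma~\ref{lem: maingeo}, Murphy's h-principle \cite{Murphy11}, the contact isotopy extension theorem, and a careful estimate of the conformal factor. The three-step strategy is: (i) construct a target Legendrian $\Lambda''$ with the desired chord properties sitting inside $U^{\delta}(\Lambda_0,\alpha)$, (ii) use Murphy's h-principle to convert the formal isotopy $\Lambda \simeq \Lambda''$ into a genuine Legendrian isotopy, (iii) extend this isotopy to an ambient contactomorphism with conformal factor bounded by $4$.

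For step (i), since $\Lambda$ is only known to lie in $U^{\epsilon/4}(\Lambda_0,\alpha)$, I would first shrink it. Identify $U^{\epsilon/4}(\Lambda_0,\alpha) \cong U^{\epsilon/4}(\Lambda_0) \subset J^1(\Lambda_0)$ via the strict Weinstein neighborhood theorem and use a cutoff of the contact scaling $m_c(q,p,z) = (q,cp,cz)$, which satisfies $m_c^{*}\alpha_{std} = c\alpha_{std}$, to produce a contactomorphism carrying $\Lambda$ to a smaller copy $\Lambda_{\text{sm}} \subset U^{\delta/2}(\Lambda_0,\alpha)$. Now apply Lemma~\ref{lem: maingeo} to $\Lambda_{\text{sm}}$ in the ambient $(Y,\alpha)$ with action bound $D$ and a neighborhood so small that the output $\Lambda''$ sits inside $U^{\delta}(\Lambda_0,\alpha)$. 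By Lemma~\ref{lem: maingeo}, $\Lambda''$ is loose, formally isotopic to $\Lambda_{\text{sm}}$, and all elements of $\mathcal{P}^{<D}(\Lambda'',Y,\alpha)$ have positive degree.

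For step (ii), both $\Lambda$ and $\Lambda''$ are loose in $U^{\epsilon/4}(\Lambda_0,\alpha)$ (using Remark~\ref{rem: looseleg}) and formally Legendrian isotopic there, via the composition of the preliminary shrinking with the formal isotopy from Lemma~\ref{lem: maingeo}. Murphy's h-principle upgrades this to a genuine Legendrian isotopy $\Lambda_t$ inside $U^{\epsilon/4}(\Lambda_0,\alpha)$. The contact isotopy extension theorem then yields an ambient contact isotopy $h_t$ of $(Y,\alpha)$ with $h_1(\Lambda) = \Lambda''$, supported in any preassigned open neighborhood of $\bigcup_t \Lambda_t$, in particular in $U^{\epsilon}(\Lambda_0,\alpha)$.

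The main obstacle is the bound $h^{*}\alpha < 4\alpha$; the naive ambient extension gives no control on the conformal factor, since the Murphy isotopy from $\Lambda$ to $\Lambda''$ can be geometrically wild. The constant $4$ matches the ratio $\epsilon/(\epsilon/4)$ of the outer to inner neighborhoods and corresponds to the conformal factor of the inverse scaling $m_4$ on $J^1(\Lambda_0)$. In the model $(U^{\epsilon}(\Lambda_0),\alpha_{std})$, a contact isotopy generated by a Hamiltonian $H_t$ has conformal factor satisfying $\tfrac{d}{dt}\log(h_t^{*}\alpha_{std}/\alpha_{std}) = h_t^{*}(R \cdot H_t)$ where $R = \partial_z$ is the Reeb field, so my plan is to construct the extending Hamiltonian as a composition of localized pieces with cutoffs chosen so that the total integrated $R$-derivative is bounded by $\log 4$; the auxiliary $m_c$-type rescaling, interpolated between the identity near $\partial U^{\epsilon}$ and a contraction onto $U^{\epsilon/2}$, provides the main mechanism. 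This conformal-factor bookkeeping is the technical heart of the proof and is what the authors defer to the Appendix.
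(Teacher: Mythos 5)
Your high-level decomposition matches the paper's: produce a target Legendrian with positive-degree chords via Lemma~\ref{lem: maingeo}, upgrade the formal isotopy to a genuine one via Murphy's h-principle, extend to an ambient contact isotopy, and then address the conformal factor. You also correctly flag the conformal factor as the technical heart. But there is a real gap in step (iii), and your explanation of where the constant $4$ comes from is incorrect.

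The gap is this. You suggest building the extending Hamiltonian for the Murphy isotopy out of localized pieces so that the integrated $R$-derivative is at most $\log 4$. That is not achievable: Murphy's h-principle gives you some Legendrian isotopy with no quantitative control whatsoever, so the conformal factor of any ambient extension $\phi$ may satisfy $\phi^{*}\alpha < 2^{k}\alpha$ only for some enormous $k$, and no cutoff trick on the Hamiltonian will recover a universal bound. The paper's mechanism is different: it makes no attempt to tame $\phi$. Instead one post-composes, setting $h = f_{k}\circ\phi$, where $f_{k}$ is a contactomorphism supported in $U^{\epsilon}(\Lambda_0,\alpha)$ that restricts to the \emph{linear} scaling $s_{1/2^{k}}$ on all of $U^{\epsilon/2}(\Lambda_0,\alpha)$ and satisfies $f_{k}^{*}\alpha < 4\alpha$. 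Since $\phi$ is supported in $U^{\epsilon/2}$, on that region $h^{*}\alpha = \tfrac{1}{2^{k}}\phi^{*}\alpha < \alpha$, and outside it $\phi = \mathrm{Id}$ so $h^{*}\alpha = f_{k}^{*}\alpha < 4\alpha$; the two regimes glue to give the bound everywhere. This cancellation is the idea you are circling around when you invoke the ``auxiliary $m_c$-type rescaling,'' but you never state it, and without it the proof does not close.

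The further, and genuinely nontrivial, technical point is the existence of $f_{k}$ with conformal factor bounded by $4$ \emph{uniformly in $k$}. A naive cutoff of the contact Hamiltonian generating $s_{1/2^{k}}$ gives a bound that blows up as $k\to\infty$, which is useless here since $k$ is forced on you by how wild $\phi$ is. Producing the uniform bound requires the specific interpolating family constructed in Lemma~\ref{lem: scalingmap} (and Lemma~\ref{lem: techlem2}): one chooses $h_t$ so that $\bigl(\partial_z h_t\bigr)^{-1}\partial_t\partial_z h_t \le \tfrac{5}{4}$ for all $t\in[0,1)$, whence the conformal factor $\gamma_t = e^{\int_0^t\nu_s\,ds} \le e^{5/4} < 4$. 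So the constant $4$ is $e^{5/4}$ rounded up (and by the paper's remark could be any number bigger than $e$), not the ratio $\epsilon/(\epsilon/4)$ as you claim; that ratio is a coincidence. Finally, once you replace $h = $ (extension of Murphy isotopy) by $h = f_k\circ\phi$, the chord-positivity claim in the second bullet needs re-examination, since $h(\Lambda) = s_{1/2^k}(\Lambda')$ rather than $\Lambda'$; you would need to check that scaling by $s_{1/2^k}$ preserves both the local (zig-zag) chords and the global chords with their degrees, which the paper does carefully but you do not address.
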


\begin{remark}\
\begin{enumerate}[leftmargin=*]
\item Again, this lemma holds for $\mathcal{W}^{<D}$ instead of $\mathcal{P}^{< D}$. 
\item We can assume $h(\Lambda)$ is generic in the sense that all elements of $\mathcal{P}^{< D}(h(\Lambda), Y, \alpha)$ are non-degenerate and all elements of  $\mathcal{P}^{< D}(Y, \alpha)$ are disjoint from $h(\Lambda)$.  
\item 
If a chord $c$ has $|c| > 3-n$, then the corresponding orbit $\gamma_c$ has $|\gamma_c| > 0 $; see Proposition \ref{prop: BEE}. However the condition $|c| > 3-n $ is not enough to conclude that all orbits have positive degree. 
For example, consider $c$ with $|c| = -1$, which is greater than $3-n$ for $n \ge 5$. Then the word $w$ consisting of $n-3$ copies of $c$ has 
$|w| = -(n-3)$ and so $|\gamma_w| = |w| + n-3 = 0$, which violates the definition of ADC contact forms. This is why we require the chords in Lemma \ref{lem: techkey} to have positive degree (instead of just degree bigger than $n-3$).
\end{enumerate}
\end{remark}

The definition of asymptotically dynamically convex contact structures involves a sequence of decreasing contact forms. When we do contact surgery along an isotropic submanifold, we need to assume that the resulting sequence of Weinstein handles are nested inside one another so that the new forms are still decreasing; furthermore, the handles need to have model contact forms so that the Reeb flow inside the handles is standard.  As the following proposition shows, we can achieve this if the original sequence of contact forms is well-behaved in a neighborhood of the isotropic attaching sphere. 

\begin{proposition}\label{prop: techsurgery}
Let $\Lambda \subset (Y^{2n-1}_-, \xi_-), n\ge 3,$ be a Legendrian sphere and  $(Y_+, \xi_+)$ the result of contact surgery on $\Lambda$. 
Suppose $(Y_-, \xi_-)$ is an asymptotically dynamically convex contact structure with $(\alpha_k, D_k)$ as in Definition \ref{def: semigood}. If  $\alpha_k|_{U^{\epsilon}(\Lambda, \alpha_1)}
 = c_k \alpha_1|_{U^{\epsilon}(\Lambda, \alpha_1)}$
for some constants $\epsilon, c_k$  
and all elements of $\mathcal{W}^{<D_k}(Y_-, \Lambda, \alpha_k)$ have positive degree, then $(Y_+, \xi_+)$ is also asymptotically dynamically convex. 
\end{proposition}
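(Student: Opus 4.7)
The approach is to construct a non-increasing sequence of contact forms $\beta_k$ on $Y_+$ by doing the contact surgery using each $\alpha_k$ separately, with carefully chosen nested handle sizes, and then verify that $(\beta_k, D_k)$ satisfies Definition \ref{def: semigood}. First, since $\Lambda$ is a Legendrian sphere, the surgery is critical (index $n$), so for $n\geq 3$ Proposition \ref{prop: c1equivalence} gives $c_1(Y_+,\xi_+)=0$, and the $\mathbb{Z}$-grading on Reeb orbits is well-defined. For each $k$, Proposition \ref{prop: BEE} produces an $\epsilon_k>0$ such that contact surgery on $U^{\epsilon_k}(\Lambda,\alpha_k)$ yields a contact form $\beta_k$ on $Y_+$ whose orbits with action less than $D_k$ are in grading-preserving bijection with $\mathcal{P}^{<D_k}(Y_-,\alpha_k)\cup \mathcal{W}^{<D_k}(\Lambda,Y_-,\alpha_k)$, where words receive a grading shift of $n-3$. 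By the ADC hypothesis on $Y_-$, elements of the first set have positive degree; by the hypothesis on words together with $n\geq 3$, the new orbits have degree at least $1+(n-3)=n-2\geq 1$. Hence every element of $\mathcal{P}^{<D_k}(Y_+,\beta_k)$ has positive degree.

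The key technical task is to arrange that the $\beta_k$ form a non-increasing sequence on the common manifold $Y_+$. The hypothesis $\alpha_k=c_k\alpha_1$ on $U^\epsilon(\Lambda,\alpha_1)$, combined with $\alpha_k\geq\alpha_{k+1}$, forces $1=c_1\geq c_2\geq c_3\geq\cdots$. A direct computation in the standard Legendrian model $J^1(\Lambda)$ shows that rescaling the contact form by $c$ rescales the size of the strict Weinstein neighborhood by $1/c$, so $U^{\epsilon_k}(\Lambda,\alpha_k)=U^{\epsilon_k/c_k}(\Lambda,\alpha_1)$. We may freely shrink each $\epsilon_k$ without invalidating Proposition \ref{prop: BEE}, so we arrange $\epsilon_k/c_k$ to be strictly decreasing, making the surgery neighborhoods nested inside $U^\epsilon(\Lambda,\alpha_1)$.

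The Weinstein handle construction is scale-equivariant in the Liouville coordinate, so the handle $H_k$ attached for $\alpha_k$ can be taken to be the $c_k$-scaled copy of a reference handle $H_1$, sitting inside $H_1$. Define $\beta_k$ to equal $\alpha_k$ on $Y_+\setminus H_1$ and the $c_k$-scaled standard Weinstein model form on $H_k$, with an appropriate interpolation in the annular region $H_1\setminus H_k$ (which is possible because the ambient form there is $\alpha_k=c_k\alpha_1$ by hypothesis). The inequality $\beta_k\geq\beta_{k+1}$ then reduces to three pointwise checks: outside $H_1$ we use $\alpha_k\geq\alpha_{k+1}$; in $H_{k+1}\subset H_k$ both forms are scaled model forms and the inequality follows from $c_k\geq c_{k+1}$; and in the transition region, the relation $\alpha_k=c_k\alpha_1$ together with scale-equivariance yields the same factor between $\beta_k$ and $\beta_{k+1}$.

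The main obstacle is arranging the interpolation in the annular region $H_k\setminus H_{k+1}$ so that $\beta_k\geq\beta_{k+1}$ holds \emph{pointwise} while each $\beta_k$ remains a valid contact form for $\xi_+$ realizing the BEE bijection with the correct bounded-action orbits. This will require using the explicit Weinstein handle model of \cite{BEE12} together with a convex interpolation compatible with the common scaling factor $c_k$; the fact that both the interior model form and the exterior ambient form are proportional to the same reference ($\alpha_1$ and its handle analogue with $c_1=1$) makes the patching possible. Once this construction is completed, $(\beta_k,D_k)$ verifies Definition \ref{def: semigood} and $(Y_+,\xi_+)$ is ADC.
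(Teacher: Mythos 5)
Your overall strategy is right -- do the surgery for each $\alpha_k$ using nested scaled handles and check that the resulting forms on $Y_+$ are non-increasing -- but the proposal stops short of the actual proof at the crucial step. You explicitly flag ``the main obstacle is arranging the interpolation in the annular region $H_k\setminus H_{k+1}$ so that $\beta_k\geq\beta_{k+1}$ holds pointwise'' and then defer to an unspecified ``convex interpolation compatible with the common scaling factor.'' That sentence is the entire content of the lemma: showing that such a patching exists while keeping the monotonicity, the contact condition, and the validity of the BEE bijection all at once is exactly what needs to be proved, and invoking an unwritten interpolation does not resolve it.

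The paper's proof dissolves the obstacle rather than solving it, and you should see why no interpolation is required at all. One fixes a single Liouville domain $H_1^{\epsilon_1}\cup (Y_-\times(0,1],\, t\alpha_1)$, with its single Liouville form $\lambda$ and Liouville vector field $v$ (which is $\sum(2q_i\partial_{q_i}-p_i\partial_{p_i})$ on the handle and $t\partial_t$ on the symplectization piece, glued smoothly). The smaller handles $H_{c_k}^{\epsilon_k}$ are produced by flowing $H_1^{\epsilon}$ backward along $v$ for time $\ln(c_k^{-1})$, not by ad hoc rescaling plus patching; the hypothesis $\alpha_k=c_k\alpha_1$ near $\Lambda$ is precisely what lets the boundary of the shrunken handle land on the graphical hypersurface of $\alpha_k$. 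Each $\alpha_k'$ is then simply the restriction of the one fixed form $\lambda$ to the hypersurface $\partial\bigl(H_{c_k}^{\epsilon_k}\cup(Y_k\times(0,1])\bigr)$. Because the cobordism between consecutive such hypersurfaces carries the non-vanishing Liouville vector field $v$ transverse to both boundaries, the Liouville flow carries $(Y_+,\alpha_{k+1}')$ to $(Y_+,\alpha_k')$ and exponentially expands $\lambda$, so $\alpha_{k+1}'<\alpha_k'$ comes for free. This is the step your proposal is missing, and it cannot be filled by ``convexity'' considerations alone; the pointwise inequality in the annulus is not something you arrange by hand but a structural consequence of having all forms arise from one Liouville structure.
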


\begin{remark}
As mentioned in Remark \ref{rem: parametrization}, we need to consider $\Lambda$ as a parametrized sphere for contact surgery to be well-defined; however, since this parametrization does not affect our proof, we do not include it here. Furthermore, we need to assume that $\Lambda$ is generic in the sense that all elements of 
$\mathcal{P}^{<D_k}(Y_-, \Lambda, \alpha_k)$ are non-degenerate 
and all elements of $\mathcal{P}^{<D_k}(Y_-, \alpha_k)$ are disjoint from $\Lambda$. 
\end{remark}
\begin{proof}[Proof of Proposition \ref{prop: techsurgery}]
We first review Weinstein n-handles, following \cite{W}. Consider $(\mathbb{R}^{2n}, \omega_{std})$ with Liouville form 
$\lambda = \sum_{i=1}^n(2q_i dp_i + p_i dq_i)$ and Liouville vector field 
$v = \sum_{i=1}^n (2q_i \frac{\partial }{\partial q_i} - p_i\frac{\partial }{\partial p_i})$. Note that $v$ has only one zero, which occurs at the origin. Also $v$ is transverse to the hypersurfaces $X_- := \{\sum_{i=1}^n (q_i^2 -\frac{1}{2}p_i^2) =-1 \} \cong S^{n-1} \times \mathbb{R}^{n}$ and 
$X_+ := \{\sum_{i=1}^n q_i^2 = \delta\} \cong \mathbb{R}^n \times S^{n-1}$, the radius-$\delta$ cotangent bundle $S^\delta T^*\mathbb{R}^n$  of the Lagrangian $\mathbb{R}^n = \{q_1 = \cdots = q_n = 0\}$.  Therefore, $\lambda|_{X_-}, \lambda|_{X_+}$ are contact forms.

The Weinstein handle $H \cong D^n \times D^n$ is the compact subset of $\mathbb{R}^{2n}$ bounded by $X_-$ and $X_+$.
The boundary of $H$ has two components $\partial_\pm H \subset X_{\pm}$ that meet at the corner $\partial_+H \cap \partial_- H = X_- \cap X_+ \cong S^{n-1} \times S^{n-1}$. Since $\partial_\pm H \subset X_{\pm}$,  $\lambda|_{\partial_\pm H}$ are contact forms. Note that $X_-$ contains the Legendrian sphere $S^{n-1} =  X_- \cap \{q_1 = \cdots = q_n = 0 \}$. Hence there is a neighborhood 
$U^{\epsilon}(S^{n-1}, \lambda|_{X_-})$ of $S^{n-1}$ in $X_-$ that is strictly contactomorphic to 
$U^{\epsilon}(S^{n-1}) \subset 
(J^1(S^{n-1}),  \alpha_{std})$.
The lower boundary $\partial_-H = S^{n-1} \times D^n$ is a small neighborhood of $S^{n-1} \subset X_-$ and 
by shrinking $\delta$, we can assume that $\partial_-H \subset U^{\epsilon}(S^{n-1}, \lambda|_{X_-})$. To emphasize this dependence on $\epsilon$, we denote this Weinstein handle $H_1^\epsilon$. We also take $\delta$'s so that $H_1^{\epsilon'} \subset H_1^\epsilon$ if $\epsilon' < \epsilon$. See Figure \ref{fig: handle}.

\begin{figure}
     \centering
     \includegraphics[scale=0.5]{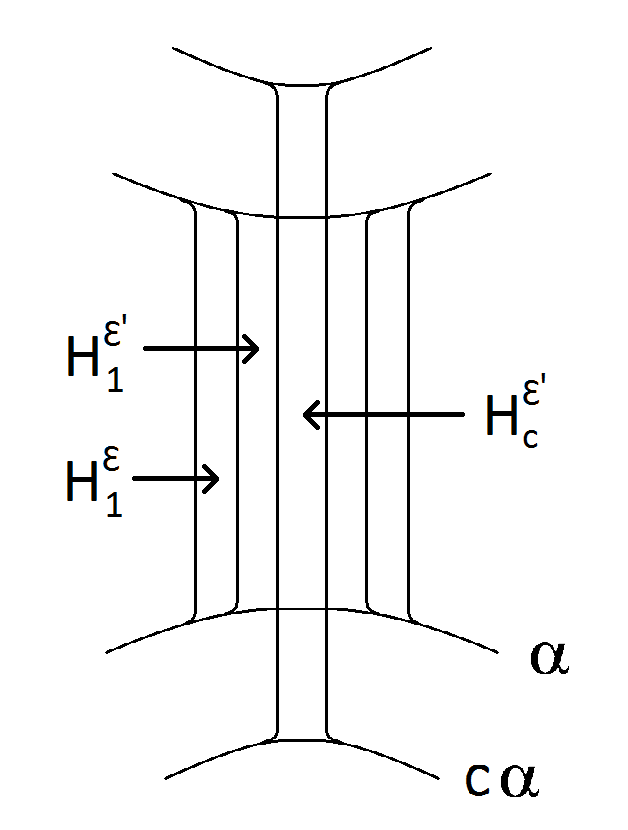}
     \caption{Weinstein handles $H_1^{\epsilon},
H_1^{\epsilon'},$ and $H_c^{\epsilon}$}
     \label{fig: handle}
 \end{figure}

By modifying $X_+$, we can  make $X_+$ agree with $X_-$ agree along $X_- \backslash U^{\epsilon}(S^{n-1}, \lambda|_{X_-})$ and hence smooth the corners between $\partial_+ H_1^\epsilon$ and $X_-$.
We modify $X_+$ by setting $X_+ = \{F(\sum_{i=1}^n q_i^2, \sum_{i=1}^n p_i^2) = 0\}$, where $F: \mathbb{R}^2 \rightarrow \mathbb{R}$ is any smooth function such that $F(x,y) = x- \delta$ away from $X_-$ and $F(x,y) = x - \frac{1}{2}y + 1$ near 
$X_- \backslash U^{\epsilon}(S^{n-1}, \lambda|_{X_-})$. Also, to ensure that $\lambda|_{X_+}$ is still a contact form, we require that the partial derivatives of $F$ do not have the same sign, $\frac{\partial F}{\partial x}$ is not zero when $y=0$,  $\frac{\partial F}{\partial y}$ is not zero when $x=0$, and $F(0,0)\neq 0$; see \cite{W}.

Furthermore, we can take $F$ so that  $\partial_-H_1^\epsilon$ is a \textit{convex} subset of $U^\epsilon(S^{n-1}) \subset (J^1(\Lambda), \alpha_{std})$ under the strict contactomorphism from
$U^{\epsilon}(S^{n-1}, \lambda|_{X_-})$ to
$U^\epsilon(S^{n-1})$. Here convexity in $U^\epsilon(S^{n-1}) \subset J^1(S^{n-1})$ is with respect to the linear $y, z$ coordinates on $J^1(S^{n-1})$. To see this, we construct the strict contactomorphism by first taking a diffeomorphism $\phi$ from a neighborhood $U$ of $S^{n-1} \subset X_-$ to a neighborhood $U'$ of $S^{n-1} \subset J^1(S^{n-1})$ such that $\phi|_{S^{n-1}}= Id$ and $\phi(\partial_-H_1^\epsilon)$ is convex in $J^1(S^{n-1})$ for all sufficiently small $\epsilon$. Then we take a strict contactomorphism $\phi'$ from $(U', \phi_*\alpha) \subset J^1(S^{n-1})$ to $(U'', \alpha_{std}) \subset J^1(S^{n-1})$ such that $\phi'|_{S^{n-1}} = Id$. Since $\phi'$ is a diffeomorphism such that $\phi'|_{S^{n-1}} = Id$, $\phi'$ takes sufficiently small convex neighborhoods of $S^{n-1}$ to convex neighborhoods of $S^{n-1}$. So under the strict contactomorphism $\phi'\circ \phi$, we have that $\phi' \circ \phi(\partial_-H_1^\epsilon)$ is a convex subset of $U^\epsilon(S^{n-1}) \subset J^1(S^{n-1})$ for all  sufficiently small $\epsilon$ as desired.

Now we construct nested handles that are contained in $H_1^\epsilon$. We first note that the image of $U^{\epsilon}(S^{n-1}, \lambda|_{X_-})$  in $\mathbb{R}^{2n}$ under the flow of $-v$ can be identified with the negative symplectization $(U^{\epsilon}(S^{n-1}) \times (0, 1], t\alpha_{std})$. 
Therefore $H_1^{\epsilon}\cup (U^{\epsilon}(S^{n-1}) \times (0, 1], t\alpha_{std})$ can be considered a subset of $\mathbb{R}^{2n}$. 
For  $c \in (0,1]$, let $H_c^{\epsilon} \subset \mathbb{R}^{2n}$ be the image of $H_1^{\epsilon}$ under the time 
$\ln (c^{-1})$ flow of $-v$. 
Since $v$ expands $\lambda$ exponentially, $\partial_-H_c^\epsilon \subset U^{\epsilon}(S^{n-1}) \times \{c\}$. So we can attach $H_c^\epsilon$ to $U^{\epsilon}(S^{n-1}) \times (0, c]$; note that $H_c^{\epsilon} \cup U^{\epsilon}(S^{n-1}) \times (0,c]$ is precisely the image of $H_1^{\epsilon} \cup U^{\epsilon}(S^{n-1}) \times (0,1]$ under the time $\ln (c^{-1})$ flow of $-v$.
Also, if $0 < c' < c \le 1$ and $\epsilon' < \epsilon$, then 
$ H_{c'}^{\epsilon'} \cup U^{\epsilon'}(S^{n-1}) \times (0,c']\subset 
H_c^\epsilon \cup U^{\epsilon}(S^{n-1}) \times (0, c]$. This is because 
$H_1^{\epsilon'} \cup U^{\epsilon}(S^{n-1}) \times (0, 1]
 \subset H_1^\epsilon \cup U^{\epsilon}(S^{n-1}) \times (0, 1]$ if $\epsilon' < \epsilon$  and 
$-v$ is inward transverse (or tangent) to the boundary of 
$H_1^{\epsilon} \cup U^{\epsilon}(S^{n-1})\times (0, 1]$. As a result, we also see that $v$ is transverse to $\partial_{\pm} H_c^\epsilon$ and hence $\lambda|_{\partial_{\pm} H_c^\epsilon}$ is a contact form.

There is a strict contactomorphism between $\partial_-H_1^{\epsilon} \subset U^{\epsilon}(S^{n-1})$ and a neighborhood of the Legendrian attaching sphere $\Lambda$ contained in $U^{\epsilon}(\Lambda, \alpha_1) \subset (Y_-, \alpha_1)$. We can use this contactomorphism to attach $H_1^\epsilon$ to $(Y_-, \alpha_1)$. Then $H_1^\epsilon \cup (U^{\epsilon}(S^{n-1}) \times (0, 1], t\alpha_{std})$ is a subset of 
$H_1^\epsilon \cup (Y_- \times (0, 1], t\alpha_1)$. Since $\alpha_k < \alpha_1$, we can consider $\alpha_k$ as a graphical submanifold $Y_k$ of $(Y_- \times (0, 1], t\alpha_1)$. By assumption,  $\alpha_k|_{U^{\epsilon}(\Lambda, \alpha_1)}
 = c_k\alpha_1|_{U^{\epsilon}(\Lambda, \alpha_1)}$, where $c_k \le 1$ are decreasing.  Therefore, $Y_k \cap (U^{\epsilon}(S^{n-1}) \times (0, 1]) = U^{\epsilon}(S^{n-1}) \times \{c_k\}$. 
Let $\epsilon_k \in (0, \epsilon)$ be a decreasing sequence. 
Since $\partial_- H_{c_k}^{\epsilon_k} \subset 
 U^{\epsilon_k}(S^{n-1}) \times \{c_k\}
 \subset 
  U^{\epsilon}(S^{n-1}) \times \{c_k\}$,  we can attach $H_{c_k}^{\epsilon_k}$ to $Y_k$.     
  Because $\epsilon_{k}, c_k, \alpha_k$ are all decreasing, we have $H_{c_{k+1}}^{\epsilon_{k+1}}\cup (Y_{k+1} \times (0,1], t\alpha_{k+1})
  \subset 
 H_{c_k}^{\epsilon_k}\cup (Y_k \times (0,1], t\alpha_k) $ for all $k$.

 There is a Liouville form on $H_{1}^{\epsilon_1}\cup Y_- \times (0,1]$ that equals $\lambda$ on $H_{1}^{\epsilon_1}$ and $t\alpha_1$ on
 $Y_- \times (0,1]$; correspondingly, $H_{1}^{\epsilon_1}\cup Y_- \times (0,1]$ has a Liouville vector field that is
$v$ in  $H_{1}^{\epsilon_1}$ and   $\frac{\partial }{\partial t}$ in $(Y_- \times (0,1], t\alpha_1)$. In particular, this vector field has only one zero, namely at the origin in $H_{1}^{\epsilon_1}$. Furthermore, the Liouville structure on $H_{1}^{\epsilon_1}\cup Y_- \times (0,1]$ restricts to a Liouville structure on  $H_{c_k}^{\epsilon_k}\cup (Y_k \times (0,1], t\alpha_k)$ for all $k$. 

Now let $(Y_+, \alpha_k')$ be
 $\partial (H_{c_k}^{\epsilon_k} \cup (Y_-\times (0,1], t\alpha_k))$. 
  Note that $U^{\epsilon}(S^{n-1}) \times \{c_k\} \subset (J^1(S^{n-1}) \times (0, \infty), t\alpha_{std})$ is strictly contactomorphic to $U^{c_k\epsilon}(S^{n-1}) \subset 
    (J^1(S^{n-1}), \alpha_{std})$ and so $\partial_-H_{c_k}^{\epsilon_k} \subset U^{\epsilon_k}(S^{n-1}) \times \{c_k\} = U^{c_k\epsilon_k}(S^{n-1}) = U^{c_k\epsilon_k}(\Lambda, \alpha_k)$. 
Therefore $(Y_+, \alpha_k)$ equals $(Y_-, \alpha_k)$ outside $(\partial_- H_{c_k}^{\epsilon_k}, \lambda)
\subset U^{c_k\epsilon_k}(\Lambda, \alpha_k)$ and equals $(\partial_+H_{c_k}^{\epsilon_k},  \lambda)$  otherwise. We choose $\epsilon_k$ to be small enough so that Proposition \ref{prop: BEE} applies to $\Lambda \subset (Y_-, \alpha_k)$ with a neighborhood of size $c_k\epsilon_k$ and action bound $D_k$. Then there is a (shifted) grading-preserving bijection between $\mathcal{P}^{< D_k}(Y_+, \alpha_k')$ and $\mathcal{P}^{< D_k}(Y_-, \alpha_k) \cup \mathcal{W}^{<D_k}(\Lambda, Y_-, \alpha_k)$. Proposition \ref{prop: BEE} assumes that the handle is of the form 
 $(S^\delta T^*D^n =\{ \sum_{i=1}^n q_i^2 = \delta\}, \lambda_{std}= \sum_{i=1}^n q_i dp_i)$, which has Reeb vector field
 $R_{std} = \frac{1}{\delta}\sum_{i=1}^n q_i \frac{\partial }{\partial p_i}$; here $q_i, p_i$ are switched from the usual conventions. In our situation, the handle is 
 $(S^\delta T^* D^n =  \{\sum_{i=1}^n q_i^2 = \delta\}, \lambda  = \sum_{i=1}^n (2q_i dp_i + p_i dq_i))$  away from $X_-$. 
  However, $R_\lambda$ equals 
 $\frac{1}{2}R_{std}$ and so Proposition \ref{prop: BEE}, which uses only the Reeb dynamics in the handle,  still applies. We also need $\partial_- H_{c_k}^{\epsilon_k}$ to be convex in $U^{c_k\epsilon_k}(\Lambda, \alpha_k)$ so that the only Reeb trajectories of $(Y_-, \alpha_k)$ that leave and come back to  $\partial_-H_{c_k}^{\epsilon_k}$ correspond to Reeb chords of $\Lambda \subset (Y_-, \alpha_k)$. This follows because $\partial H_1^\epsilon$ is convex in $U^\epsilon(\Lambda, \alpha_1)$.

Finally, we prove that $(Y_+,\xi_+)$ is ADC by showing that $(\alpha_k', D_k)$ satisfy the conditions in Definition \ref{def: semigood}. Since $(\alpha_k, D_k)$ satisfy Definition \ref{def: semigood}, $D_k$ is 
increasing and tends to positive infinity, $\alpha_k$ are decreasing, and  all elements of $\mathcal{P}^{< D_k}(Y_-, \alpha_k)$ have positive degree. Furthermore, we assumed that all elements of $\mathcal{W}^{<D_k}(\Lambda, Y_-, \alpha_k)$ have positive degree. By choice of $\epsilon_k$, we have a bijection between $\mathcal{P}^{< D_k}(Y_+, \alpha_k')$ and  
$\mathcal{P}^{< D_k}(Y_-, \alpha_k) \cup 
\mathcal{W}^{<D_k}(\Lambda, Y_-, \alpha_k)$, which shifts the grading by $n-3$. Since $n -3 \ge 0$, all elements of $\mathcal{P}^{< D_k}(Y_+, \alpha_k')$ also have positive degree. 

Finally we show that $\alpha_k'$ is decreasing. 
Recall that $(Y_+, \alpha_k') = \partial (H_{c_k}^{\epsilon_k} \cup (Y_-\times (0,1], t\alpha_k))$. Also, 
$H_{c_{k+1}}^{\epsilon_{k+1}}\cup (Y_{k+1} \times (0,1], t\alpha_{k+1})
  \subset 
 H_{c_k}^{\epsilon_k}\cup (Y_k \times (0,1], t\alpha_k) $ and the cobordism 
 $H_{c_k}^{\epsilon_k}\cup Y_k \times (0,1] \backslash H_{c_{k+1}}^{\epsilon_{k+1}}\cup Y_{k+1} \times (0,1]$ admits a Liouville vector field that is transverse to its boundaries and has no zeroes. Therefore, the Liouville flow in this region takes $(Y_+, \alpha_{k+1}')$ to 
 $(Y_+, \alpha_k')$. Since the Liouville vector field expands the Liouville form and the contact forms are induced by the Liouville forms,   we have that $\alpha_{k+1}'  < \alpha_k'$ on $Y_+$ as desired. 
\end{proof}
\begin{remark}\label{rem: subcritical_supernice}
By using Proposition \ref{prop: MLYau_action}, one can show that the analog of Proposition \ref{prop: techsurgery} also holds for subcritical surgery. In this case, there are no  requirements on  $\mathcal{W}^{<D_k}(Y_-, \Lambda, \alpha_k)$ since all new Reeb orbits occur in the belt sphere. To apply Proposition \ref{prop: MLYau_action} for index $2$ surgery,  we also need to assume that $c_1(Y_+, \xi_+) = 0$ and either $\Lambda^1$ or all orbits of $(Y_-, \alpha_k)$ with action less than $D_k$ are contractible.
\end{remark}

Using Lemma \ref{lem: techkey} and Proposition \ref{prop: techsurgery}, we now prove our main result Theorem \ref{thm: semi-surgery} that flexible surgery preserves ADC contact forms.

\begin{proof}[Proof of Theorem \ref{thm: semi-surgery}]
Let $(Y_-, \xi_-)$ be an ADC contact structure and let $(Y_+, \xi_+)$ be the result of contact surgery on a loose Legendrian $\Lambda \subset (Y_-, \xi_-)$. 
Since $(Y_-, \xi_-)$ is ADC, there are decreasing contact forms $\alpha_k$ for $(Y_-, \xi_-)$ and increasing $D_k$ tending to infinity so that all elements of $\mathcal{P}^{< D_k}(Y_-, \alpha_k)$ have non-zero degree. By taking a subsequence of $(\alpha_k, D_k)$, we can assume that $D_k > k 4^k$. 
We will use $\alpha_k$ to construct 
a sequence of forms $\alpha_k''$ for 
$(Y_-, \xi_-)$ and numbers $D_k''$ such that 
\begin{enumerate}
 \item $\alpha_k''$ are decreasing and $D_k''$ are increasing and tend to infinity
 \item all elements of $\mathcal{P}^{< D_k''}(Y_-, \alpha_k'')$ have positive degree
 \item all elements of $\mathcal{W}^{<D_k''}(\Lambda, Y_-, \alpha_k'')$ have positive degree
 \item $\alpha_k''|_{U^\epsilon(\Lambda, \alpha_1'')} =c''_k \alpha_1''|_{U^\epsilon(\Lambda, \alpha_1'')}$ for some constants $c_k'' \le 1$  and $\epsilon > 0$. 
\end{enumerate}
Note that  conditions (1) and (2) are precisely the conditions from Definition \ref{def: semigood}.
We also need the usual genericity assumption: 
all elements of $\mathcal{P}^{< D_k''}(\Lambda, Y_-, \alpha_k'')$
 are non-degenerate and all elements of $\mathcal{P}^{< D_k''}(Y_-, \alpha_k'')$ are disjoint from $\Lambda$.  Then by Proposition \ref{prop: techsurgery}, $(Y_+, \xi_+)$ is also ADC. 
\begin{remark}\label{rem: goodlegdef}
Conditions (1) and (3) implies that loose Legendrians are asymptotically dynamically convex; see Definition \ref{def: leggood} and Proposition \ref{prop: loose_nice_leg} below. Conditions (1), (2), (3) imply that a loose Legendrian in an ADC contact manifold always form an ADC pair; see Definition  \ref{def: pair}.
\end{remark} 
 
Choose $\epsilon_k$ small enough so that the neighborhoods $U^{\epsilon_k/4}(\Lambda, \alpha_k) \subset Y_-$ of $\Lambda$  
satisfy
 $ U^{\epsilon_{k+1}/4}(\Lambda, \alpha_{k+1})
  \subset
  U^{\epsilon_k/4}(\Lambda, \alpha_k)$ for all $k$. 
  As an intermediate step, we will inductively construct contactomorphisms $\phi_k$ of $(Y_-, \xi_-)$ and Legendrians $\Lambda_k \subset (Y_-, \xi_-), \Lambda_k = \phi_k(\Lambda_{k-1}),$ satisfying
  \begin{enumerate}
  \item[(1)\textquotesingle] $\phi_k^* \alpha_k < 4\alpha_k$ for $k > 1$
  \item[(2)\textquotesingle]  elements of
   $\mathcal{W}^{<k4^k}(\Lambda_k, Y_-, \alpha_k)$ have positive degree
   \item[(3)\textquotesingle] $\Lambda_k \subset U^{\epsilon_{k+1}/4}(\Lambda, \alpha_{k+1})$ is both loose and formally isotopic to $\Lambda$ in 
    $U^{\epsilon_{k+1}/4}(\Lambda, \alpha_{k+1})$
  \end{enumerate} 
  plus the usual genericity assumptions: 
  all elements of 
  $\mathcal{P}^{<k4^{k}}(\Lambda_k, Y_-, \alpha_{k})$ are non-degenerate and all elements of 
  $\mathcal{P}^{<k4^{k}} (Y_-, \alpha_{k})$ are disjoint from $\Lambda_k$. 
  
Let $\delta_k$ be a sequence so that 
$U^{\delta_k}(\Lambda, \alpha_k) \subset 
U^{\epsilon_{k+1}/4}(\Lambda, \alpha_{k+1})$.
Applying Lemma \ref{lem: maingeo} to $\Lambda \subset (Y_-, \alpha_1), \epsilon = \delta_1,$ and $D = 4$, there is a Legendrian $\Lambda_1 \subset U^{\delta_1}(\Lambda, \alpha_1)$
such that all elements of $\mathcal{W}^{<4}(\Lambda_1, Y_-, \alpha_1)$ have positive degree and $\Lambda_1$ is both loose and formally Legendrian isotopic to $\Lambda$ in 
$U^{\delta_1}(\Lambda, \alpha_1)$  (along with the genericity conditions). Since  $U^{\delta_1}(\Lambda, \alpha_1)\subset 
U^{\epsilon_2/4}(\Lambda, \alpha_2)$, the 
last claim about looseness and formal isotopy also holds for $U^{\epsilon_2/4}(\Lambda, \alpha_2)$ by Remark \ref{rem: looseleg}.
Since $\Lambda$ is loose in $(Y_-, \xi_-)$, by Murphy's h-principle there exists a genuine Legendrian isotopy $f_t$ in $Y_-$ from $\Lambda$ to $\Lambda_1$.  This Legendrian isotopy can be extended to an ambient contact isotopy $\phi_t$ of $(Y_-, \xi_-)$; see Theorem 2.6.2 of \cite{Gbook}. So $\Lambda_1 = \phi_1(\Lambda)$ satisfies (2)\textquotesingle \ and (3)\textquotesingle. Since $\Lambda_1 = f_1(\Lambda)$ is loose in $J^1(\Lambda)$ but $\Lambda$ is not, $f_t$ is a large isotopy and $f_t(\Lambda)$ necessarily leaves $U^{\epsilon_2/4}(\Lambda, \alpha_2)$. Similarly, $\phi_t$ is also large and does not necessarily satisfy $\phi_1^* \alpha_1 < 4\alpha_1$; however we do not require this for $k =1$. 

Suppose we have constructed $\Lambda_{k-1}$ and $\phi_{k-1}$ and want to construct $\Lambda_{k}$ and $\phi_{k}$. By assumption, $\Lambda_{k-1} \subset U^{\epsilon_{k}/4}(\Lambda, \alpha_{k})$ is both loose and formally isotopic to $\Lambda$ in 
$U^{\epsilon_{k}/4}(\Lambda, \alpha_{k})$. 
Applying Lemma \ref{lem: techkey} to $\Lambda_{k-1} \subset U^{\epsilon_{k}/4}(\Lambda, \alpha_{k}), \epsilon = \delta_k$, and $D = k4^k$,  there is a contactomorphism $\phi_{k}$ of  $(Y_-, \xi_-)$ such that 
$\phi_{k}^* \alpha_{k} < 4 \alpha_{k}$, all elements of $\mathcal{W}^{<k4^{k}} (\phi_{k}(\Lambda_{k-1}), Y_-, \alpha_{k})$ have positive degree, and 
$\phi_{k}(\Lambda_{k-1}) \subset U^{\delta_k}(\Lambda, \alpha_{k})$ is both loose and formally Legendrian isotopic to $\Lambda$ in $U^{\delta_k}(\Lambda, \alpha_{k})$; furthermore the required genericity holds. Since
$U^{\delta_k}(\Lambda, \alpha_{k})
\subset U^{\epsilon_{k+1}/16}(\Lambda, \alpha_{k+1})$, the last claim also holds for 
$U^{\epsilon_{k+1}/16}(\Lambda, \alpha_{k+1})$ by Remark \ref{rem: looseleg}.
So $\phi_k$ and $\Lambda_k: = \phi_{k}(\Lambda_{k-1})$ satisfy (1)\textquotesingle, (2)\textquotesingle, (3)\textquotesingle. Note that 
$\Lambda_k = \phi_k \circ \cdots \circ \phi_1(\Lambda)$.

Now let $\alpha_{k}': = 
\frac{1}{4^k} (\phi_k \circ \cdots \circ \phi_1)^*\alpha_k 
=
\frac{1}{4^k} \phi_1^* \cdots \phi_k^* \alpha_{k}$.
Note that $\alpha_{k}'$ is a contact form for $\xi_-$ because $\phi_i$ are all contactomorphisms.  We will now show that $\alpha_k'$ satisfies (1), (2), (3).
We first show that (1) holds. 
Because $\alpha_{k} > \frac{1}{4} \phi_k^* \alpha_{k}$ for $k >1$ by (1)\textquotesingle, we have
$\phi_1^* \cdots \phi_{k-1}^*\alpha_{k} > \frac{1}{4}\phi_1^* \cdots \phi_{k-1}^*\phi_k^* \alpha_k$. Dividing by $4^{k-1}$ and using the fact that $\alpha_{k-1} > \alpha_k$, 
$$
\alpha_{k-1}' = \frac{1}{4^{k-1}} \phi_1^* \cdots \phi_{k-1}^*\alpha_{k-1} >
\frac{1}{4^{k-1}} \phi_1^* \cdots \phi_{k-1}^*\alpha_{k}
> 
 \frac{1}{4^k}\phi_1^* \cdots \phi_{k-1}^*\phi_k^* \alpha_k
=\alpha_{k}'
$$
for $k>1$ as desired. It does not matter that $\phi_1^*\alpha_1$ might be much larger than $\alpha_1$. 
Now we show that (2) holds with $D_k' = k$. Note that  
$\mathcal{W}^{<k}(\Lambda, Y_-, \alpha_{k}')$ equals
\begin{equation}\label{eqn: chords1}
\mathcal{W}^{<k}(\Lambda, Y_-, \frac{1}{4^k} \phi_1^* \cdots \phi_k^* \alpha_{k})
= \mathcal{W}^{<k4^k}(\Lambda, Y_-, \phi_1^* \cdots \phi_k^* \alpha_{k})
=\mathcal{W}^{<k4^k}(\Lambda_k, Y_-, \alpha_{k})
\end{equation}
by Proposition
\ref{prop: easychords}.
However, all elements of $\mathcal{W}^{<k4^k}(\Lambda_k, Y_-, \alpha_{k})$ have positive degree by (2)\textquotesingle \ and therefore so do all elements of 
$\mathcal{W}^{<k}(\Lambda, Y_-, \alpha_{k}')$. 

We can similarly show that (3) holds with $D_k' = k$. 
By Proposition \ref{prop: easy}
\begin{equation}\label{eqn: orbits1}
\mathcal{P}^{< k}(Y_-, \alpha_k') = 
\mathcal{P}^{< k4^k}(Y_-, \phi_1^* \cdots \phi_k^* \alpha_{k})
= \mathcal{P}^{< k4^k}(Y_-, \alpha_k).
\end{equation}
Since $D_k > k4^k$, we have 
$\mathcal{P}^{<k4^k}(Y_-, \alpha_k) \subseteq  \mathcal{P}^{< D_k}(Y_-, \alpha_k)$. Because $(Y_-,\xi_-)$ is ADC,  elements of $\mathcal{P}^{< D_k}(Y_-, \alpha_k)$ have positive degree and therefore so do elements of $\mathcal{P}^{<k4^k}(Y_-, \alpha_k)$.
Finally, note that all elements of $\mathcal{P}^{< D_k'}(\Lambda, Y_-, \alpha_k')$ are non-degenerate and all elements of $\mathcal{P}^{< D_k'}(Y_-, \alpha_k')$ are disjoint from $\Lambda$ since 
$\mathcal{P}^{< D_k'}(\Lambda_k, Y_-, \alpha_k)$ is non-degenerate and 
 $\mathcal{P}^{< D_k'}(Y_-, \alpha_k)$ is disjoint from $\Lambda_k$. 

In general $\alpha_k'$ do not satisfy $(4)$. However, we can use the following proposition, which will be proven in the Appendix, to further modify the $(\alpha_k', D_k')$ and get contact forms that satisfy all the conditions $(1),(2), (3), (4)$. 

\begin{proposition}\label{prop: nice_to_supernice}
Let $\alpha_1 > \alpha_2$ be contact forms for $(Y,\xi)$ and let 
$\Lambda \subset (Y, \xi)$ be an isotropic submanifold with trivial symplectic conormal bundle. Then for any sufficiently small $\delta_1, \delta_2$, then  there exists a contactomorphism $h$ of $(Y, \xi)$ such that 
\begin{itemize}
\item $h$ is supported in $U^\epsilon(\Lambda, \alpha_1)$,  $h|_{\Lambda} = Id$, and $h^*\alpha_2 < 4 \alpha_1$
\item $h^*\alpha_2|_{U^{\delta_1}(\Lambda, \alpha_1)} = c \alpha_1|_{U^{\delta_1}(\Lambda, \alpha_1)}$ for some constant $c$ (depending on $\delta_1, \delta_2$)
\item $h(U^{\delta_1}(\Lambda, \alpha_1)) \subset U^{\delta_2}(\Lambda, \alpha_2).$
\end{itemize}
\end{proposition}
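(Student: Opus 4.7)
The plan is to combine the Weinstein neighborhood theorem for isotropic submanifolds with a scaling argument in the standard local model, and then to extend the resulting local strict contactomorphism globally via a cutoff of a generating contact Hamiltonian.

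First, I would apply the Weinstein neighborhood theorem for isotropic submanifolds with trivial symplectic conormal bundle (the natural extension of Proposition~\ref{thm: Geiges} to isotropics) separately to $\Lambda\subset(Y,\alpha_1)$ and $\Lambda\subset(Y,\alpha_2)$. This produces strict contact embeddings $\phi_i:U_i(\Lambda)\hookrightarrow (J^1(\Lambda)\oplus\mathbb{C}^k,\alpha_{std})$ onto neighborhoods of the zero section, with $\phi_i^*\alpha_{std}=\alpha_i$ and $\phi_i|_\Lambda=\mathrm{Id}$; after shrinking, both are defined on a common neighborhood $U(\Lambda)$.

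Second, let $\rho_s$ denote the standard scaling by $s\in(0,1]$ on the model, which fixes $\Lambda$ pointwise, satisfies $\rho_s^*\alpha_{std}=s\alpha_{std}$, and maps $U^\delta(\Lambda)$ onto $U^{s\delta}(\Lambda)$. Define the local map $h_{\mathrm{loc}}=\phi_2^{-1}\circ\rho_s\circ\phi_1$ on a small neighborhood of $\Lambda$. A direct computation gives $h_{\mathrm{loc}}^*\alpha_2=\phi_1^*\rho_s^*\phi_2^{-1\,*}\alpha_2=\phi_1^*(s\alpha_{std})=s\alpha_1$, so strict proportionality holds with constant $c=s$. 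Choosing $s=\min(1,\delta_2/\delta_1)$ ensures both $h_{\mathrm{loc}}(U^{\delta_1}(\Lambda,\alpha_1))\subset U^{\delta_2}(\Lambda,\alpha_2)$ and $c\le 1<4$.

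Third, I would realize $h_{\mathrm{loc}}$ as the time-$1$ map of a contact isotopy $h_t$ of $(Y,\xi)$ from $h_0=\mathrm{Id}$ to $h_1=h_{\mathrm{loc}}$. Such an isotopy is obtained by concatenating (a) the parametric Weinstein isotopy from $\mathrm{Id}$ to $\phi_2^{-1}\circ\phi_1$, constructed via a smooth family of strict charts $\psi_t$ interpolating $\phi_1$ and $\phi_2$ for a family of contact forms $\alpha_t=(1-t)\alpha_1+t\alpha_2$ (which are all contact forms for $\xi$), with (b) the scaling isotopy $\phi_2^{-1}\circ\rho_{s_t}\circ\phi_1$ as $s_t$ runs from $1$ down to $s$. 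Realize $h_t$ as the flow of a time-dependent contact Hamiltonian $K_t$ supported in a small neighborhood of $\Lambda$, multiply $K_t$ by a cutoff $\chi$ that equals $1$ on a neighborhood containing $U^{\delta_1}(\Lambda,\alpha_1)\cup h_{\mathrm{loc}}(U^{\delta_1}(\Lambda,\alpha_1))$ and is compactly supported in $U^\epsilon(\Lambda,\alpha_1)$, and define $h$ as the time-$1$ map of the truncated flow. This $h$ fixes $\Lambda$ pointwise, equals the identity outside $U^\epsilon(\Lambda,\alpha_1)$, and agrees with $h_{\mathrm{loc}}$ on $U^{\delta_1}(\Lambda,\alpha_1)$, so that $h^*\alpha_2=c\alpha_1$ holds there as required.

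The main obstacle will be verifying the global bound $h^*\alpha_2<4\alpha_1$, which is only nontrivial on the annular transition region where $\chi$ varies. On this region $h^*\alpha_2=(f\circ h)\,g_h\,\alpha_1$ with $f=\alpha_2/\alpha_1<1$ and $g_h=h^*\alpha_1/\alpha_1$ the conformal factor introduced by the truncated flow; $g_h$ is controlled by the $C^1$-norm of $\chi K_t$. By shrinking the Weinstein charts (which is precisely the hypothesis that $\delta_1,\delta_2$ be sufficiently small) so that the whole isotopy $h_t$ is $C^1$-close to the identity, and by choosing $\chi$ to vary slowly on the scale of $\epsilon$, one arranges $g_h<4$; combined with $f\circ h<1$ this yields $h^*\alpha_2<4\alpha_1$ everywhere, completing the proof.
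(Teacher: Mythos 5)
Your overall architecture matches the paper's: a local strict matching of $\alpha_2$ with $\alpha_1$ near $\Lambda$ via the Weinstein neighborhood theorem, a model scaling to push the neighborhood down, and a cutoff of the generating contact Hamiltonian to globalize. The gap is in your final paragraph, which is the only place the uniform bound $h^*\alpha_2<4\alpha_1$ has to be earned, and your argument for it does not hold up.

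The issue is this. You claim that ``shrinking the Weinstein charts so that the whole isotopy $h_t$ is $C^1$-close to the identity'' and ``choosing $\chi$ to vary slowly'' will force the conformal factor $g_h$ below~$4$. Neither claim is correct. First, decreasing $\delta_1,\delta_2$ does not make the isotopy $C^1$-small: the scaling part $\rho_{s_t}$ with $s_t$ running from $1$ down to an arbitrarily small $s=\delta_2/\delta_1$ has derivative $s_t$ in the fiber and $z$ directions, which is far from $1$, and the matching $\phi_2^{-1}\circ\phi_1$ depends on $\alpha_1,\alpha_2$ globally and is not controlled by $\delta_i$. Second, and more fundamentally, cutting off the contact Hamiltonian $K_t$ with a bump function $\chi$ introduces the term $K_t\,d\chi(R_\alpha)$ into $d(\chi K_t)(R_\alpha)$, and it is exactly this term that the exponential formula $g_h=\exp\left(\int_0^1 d(\chi K_t)(R_\alpha)\circ h_t\,dt\right)$ feeds on. A priori you only know $\ln g_h\le\ln 2$ pointwise along the flow (from the scaling), but you need a bound on $\max\,d(\chi K_t)(R_\alpha)$ uniform in $t$; the pointwise logarithmic bound does not imply that. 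This is precisely the obstruction highlighted in the remark after Lemma~\ref{lem: scalingmap}, which states that a naive cutoff only yields a bound $C_\delta$ that may blow up as the scaling factor $\delta\to 0$.

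The paper resolves this by two devices you are missing. First, the scaling is not implemented as $s_c$ directly but as a carefully chosen family of diffeomorphisms $h_t$ of the $z$-axis (Lemma~\ref{lem: techlem2}) satisfying the explicit differential inequality $\bigl(\partial_z h_t\bigr)^{-1}\partial^2_{tz}h_t\le\tfrac54$, which gives a uniform bound on $dH_t(R_\alpha)$ independent of the final scaling factor. Second, and crucially, the cutoff is taken to depend only on $\|y\|$, not on $z$. Since $R_\alpha=\partial_z$, one has $d(b(\|y\|^2)H_t)(R_\alpha)=b(\|y\|^2)\,dH_t(R_\alpha)$ with no $dB$ term, and the bound survives. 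Your $\chi$ is a general cutoff of the Hamiltonian, so $d\chi(R_\alpha)\neq 0$ in general and the argument breaks. You need to replace the final paragraph with a construction of this type (a specifically chosen $z$-scaling isotopy plus a $y$-only cutoff, as in Lemma~\ref{lem: scalingmap}) rather than appealing to $C^1$-smallness or slow variation of $\chi$.
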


Now we proceed as in the first half of this proof. By taking a subsequence of $\alpha_k'$, we can assume that $D_k' > k4^k$.
Let $\delta_1^k, \delta_2^k$ be a sequence of sufficiently small numbers
so that $\delta_2^{k-1} < \delta_1^{k}$ and Proposition \ref{prop: nice_to_supernice} applies to the contact forms $\alpha_{k-1}'> \alpha_{k}'$ constructed above. 
Let $h_1 = Id$ and for $k > 1$, let $h_k$ be the contactomorphism of $(Y_-, \xi_-)$ provided by Proposition \ref{prop: nice_to_supernice} such that
$h_k^*\alpha'_k < 4\alpha_{k-1}$,  $h_k^*{\alpha'_{k}}|_{U^{\delta_1^{k-1}}(\Lambda, {\alpha'_k})}
= c_k {\alpha'_{k-1}}|_{U^{\delta_1^{k-1}}(\Lambda, 
{\alpha'_{k-1}})}$ 
for some constant $c_k$, and  $h_k(U^{\delta_1^{k-1}}(\Lambda, \alpha'_{k-1}) ) \subset U^{\delta_2^{k-1}}(\Lambda, \alpha'_{k})
\subset 
U^{\delta_1^{k}}(\Lambda, \alpha'_{k}).$
Let 
$\alpha_k'' := \frac{1}{4^{k}}(h_{k} \circ \cdots \circ h_1)^* \alpha_{k}'$.
By repeating the approach in the first half of this proof, we can show that $(\alpha_k'', D_k'' = k)$ satisfy $(1), (2), (3)$. Finally, since 
$h_k(U^{\delta_1^{k-1}}(\Lambda, \alpha_{k-1}') ) \subset 
U^{\delta_1^{k}}(\Lambda, \alpha_{k}')$, we can use induction to show that 
$\alpha_k''|_{U^{\delta_1}(\Lambda, \alpha_1'')} = \left(\frac{\prod_{i=1}^{k} c_i }{4^{k}}\right)
\alpha_1''|_{U^{\delta_1}(\Lambda, \alpha_1'')}$, where $c_1 = 1$. Therefore condition $(4)$ holds with 
$c_k'' := \left(\frac{\prod_{i=1}^{k} c_i }{4^{k}}\right)$
and $\epsilon := \delta_1$. 
We also note that the required genericity still holds for $(\alpha_k'', D_k'')$ and $\Lambda$. 
\end{proof}

Finally, we give a proof of Theorems \ref{thm: MLYau}, \ref{thm: MLYauindex2} that subcritical surgery preserves asymptotically dynamically convex contact forms. Besides M.-L.Yau's calculation of the degrees of the Reeb orbits in subcritical handles, the proof is along the same lines as the proof in the flexible case. 

\begin{proof}[Proof of Theorem \ref{thm: MLYau}]
Suppose $(Y_-, \xi_-)$ is ADC with $(\alpha_k, D_i)$ as in Definition \ref{def: semigood}. Using Proposition \ref{prop: nice_to_supernice} like in the proof of Theorem \ref{thm: semi-surgery}, we can furthermore assume that $\alpha_k|_{U^\epsilon(\Lambda, \alpha_1)} = c_k \alpha_1|_{U^\epsilon(\Lambda, \alpha_1)}$ holds. 
Then by the subcritical version of Proposition \ref{prop: techsurgery} discussed in Remark 
\ref{rem: subcritical_supernice}, 
$(Y_+, \xi_+)$ is also ADC.
\end{proof}

\begin{proof}[Proof of Theorem \ref{thm: MLYauindex2}]
We proceed exactly as in the proof of Theorem \ref{thm: MLYau} except now we need to assume that $c_1(Y_+, \xi_+) = 0$ and either $\Lambda^1$ or all orbits of $(Y_-, \alpha_k)$ with action less than $D_k$ are contractible for all $k$ in order to apply Proposition  \ref{prop: techsurgery}.
\end{proof}

\section{Results for Legendrians}\label{sec: legendrian_results}

We now discuss the Legendrian analogs of our results. Because the main ideas are quite similar, we give less details here than in the contact case. We first define asymptotically dynamically convex \textit{Legendrians} and use these to define asymptotically dynamically convex \textit{Weinstein domains} and \textit{Lagrangians}.

In this section, we assume that all Legendrians $\Lambda^{n-1} \subset Y^{2n-1}$ and Lagrangians $L^n \subset W^{2n}, n \ge 3,$ are oriented and spin and that $c_1(Y, \Lambda), c_1(W, L)$ vanish; also $c_1(Y), c_1(W)$ vanish as before. 
After choosing a trivialization of the canonical bundle of $(Y, \xi)$, we can $\mathbb{Z}$-grade the Reeb chords of Legendrians. 
For simplicity, we will also assume that $\pi_1(Y, \Lambda) = 0$, in which case all chords have a well-defined grading that is independent of the choice of trivialization. This assumption also allows us to avoid discussing $\pi_1$-injective Lagrangian fillings; see Section \ref{ssec: independencelin} for a discussion in the contact case. Below we will explain when these assumptions are preserved under surgery. Finally, we assume that all  Legendrians and contact forms satisfy the necessary genericity conditions, which we do not discuss here.  

\subsection{Asymptotically dynamically convex Legendrians}\label{ssec: ACD_Legendrians}
\subsubsection{Definitions}
First, we define the Legendrian analog of an asymptotically dynamically convex contact structure. Let $\Lambda \subset (Y, \xi)$ be a Legendrian and let $\alpha$ be a contact form for $\xi$. Recall that 
$\mathcal{P}^{< D}(\Lambda, Y, \alpha)$ is the set of Reeb chords of $\Lambda$ with action less than $D$; since $\pi_1(Y, \Lambda) =0$, we do not need to make the additional requirement that the chords are zero in $\pi_1(Y, \Lambda)$.

\begin{definition}\label{def: leggood}
A Legendrian $\Lambda \subset (Y, \xi)$ is \textit{asymptotically dynamically convex} if there exists a sequence of non-increasing contact forms $\alpha_1 \ge \alpha_2 \ge \alpha_3 \cdots$ for $\xi$ and a sequence of increasing 
$D_1 < D_2 < D_3 \cdots $ going to infinity such that all elements of $\mathcal{P}^{< D_k}(\Lambda, Y, \alpha_k)$ have positive degree.
\end{definition}
\begin{remark}
Asymptotically dynamically convex Legendrians generalize the dynamically convex Legendrians defined on p.80, \cite{CieliebakOancea}; these Legendrians have positive degree Reeb chords for \textit{some fixed} contact form on $(Y, \xi)$. 
\end{remark}

Note that by the Legendrian isotopy extension theorem \cite{Gbook}, any Legendrian that is Legendrian isotopic to an ADC Legendrian is also ADC. Also note that Definition \ref{def: leggood} does not require the Legendrian to be parametrized.

The contact manifold $(Y, \xi)$ in Definition \ref{def: leggood} does not have to be ADC. However, we can also consider ADC Legendrians in ADC contact manifolds.  It is not clear that the same sequence of contact forms can be used to show that both the contact manifold and the Legendrian are asymptotically dynamically convex. We will use the following definition when this is possible. 

\begin{definition}\label{def: pair}
$(Y,\Lambda, \xi)$ is an \textit{asymptotically dynamically convex pair} if there exist a sequence of contact forms $\alpha_1 \ge \alpha_2 \ge \alpha_3 \cdots$ for $\xi$ and a sequence of increasing  $D_1 < D_2 < D_3 \cdots $ going to infinity such that all elements of 
$\mathcal{P}^{< D_k}(Y, \alpha_k)$ and $\mathcal{P}^{< D_k}(\Lambda, Y, \alpha_k)$ have positive degree.
\end{definition}

\begin{remark}
We do not know whether $(Y, \Lambda, \xi)$ is always an ADC pair if $(Y, \xi)$ is an ADC contact structure and $\Lambda \subset (Y, \xi)$ is an ADC Legendrian.
\end{remark}

The following proposition reformulates our main result Lemma \ref{lem: maingeo} about Reeb chords of loose Legendrians; see Remark \ref{rem: goodlegdef}.

\begin{proposition}\label{prop: loose_nice_leg}
If $\Lambda \subset (Y, \xi)$ is a loose Legendrian, then $\Lambda$ is asymptotically dynamically convex in $(Y, \xi)$. Furthermore, if $(Y, \xi)$ is an asymptotically dynamically convex contact structure, then 
$(Y, \Lambda, \xi)$ is an asymptotically dynamically convex pair.  
\end{proposition}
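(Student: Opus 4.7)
The plan is to show that Proposition \ref{prop: loose_nice_leg} is essentially a byproduct of the construction carried out in the first half of the proof of Theorem \ref{thm: semi-surgery}, once we strip away the surgery-specific technical condition (4) (control of $\alpha_k$ near $\Lambda$) which was needed only to apply Proposition \ref{prop: techsurgery}. For the pair statement, neither the assumption that $\Lambda$ is a sphere nor the existence of a parametrization is used, so the argument applies to any loose Legendrian.

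More precisely, for the first claim, fix any contact form $\alpha$ for $(Y,\xi)$, set $\alpha_k = \alpha$ for all $k$, and choose an increasing sequence $D_k \to \infty$ with $D_k > k\cdot 4^k$. Inductively apply Lemma \ref{lem: techkey} with action bound $k\cdot 4^k$ to produce contactomorphisms $\phi_k$ of $(Y,\xi)$ satisfying $\phi_k^*\alpha_k < 4\alpha_k$ such that the Legendrians $\Lambda_k := \phi_k\circ\cdots\circ\phi_1(\Lambda)$ are loose and formally isotopic to $\Lambda$ in progressively shrinking standard neighborhoods, and all elements of $\mathcal{P}^{<k\cdot 4^k}(\Lambda_k, Y, \alpha_k)$ have positive degree. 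At each stage, the ambient contactomorphism $\phi_k$ is obtained by extending the genuine Legendrian isotopy produced by Murphy's h-principle from $\Lambda_{k-1}$ to the modified Legendrian built in Lemma \ref{lem: techkey}. Define
\[
\alpha_k' := \frac{1}{4^k}(\phi_k\circ\cdots\circ\phi_1)^*\alpha_k.
\]
Exactly as in the proof of Theorem \ref{thm: semi-surgery}, the telescoping $\phi_k^*\alpha_k < 4\alpha_k$ together with the $1/4^k$ rescaling yields $\alpha_{k+1}' < \alpha_k'$, so the $\alpha_k'$ form a decreasing sequence of contact forms for $\xi$. Since the contactomorphism $\phi_k\circ\cdots\circ\phi_1$ carries chords of $\Lambda$ in $\alpha_k'$ to chords of $\Lambda_k$ in $\frac{1}{4^k}\alpha_k$, Proposition \ref{prop: easychords} identifies $\mathcal{P}^{<k}(\Lambda, Y, \alpha_k')$ bijectively and grading-preservingly with $\mathcal{P}^{<k\cdot 4^k}(\Lambda_k, Y, \alpha_k)$, whose elements have positive degree by construction. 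This verifies that $(\alpha_k', D_k' := k)$ witnesses $\Lambda$ as an asymptotically dynamically convex Legendrian.

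For the pair statement, when $(Y,\xi)$ is already ADC, we take $(\alpha_k, D_k)$ to be the witnessing data from Definition \ref{def: semigood}, after passing to a subsequence to arrange $D_k > k\cdot 4^k$, and then run the exact same inductive construction using these $\alpha_k$ at each step. The pullback $\alpha_k'$ is still decreasing by the same telescoping estimate. Since any contactomorphism induces a grading-preserving bijection on Reeb orbits (the Reeb vector field of $c\cdot\phi^*\alpha$ is $\frac{1}{c}\phi^{-1}_*R_\alpha$ and the Conley--Zehnder index is invariant under such a change), Proposition \ref{prop: easy} identifies $\mathcal{P}^{<k}(Y, \alpha_k')$ bijectively and grading-preservingly with $\mathcal{P}^{<k\cdot 4^k}(Y, \alpha_k) \subseteq \mathcal{P}^{<D_k}(Y, \alpha_k)$, whose elements have positive degree by assumption. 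Combined with the chord analysis above, the data $(\alpha_k', k)$ simultaneously witnesses positivity of orbit degrees and chord degrees, so $(Y, \Lambda, \xi)$ is an ADC pair.

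The main issue to be careful about is that Lemma \ref{lem: techkey} requires $\Lambda$ to be loose so Murphy's h-principle applies, but this is exactly our hypothesis; no new geometric input beyond what was developed in Sections \ref{sec: reeb_chord_loose} and \ref{sec: surgerysemigood} is needed, since we can discard the technical condition (4) of Proposition \ref{prop: techsurgery} (and the associated use of Proposition \ref{prop: nice_to_supernice}) which was only relevant for performing surgery.
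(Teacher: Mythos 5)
Your proposal is correct and follows the paper's own route: Proposition \ref{prop: loose_nice_leg} is precisely Remark \ref{rem: goodlegdef}, i.e.\ it is the statement that conditions (1)--(3) established in the first half of the proof of Theorem \ref{thm: semi-surgery} suffice, with the surgery-specific condition (4) (and Proposition \ref{prop: nice_to_supernice}) discarded; your observation that for the first claim one may take the constant sequence $\alpha_k = \alpha$ is the right adaptation when $(Y,\xi)$ is not assumed ADC. One small imprecision worth fixing: the base step $k=1$ cannot invoke Lemma \ref{lem: techkey} directly, since $\Lambda$ is not loose inside a small standard neighborhood $U^{\epsilon/4}(\Lambda,\alpha)$ of itself --- the paper instead applies Lemma \ref{lem: maingeo}, then Murphy's h-principle together with the contact isotopy extension theorem, and the resulting $\phi_1$ need not satisfy $\phi_1^*\alpha_1 < 4\alpha_1$, which is harmless because that bound enters the telescoping estimate $\alpha_{k+1}' < \alpha_k'$ only for $k>1$.
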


\subsubsection{Wrapped Floer homology}

Let $W$ be a Liouville filling of $(Y, \xi)$. 
For an exact Lagrangian $L \subset W$ with Legendrian boundary $\Lambda \subset Y$, wrapped Floer homology $WH(L, L; W)$ and its positive version $WH^+(L, L; W)$ are the Legendrian analogs of  $SH(W)$ and $SH^+(W)$ respectively. The generators of $WH(L, L; W)$ are Hamiltonian chords of $L$, which for certain admissible Hamiltonians $H$ correspond to Morse critical points of $H|_L$ and Reeb chords of $\Lambda \subset Y$; see \cite{Abouzaid_Seidel} and \cite{Ritter} for details. Unlike for $SH$, where each Reeb orbit gives rise to two (non-constant) Hamiltonian orbits, for $WH$ there is one-to-one correspondence between Reeb chords and (non-constant) Hamiltonian chords. The generators of $WH^+(L, L; W)$ are just Reeb chords of $\Lambda$. 

We now explain our grading for $WH$. Hamiltonian chords corresponding to Reeb chords are graded with the Reeb chord grading described in Section \ref{ssec: prop_reeb_chords}; Hamiltonian chords corresponding to Morse critical points
of $H|_L$ are graded by $n- Ind(p) -2$, where $Ind(p)$ is the Morse index of $H|_L$. The resulting grading coincides with the grading of $WH$ in \cite{BEE12}. 

With this grading convention, the tautological long exact sequence for $WH$ takes the form
$$
\cdots \rightarrow H^{n-k-2}(L; \mathbb{Z}) \rightarrow WH_k(L, L; W) \rightarrow WH^+_k(L, L; W) \rightarrow 
H^{n-k-1}(L; \mathbb{Z}) \rightarrow \cdots
$$
In particular, if $WH(L, L; W) = 0$ (which will usually be the case in this paper), then 
$WH_k^+(L, L; W) \cong H^{n-k-1}(L; \mathbb{Z}) $. 
Under this grading convention, 
$WH_k(T^*_x M, T^*_x M; T^*M) \cong H_{k-n+2}(\Omega M; \mathbb{Z})$, where $\Omega M$ is the based loop space of $M$.
Another important property is that $WH(L, L; W)$ is a module over $SH(W)$, which can be endowed with a certain unital ring structure that we do not discuss here;  see Theorem 6.17 of \cite{Ritter}. In particular, if $SH(W) = 0$, then $WH(L, L; W) = 0$ for any exact Lagrangian $L \subset W$. 

If $\Lambda$ is asymptotically dynamically convex, then $WH_k^+(L, L; W) = 0$ for $k \le 0$ for all exact Lagrangian fillings $L \subset W$ of $\Lambda$; see 
Section \ref{sssec: homological_obstruction} for a similar statement in the contact case. This can be used to show that not all Legendrians are ADC. 

\begin{examples}\label{ex: not_ADC_legendrian} 
Let $M^n$ be a manifold with non-empty, connected boundary and $H^{n-1}(M; \mathbb{Z}) \ne 0$. Then $T^*M$ is a subcritical Weinstein domain and so $SH(T^*M) = 0$, which implies that $WH(M, M; T^*M)= 0$. 
Hence $WH_0^+(M, M; T^*M)\cong H^{n-1}(M; \mathbb{Z}) \ne 0$. 
This does not directly show that $\partial M \subset \partial T^*M$ is not ADC since the degree $0$ chord $c$ is non-zero in $\pi_1(\partial T^*M, \partial M) \cong \pi_1(M, \partial M)$; see Proposition \ref{prop: simultaneoussurgerygood} below.
By slightly modifying this situation, we can produce an example that does not have this issue. First note that $c$ \textit{does} vanish in $\pi_1(T^*M, M) = 0$, i.e. the filling is not $\pi_1$-injective; in fact, $c$ has degree 0 with respect to the trivialization induced by the disk in $T^*M$ that contracts this chord. Now attach a Weinstein 2-handle to $T^*M$ along an isotropic $\Lambda^1 \subset  \partial T^*M$ that maps to $c$ under $\pi_1(\partial T^*M) \rightarrow \pi_1(\partial T^*M, \partial M)$; frame $\Lambda^1$ using the framing induced from $c$. The result is a subcritical Weinstein domain $W$ that has $M$ as an exact Lagrangian. Furthermore, $c$ vanishes in $\pi_1(\partial W, \partial M)$ and represents a non-zero element of $WH_0^+(M, M; W) \cong H^{n-1}(M; \mathbb{Z})$; therefore 
$\partial M \subset \partial W$ is not ADC. See the discussion before 
Theorem \ref{thm: MLYauindex2} for a similar situation. 
\end{examples}

There are many other examples of non-ADC Legendrians. The Legendrian contact homology algebras of the exotic Legendrians in \cite{Che, EES} have non-trivial augmentations with different linearized contact homologies. This requires degree zero chords and hence all these examples are non-ADC.

Note that  $WH(L, L; W)$ and $WH^+(L, L; W)$ 
are invariants of $L$ up to Hamiltonian isotopy in $W$. However, as in the contact setting, $WH^+(L,L; W)$ is not a Legendrian invariant. 
\begin{examples}\label{ex: sabloff} (Theorem 1.5, \cite{Sabloff})
For any integer $N$, there is a Legendrian sphere 
$\Lambda^{n-1}\subset (S^{2n-1}, \xi_{std}), n \ge 3,$ with at least $N$ exact Lagrangian fillings $L \subset (B^{2n}, \omega_{std})$ with different cohomology $H^*(L; \mathbb{Z})$. Since $SH(B^{2n}) = 0$, we have $WH(L, L; B^{2n}) = 0$. Then the tautological long exact sequence for $WH$ shows that 
$WH^+_k(L, L; B^{2n}) \cong H^{n-k-1}(L; \mathbb{Z})$. So different fillings $L$ have different $WH^+(L, L; B^{2n})$ and therefore $WH^+(L, L; B^{2n})$ is not an invariant of $\Lambda$. 
\end{examples} 
Like in the contact case, $WH^+$ fails to be a Legendrian invariant due to the presence of certain J-holomorphic curves. These are J-holomorphic disks in $W$ that have boundary on $L$ and are asymptotic to degree $0$ Reeb chords of $\Lambda$. The following proposition
states that $WH^+(L, L; W)$ is an invariant for asymptotically dynamically convex Legendrians, which do not have such chords (for fixed action bounds going to infinity). It is the Legendrian analog of Proposition \ref{prop: nice_sh_independent} and is proven in the same way. 

\begin{proposition}\label{prop: Legindependence}
If $\Lambda \subset (Y,\xi)$ is an asymptotically dynamically convex Legendrian and $W$ is a Liouville filling of $Y$, then all exact Lagrangian fillings of $\Lambda$ in $W$ have isomorphic $WH^{+}$. 
\end{proposition}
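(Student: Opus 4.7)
The plan is to adapt the neck-stretching strategy from the proof of Proposition \ref{prop: nice_sh_independent} to the wrapped Floer setting, using the ADC condition on $\Lambda$ to rule out rigid $J$-holomorphic discs with boundary on the Lagrangian filling that could obstruct filling-independence. Fix contact forms $\alpha_1\ge\alpha_2\ge\cdots$ for $\xi$ and action bounds $D_1<D_2<\cdots\to\infty$ witnessing the ADC property of $\Lambda$. For an exact Lagrangian filling $L\subset W$, let $W_k\subset\widehat W$ be the graphical subdomain determined by $\alpha_k$ and let $L_k\subset\widehat L$ be the graphical Lagrangian subdomain with boundary on the Legendrian $\Lambda_k\subset Y_k=\partial W_k$ corresponding to $\alpha_k$; both nested sequences exhaust $\widehat W$ and $\widehat L$ respectively, and the transfer maps $WH^+(L,L;W)\to WH^+(L_k,L_k;W_k)$ are isomorphisms because each $(W_k,L_k)$ is graphical.

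The key technical input I need is a wrapped analogue of Proposition \ref{prop: neckstretch}: for a decreasing homotopy $(H_s,J_s)$ used to define the differential, continuation map, or transfer map on $WC^+$, and for Floer strip asymptotes $x_\pm$ with $A_{H_+}(x_+)-A_{H_-}(x_-)<D_k$, after sufficient neck stretching of $J$ along $Y_k$ and of $J|_{L}$ along $\Lambda_k$ the resulting rigid Floer strips must lie in $\widehat W\setminus W_k$ with Lagrangian boundary in $\widehat L\setminus L_k$. The proof follows the SFT-compactness template of Proposition \ref{prop: neckstretch}: a broken building decomposes into a top piece in the completed cobordism plus $J'$-holomorphic curves inside $(W_k,L_k)$ with interior punctures asymptotic to Reeb orbits of $(Y_k,\alpha_k)$ and boundary punctures asymptotic to Reeb chords of $(\Lambda_k,Y_k,\alpha_k)$. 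The action estimate together with the positivity of Reeb chord action and the fact that $\partial H_s/\partial s\le 0$ force every such asymptotic chord $c$ and orbit $\gamma$ to satisfy $A(c),A(\gamma)<D_k$; the ADC assumption on $\Lambda$ then gives $|c|>0$ and the virtual dimension balance
\[
|x_+|-|x_-|-\sum_{i,j}|\gamma_i^j|-\sum_{i,k}|c_i^k|\ge m-1
\]
(precisely the wrapped analogue of Equation (\ref{eqn: moduli_dim2}), with an extra $-|c|$ contribution for each Reeb chord boundary puncture) yields a sign contradiction, since the left side is non-positive while the right is non-negative.

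With this in hand, assemble a commutative diagram exactly analogous to (\ref{eqn: CD_large}) with objects $WH^+(L_i,L_i;W_i;H_i^{D_j},J_i^{R_i})$, horizontal continuation maps, and vertical wrapped transfer maps; the same chain of identifications (horizontal maps on identical vector spaces become the identity after enough stretching) shows the vertical transfers are computed entirely by Floer strips in $\widehat W_i\setminus W_{i+1}$ with boundary in $\widehat L_i\setminus L_{i+1}$. Since $WH^+(L,L;W)$ equals the diagonal direct limit, its differential and structure maps are computed in the cylindrical end of $\widehat W$ by data depending only on $(Y,\Lambda,\alpha)$ and the contact/Legendrian dynamics, not on the pair $(W,L)$.

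To conclude, given two exact Lagrangian fillings $L,L'$ of $\Lambda$ in $W$, construct the Hamiltonians and almost complex structures for both simultaneously, matching them outside a compact set containing both $L$ and $L'$; then by the neck-stretching statement applied in both completions the two chain complexes, differentials, and all structure maps are canonically identified for $j\le i$, so their diagonal direct limits coincide, giving $WH^+(L,L;W)\cong WH^+(L',L';W)$. The main obstacle will be the SFT-breaking analysis in the presence of a Lagrangian boundary condition: verifying that bubbling off a $J$-holomorphic disc with boundary on $L_k$ and a single Reeb-chord puncture at $c$ contributes exactly $|c|$ to the dimension formula, so that the ADC positivity of chord degrees rules out rigid bubbles in the same way positive orbit degrees did in the closed case; transversality for such punctured discs (with time-dependent perturbations as in \cite{BO2,Ritter}) and the non-degeneracy of $\Lambda$ for action below $D_k$ must be set up with care, but the rest is formal bookkeeping mirroring Proposition \ref{prop: nice_sh_independent}.
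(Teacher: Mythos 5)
Your neck-stretching argument claims that after sufficiently stretching $J$ along $Y_k$ and $\Lambda_k$, all rigid Floer strips lie entirely in $\widehat W\setminus W_k$ with boundary in $\widehat L\setminus L_k$, and you derive the contradiction from the dimension bound
\[
|x_+|-|x_-|-\sum_{i,j}|\gamma_i^j|-\sum_{i,k}|c_i^k|\ge m-1.
\]
But the ADC hypothesis on $\Lambda$ only supplies $|c_i^k|>0$; the interior punctures $\gamma_i^j$ are asymptotic to Reeb orbits of $(Y_k,\alpha_k)$, and nothing in the hypotheses controls their degrees. Indeed Remark \ref{rem: niceLegendrianindependent} explicitly notes that $(Y,\xi)$ need not be asymptotically dynamically convex, so one cannot exclude contractible orbits with $|\gamma|\le 0$. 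If such orbits are present, the left-hand side of your inequality need not be non-positive, the sign contradiction evaporates, and broken buildings with plane caps inside $W_k$ can genuinely occur. As written, your argument proves the subsequent, stronger statement about asymptotically dynamically convex \emph{pairs}, where both orbit and chord degrees are controlled and $W$ is also allowed to vary, not Proposition \ref{prop: Legindependence} itself.

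The missing idea is to exploit that $W$ is fixed in this proposition. Pieces of the broken building lying in $W_k$ but disjoint from $L_k$ (punctured planes capping interior punctures) depend only on $W$ and on the stretched almost complex structure, so they contribute identically to the differentials for $L$ and for $L'$; there is no need to rule them out. The $L$-dependence of the wrapped differential enters only through disk caps with boundary on $L_k$, and such a cap necessarily carries a positive boundary puncture asymptotic to a Reeb chord of $\Lambda_k$, whose positive degree is exactly what the ADC hypothesis on $\Lambda$ gives you. The correct wrapped neck-stretching statement should therefore assert only that the Lagrangian boundary of a rigid strip stays in $\widehat L\setminus L_k$ (equivalently, that no disk bubbles form on $L_k$), and then conclude by observing that the remaining Floer data — the top piece in the cylindrical end, the Legendrian cylinder $\Lambda\times[1,\infty)$, and any orbit-anchored caps inside $W$ — are identical for any two exact Lagrangian fillings of $\Lambda$ inside the same $W$.
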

\begin{remark}\label{rem: niceLegendrianindependent}\
\begin{enumerate}[leftmargin=*]
\item Proposition \ref{prop: Legindependence} holds vacuously for loose Legendrians, which have no exact Lagrangian fillings \cite{Murphy11}.
\item Note that $(Y, \xi)$ does not have to be ADC.
\item 
Example \ref{ex: sabloff} shows that the conclusion of Proposition \ref{prop: Legindependence} does not hold in general. Therefore, the ADC condition is crucial.  In particular, the  Legendrians from Example \ref{ex: sabloff} are not ADC. 
\end{enumerate}
\end{remark}

For ADC \textit{pairs}, a stronger version of Proposition \ref{prop: Legindependence} holds for Lagrangians in \textit{different} Liouville filling. 

\begin{proposition}
If $(Y, \Lambda, \xi)$ is an asymptotically dynamically convex pair, then all exact 
Lagrangian fillings of $\Lambda$ in all
Liouville filling of $(Y, \xi)$ have isomorphic $WH^{+}$;
that is, if $W_1, W_2$ are two Liouville filling of $(Y, \xi)$ and $L_k \subset W_k, k=1,2,$ are two exact Lagrangian fillings of $\Lambda$, then $WH^+(L_1, L_1; W_1) \cong WH^+(L_2, L_2; W_2)$. 
\end{proposition}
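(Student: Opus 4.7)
The plan is to adapt the proof of Proposition \ref{prop: nice_sh_independent} to the Legendrian setting, combining it with the strategy of Proposition \ref{prop: Legindependence} (which handles a single filling). So the main tasks are: (i) a neck-stretching result for the trajectories defining $WH^+$ that allows breaking off both $J$-holomorphic disks in $W$ (asymptotic to Reeb orbits of $Y$) and $J$-holomorphic half-disks with boundary on $L$ (asymptotic to Reeb chords of $\Lambda$); (ii) a commutative-diagram / direct-limit argument analogous to \eqref{eqn: CD_large} producing an isomorphism that depends only on the pair $(Y,\Lambda)$ and not on the choice of filling $(W,L)$.

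First I would set up the geometric picture exactly as in Proposition \ref{prop: nice_sh_independent}. Given the sequence $\alpha_1 \ge \alpha_2 \ge \cdots$ and $D_1 < D_2 < \cdots$ witnessing that $(Y,\Lambda,\xi)$ is an ADC pair, take graphical submanifolds $Y_i \subset \widehat{W}_1$ with $\alpha_i = \lambda|_{Y_i}$ and corresponding graphical subdomains $W_{1,i}$; do the same inside $\widehat{W}_2$ to obtain $W_{2,i}$. For the Lagrangian fillings, use the Liouville flow along the cylindrical end $\Lambda \times [1,\infty) \subset \widehat{W}_k$ to truncate $L_k$ at the level $Y_i$, producing Lagrangian fillings $L_{k,i} \subset W_{k,i}$ of $\Lambda_i := \Lambda \subset (Y_i,\alpha_i)$. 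Choose admissible Hamiltonians $H_{k,i}^{D_j}$ with slope $D_j$ at infinity, vanishing on $W_{k,i}$ and Morse on $L_{k,i}$, and almost complex structures $J_{k,i}^{R_i}$ stretched along $Y_i$.

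Next, I would prove the Legendrian analog of Proposition \ref{prop: neckstretch}: if $i_*\colon \pi_1(Z) \to \pi_1(W)$ and $i_*\colon \pi_1(Z,Z\cap L) \to \pi_1(W,L)$ are injective (automatic here because we have assumed $\pi_1(Y,\Lambda)=0$, and Weinstein fillings are $\pi_1$-injective) and all elements of $\mathcal{P}^{<D}(Z,\alpha_Z)$ and $\mathcal{P}^{<D}(Z\cap L, Z, \alpha_Z)$ have positive degree, then after sufficient stretching every rigid Floer strip for $WH^+$ with boundary on $L$ and energy bounded by $D$ stays in $\widehat{W}\setminus V$. The proof follows the same SFT compactness argument as in Proposition \ref{prop: neckstretch}: a broken building in $\widehat{V}$ now has components that are either $J$-holomorphic spheres / planes asymptotic to Reeb orbits of $Z$ (handled exactly as before) or $J$-holomorphic disks with boundary on $L\cap V$ and asymptotic at punctures to Reeb chords of $Z\cap L$. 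The transversality dimension formula acquires additional terms $-|c_{i}^{k}|$ for each Reeb-chord puncture, but since by the ADC pair hypothesis both the orbits $\gamma_i^k$ and chords $c_i^k$ that appear have action below $D$ and hence \emph{positive} degree, the same sign contradiction as in \eqref{eqn: moduli_dim2} rules out nonempty buildings in the limit. I expect this step to be the main obstacle, because one must carefully set up a combined maximum principle and energy bound for half-strips with boundary on the partially completed Lagrangian cobordism $\widehat{W\setminus V}$, $\widehat{L\setminus L_V}$, generalizing Lemmas \ref{lem: maximal_principle} and \ref{lem: maximal_principle_param} to the open-string setting; this has been done in the literature (e.g.\ by Abouzaid--Seidel and Ritter) and I would cite and adapt those results.

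Having this, the rest is a direct mimic of Proposition \ref{prop: nice_sh_independent}. One builds the commutative diagram
\[
WH^+(L_{k,i}, L_{k,i}; W_{k,i}; H_{k,i}^{D_j}, J_{k,i}^{R_i})
\]
whose horizontal maps are continuation maps in $j$ and vertical maps are transfer maps in $i$ (using that $W_{k,i+1} \subset W_{k,i}$ is a graphical Liouville subdomain, and likewise $L_{k,i+1} \subset L_{k,i}$ is a graphical Lagrangian subdomain, so the transfer maps are isomorphisms on $WH^+$ in the vertical limit). The horizontal limit computes $WH^+(L_k,L_k;W_k)$, and commuting the double direct limit gives
\[
WH^+(L_k,L_k;W_k) \;\cong\; \varinjlim_{j}\;\varinjlim_{i\ge j}\; WH^+(L_{k,i},L_{k,i};W_{k,i};H_{k,i}^{D_j},J_{k,i}^{R_i}).
\]
By the neck-stretching step, for $j\le i$ all Floer strips defining the differentials, continuation maps, and transfer maps occur in $\widehat{W}_{k,i}\setminus W_{k,i}$, which depends only on the pair $(Y,\Lambda)$ and the contact forms $\alpha_i$, not on the filling $(W_k,L_k)$. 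Hence the right-hand double limit is identical for $k=1$ and $k=2$, yielding $WH^+(L_1,L_1;W_1)\cong WH^+(L_2,L_2;W_2)$. Finally, I would record that the required genericity (non-degenerate Reeb chords and orbits, disjoint from $\Lambda$) is preserved under the constructions, and remark that when $(Y,\xi)$ is merely asymptotically dynamically convex and $\Lambda$ is merely an ADC Legendrian this argument does not obviously go through, since we need the \emph{same} sequence $(\alpha_k,D_k)$ to witness positive degree for both orbits and chords simultaneously.
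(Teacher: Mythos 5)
Your proof is correct and supplies the argument the paper omits: this proposition is stated without proof, evidently intending that the Legendrian analog of Proposition \ref{prop: nice_sh_independent} (the relative neck-stretching result plus the double direct-limit diagram as in \eqref{eqn: CD_large}) applies once both Reeb-orbit punctures and Reeb-chord punctures are excluded, which is exactly what the asymptotically-dynamically-convex-pair hypothesis provides. The one point you leave slightly implicit, which the end of the proof of Proposition \ref{prop: nice_sh_independent} handles explicitly, is that the stretching parameters $R_i$ from the analog of Proposition \ref{prop: neckstretch} depend on the filling, so the Hamiltonians and almost complex structures for $(W_1,L_1)$ and $(W_2,L_2)$ must be chosen simultaneously so that they agree on $\widehat{W}_{k,i}\setminus W_{k,i}$ and the two double-limit diagrams become literally identical.
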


\subsubsection{Surgery operations}
Theorems \ref{thm: MLYau}, \ref{thm: MLYauindex2}, and \ref{thm: semi-surgery} show that asymptotically dynamically convex contact structures are preserved under certain contact surgeries. We now present several Legendrian analogs of these results. 

We begin with \textit{ambient Legendrian surgery}, which was introduced by Rizell in \cite{Riz}. Given a Legendrian $\Lambda_-^{n-1} \subset (Y^{2n-1}, \xi)$ and a framed isotropic disk $D^k \subset Y, k < n,$ with $\partial D^k \subset \Lambda_-$ and Int $D^k \subset Y\backslash \Lambda_-$, Rizell constructs another Legendrian $\Lambda_+ \subset (Y, \xi)$ and an elementary Lagrangian cobordism $L \subset Y\times (0, \infty)$ between $\Lambda_-$ and $\Lambda_+$ with one critical point of index $k$.  Note that the construction does not change the contact manifold.
Following Rizell, we call the surgery subcritical if $k < n-1$; this only makes sense for $n \ge 3$. On the other hand, \textit{contact} surgery is called subcritical if $k < n$.

Rizell also computed the degrees of the new Reeb chords produced by ambient Legendrian surgery, essentially proving the 
following result.

\begin{proposition}\label{prop: ambientsurgery}\cite{Riz}
If $\Lambda_- \subset (Y, \xi)$ is an asymptotically dynamically convex Legendrian and $\Lambda_+ \subset (Y,\xi)$ is the result of subcritical ambient Legendrian surgery, then $\Lambda_+$ is also an asymptotically dynamically convex Legendrian. 
\end{proposition}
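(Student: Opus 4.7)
The proof plan parallels the strategy used for Theorems \ref{thm: MLYau} and \ref{thm: semi-surgery}, simplified by the fact that ambient Legendrian surgery leaves the contact manifold $(Y,\xi)$ unchanged, so only Reeb chord degrees need to be controlled. The two main ingredients are Rizell's local analysis of chords produced by ambient Legendrian surgery and an adjustment step to preserve the non-increasing sequence of contact forms required by Definition \ref{def: leggood}.

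First, I would invoke Rizell's computation to obtain the Legendrian analog of Proposition \ref{prop: MLYau_action}: for any action bound $D>0$ and any integer $i>0$, if the framed isotropic disk $D^k$ is thickened in a sufficiently small neighborhood, the new chords of $\Lambda_+$ with action less than $D$ are localized inside the standard surgery handle, correspond to iterations $c^1,\dots,c^i$ of a single model chord, and have explicit Conley-Zehnder indices. Because the surgery is subcritical, i.e.\ $k<n-1$, these indices guarantee $|c^j|>0$ for all $j\ge 1$; meanwhile, chords of $\Lambda_-$ disjoint from the surgery region persist as chords of $\Lambda_+$ of the same action and degree. Since $k<n-1\le n$, the framed isotropic disk lives in a subcritical skeleton, so $\pi_1(Y,\Lambda_+)$ is unchanged and all relevant degrees remain well-defined.

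Second, I would prove an ambient Legendrian surgery analog of Proposition \ref{prop: techsurgery}: given the decreasing forms $\alpha_k$ and bounds $D_k$ witnessing the ADC property of $\Lambda_-$, construct a nested sequence of ambient surgery handles (attached along successively shrinking copies of $D^k$) whose resulting Legendrians $\Lambda_+^k$ satisfy $\alpha_{k+1}|_{\Lambda_+^{k+1}}\le \alpha_k|_{\Lambda_+^k}$ in an appropriate sense. To do this I would first apply Proposition \ref{prop: nice_to_supernice} (with $\Lambda$ taken to be a neighborhood of the isotropic disk) to replace $\alpha_k$ by forms $\alpha_k''$ that are constant multiples of $\alpha_1''$ on a fixed neighborhood of $D^k$, while preserving the ADC chord condition as in the proof of Theorem \ref{thm: MLYau}. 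Then the nested surgery construction proceeds exactly as in Proposition \ref{prop: techsurgery}, but now with Lagrangian handles inside a fixed contact manifold rather than Weinstein handles extending it.

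Combining these steps, for each $k$ the set $\mathcal{P}^{<D_k}(\Lambda_+,Y,\alpha_k'')$ decomposes into old chords (positive degree by hypothesis on $\Lambda_-$) and new chords in the handle (positive degree by the subcritical index calculation). After possibly passing to a subsequence and rescaling as in Remark \ref{rem: semigood}(4), the pair $(\alpha_k'',D_k)$ witnesses the ADC property for $\Lambda_+$. The main obstacle will be verifying that the ambient surgery handle admits a nested family compatible with the rescaled contact forms: one needs an explicit local model for ambient Legendrian surgery in which the Lagrangian cobordism is preserved under the Liouville flow of the ambient symplectization, analogous to the contraction $H_c^\epsilon\subset H_1^\epsilon$ used in Proposition \ref{prop: techsurgery}. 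Fortunately Rizell's local model is already constructed using a Weinstein-type handle whose core is the thickened disk, so the required nesting should follow from applying the Liouville flow of $Y\times(0,\infty)$ and intersecting with graphical copies of $Y_{\alpha_k''}$, just as in the proof of Proposition \ref{prop: techsurgery}.
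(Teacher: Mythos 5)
Your proposal follows essentially the same outline as the paper's argument (invoke Rizell's local chord computation near the belt sphere, then shrink the surgery neighborhood to obtain the required non-increasing sequence of contact forms), but there is a factual error in the first step and an acknowledged gap in the second.

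You model the new-chord analysis on Proposition \ref{prop: MLYau_action}, positing iterations $c^1,\dots,c^i$ of a single model chord with index growing in $i$. That is the structure for Reeb \emph{orbits} created by subcritical \emph{contact} surgery, where the belt sphere contains a closed contact sphere with a periodic Reeb flow that can be iterated. For ambient Legendrian surgery, Rizell's Remark 4.6 gives a \emph{single} new Reeb chord $c$ near the belt sphere of the Lagrangian handle, with $|c|=n-k-1$; a chord has no natural iteration the way an orbit does. Your conclusion (positivity for $k<n-1$) survives the miscounting, but the dynamical picture is wrong. More significantly, you correctly spot that Definition \ref{def: leggood} requires a single fixed $\Lambda_+$ together with non-increasing $\alpha_k$, whereas shrinking the surgery disk a priori yields a sequence of isotopic representatives $\Lambda_+^{(k)}$ rather than a sequence of contact forms, so one needs to pull everything back through contactomorphisms with controlled conformal factors as in Lemma \ref{lem: scalingmap}. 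You note that the analogue of the nested-handle argument in Proposition \ref{prop: techsurgery} is not immediate, since the ambient Lagrangian handle sits in $Y\times(0,\infty)$ rather than being a Weinstein handle glued along $Y$, and you leave this as "the main obstacle" without resolving it. The proposal therefore identifies the right issues but stops short of a complete proof at exactly the step where the paper is also terse.
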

\begin{remark}\
\begin{enumerate} [leftmargin=*]
\item If $k \ne 1$ and $c_1(Y, \Lambda_-) = 0$,  the cohomology long exact sequence of the triple $(Y \times (0, \infty), L, \Lambda_-)$ shows that $c_1(Y, \Lambda_+) = 0$;  for $k = 1$, there is a unique choice of framing (up to homotopy) such that $c_1(Y, \Lambda_+) = 0$. See Section 4.4 of \cite{Riz}. If $k \ne n-1$, then $\Lambda_+$ is connected and by the homotopy long exact sequence $\pi_1(Y, \Lambda_-) = 0$ implies $\pi_1(Y, \Lambda_+) = 0$. 
In the critical case $k = n-1$, these do not necessarily hold; more importantly, critical surgery creates Reeb chords of degree 0 and hence we must exclude this case completely.
\item 
Consider the special case of Proposition \ref{prop: ambientsurgery}
when $\Lambda_-$ is loose (and hence ADC by Proposition \ref{prop: loose_nice_leg}). 
For $k < n-1$, we can make $D^k$ disjoint from the loose chart of $\Lambda_-$. Hence this remains a loose chart for $\Lambda_+$ and so $\Lambda_+ \subset (Y, \xi)$ is also loose (and therefore ADC). 
\end{enumerate}
\end{remark}

\begin{proof}
The Reeb chords of $\Lambda_+$ correspond to the old chords of $\Lambda_-$ and a new chord $c$ near the belt sphere of $\Lambda_+$, which has $|c| = n-k-1$; see Remark 4.6 of \cite{Riz}. More precisely, we need to consider a sequence of decreasing contact forms for $(Y, \xi)$ obtained by shrinking the surgery neighborhood of $D^k$. In the subcritical case $k < n-1$,  the chord $c$ has positive degree and hence if $\Lambda_-$ is an ADC Legendrian, so is $\Lambda_+$.
\end{proof}

As we noted before, the Legendrians in Example \ref{ex: sabloff} are not ADC. However they are created via ambient Legendrian surgery on the standard Legendrian unknot, which is ADC. But the construction of these examples uses critical surgery, which is why the resulting Legendrians are not ADC. This shows that Proposition \ref{prop: ambientsurgery} fails in the critical case.  Indeed for a Lagrangian $L \subset (B^{2n}, \omega_{std})$, we have $WH^+_0(L, L; B^{2n}) \cong H^{n-1}(L; \mathbb{Z})$, which may be non-zero in the presence of critical surgery. We also note that the examples from \cite{EES} rely on index 1 ambient Legendrian surgery between \textit{disconnected} Legendrians, which is why they are not ADC. 

Now we consider surgery that \textit{does} change the contact manifold. First, we assume that the surgery changes the contact manifold but does not change the Legendrian.

\begin{proposition}\label{prop: nonsimultaneousgood}
If $\Lambda_- \subset (Y_-, \xi_-)$ is an asymptotically dynamically convex Legendrian 
and $(Y_+, \xi_+)$ is the result of contact surgery on $\Lambda^{k-1} \subset Y_- \backslash \Lambda_-$, which is subcritical or loose in the complement of $\Lambda_-$, then $\Lambda_+ = \Lambda_- \subset (Y_+, \xi_+)$ is also an asymptotically dynamically convex Legendrian. If $(Y_-, \Lambda_-, \xi_-)$ is an asymptotically dynamically convex pair, then $(Y_+, \Lambda_+, \xi_+)$ is also an asymptotically dynamically convex pair. 
\end{proposition}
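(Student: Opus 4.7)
The plan is to mimic the proof of Theorem \ref{thm: semi-surgery} (in the loose case) or Theorem \ref{thm: MLYau} (in the subcritical case), exploiting the fact that $\Lambda^{k-1}$ is disjoint from $\Lambda_-$. Because $\Lambda^{k-1} \subset Y_- \setminus \Lambda_-$, there is some $\epsilon_0 > 0$ such that $U^{\epsilon_0}(\Lambda,\alpha_1) \cap \Lambda_- = \varnothing$, and all modifications of contact forms and Legendrians needed in the earlier proofs (application of Lemma \ref{lem: techkey} to make the chords of $\Lambda$ positive-degree, followed by Proposition \ref{prop: nice_to_supernice} to standardize the forms near $\Lambda$, followed by Proposition \ref{prop: techsurgery} to attach nested handles) can be carried out inside such a neighborhood and therefore leave $\Lambda_-$ completely untouched.

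Concretely, I start with the sequence $(\alpha_k, D_k)$ witnessing that $\Lambda_- \subset (Y_-, \xi_-)$ is ADC.  In the loose case, inductively replace $\Lambda$ by contactomorphic images $\Lambda_k$ using Lemma \ref{lem: techkey}, and rescale the forms by the inverse contactomorphisms (exactly as in the proof of Theorem \ref{thm: semi-surgery}) to obtain a new decreasing sequence $\alpha_k''$ on $(Y_-, \xi_-)$ with increasing action bounds $D_k''$ such that all elements of $\mathcal{W}^{< D_k''}(\Lambda, Y_-, \alpha_k'')$ have positive degree and $\alpha_k''$ is standard on $U^\epsilon(\Lambda, \alpha_1'')$ for some fixed small $\epsilon$.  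In the subcritical case this last step uses Remark \ref{rem: subcritical_supernice} instead of Lemma \ref{lem: techkey}.  Crucially, since all these modifications are supported near $\Lambda$ and $\Lambda_- \cap U^\epsilon(\Lambda, \alpha_1) = \varnothing$, we have $\alpha_k''|_{\Lambda_-} = \alpha_k|_{\Lambda_-}$ near $\Lambda_-$, so the chords of $\Lambda_-$ in $(Y_-, \alpha_k'')$ of action $<D_k''$ (after possibly taking subsequences so $D_k''$ does not exceed the original $D_k$'s) still have positive degree.  Now apply Proposition \ref{prop: techsurgery} (or its subcritical analog) to produce nested handles and a decreasing sequence $\alpha_k'$ on $(Y_+, \xi_+)$, again not touching $\Lambda_+ = \Lambda_-$.

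The main step is a chord-level analog of Proposition \ref{prop: BEE}: for $\epsilon_k$ sufficiently small, there is a grading-preserving bijection
\[
\mathcal{P}^{< D_k''}(\Lambda_+, Y_+, \alpha_k') \;\longleftrightarrow\; \mathcal{P}^{< D_k''}(\Lambda_-, Y_-, \alpha_k'').
\]
This is proved by exactly the same SFT compactness and neck-stretching argument that underlies Proposition \ref{prop: BEE}, applied this time to moduli spaces of pseudoholomorphic strips with boundary on the Lagrangian cylinder over $\Lambda_-$ rather than of cylinders.  A would-be new chord of $\Lambda_+$ in the handle region would, in the limit, break into a Floer strip in $\widehat{Y_- \setminus U^\epsilon(\Lambda,\alpha)}$ with boundary on $\Lambda_-$ together with a configuration of punctured holomorphic curves in the handle asymptotic to a word of Reeb chords of $\Lambda$; such a configuration requires the $\Lambda_-$-boundary strip to have both endpoints on $\Lambda_-$ while also touching $\Lambda$, but since $\Lambda_- \cap \Lambda = \varnothing$ this forces the $\Lambda$-piece to be empty, so the strip is already a genuine Floer strip in $(Y_-, \alpha_k'')$.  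The grading is preserved because $c_1(Y_\pm, \Lambda_\pm)$ both vanish and a trivialization of the canonical bundle on $Y_- \setminus U^\epsilon(\Lambda, \alpha)$ extends canonically across the surgery (the attaching sphere has dimension $\le n-1$, so the relevant cohomological obstructions vanish as in Proposition \ref{prop: c1equivalence}), and the chords are geometrically unchanged.

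The hard part is justifying the chord-level bijection rigorously.  Once it is in hand, the proposition follows immediately: positive-degree chords of $\Lambda_-$ correspond to positive-degree chords of $\Lambda_+$, so $(\alpha_k', D_k'')$ witnesses that $\Lambda_+ \subset (Y_+, \xi_+)$ is ADC.  For the pair statement, the same construction is carried out with the sequence that simultaneously witnesses both the orbit and chord positivity in $(Y_-, \Lambda_-, \xi_-)$; Theorems \ref{thm: MLYau}, \ref{thm: MLYauindex2}, or \ref{thm: semi-surgery} (applied to the surgery on $\Lambda$) then guarantee that $\mathcal{P}^{< D_k''}(Y_+, \alpha_k')$ also consists of positive-degree orbits, while the chord-level bijection handles the Legendrian side, giving an ADC witness for the whole pair $(Y_+, \Lambda_+, \xi_+)$.
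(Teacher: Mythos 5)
Your proposal has a genuine gap in the flexible (critical-index) case, and the justification you give for the claimed chord-level bijection is based on the wrong kind of argument.

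First, the bijection
\[
\mathcal{P}^{<D}(\Lambda_+, Y_+, \alpha') \;\longleftrightarrow\; \mathcal{P}^{<D}(\Lambda_-, Y_-, \alpha)
\]
is simply false when $\Lambda^{n-1}$ is a Legendrian and we attach a critical handle. After surgery the Reeb flow can travel through the new handle, and this produces Reeb chords of $\Lambda_+$ in $Y_+$ that do \emph{not} come from chords of $\Lambda_-$ alone. The surgery formula for chords (Theorem 5.10 of \cite{BEE12}, the relative analog of Proposition \ref{prop: BEE}) says that chords of $\Lambda_+$ in $Y_+$ (below a fixed action bound) correspond to chords of $\Lambda_-$ in $Y_-$ \emph{together with} composable words of the form $a\, c_1 \cdots c_k\, b$, where $a$ is a mixed chord from $\Lambda_-$ to $\Lambda$, the $c_i$ are pure chords of $\Lambda$, and $b$ is a mixed chord from $\Lambda$ back to $\Lambda_-$. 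The fact that $\Lambda_- \cap \Lambda = \varnothing$ as submanifolds does not make these mixed chords disappear: a Reeb chord can perfectly well start on $\Lambda_-$ and end on $\Lambda$ even when the two Legendrians are disjoint. Your argument ``since $\Lambda_- \cap \Lambda = \varnothing$ this forces the $\Lambda$-piece to be empty'' conflates disjointness of the submanifolds with absence of Reeb trajectories between them, which is a much stronger and generically false statement. These extra words are exactly what needs to be controlled, and controlling them is where the looseness of $\Lambda$ \emph{in the complement of $\Lambda_-$} is used: the proof of Lemma \ref{lem: maingeo} can be applied to raise the degree of any chord with at least one endpoint on $\Lambda$ (including the mixed chords $a, b$), making all the words $a\,c_1\cdots c_k\,b$ positive-degree. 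You never address these mixed chords, so the loose case is not handled.

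Second, the mechanism you invoke for the bijection is also off. Proposition \ref{prop: BEE} and its relative cousin are statements about the \emph{Reeb dynamics} of the explicit surgered contact form in the handle model; they are not proved by SFT compactness, neck-stretching, or moduli spaces of pseudoholomorphic strips. Those tools are what one uses to show that the \emph{Floer differential} is independent of the filling (Proposition \ref{prop: neckstretch}), which is a different question. By phrasing your bijection in terms of broken pseudoholomorphic configurations you are proving (or attempting to prove) the wrong thing. The correct route is: (i) in the subcritical case the chord sets genuinely coincide by generic disjointness (no neck-stretching needed); (ii) in the loose critical case use the surgery formula for chords and kill the degree of the new words by Lemma \ref{lem: maingeo} applied to $\Lambda$ in the complement of $\Lambda_-$; (iii) for the pair statement, observe that the new Reeb orbits of $Y_+$ have positive degree by Proposition \ref{prop: MLYau_action} (subcritical) or Lemma \ref{lem: maingeo} plus Proposition \ref{prop: BEE} (flexible), and that one single sequence of shrinking contact forms witnesses both the orbit and chord positivity simultaneously.
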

\begin{remark}\
\begin{enumerate}[leftmargin=*]
\item If $k \ne 2$ and $c_1(Y_-, \Lambda_-) = 0$,  the cohomology long exact sequence of the triple $(W, Y_-, \Lambda_-)$, where $W$ is the Weinstein cobordism between $Y_-$ and $Y_+$, implies that $c_1(Y_+, \Lambda_+) = 0$; also $c_1(Y_+) = 0$ by Proposition \ref{prop: c1equivalence}. For $k = 2$, these results hold with the correct framing of $\Lambda^{k-1}$; see Section \ref{ssec: independencelin}. 
Also, if $k \ne 1$ and $\pi_1(Y_-, \Lambda_-) = 0$, the homotopy long exact sequence implies that $\pi_1(Y_+, \Lambda_+) = 0$. 
\item If $\Lambda_-$ is loose (hence ADC by Proposition \ref{prop: loose_nice_leg}) and $\Lambda^{k-1}$ is subcritical or loose in the complement of $\Lambda_-$, we can make $\Lambda^{k-1}$ disjoint from the loose chart of $\Lambda_-$ and so $\Lambda_+ = \Lambda_-$ has a loose chart in $(Y_+, \xi_+)$. Therefore $\Lambda_+ \subset (Y_+, \xi_+)$ is also loose and hence ADC. 
\end{enumerate}
\end{remark}
\begin{proof}
When $\Lambda^{k-1}$ is subcritical, the Reeb chords of $\Lambda_+ \subset (Y_+, \xi_+)$ correspond to Reeb chords of $\Lambda_-$ since generically there are no chords with endpoints on both $\Lambda_-$ and $\Lambda^{k-1}$. When $\Lambda^{k-1}$ is a Legendrian, there is a grading-preserving correspondence between chords of $\Lambda_+ \subset (Y_+, \xi_+) $ and words of chords of $\Lambda_- \cup \Lambda^{k-1}$ that begin and end on $\Lambda_-$; see Theorem 5.10 of \cite{BEE12}. 
More precisely, these are either chords of $\Lambda_-$ or words of the form $a c_1 \cdots c_k b$, where $a$ is a chord from $\Lambda_-$ to $\Lambda$, $b$ is a chord from $\Lambda$ to $\Lambda_-$, and $c_i$ are chords of $\Lambda$.
Because $\Lambda^{k-1}$ is loose in the complement of $\Lambda_-$, we can increase the degree of any chord with at least one endpoint on $\Lambda^{k-1}$ as in Lemma \ref{lem: maingeo}; hence the words $ac_1 \cdots c_k b$ have positive degree. In addition, the chords of $\Lambda_- \subset (Y_-, \xi_-)$ have positive degree since $\Lambda_-$ is ADC. Therefore $\Lambda_+$ is also ADC. 

If $(Y_-, \Lambda_-, \xi_-)$ is an ADC pair, the new Reeb orbits of $(Y_+, \xi_+)$ have positive degree by Proposition \ref{prop: MLYau_action} in the subcritical case and by Lemma \ref{lem: maingeo} and Proposition \ref{prop: BEE} in the flexible case (we automatically have $n-3 \ge 0$ since $\Lambda^{n-1}$ is loose). So $(Y_+, \xi_+)$ is also ADC. The sequence of contact forms that show $(Y_+, \xi_+)$ is an ADC contact structure and $\Lambda_+ \subset (Y_+, \xi_+)$ is an ADC Legendrian are obtained by shrinking the handle around $\Lambda^{k-1}$ and applying Legendrian isotopies in the flexible case. 
Since the same sequence of forms is used, $\Lambda_+ \subset (Y_+, \xi_+)$ is an ADC pair. 
\end{proof}

\begin{remark}\label{rem: nonsimultaneousgood} \
\begin{enumerate}[leftmargin=*]
\item
Note that in general the grading of chords with endpoints on different Legendrians is not well-defined. However we only consider words of the form $a c_1 \cdots c_k b$. These words are \textit{cyclically composable} in the sense of \cite{BEE12} and have well-defined grading; see \cite{knotcontact} or \cite{EO_15}.
\item
Proposition \ref{prop: nonsimultaneousgood} does not necessarily hold if $\Lambda^{n-1}$ is an ADC Legendrian that is not loose in the complement of $\Lambda_-$. If $\Lambda^{n-1}$ is ADC, then all chords of $\Lambda^{n-1}$ have positive degree but this might not be true for chords between $\Lambda^{n-1}$ and $\Lambda_-$, which is what we need for the proof of Proposition \ref{prop: nonsimultaneousgood}. 
However, if $\Lambda_- \cup \Lambda^{n-1}$ is an ADC \textit{link}, i.e. all chords of $\Lambda_-$ and $\Lambda^{n-1}$ \textit{and} chord words $ab$ between them have positive degree, then Proposition
\ref{prop: nonsimultaneousgood} holds once more. Indeed, the proof of Proposition \ref{prop: nonsimultaneousgood} shows that if $\Lambda_-$ is ADC and $\Lambda$ is loose in the complement of $\Lambda_-$, then $\Lambda_- \cup \Lambda^{n-1}$ is an ADC link. 
\end{enumerate}
\end{remark}

Now we consider surgery that changes the Legendrian and the contact manifold simultaneously. Consider a Legendrian $\Lambda_-^{n-1} \subset (Y_-^{2n-1}, \xi_-)$ and a framed isotropic sphere $\Lambda^{k-1} \subset \Lambda_-^{n-1}$ with $k < n$. Let $(Y_+, \xi_+)$ be result of index $k$ contact surgery on $\Lambda^{k-1}$ (which is subcritical since $k < n$) 
and let $W$ be the elementary Weinstein cobordism between $(Y_-, \xi_-)$ and $(Y_+, \xi_+)$. Then $(Y_+, \xi_+)$ contains a Legendrian $\Lambda_+$ which is smoothly the result of index $k$ surgery on $\Lambda_-$ \cite{EGL}, \cite{CieChord} and $W$ contains an elementary Lagrangian cobordism $L$ between $\Lambda_-$ and $\Lambda_+$. We will call this operation \textit{Legendrian surgery} and the $k < n-1$ case subcritical.

\begin{proposition}\label{prop: simultaneoussurgerygood}
If $\Lambda_- \subset (Y_-, \xi_-)$ is an asymptotically dynamically convex Legendrian 
and $\Lambda_+\subset (Y_+, \xi_+)$ is the result of subcritical Legendrian surgery on $\Lambda^{k-1} \subset \Lambda_-$,  then $\Lambda_+ \subset (Y_+, \xi_+)$ is also an asymptotically dynamically convex Legendrian. If $(Y_-, \Lambda_-, \xi_-)$ is an asymptotically dynamically convex pair, then $(Y_+, \Lambda_+, \xi_+)$ is also an asymptotically dynamically convex pair. 
\end{proposition}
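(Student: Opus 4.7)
The plan is to mirror the strategy of Proposition \ref{prop: nonsimultaneousgood} and Theorem \ref{thm: MLYau}, using subcritical Legendrian surgery as the relative counterpart of subcritical contact surgery. The main input will be a relative version of Proposition \ref{prop: MLYau_action}: for a Legendrian $\Lambda_-^{n-1} \subset (Y_-, \alpha_-)$ with a framed isotropic sphere $\Lambda^{k-1} \subset \Lambda_-$, $k < n-1$, and for any $D > 0$ and integer $i > 0$, there exists $\epsilon = \epsilon(D,i) > 0$ such that if $(Y_+, \alpha_+)$ is the result of $\epsilon$-small contact surgery on $\Lambda^{k-1}$ and $\Lambda_+ \subset Y_+$ the resulting Legendrian, then $\mathcal{P}^{< D}(\Lambda_+, Y_+, \alpha_+)$ is in grading-preserving bijection with $\mathcal{P}^{< D}(\Lambda_-, Y_-, \alpha_-) \sqcup \{c^1, \dots, c^i\}$, where the new chords $c^j$ lie in a neighborhood of the Legendrian belt sphere $S^{n-k-1} \subset S^{2n-k-1}$ of the handle and correspond to iterations of a single Morse-Bott chord in the standard model, with degrees $|c^j| = 2(n-k) - 3 + 2(j-1)$.

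Granted this chord computation, the proof of the first assertion proceeds as follows. Because $k < n-1$, the new chords satisfy $|c^j| \ge 2(n-k)-3 \ge 1$, so they have positive degree. Starting from a sequence $(\alpha_k, D_k)$ realizing the ADC property of $\Lambda_-$, I would first apply Proposition \ref{prop: nice_to_supernice} (as in the proof of Theorem \ref{thm: semi-surgery}) to replace this sequence, without loss, by one for which each $\alpha_k$ is a positive constant multiple of $\alpha_1$ on a fixed neighborhood $U^\epsilon(\Lambda^{k-1}, \alpha_1)$ of the attaching sphere, and then attach nested standard Weinstein handles $H_{c_k}^{\epsilon_k}$ exactly as in Proposition \ref{prop: techsurgery}. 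Choosing $\epsilon_k$ small enough that the chord bijection holds with action bound $D_k$, all new chords of $\Lambda_+$ have positive degree, and the Liouville-flow argument of Proposition \ref{prop: techsurgery} produces a decreasing sequence of contact forms $\alpha_k'$ on $(Y_+, \xi_+)$ exhibiting $\Lambda_+$ as asymptotically dynamically convex. For the pair statement, I would run this argument simultaneously with Proposition \ref{prop: MLYau_action} applied to the same nested handles: the new Reeb orbits of $(Y_+, \alpha_k')$ lie in the $S^{2n-k-1}$ belt sphere with degrees $2n-k-4+2j \ge 1$ since $k < n-1$, and together with the positive-degree chords this witnesses $(Y_+, \Lambda_+, \xi_+)$ as an ADC pair.

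The key obstacle—and essentially the only non-formal step—is establishing the relative chord bijection above. The absolute analog for Reeb orbits is Proposition \ref{prop: MLYau_action} of M.-L.\ Yau. The relative version should follow by the same stretching-the-neck / Morse-Bott perturbation analysis, now applied in a tubular neighborhood of the Lagrangian core $D^k$ of the handle whose boundary sits inside $\Lambda_-$: away from this neighborhood the contact form $\alpha_+$ coincides with $\alpha_-$ and the Legendrian $\Lambda_+$ coincides with $\Lambda_-$, so no new chords are introduced there, while inside the handle the Reeb dynamics is explicit and the short chords are iterations of a single Bott chord on the Legendrian belt sphere. The grading computation reduces to a direct Conley--Zehnder index calculation for this standard model, parallel to the one in \cite{MLYau}, and yields the claimed degrees. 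Once this local lemma is in hand, the rest of the argument is purely the formal scheme from Sections \ref{sec: surgerysemigood} with the pair version of Remark \ref{rem: nonsimultaneousgood} replaced by the Legendrian surgery framework of \cite{EGL,CieChord}.
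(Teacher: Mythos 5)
Your overall approach coincides with the paper's: identify the new Reeb chords of $\Lambda_+$ with chords of a standard Legendrian (the $(n-k-1)$-dimensional unknot inside the $(2(n-k)-1)$-dimensional contact sphere in the belt sphere of the handle), observe that these chords have positive degree when $k<n-1$, and combine with Proposition~\ref{prop: MLYau_action} for the pair statement. One discrepancy: the chord-degree formula you state, $|c^j| = 2(n-k)-3 + 2(j-1)$, does not match the paper's. The paper gives $|c| = n-k-1$ for the shortest chord of the unknot $S^{n-k-1}\subset (S^{2n-2k-1},\xi_{std})$ (cf.\ \cite{BEE12, EES2}), which equals your formula only when $n-k=2$. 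Fortunately this is only a numerical slip: both formulas are positive precisely when $k<n-1$, so the positivity argument survives intact. You are also more explicit than the paper in invoking the nested-handle and Liouville-flow machinery of Proposition~\ref{prop: techsurgery} (and Proposition~\ref{prop: nice_to_supernice}) to produce the decreasing sequence of contact forms on $(Y_+,\xi_+)$ restricted to $\Lambda_+$; the paper leaves this implicit, so if anything your write-up is tidier on that point, and it is the correct way to fill the gap.
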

\begin{remark}
If $k \ne 2$, then $c_1(Y_-) = 0$ implies $c_1(Y_+) = 0$ by Proposition \ref{prop: c1equivalence}; furthermore, $\pi_1(Y_-) \cong \pi_1(Y_+)$ in this case. If $k \ne 1,2$, then $c_1(Y_-, \Lambda_-) = 0$ implies $c_1(Y_+, \Lambda_+) = 0$ by the five-lemma applied to the cohomology long exact sequences of $(W, L)$ and $(Y_-, \Lambda_-)$. If $k = 2$, these results hold for the correct framing of $\Lambda^{k-1}$. 
Also, if $k \ne n-1$, then $\Lambda_+$ is connected and $\pi_1(Y_-, \Lambda_-) = 0$ implies $\pi_1(Y_+, \Lambda_+) = 0$.
In the critical case $k = n-1$, these topological assumptions do not necessarily hold; more importantly, critical surgery creates Reeb chords of degree 0 and hence we must exclude this case completely.
\end{remark}

\begin{proof}
If $k < n -1$, then $\Lambda_+$ is connected and has a portion contained in the contact handle.  As we noted in the proof of Proposition \ref{prop: MLYau_action}, the belt sphere $S^{2n-k-1}$ contains a contact sphere $(S^{2n-2k-1}, \xi_{std})$. Furthermore, $\Lambda_+ \cap S^{2n-k-1}$ is the Legendrian unknot 
$S^{n-k-1} \subset (S^{2n-2k-1}, \xi_{std})$. The new Reeb chords of $\Lambda_+$ correspond to Reeb chords of this unknot, which has a single Reeb chord $c$ with $|c| = n-k-1$; see \cite{BEE12} or \cite{EES2}.   In the subcritical case $k < n - 1$, this chord has positive degree; in fact, $S^{n-k-1} \subset (S^{2n-2k-1}, \xi_{std})$ is an ADC pair for $k < n-1$.  Therefore if $\Lambda_-$ is an ADC Legendrian, so is  $\Lambda_+$. 
Finally, since the corresponding contact surgery is subcritical, Legendrian surgery on an ADC pair results in another ADC pair.
\end{proof}

Note that if $M$ has non-empty boundary, then $SH(T^*M) = 0$ and therefore by the tautological long exact sequence
$WH_{n-k-1}^+(M, M; T^*M) \cong H^{k}(M; \mathbb{Z})$. Indeed as we saw in the proof of Proposition \ref{prop: simultaneoussurgerygood}, index $k$ Legendrian surgery creates a Reeb chord of degree $n-k-1$. 

The critical case $n = 2, k = 1$ was analyzed by Ekholm and Ng \cite{EkholmNg};  in their situation $\Lambda_+^1 \subset Y_+^3$ is connected but $\Lambda_-^1 \subset Y^3_-$ consists of two disconnected circles.  
They showed that $\Lambda_+$ has infinitely many new Reeb chords that correspond to (copies of) the Reeb orbits of $Y_+$ in the belt sphere $S^2$ of the Weinstein 1-handle. This is because $\Lambda^+ \cap S^2 \subset S^1$ equals $S^0 \subset S^1$ and so all the Reeb orbits of $S^1$ necessarily hit $S^0$; in the subcritical situation,
$S^{n-k-1}= \Lambda^+ \cap S^{2n-k-1} \subset S^{2n-2k-1}$ avoids the Reeb orbits of $S^{2n-2k-1}$.

When $k = n$ and $\Lambda^{n-1}_- = S^{n-1}$, the surgery caps off $\Lambda_-$, which does not survive to the new contact manifold $Y_+$. However, this operation gives rise to the belt sphere $\Lambda_+ = S^{n-1}$ of the Weinstein n-handle, which is a Legendrian in $Y_+$. 
 
\begin{proposition}\label{prop: surgerybeltsphere}
If $\Lambda_-^{n-1} \subset (Y_-^{2n-1}, \xi_-)$ is an asymptotically dynamically convex Legendrian sphere and $(Y_+, \xi_+)$ is the result of contact surgery on $Y_-$ along $\Lambda_+$, the belt sphere $\Lambda_+ \subset (Y_+, \xi_+)$ is also asymptotically dynamically convex. If $(Y_-^{2n-1}, \Lambda_-, \xi_-)$ is an asymptotically dynamically convex pair and $n \ge 3$, then $(Y_+,\Lambda_+, \xi_+)$ is also an asymptotically dynamically convex pair.
\end{proposition}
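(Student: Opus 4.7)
The plan is to establish a Bourgeois--Ekholm--Eliashberg--type correspondence between bounded-action Reeb chords of the belt sphere $\Lambda_+ \subset Y_+$ and words of Reeb chords of the attaching sphere $\Lambda_- \subset Y_-$, analogous to Proposition \ref{prop: BEE} but for chords rather than orbits, and then read off positivity of degrees from the ADC hypothesis on $\Lambda_-$ (combined, for the pair statement, with Proposition \ref{prop: BEE} itself applied to the new orbits of $Y_+$).

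First I would set up contact forms exactly as in the proof of Proposition \ref{prop: techsurgery}. Start with a witnessing sequence $(\alpha_k, D_k)$ for $\Lambda_-$ being ADC; after passing to a subsequence and applying Proposition \ref{prop: nice_to_supernice}, assume $\alpha_k|_{U^\epsilon(\Lambda_-, \alpha_1)} = c_k\,\alpha_1|_{U^\epsilon(\Lambda_-, \alpha_1)}$ with $c_k \le 1$ decreasing. Then attach the nested standard Weinstein $n$-handles $H_{c_k}^{\epsilon_k}$ to $\Lambda_-$ to obtain a non-increasing sequence of contact forms $\alpha_k'$ on $Y_+$, each of which contains the belt Legendrian $\Lambda_+$ sitting inside the handle in a standard fashion.

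The heart of the argument is the chord analog of Proposition \ref{prop: BEE}: for any $D>0$, after choosing the handle parameter $\epsilon_k$ sufficiently small, there should be a grading-preserving bijection between $\mathcal{P}^{<D}(\Lambda_+, Y_+, \alpha_k')$ and the disjoint union $\{e\} \cup \mathcal{W}^{<D}(\Lambda_-, Y_-, \alpha_k)$, where $e$ is the short chord of $\Lambda_+$ contained entirely in the handle region. The proof is the direct chord analogue of BEE's argument: a Reeb chord of $\Lambda_+$ of bounded action alternately traverses the shrunken handle (once more than the number of excursions) and makes excursions into $Y_- \setminus U(\Lambda_-)$, where each excursion is asymptotically a Reeb chord of $\Lambda_-$, so the chord is indexed by the word $w=c_1\cdots c_m$ of those excursions. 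The grading computation is essentially present in BEE's original analysis and should yield $|c_w| = |w| + (n-3)$ for a nontrivial word and $|e| = n-2$ for the short chord. Verifying this grading shift carefully is the main technical obstacle I anticipate; I would cross-check by comparing with the wrapped Floer homology of the co-core disk in a Weinstein handle.

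Given the correspondence, both conclusions follow easily. For the first statement, ADC of $\Lambda_-$ gives $|c_i|\ge 1$ for each chord appearing in a word $w$ of action $<D_k$, so $|c_w|=|w|+n-3 \ge n-2\ge 1$ for $n\ge 3$ (which is implicit throughout via the $c_1$-vanishing hypotheses on Legendrians), and the short chord has degree $n-2\ge 1$; thus $(\alpha_k', D_k)$ witness $\Lambda_+ \subset (Y_+, \xi_+)$ as ADC. For the pair version, the new orbits of $(Y_+,\alpha_k')$ are governed by Proposition \ref{prop: BEE}: old orbits are positive by $Y_-$ ADC, and new orbits satisfy $|\gamma_w|=|w|+n-3\ge n-2 \ge 1$ using $n\ge 3$ and the ADC-pair hypothesis $|w|\ge 1$. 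Combined with the chord positivity just established, this shows $(Y_+,\Lambda_+,\xi_+)$ is an ADC pair; the need for $n\ge 3$ in this part traces precisely to the shift $n-3$ in the orbit grading.
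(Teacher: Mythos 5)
Your overall strategy coincides with the paper's: interpret bounded‐action Reeb chords of the belt sphere $\Lambda_+$ as words of Reeb chords of the attaching sphere $\Lambda_-$ (the relative analogue of Proposition \ref{prop: BEE}), and combine this with Proposition \ref{prop: BEE} itself for the orbit side of the ADC‐pair claim. The paper handles the first step by simply citing Theorem 5.8 of \cite{BEE12}, whereas you spell out the contact‐form bookkeeping via the nested‐handle setup of Proposition \ref{prop: techsurgery}; that extra care is consistent with what the rest of the paper does (Corollary \ref{cor: flexsemigood}) and is fine.

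The substantive issue is your grading formula. You posit $|c_w| = |w| + (n-3)$ for a nontrivial word together with a ``short chord'' $e$ of degree $n-2$ lying entirely in the handle. The correct shift from \cite{BEE12} is $|c_w| = |w| + (n-2)$ uniformly --- one more than the orbit shift $n-3$, reflecting that the chord and orbit degree conventions ($\mu_{CZ}-1$ versus $\mu_{CZ}+n-3$) are offset. Moreover there is no short chord: in the standard handle model the Reeb field $R_\lambda = \tfrac12\sum q_i\,\partial_{p_i}$ vanishes identically along the belt sphere $\{q=0\}$, so every Reeb chord of $\Lambda_+$ must leave the handle and hence corresponds to a \emph{nonempty} word. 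The cross‐check you propose would in fact catch both problems: for $W=T^*S^n$, $\Lambda_-$ the Legendrian unknot has a single chord of degree $n-1$, and $WH_*(T^*_x S^n)\cong H_{*-n+2}(\Omega S^n)$ has its first nonconstant generator in degree $2n-3=(n-1)+(n-2)$, consistent with the shift $n-2$ and with the empty word being accounted for by the Morse minimum of $H|_{T^*_xS^n}$ (degree $n-2$), not a Reeb chord. With the corrected shift the positivity estimate reads $|c_w| = |w| + n-2 \ge n-1 \ge 1$ for $n\ge 2$, so the first conclusion needs no $n\ge 3$ hypothesis at all; the $n\ge 3$ constraint enters only through the orbit shift $n-3$ in the ADC‐pair part, exactly as you identified. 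Your error does not invalidate the stated proposition (since $n\ge3$ is assumed for the pair claim), but the shift as you wrote it is wrong and you would have found the discrepancy by carrying out the $WH$ comparison you yourself suggested.
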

\begin{remark}\
\begin{enumerate}[leftmargin=*]
\item If $n \ne 2$ and $c_1(Y_-) = 0$, then $c_1(Y_+) = 0$ by Proposition \ref{prop: c1equivalence} and 
$c_1(Y_+, \Lambda_+)  = 0$ by the cohomology long exact sequence for $(Y_+, \Lambda_+)$.
Also, if $\pi_1(Y_-, \Lambda_-) = 0$, then 
$\pi_1(Y_+, \Lambda_+) = 0$. 
\item Even if $\Lambda_-$ is loose, then $\Lambda_+$ is usually not loose. For example, if $Y_-$ has a Liouville filling $W_-$, then $\Lambda_+$ bounds the Lagrangian co-core $D^n$ in $W_+$ and so is not loose; here $W_+$ is the result of attaching a Weinstein handle along $\Lambda_-$ to $W_-$. Note that $D^n$ is an example of a semi-flexible Lagrangian; see Corollary \ref{cor: flexsemigood} below. 
However, if $\Lambda_-$ is loose and $Y_-$ is overtwisted, then $Y_+$ is also overtwisted by Proposition 2.8 of  \cite{CMP}. Furthermore, the belt sphere $\Lambda_+ \subset Y_+$ is loose since it avoids the overtwisted disk in $Y_+$; note that in this case, $Y_-$ has no Liouville fillings. 
\end{enumerate}
\end{remark}
\begin{proof}
The Reeb chords of $\Lambda_+$ correspond to words of chords of $\Lambda_-$ with a grading shift of $n-2$. More precisely, if $c_w$ is a Reeb chord of $\Lambda_+$ corresponding to a word $w$ of Reeb chords of $\Lambda_-$, then $|c_w| = |w| + n-2$; see Theorem 5.8 of \cite{BEE12}.
Since $\Lambda_-$ is ADC and $n \ge 2$ always,  $\Lambda_+$ is ADC as well.  
By Proposition \ref{prop: BEE},  the new Reeb orbits of $(Y_+, \xi_+)$ correspond to cyclic equivalence classes of words of chords of $\Lambda_-$ with grading shifted by $n-3$, i.e. 
$|\gamma_w| = |w| + n-3$. Since $\Lambda_-$ is ADC and $n \ge 3$, these orbits have positive degree and so $(Y_+, \xi_+)$ is ADC. Since the same sequence of contact forms is used for both chords and orbits, $(Y_+, \Lambda_+, \xi)$ is an ADC pair. 
\end{proof}

\subsection{Legendrians with flexible Lagrangian fillings}\label{subsec: legwithflexfill}

 Eliashberg, Ganatra, and the author  \cite{EGL} defined \textit{flexible} Lagrangians and showed that they satisfy an h-principle. We say that an exact Lagrangian $L\subset  (W, \omega)$  with Legendrian boundary is {\em flexible}  if  $(W \backslash T^*L, \omega|_{W \backslash T^*L})$ is a flexible Weinstein cobordism.
This implies that $W$ itself is flexible. For example, $L \subset T^*L$ has trivial Weinstein complement and hence is tautologically flexible.  
 An exact Lagrangian $L \subset  (W, \omega)$  is {\em semi-flexible}  if  the cobordism $(W \backslash T^*L, \omega|_{W \backslash T^*L})$  can be presented as a composition of a Weinstein domain $W'$ and a  flexible cobordism $W''$ with $\partial_-W'' = ST^*L \sqcup\partial W'$, where 
 $\partial T^*L = ST^*L \cup (T^*(\partial L)\times \mathbb{R})$. Note that in this case $W$ itself does not have to be flexible. Unless we specify otherwise, all flexible and semi-flexible Lagrangians will have non-empty boundary (although the definitions above make sense in the closed case).

In an attempt to distinguish our uses of `subcritical', we say $L^n$ 
is \textit{Morse} subcritical if it admits a proper Morse function without any critical points of index $n-1$. Note that this makes sense both if $L^n$ is closed or has non-empty boundary. If $L^n$ is closed and Morse subcritical, then $L^n \backslash D^n$ is also Morse subcritical and admits a Morse function with critical points of index 
less than $n-1$.  

\begin{lemma}\label{lem: cotangentLaggood}
If $L$ is Morse subcritical with non-empty boundary, then $(\partial T^*L, \partial L)$ is an asymptotically dynamically convex pair. In particular, $\partial L \subset \partial T^*L$ is asymptotically dynamically convex. 
\end{lemma}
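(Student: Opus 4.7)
The plan is to induct on the number of handles in a Morse-subcritical handle decomposition of $L$, with base case the standard Legendrian unknot and inductive step supplied by Proposition \ref{prop: simultaneoussurgerygood}. Since $L^n$ is Morse subcritical with non-empty boundary, a Morse function with $\partial L$ as its maximal level set has only critical points of indices $0,1,\ldots,n-2$. Reducing to the case that $L$ is connected (the disconnected case works component by component), we may assume a single $0$-handle and obtain a filtration $D^n = L_0 \subset L_1 \subset \cdots \subset L_N = L$, where $L_i$ is obtained from $L_{i-1}$ by attaching a handle of some index $k_i \in \{1,\ldots,n-2\}$ along an embedded sphere $S^{k_i-1} \subset \partial L_{i-1}$. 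Passing to cotangent bundles, $T^*L_i$ is built from $T^*L_{i-1}$ by a Weinstein $k_i$-handle whose isotropic attaching sphere is exactly the sphere $S^{k_i-1} \subset \partial L_{i-1} \subset \partial T^*L_{i-1}$; on contact boundaries this is the Legendrian surgery operation of Proposition \ref{prop: simultaneoussurgerygood}, and it is subcritical since $k_i \le n-2 < n-1$.

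For the base case I need $(\partial T^*D^n, \partial D^n, \xi_{std}) = (S^{2n-1}, S^{n-1}_{\mathrm{unknot}}, \xi_{std})$ to be an ADC pair. The ADC property of $(S^{2n-1}, \xi_{std})$ is the standard computation used in the proof of Corollary \ref{cor: flexsemigood} (following \cite{BEE12}). For the Legendrian unknot, under the Hopf contact form the Reeb chords of $S^{n-1} = S^{2n-1} \cap \R^n$ come in Morse-Bott families parameterized by $S^{n-1}$, one family per iterate $k \ge 1$ of the Reeb orbit. Perturbing by a $C^2$-small Morse function on $S^{n-1}$ splits each family into finitely many non-degenerate chords whose Conley-Zehnder indices grow linearly in $k$, and a direct computation shows all reduced degrees are positive for $n \ge 3$. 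Taking perturbations that simultaneously make both the Reeb orbits of $(S^{2n-1}, \xi_{std})$ and the chords of $S^{n-1}$ non-degenerate, I obtain a single sequence $(\alpha_k, D_k)$ realizing the ADC pair structure.

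The inductive step is then immediate: if $(\partial T^*L_{i-1}, \partial L_{i-1}, \xi)$ is an ADC pair, Proposition \ref{prop: simultaneoussurgerygood} produces an ADC pair $(\partial T^*L_i, \partial L_i, \xi)$ after the subcritical Legendrian surgery, so after $N$ steps $(\partial T^*L, \partial L)$ is an ADC pair, which in particular implies that $\partial L \subset \partial T^*L$ is ADC. The main obstacle is the explicit verification of the base case: organizing the Morse-Bott computation for the Legendrian unknot with respect to a common sequence of non-increasing contact forms whose action thresholds tend to infinity, and checking the sign of the reduced Conley-Zehnder indices in the grading conventions of Section \ref{ssec: prop_reeb_chords}. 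Once this is settled, everything reduces to a formal iteration of subcritical Legendrian surgery invariance.
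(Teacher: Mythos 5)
Your proof follows essentially the same strategy as the paper's: build $(T^*L, L)$ from $(B^{2n}, B^n)$ by attaching Weinstein handles of index $\le n-2$ along the isotropic attaching spheres of a Morse-subcritical handle decomposition of $L$, note that the Legendrian unknot $S^{n-1} \subset (S^{2n-1}, \xi_{std})$ gives an ADC pair, and iterate Proposition~\ref{prop: simultaneoussurgerygood}. You spend more effort than the paper does justifying the base case via a Morse--Bott argument, which is a reasonable thing to spell out since the paper only asserts it.

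The one thing you omit that the paper's proof addresses explicitly is the index-$2$ surgeries. For $k = 2$, Proposition~\ref{prop: simultaneoussurgerygood} (and the underlying Theorem~\ref{thm: MLYauindex2} / Proposition~\ref{prop: MLYau_action}) is not unconditional: it requires $c_1(Y_+, \xi_+) = 0$ \emph{for a specific choice of framing of the attaching circle}, and also requires that either the attaching circle or all low-action Reeb orbits be contractible, to rule out new contractible orbits of negative degree coming from old non-contractible ones. In the cotangent setting this is harmless --- the Lagrangian handle attachment canonically frames the circle so that $c_1(T^*L, L) = 0$, and the orbits/chords created after the index-$1$ surgeries live in the belt sphere $S^{2n-2}$, hence are automatically contractible --- but you should verify it rather than cite the proposition as though the $k=2$ case were covered automatically. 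Without this check, the inductive step for any $2$-handles in the decomposition of $L$ is not justified.
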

\begin{remark}\label{rem: cotangentLaggood}
Since $T^*L$ retracts to $L$, we have $c_1(T^*L, L) = 0$ and hence $c_1(\partial T^*L, \partial L) = 0$. 
Also, since $L$ is Morse subcritical, $\pi_1(L, \partial L) = 0$. 
By the homotopy long exact sequence of $(T^*L, \partial T^*L, \partial L)$, this implies $\pi_1(\partial T^*L, \partial L) = 0$ as desired. 
\end{remark}

\begin{proof}
The pair $(\partial T^*L, \partial L) = \partial (T^*L, L)$ can be constructed by doing a sequence of Legendrian surgeries to $(S^{2n-1}, S^{n-1})= \partial (B^{2n}, B^n)$; here $S^{n-1}$ is a Legendrian unknot in $S^{2n-1}$ and hence $(S^{2n-1}, S^{n-1})$ an ADC pair. Because $L$ is Morse subcritical, these surgeries have index less than $n-1$ and hence are subcritical. 
 Since $(S^{2n-1}, S^{n-1})$ is an ADC pair, $(\partial T^*L, \partial L)$ is also an ADC pair by repeated use of Proposition \ref{prop: simultaneoussurgerygood}.
 Note that index $2$ case works
 since $c_1(T^*L, L) = 0$
 and all Reeb orbits and chords obtained after doing index $1$ surgery occur in the belt sphere $S^{2n-2}$ and hence are contractible.
\end{proof}

Lemma \ref{lem: cotangentLaggood} provides another prospective on cotangent bundles of closed manifolds. Consider a closed, Morse subcritical manifold $L^n$. Then $L \backslash D^n$ is also Morse subcritical
and so 
$(\partial T^*(L\backslash D^n), \partial (L \backslash D^n))$ is an ADC pair by Lemma \ref{lem: cotangentLaggood}. 
Then  for $n \ge 3$,  $(\partial T^*L, \partial T_x^*L)$ is also an ADC pair by Proposition \ref{prop: surgerybeltsphere}. On the other hand, recall that the proof of Theorem \ref{thm: boundedinfinite} showed that $\partial T^*L$ is an ADC contact structure for \textit{any} closed $L$ with $n\ge 4$ by interpreting the Reeb flow as geodesic flow. Similarly, it is possible to use geodesic flow to show that $\partial T_x^*L \subset \partial T^*L$ an ADC pair for any $L^n, n \ge 4$. 

We now present the Legendrian analog of Corollary \ref{cor: flexsemigood}.
\begin{corollary}\label{cor: semiflexlaggood}
If $L \subset W$ is a Morse subcritical, semi-flexible Lagrangian, then $\partial L \subset \partial W$ is asymptotically dynamically convex. 
If $L \subset W$ is a Morse subcritical, flexible Lagrangian, then $(\partial W, \partial L)$ is an asymptotically dynamically convex pair. 
\end{corollary}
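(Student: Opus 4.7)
The plan is to build $(\partial W, \partial L)$ by iterated handle attachment starting from $\partial T^*L$, propagating asymptotic dynamical convexity at each stage via Proposition \ref{prop: nonsimultaneousgood}. The base case is provided by Lemma \ref{lem: cotangentLaggood} together with the Morse subcriticality of $L$: the pair $(\partial T^*L, \partial L)$ is an ADC pair.

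For the flexible case, the hypothesis says $W \backslash T^*L$ is a flexible Weinstein cobordism with $\partial_-(W \backslash T^*L) = ST^*L$, so it is built from $ST^*L$ by iteratively attaching subcritical handles and critical handles along loose Legendrians. Because $\partial L \subset \partial T^*L$ sits in the $T^*\partial L \times \mathbb{R}$ portion of the contact boundary, which is disjoint from $ST^*L$, every attaching sphere in this process lies in the complement of $\partial L$ at each intermediate stage. Proposition \ref{prop: nonsimultaneousgood}, applied in its ADC-pair form, then shows that the ADC pair property is preserved under each attachment, and the final output $(\partial W, \partial L)$ is an ADC pair as desired. For the semi-flexible case, the same approach works but the base case is taken to be $T^*L \sqcup W'$: the Legendrian $\partial L$ is ADC in $\partial T^*L \subset \partial(T^*L \sqcup W')$ (we do not claim the ambient contact manifold itself is ADC, since $\partial W'$ is not controlled), and the flexible cobordism $W''$ is then attached to $ST^*L \sqcup \partial W'$ with all handles again in the complement of $\partial L$. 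Applying Proposition \ref{prop: nonsimultaneousgood} in its Legendrian-only form at each stage yields that $\partial L \subset \partial W$ remains an ADC Legendrian.

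The main technical obstacle, familiar from Corollary \ref{cor: flexsemigood}, is the treatment of index-2 handle attachments: within Proposition \ref{prop: nonsimultaneousgood} these invoke Theorem \ref{thm: MLYauindex2}, which requires either the attaching circle $\Lambda^1$ or all Reeb orbits below the relevant action threshold to be contractible (and likewise for the chord words of $\partial L$ in the pair setting). I would handle this exactly as in the proof of Corollary \ref{cor: flexsemigood}: after a preliminary Weinstein homotopy, arrange the Morse function on $W \backslash T^*L$ (respectively on $W'$ and $W''$) to be self-indexing, so that every 2-handle is attached only after all 1-handles. By Proposition \ref{prop: MLYau_action} the new low-action Reeb orbits produced by the 1-handles are all concentrated in the $S^{2n-2}$ belt spheres, which are simply connected for $n \ge 3$; the same belt spheres are disjoint from $\partial L$, so no new Reeb chords of $\partial L$ are created at this stage, and the contractibility hypothesis needed for index-2 surgery is satisfied throughout.
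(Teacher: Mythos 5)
Your proof follows the same strategy as the paper's: the base case comes from Lemma \ref{lem: cotangentLaggood}, and then $W$ is built up from $T^*L$ (resp.\ $T^*L\sqcup W'$) by iterated handle attachment, with the key observation that all attaching spheres and loose charts of the flexible cobordism live in $ST^*L$ (resp.\ $ST^*L\sqcup\partial W'$) and hence are disjoint from $\partial L\subset T^*(\partial L)\times\mathbb{R}$, so Proposition \ref{prop: nonsimultaneousgood} (Legendrian-only form for semi-flexible, pair form for flexible) propagates the ADC property. Two small deviations from the paper's write-up: the paper first joins $T^*L$ and $W'$ with a single $1$-handle to form the connected $U=T^*L\natural W'$ before attaching the rest of $W''$, whereas you run the induction directly on the disconnected $T^*L\sqcup W'$ (harmless, since $\partial W'$ contributes no Reeb chords of $\partial L$); and the paper derives the flexible case by combining the semi-flexible argument with Corollary \ref{cor: flexsemigood}, whereas you argue the pair condition directly, which is slightly cleaner.

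Your extra paragraph on index-$2$ handles is well-intentioned but somewhat off target. For the semi-flexible statement you only invoke the Legendrian-only form of Proposition \ref{prop: nonsimultaneousgood}, whose proof for subcritical surgery tracks Reeb chords and never calls on Proposition \ref{prop: MLYau_action} or Theorem \ref{thm: MLYauindex2}; the only $k=2$ subtlety there is the topological one about framings and $c_1(Y_+,\Lambda_+)$, which is handled in the remark following that proposition. For the flexible (pair) statement the dynamical caveat does arise, but your justification --- ``new low-action orbits from $1$-handles lie in $S^{2n-2}$ belt spheres, hence the contractibility hypothesis holds throughout'' --- is the argument that works when the base is $(S^{2n-1},\xi_{std})$, as in Corollary \ref{cor: flexsemigood}. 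Here the base is $\partial T^*L$, which can carry non-contractible Reeb orbits below the action threshold when $\pi_1(L)\neq 0$, so that claim does not follow from what you wrote. The paper itself glosses over this point in the proof of this corollary (it is treated explicitly in Lemma \ref{lem: cotangentLaggood} only because the base there is $S^{2n-1}$), so this is not a defect specific to your argument; but the assertion that the contractibility hypothesis is ``satisfied throughout'' should either be removed or be replaced by the observation that the relevant $2$-handle attaching circles can be assumed contractible, or that the condition is imposed as an implicit hypothesis alongside $\pi_1(\partial W,\partial L)=0$.
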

\begin{remark}\label{rem: semiflexlaggood}
Although $\pi_1(\partial T^*L, \partial L) = 0$ by Remark 
\ref{rem: cotangentLaggood}, in general it is not true that $\pi_1(\partial W, \partial L)  = 0$ and hence we will need to make this extra assumption here. 
\end{remark}
\begin{proof}
Since $L$ is Morse subcritical, $\partial L \subset \partial T^*L$ is an ADC Legendrian by Lemma \ref{lem: cotangentLaggood}.
Since $W$ is connected, there is a one-handle in $W''$ between $T^*L$ and $W'$; let $U = T^*L \natural W'$ be the result of attaching this one-handle. One-handles are subcritical and so all Reeb chords of $\partial L \subset \partial U$ are contained in $\partial T^*L$. Therefore $\partial L \subset \partial U$ is also ADC. 

Because $W''$ is a flexible cobordism, $W$ is obtained by successively attaching subcritical or flexible handles to $U$. For example, suppose we need to attach a flexible handle $H^n$ to $\partial U$ in the construction of $W''$. 
The attaching sphere $\partial H^n$ and its loose chart are contained in $ST^*L \sharp \partial W' \subset \partial U$ because $\partial_-W'' = ST^*L \cup \partial W'$.  On the other hand, $\partial L \subset T^*(\partial L) \times \mathbb{R} \subset \partial U$ which is disjoint from $ST^*L \sharp \partial W'$. Therefore the attaching sphere $\partial H^n$ is loose in the complement of $\partial L \subset \partial U$. 
So by Proposition \ref{prop: nonsimultaneousgood}, $\partial L \subset \partial (U \cup H^n)$ is also ADC.
The same is true after successive subcritical and flexible handles are attached to successive contact manifolds. In particular,  $\partial L \subset \partial W$ is ADC.

If $L$ is flexible, then $W$ is a flexible domain. So we already know by Corollary \ref{cor: flexsemigood} that $\partial W$ is ADC.
To show that $(\partial W, \partial L)$ is an ADC pair,  note that $(\partial T^*L, \partial L)$ is an ADC pair by Lemma \ref{lem: cotangentLaggood} and attaching flexible handles whose attaching spheres are loose in the complement of $\partial L$ results in another ADC pair. 
\end{proof}

As noted in Section 5 of \cite{EGL}, the Lagrangian co-core of a flexible Weinstein handle is semi-flexible.  Since co-cores are discs, they are Morse subcritical. Therefore, by Corollary 
\ref{cor: semiflexlaggood}, belt spheres of flexible handles are ADC Legendrians, which gives another proof of Proposition \ref{prop: surgerybeltsphere} in the special case when $\Lambda_-$ is loose. 

Now we prove the results for Legendrians stated in the Introduction. 
\begin{proof}[Proof of Theorem \ref{thm: flexlag}]
Let $W$ be a flexible Weinstein filling of $(Y,\xi)$ and let $L \subset W$ be a flexible Lagrangian filling of $\Lambda \subset (Y, \xi)$. If $\pi_1(L, \Lambda) = 0$ and $n \ge 5$, by Smale's handle-trading trick $L$ is Morse subcritical. So by Corollary \ref{cor: semiflexlaggood}, $(Y, \Lambda, \xi)$ form an ADC pair. 
If $W'$ is another flexible filling of $(Y, \xi)$ and $L' \subset W'$ is another exact Lagrangian filling of $\Lambda$, then 
$WH^+(L', L'; W') \cong WH^+(L, L; W)$ by Proposition \ref{prop: Legindependence}; we use the fact that $\pi_1(Y, \Lambda) = 0$ to avoid discussing $\pi_1$-injective Lagrangian fillings. 

Since $W, W'$ are flexible, $SH(W) = SH(W')= 0$ and therefore we have 
$WH(L, L; W)= WH(L', L'; W') = 0$ as well. By the tautological exact sequence for $WH$, these vanishing results imply  $WH^+_k(L, L; W) \cong H^{n-k-1}(L)$ and $WH^+_k(L', L'; W') \cong H^{n-k-1}(L')$.
Since $WH^+_*(L', L'; W') \cong WH^+_*(L, L; W)$, we have $H^*(L') \cong H^*(L)$. Since $L, L'$ are spin by assumption, this holds over $\mathbb{Z}$. 
\end{proof}

\begin{proof}[Proof of Theorem \ref{thm: leginfinite}]
Let $M^n \in \Omega^n, n \ge 3,$ and $M' := M \backslash D^n$.
Then by the h-principle for flexible Lagrangians \cite{EGL}, $M'$ has a flexible Lagrangian embedding into $(B^{2n}, \omega_{std})$ such that the Legendrian boundary $\partial M' \subset (S^{2n-1}, \xi_{std})$ is formally isotopic to the Legendrian unknot. For $n \ge 3$,  $\pi_1(M) = 0$ implies $\pi_1(M') = 0$ and $\pi_1(M', \partial M') = 0$, which for $n\ge 5$ shows that $M'$ is Morse subcritical by Smale's handle-trading trick.

Suppose $\Lambda \subset (Y,\xi)$ has flexible filling $L \subset W$. If $\pi_1(L, \Lambda) = 0$ and $n \ge 5$, $L$ is Morse subcritical again by Smale's handle-trading trick.  Consider $L_M := L \natural M' \subset W \natural B^{2n} = W$. Because $\partial M'$ is formally isotopic to the Legendrian unknot, $\partial L_M = \partial L \natural \partial M'$ is formally isotopic to $\Lambda = \partial L$. Since $L_M$ is obtained by attaching a 1-handle to flexible $M'$ and $L$, $L_M$ is also flexible, which proves the first and third claims of Theorem \ref{thm: leginfinite} with $\Lambda_M := \partial L_M$.

Because $M'$ and $L$ are Morse subcritical, so is $L_M$ and hence $\partial L_M \subset \partial W$ is an ADC Legendrian by Corollary \ref{cor: semiflexlaggood}. So by Proposition \ref{prop: Legindependence} $WH^+_k(L_M, L_M; W)$, which equals $H^{n-k-1}(L_M)$ since $SH(W) = 0$, 
is a Legendrian invariant. Note that 
$H^*(L_M)\cong H^*(L) \oplus H^*(M')$ in non-zero degrees. Also, by the Mayer-Vietoris exact sequence $H^*(M) \cong H^*(M')$ except in top degree. 
Therefore if $H^*(M)\not \cong H^*(N)$, then $H^*(L_M) \not \cong H^*(L_N)$ and so $\Lambda_M, \Lambda_N$ are not Legendrian isotopic, which proves the second claim of Theorem \ref{thm: leginfinite}.
Finally, note that $M \in \Omega^n$ is stably parallelizable and hence is spin; also, $L$ is spin by our assumption that all Lagrangians are spin. Therefore $L_M$ is also spin and so the claim hold over $\mathbb{Z}$.

To construct infinitely many non-isotopic Legendrians that are formally isotopic and have flexible fillings, it suffices to produce infinitely many $M \in \Omega$ with different 
$H^*(M; \mathbb{Z})$. This was done for $n \ge 5$ in the proof of Theorem \ref{thm: boundedinfinite}.
\end{proof}

\begin{proof}[Proof of Corollary \ref{cor: nearby}]
Let $L \subset T^*M$ be an exact Lagrangian that has zero Maslov class and intersects $T^*M_q$ transversely in a single point for some $q\in M$. 
First, we can Hamiltonian isotope $L^n$ so that $L^n \cap T^*D^n = D^n$ for some small disk $D^n \subset M^n$ containing $q$. 
More precisely, for small enough $D^n$, $L \cap T^*D^n$ is the graph $\Gamma(df)$ of some $f: D^n\rightarrow \mathbb{R}$. Consider the Hamiltonian isotopy $L_t = \Gamma(d(\delta_t \cdot f))$, where $\delta_t(x) = 1- tB(x)$ for some bump function $B: D^n \rightarrow \mathbb{R}$ that equals 1 on a smaller disk $D^n_0 \subset D^n$ and vanishes near $\partial D^n$. This is an isotopy since there is only one sheet of $L$ over $D^n$; furthermore, 
$L_0 = L$ and $L_1 \cap T^*D_0^n = D_0^n$.

Since $L \cap T^*D^n = D^n$, then $L \backslash D^n \subset T^*(M\backslash D^n)$ is an exact Lagrangian with 
Legendrian boundary $\partial(L\backslash D^n) \subset \partial T^*(M\backslash D^n)$ equal to $\partial (M\backslash D^n)$. Therefore $L\backslash D^n$ is an exact Lagrangian filling of $\partial (M\backslash D^n)$. Since $L$ has zero Maslov class, $c_1(T^*M, L) = 0$ and so $c_1(T^*(M\backslash D^n), L\backslash D^n) = 0$ as well.
Since $\pi_1(M) = 0$ and $n \ge 5$, by Smale's handle-trading trick $M$ has no critical points of index $n-1$ and hence $M \backslash D^n$ is Morse subcritical. 
So $\partial (M\backslash D^n) \subset \partial T^*(M\backslash D^n)$ is an ADC Legendrian by Lemma \ref{lem: cotangentLaggood}. Then $H^*(L\backslash D^n; \mathbb{Z}) \cong H^*(M \backslash D^n; \mathbb{Z})$ by Theorem \ref{thm: flexlag}.
Using Mayer-Vietoris again, we have $H^*(L; \mathbb{Z}) \cong H^*(L\backslash D^n; \mathbb{Z})$ and $H^*(M; \mathbb{Z}) \cong H^*(M\backslash D^n; \mathbb{Z})$ except in top degree. Hence  $H^*(L; \mathbb{Z}) \cong H^*(M; \mathbb{Z})$ in all degrees; both $L, M$ are closed oriented manifolds and hence the top degree cohomology equals $\mathbb{Z}$ for both. By Poincar\'e
duality, we also have $H_*(L; \mathbb{Z}) \cong 
H_*(M; \mathbb{Z})$. 

We now prove that $\pi_*: H_*(L; \mathbb{Z}) \rightarrow H_*(M; \mathbb{Z})$ is an isomorphism. Since $L, M$ are oriented manifolds of the same dimension and $L$ intersects $T^*_q M$ transversely in a single point, $\deg(\pi) = \pm 1$ and so $\pi_*: H_*(L; \mathbb{Z}) \rightarrow H_*(M; \mathbb{Z})$ is surjective. Furthermore, 
$H_*(M; \mathbb{Z}), H_*(N; \mathbb{Z})$ are finitely-generated abelian groups since
$L, M$ are manifolds. This implies that $\pi_*$ is an isomorphism; any surjective epimorphism of a Hopfian group is automatically an isomorphism and finitely-generated abelian groups are known to be Hopfian. 

Any homology equivalence between simply-connected CW complexes is a homotopy equivalence. So if $L$ is simply-connected, then $\pi$ is a homotopy equivalence; note that we have already assumed throughout that $M$ is simply-connected. 

The above proof works for $n \ge 5$. For $n = 2,3,4$, 
we use an idea from \cite{kragh_ss}. We repeat the proof with $M' := M \times S^3$ and the closed exact Lagrangian $L' := L \times S^3 \subset T^*M'$ that has zero Maslov class and intersects $T^*_{q\times p} M'$ transversely in a single point for any $p \in S^3$. We can then conclude that
$\pi_*: H_*(L'; \mathbb{Z}) \rightarrow H_*(M'; \mathbb{Z})$ is an isomorphism. Therefore 
$\pi_*: H_*(L; \mathbb{Z}) \rightarrow H_*(M; \mathbb{Z})$ is an isomorphism as well. 
\end{proof}
Note that unlike Theorem \ref{thm: flexlag}, Corollary \ref{cor: nearby}  does  not use any h-principles or involve loose Legendrians. Also, if $L$ intersects $T_q^*M$ in $k$ points, we can assume $L \cap T^*D$ consists of $k$ parallel copies of $D^n$. So $L\backslash D^n$ is an exact Lagrangian filling of a Legendrian link given by copies of the ADC Legendrian $\partial (M\backslash D^n)$.  By studying fillings of such Legendrian links, it might be possible to show that $\deg(\pi) = 1$ and  $\pi_*: H(L; \mathbb{Z}) \rightarrow H(M; \mathbb{Z})$ is an isomorphism without any restrictions.

\subsection{Asymptotically dynamically convex Weinstein domains}\label{sec: ADS_Weinstein}

In this section, we present the class of asymptotically dynamically convex \textit{Weinstein domains}. This class contains  flexible Weinstein domains and all of our previous exotic examples. These domains can potentially be used to create even more exotic symplectic and contact manifolds; see Theorem \ref{prop: ADC_weinstein_boundary_ADC} below.  We also state some properties of these domains, in the process generalizing several of our results.
As usual, all Weinstein domains $W$ have $c_1(W) = 0$.

\begin{definition}\label{def: ADC_weinstein}
A Weinstein domain $(W^{2n}, \lambda, \phi)$ is 
\textit{asymptotically dynamically convex} if there exist regular values $c_1, \cdots, c_k$ of $\phi$ such that $c_1 < \min \phi < c_2 < \cdots < c_{k-1} < \max \phi < c_{k}$ and for all $i = 1, \cdots, k-1$, the Weinstein cobordism $\{c_i \le \phi \le c_{i+1} \}$ has a single critical point $p$ and either the attaching sphere $\Lambda_p$ is either subcritical or $(Y^{c_i}, \Lambda_p, \lambda|_{Y^{c_i}})$ form an asymptotically dynamically convex pair.   
\end{definition}
In other words, $W^{2n}$ is obtained by successively attaching Weinstein handles to $(B^{2n}, \omega_{std})$ such that each handle is subcritical or is attached along a Legendrian sphere 
in a contact manifold forming an ADC pair. Definition \ref{def: ADC_weinstein} is motivated by the definition of flexible Weinstein domains, whose index n handles are attached along loose Legendrians; see Definition \ref{def: weinstein_flexible}. 
 Similarly, we can define ADC Weinstein \textit{cobordisms}.
 Note that if $(Y^{2n-1}, \xi)$ is a level set of $\phi$, then $c_1(Y) = 0$; also, if $\Lambda^{n-1} \subset (Y^{2n-1}, \xi)$ is an attaching Legendrian and $n \ge 3$, then $c_1(Y,\Lambda) =0$ as well. So in this case, it always makes sense to say that $\Lambda^{n-1} \subset (Y^{2n-1}, \xi)$ is ADC. 

We now give some examples. For the following, let $M, N$ be closed, Morse subcritical manifolds.

\begin{proposition}\label{prop: examples_ADC_Weinstein}
 The following Weinstein domains are asymptotically dynamically convex: 
\begin{enumerate}
\item Flexible Weinstein domains 
\item The cotangent bundle $T^*M$ or the plumbing 
$T^*M \sharp_p T^*N$
\item The exotic cotangent bundle $T^*S^n_M$ from Theorem \ref{thm: boundedinfinite}.
\end{enumerate} 
\end{proposition}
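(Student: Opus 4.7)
The plan is to induct on the number of handles in a Weinstein handle decomposition, invoking the surgery results from Section \ref{sec: surgerysemigood} at each step to keep the contact boundary asymptotically dynamically convex, and then applying Proposition \ref{prop: loose_nice_leg} or Lemma \ref{lem: cotangentLaggood} to verify that the attaching Legendrian of each critical handle forms an ADC pair with the current boundary, as required by Definition \ref{def: ADC_weinstein}.

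For (1), a flexible Weinstein domain $W$ admits a handle decomposition in which every critical handle is subcritical or attached along a loose Legendrian in the current contact boundary. I would induct on the handle index. The base case $B^{2n}$ is ADC trivially. For the inductive step, suppose $W_i$ has been exhibited as an ADC Weinstein domain. Then iterated application of Theorems \ref{thm: MLYau}, \ref{thm: MLYauindex2}, and \ref{thm: semi-surgery} (equivalently, Corollary \ref{cor: flexsemigood}) shows that the contact boundary $(\partial W_i, \xi_i)$ is ADC. A subcritical handle attachment satisfies Definition \ref{def: ADC_weinstein} directly. A flexible handle is attached along a loose Legendrian $\Lambda \subset (\partial W_i, \xi_i)$, and Proposition \ref{prop: loose_nice_leg} then gives that $(\partial W_i, \Lambda, \xi_i)$ is an ADC pair, so the inductive step is complete.

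For (2), since $M$ is closed and Morse subcritical, $M \backslash D^n$ admits a Morse function with critical points of index less than $n$, so $T^*(M \backslash D^n)$ carries a purely subcritical Weinstein structure obtained from $B^{2n}$ by subcritical handle attachments; this subcritical piece is ADC by definition. One then views $T^*M$ as the result of attaching a single Weinstein $n$-handle along the Legendrian sphere $\partial(M \backslash D^n) \subset \partial T^*(M \backslash D^n)$, and Lemma \ref{lem: cotangentLaggood} guarantees that this is an attachment along an ADC pair, so $T^*M$ is ADC. The plumbing $T^*M \sharp_p T^*N$ can be built analogously from the subcritical Weinstein domain $T^*(M \backslash D^n) \natural T^*(N \backslash D^n)$ by attaching two critical $n$-handles along $\partial(M \backslash D^n)$ and $\partial(N \backslash D^n)$ linked so as to realize the plumbing rather than the boundary connected sum. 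I would verify the ADC pair condition for each of these attachments by combining Lemma \ref{lem: cotangentLaggood} with the fact that subcritical handle attachments (including the $1$-handle realizing $\natural$) preserve ADC Legendrians, via Proposition \ref{prop: nonsimultaneousgood}.

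For (3), recall from the proof of Theorem \ref{thm: boundedinfinite} that $T^*S^n_M$ is built from $T^*M$ by iteratively attaching subcritical and flexible critical handles. Having established (2), I would run the same induction as in (1), but starting from the ADC Weinstein domain $T^*M$ instead of $B^{2n}$: after each attachment the new contact boundary remains ADC by Theorems \ref{thm: MLYau}, \ref{thm: MLYauindex2}, and \ref{thm: semi-surgery}, and each flexible attaching Legendrian forms an ADC pair with that boundary by Proposition \ref{prop: loose_nice_leg}. The main obstacle I anticipate is the bookkeeping required to apply Theorem \ref{thm: MLYauindex2} whenever an index $2$ handle appears in the construction: one must verify that either the attaching circle or all sufficiently-bounded-action Reeb orbits in the current boundary are contractible and that $c_1$ vanishes at the relevant stage, which for the plumbings and the intermediate stages of $T^*S^n_M$ requires tracking fundamental group and Chern class data through the handle decomposition.
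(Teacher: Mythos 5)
Your treatment of (1), of the $T^*M$ part of (2), and of (3) matches the paper's argument and is correct. However, your construction of the plumbing in (2) differs from the paper's and has a genuine gap.

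You build $T^*M \sharp_p T^*N$ from the subcritical domain $T^*(M\backslash D^n) \natural T^*(N\backslash D^n)$ by attaching two linked $n$-handles along $\Lambda_M := \partial(M\backslash D^n)$ and $\Lambda_N := \partial(N\backslash D^n)$. To verify Definition \ref{def: ADC_weinstein}, you must show not only that $\Lambda_M$ forms an ADC pair with the boundary before the first $n$-handle (which your combination of Lemma \ref{lem: cotangentLaggood} and Proposition \ref{prop: nonsimultaneousgood} does give), but also that $\Lambda_N$ forms an ADC pair with the boundary \emph{after} the first $n$-handle is attached. Proposition \ref{prop: nonsimultaneousgood} only covers the case where the surgery is subcritical or is done along a Legendrian loose in the complement of $\Lambda_N$. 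Here the first $n$-handle is critical and its attaching Legendrian $\Lambda_M$ is not loose (it bounds the exact Lagrangian $M\backslash D^n$), so none of the cited results apply. Nothing in the paper's toolbox controls the chords of $\Lambda_N$ after a critical handle attachment along a non-loose $\Lambda_M$; at best one would need an ``ADC link'' version of Proposition \ref{prop: nonsimultaneousgood} (as hinted in Remark \ref{rem: nonsimultaneousgood}), and you would then have to verify that $\Lambda_M \cup \Lambda_N$, in the linked configuration realizing the plumbing, actually forms such a link — none of which is addressed.

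The paper sidesteps this by choosing a different handle ordering. It first builds $T^*M$ in full, then observes that $(\partial T^*M, \partial T^*_x M)$ is an ADC pair — this uses Lemma \ref{lem: cotangentLaggood} for $M \backslash D^n$ and then Proposition \ref{prop: surgerybeltsphere} for the belt sphere of the single $n$-handle of $T^*M$, not Lemma \ref{lem: cotangentLaggood} alone — and then builds $N$ by simultaneous Legendrian surgeries on $\partial T^*_x M$. Since $N$ is Morse subcritical, all of these surgeries have index $< n-1$ except the final one of index $n$, and Proposition \ref{prop: simultaneoussurgerygood} shows that the subcritical surgeries preserve ADC pairs at every step, so the final index-$n$ handle is attached along a Legendrian in an ADC pair. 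In this ordering the only critical attachment is the last one, so the issue you run into never arises.
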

\begin{proof}
A flexible Weinstein domain $W$ is constructed by successively attaching subcritical or flexible handles to
$(B^{2n}, \omega_{std})$. As in Corollary \ref{cor: flexsemigood}, we can inductively show that each successive contact boundary is ADC. Furthermore, a loose Legendrian in any ADC contact manifold is an ADC pair, which proves (1). 

For  $T^*M$, there is a single index n handle. This handle is attached along $\partial (M\backslash D^n) \subset \partial T^*(M\backslash D^n)$, which by Lemma \ref{lem: cotangentLaggood} is an ADC pair since $M^n$ is Morse subcritical. This proves the first part of (2). 
For the plumbing case, we note that 
$T^*M \sharp_p T^*N$ is obtained by attaching handles to $\partial T^*M$.
More precisely, Legendrian surgeries are done on $\partial T^*_x M \subset \partial T^*M$, which by Lemma \ref{lem: cotangentLaggood} is an ADC pair since $M$ is Morse subcritical. Since $N$ is also Morse subcritical, all of these surgeries have index less than $n-1$ except for one surgery of index $n$. So by 
Proposition \ref{prop: simultaneoussurgerygood}, the subcritical surgeries result in new Legendrians and contact manifolds that  form ADC pairs. Hence the index n handle is also attached to a Legendrian in an ADC pair, proving the second part of (2).

For (3), recall that $T^*S^n_M$ is constructed by attaching subcritical or flexible handles to $T^*M$, which is an ADC Weinstein domain by (2).
\end{proof}

The following result generalizes Corollary \ref{cor: flexsemigood} for flexible Weinstein domains and the proof of Theorem \ref{thm: boundedinfinite}.

\begin{proposition}\label{prop: ADC_weinstein_boundary_ADC}
If $W^{2n}$ is an asymptotically dynamically convex Weinstein domain and $n \ge 3$, then $\partial W^{2n}$ is an asymptotically dynamically convex contact structure. 
\end{proposition}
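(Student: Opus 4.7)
The plan is to prove this by induction on the Weinstein handle decomposition provided by Definition \ref{def: ADC_weinstein}. Write $W^{2n}$ as the composition of the elementary cobordisms $\{c_i \le \phi \le c_{i+1}\}$, each containing a single critical point $p_i$ with attaching sphere $\Lambda_{p_i} \subset (Y^{c_i}, \xi_i) := (Y^{c_i}, \lambda|_{Y^{c_i}})$. I will show by induction on $i$ that every $(Y^{c_i}, \xi_i)$ is asymptotically dynamically convex; applied at $i = k$ this gives the conclusion since $\partial W = Y^{c_k}$. The base case $(Y^{c_2}, \xi_2) = (S^{2n-1}, \xi_{std})$, the boundary of a small ball around the minimum of $\phi$, is ADC by \cite{BEE12}.

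For the inductive step, $(Y^{c_{i+1}}, \xi_{i+1})$ is obtained from $(Y^{c_i}, \xi_i)$ by contact surgery along $\Lambda_{p_i}$. If $\Lambda_{p_i}$ is subcritical of index $\ne 2$, the conclusion is immediate from Theorem \ref{thm: MLYau}. For index-$2$ subcritical handles, I would first apply a Weinstein homotopy to make $\phi$ self-indexing so that all $1$-handles are attached before any $2$-handles; after the $1$-handles the boundary becomes $(S^{2n-2}\times S^1 \, \sharp \cdots \sharp \, S^{2n-2}\times S^1, \xi_{std})$, whose Reeb orbits below any fixed action lie in the belt spheres $S^{2n-2}$ by Proposition \ref{prop: MLYau_action} and are therefore contractible for $n \ge 3$, so Theorem \ref{thm: MLYauindex2} applies. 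This mirrors the corresponding argument in the proof of Corollary \ref{cor: flexsemigood}.

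The main step is the critical case $\dim \Lambda_{p_i} = n-1$. By Definition \ref{def: ADC_weinstein} the triple $(Y^{c_i}, \Lambda_{p_i}, \xi_i)$ forms an ADC pair in the sense of Definition \ref{def: pair}, so there is a sequence of non-increasing contact forms $\alpha_k$ on $Y^{c_i}$ and action bounds $D_k \to \infty$ such that every element of both $\mathcal{P}^{<D_k}(Y^{c_i}, \alpha_k)$ and $\mathcal{P}^{<D_k}(\Lambda_{p_i}, Y^{c_i}, \alpha_k)$ has positive degree. Since the degree of a word of chords is the sum of the degrees of its letters, every element of $\mathcal{W}^{<D_k}(\Lambda_{p_i}, Y^{c_i}, \alpha_k)$ also has positive degree. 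From here the plan is to mimic the proof of Theorem \ref{thm: semi-surgery} with one key simplification: Lemma \ref{lem: techkey}, used there to exploit looseness and force positivity of chord degrees, is no longer needed, since the ADC pair hypothesis already provides positive-degree chords below any prescribed action. It then remains only to apply Proposition \ref{prop: nice_to_supernice} repeatedly to arrange that $\alpha_k|_{U^\epsilon(\Lambda_{p_i},\alpha_1)} = c_k \, \alpha_1|_{U^\epsilon(\Lambda_{p_i},\alpha_1)}$, after which Proposition \ref{prop: techsurgery} yields that $(Y^{c_{i+1}}, \xi_{i+1})$ is ADC.

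The main obstacle I expect is verifying that the Weinstein homotopies invoked above (the self-indexing step, and any critical-point reorderings needed to apply Theorem \ref{thm: MLYauindex2}) preserve the property that each critical attaching sphere forms an ADC pair with its level set at the moment it is attached. Unlike flexibility, which is stable under arbitrary Weinstein homotopies by the h-principle, ADC pairs are a Floer-theoretic notion, so this stability requires an independent argument: essentially, one must show that when a critical attaching sphere and its ambient contact manifold are transported along a Weinstein homotopy, the sequence of non-increasing contact forms and action bounds in Definition \ref{def: pair} can be transported along as well. In practice this should be achievable by scaling contact forms inside small handle neighborhoods, similarly to the proof of Proposition \ref{prop: techsurgery}, but it is the part of the argument that requires the most care.
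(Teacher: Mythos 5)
Your proof is correct and takes essentially the same route as the paper, which gives a two-sentence argument citing Theorems \ref{thm: MLYau}, \ref{thm: MLYauindex2} for the subcritical handles and Proposition \ref{prop: surgerybeltsphere} for the critical ones. Your treatment of the critical case unwinds Proposition \ref{prop: surgerybeltsphere} back to its constituents---Proposition \ref{prop: BEE} for the orbit--word correspondence with grading shift $n-3$, and Propositions \ref{prop: nice_to_supernice}, \ref{prop: techsurgery} to produce a decreasing sequence of surgery-adapted contact forms---which is in fact more explicit than the paper about verifying the non-increasing condition.

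The one spot where you correctly flag a genuine subtlety, and one the paper's own proof also skips, is the index-$2$ subcritical handles: the paper cites Theorem \ref{thm: MLYauindex2} without verifying its contractibility hypothesis. Your fix via self-indexing (mirroring Corollary \ref{cor: flexsemigood}) is the right idea, but as you observe it is not automatic that a Weinstein homotopy preserves the structure of Definition \ref{def: ADC_weinstein}, which is tied to a particular Morse function $\phi$. This can be closed without re-examining any Floer theory: the reordering that slides subcritical handles below critical ones only transports each critical attaching sphere $\Lambda_p$ through additional subcritical surgeries performed away from $\Lambda_p$, and the subcritical case of Proposition \ref{prop: nonsimultaneousgood} shows that such surgeries take an ADC pair to an ADC pair, so the reordered handle decomposition still satisfies Definition \ref{def: ADC_weinstein}. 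Recording that observation makes the self-indexing step rigorous; without it the argument is incomplete, for you and the paper alike.
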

\begin{proof}
By Theorems \ref{thm: MLYau} and \ref{thm: MLYauindex2}, the result of subcritical contact surgery on an ADC contact manifold is another ADC contact manifold. By Proposition \ref{prop: surgerybeltsphere}, which requires $n \ge 3$, the result of contact surgery along a Legendrian in a contact manifold forming an ADC pair is another ADC contact manifold. 
\end{proof}

Now we generalize Corollary \ref{cor: nearby} to some other ADC Weinstein domains. 
For the following, let $M, N$ be closed, Morse subcritical manifolds. 
\begin{proposition}\label{prop: ADC_nearby}
Let $L$ be a closed exact Lagrangian with zero Maslov class such that either
\begin{enumerate}
\item $L\subset T^*S^n_M$ and $L$ intersects $T^*_x S^n_M \subset T^*S^n_M$ transversely in a single point for some $x \in M$, or
\item $L\subset T^*M\sharp_p T^*N$ and $L$ intersects $T^*_x M \subset T^*M\sharp_p T^*N$ transversely in a single point for some $x \in M$ and is disjoint from $T^*_y N \subset T^*M\sharp_p T^*N$ for some $y \in N$. 
\end{enumerate}
Then $H^*(L; \mathbb{Z}) \cong H^*(M; \mathbb{Z})$. 
\end{proposition}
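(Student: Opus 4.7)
The plan is to adapt the proof of Corollary \ref{cor: nearby}, reducing both cases to an application of Theorem \ref{thm: flexlag}. As in that corollary, I would first Hamiltonian isotope $L$ so that it coincides with $M$ on a small disk $D^n \subset M$ containing $x$; this is possible because transversality with the Lagrangian disk $T^*_x S^n_M$ (respectively $T^*_x M$) forces $L$ to be a graph over $M$ near $x$, and one can then damp this graph to the zero section as in the proof of Corollary \ref{cor: nearby}. After this isotopy, $L \setminus D^n$ and $M \setminus D^n$ become exact Lagrangian fillings of the common Legendrian $\partial(M \setminus D^n) \cong S^{n-1}$ in the Weinstein subdomain
\[
W := \begin{cases} T^*S^n_M \setminus T^*D^n & \text{in case (1),} \\ T^*M \sharp_p T^*N \setminus (T^*D^n_x \cup T^*D^n_y) & \text{in case (2),} \end{cases}
\]
where in case (2) we use the disjointness $L \cap T^*_y N = \emptyset$ (and take $y \ne p$) to also excise a Weinstein neighborhood of $T^*_y N$.

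The central technical claim is that $W$ is a flexible Weinstein domain and that $M \setminus D^n$ is a flexible Lagrangian filling in $W$. Since $M$ is Morse subcritical, so is $M \setminus D^n$, making $T^*(M \setminus D^n)$ subcritical and hence flexible. In case (1), $W$ is obtained from $T^*(M \setminus D^n)$ by attaching the subcritical and flexible handles used to build $T^*S^n_M$ from $T^*M$ (chosen disjoint from $T^*D^n$), so $W$ remains flexible, and the complement $W \setminus T^*(M \setminus D^n)$ is precisely this flexible handle cobordism. In case (2), the complex $(M \setminus D^n) \cup_p (N \setminus D^n)$ has cells of dimension at most $n-2$, so the plumbing $W$ is subcritical, and $W \setminus T^*(M \setminus D^n)$ is a subcritical Weinstein cobordism retracting onto $T^*(N \setminus (D^n_y \cup D^n_p))$.

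With these ingredients in place, Theorem \ref{thm: flexlag} (requiring $n \ge 5$) yields $H^*(L \setminus D^n; \mathbb{Z}) \cong H^*(M \setminus D^n; \mathbb{Z})$, once the relevant vanishings of $\pi_1$ and relative Chern classes are checked; these follow from the simple connectedness of $M, N$, Smale's handle-trading trick, Remark \ref{rem: c1_lambda}, and the vanishing of $c_1$ of the ambient Weinstein domains. The long exact sequences of $(L, L \setminus D^n)$ and $(M, M \setminus D^n)$ then promote this to $H^*(L; \mathbb{Z}) \cong H^*(M; \mathbb{Z})$, since the cohomologies agree outside the top degree, where both groups equal $\mathbb{Z}$ because $L, M$ are closed oriented $n$-manifolds. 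For $n < 5$ one can apply Kragh's stabilization trick replacing $M$ by $M \times S^3$, as in the proof of Corollary \ref{cor: nearby}.

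The main obstacle is verifying flexibility: in case (2), one must check that the plumbing construction commutes cleanly with the two disk removals so that $W$ genuinely becomes a subcritical plumbing and the Lagrangian complement is itself a flexible cobordism; in case (1), one must ensure that the handles constructing $T^*S^n_M$ can be chosen disjoint from $T^*D^n_x$ so that they restrict to handles on $W$ preserving the flexible structure.
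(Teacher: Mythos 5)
Your strategy (excise the cotangent fibers, then compare $L\setminus D^n$ with $M\setminus D^n$ as exact Lagrangian fillings of the same Legendrian in the excised domain, and finish with Mayer--Vietoris) matches the paper's proof, which goes through the same reduction. The one point where you work harder than necessary is the description of the excised domain in case (1). You try to re-express $T^*S^n_M\setminus\nu(T^*_x S^n_M)$ as $T^*(M\setminus D^n)$ with ``the subcritical and flexible handles used to build $T^*S^n_M$ from $T^*M$'' attached, and then flag a worry about making those handles disjoint from the removed fiber. This is a detour: the construction of $T^*S^n_M$ in \cite{EGL} already presents it as $B^{2n}$ with a single index-$n$ handle attached along $\partial(M\setminus D^n)\subset S^{2n-1}$, with co-core $T^*_x S^n_M$. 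Removing a Weinstein neighborhood of that co-core therefore yields $B^{2n}$ on the nose, with $M\setminus D^n$ sitting inside it as a flexible Lagrangian by construction. There is nothing to rearrange or choose disjoint. The ``subcritical/flexible handles from $T^*M$ to $T^*S^n_M$'' that you refer to are attached to $\partial T^*M$, a contact manifold that no longer exists once the index-$n$ handle carrying $T^*_x M$ is gone, so your alternative decomposition would need genuine justification; the cleaner $B^{2n}\cup H^n$ description sidesteps this entirely.

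In case (2) your intended conclusion — the excised plumbing is subcritical, with $M\setminus D^n$ a flexible, Morse subcritical Lagrangian inside it — is right, and agrees with the paper, though the phrase ``$(M\setminus D^n)\cup_p(N\setminus D^n)$ has cells of dimension at most $n-2$'' should be read as a statement about the homotopy type (both pieces deformation retract to $(n-2)$-complexes by Morse subcriticality), not about the manifold itself. Finally, invoking Theorem \ref{thm: flexlag} as a black box is fine but slightly indirect: the paper instead applies the constituents directly — $\partial(M\setminus D^n)$ is ADC by the flexible/Morse-subcritical Lagrangian criterion (Corollary \ref{cor: semiflexlaggood}), $WH^+$ is then filling-independent (Proposition \ref{prop: Legindependence}), and $SH$ of the excised domain vanishes so $WH^+_k\cong H^{n-k-1}$ — which is exactly the content of Theorem \ref{thm: flexlag} when both fillings live in the same ambient domain. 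Your use of Kragh's stabilization trick for low $n$ is consistent with how the paper handles Corollary \ref{cor: nearby}.
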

\begin{proof}
In the first case, we remove the cotangent fiber $T^*_x M$ from $T^*S^n_M$, which is the co-core of the index n handle we attached to $(B^{2n}, \omega_{std})$ to create $T^*S^n_M$;  the resulting Weinstein domain is therefore $B^{2n}$. 
Since $L$ intersects $T^*_x M$ once, we have $L \backslash D^n \subset B^{2n}$.
Also, $M\backslash D^n \subset B^{2n}$ is a flexible, Morse subcritical Lagrangian with $\partial(M\backslash D^n) = \partial (L\backslash D^n)$. Since $M \backslash D^n$ is flexible and Morse subcritical, $\partial (M\backslash D^n) \subset S^{2n-1}$ is ADC by Corollary \ref{cor: flexsemigood} and so 
$WH^+(L\backslash D^n,L\backslash D^n; B^{2n} ) \cong WH^+(M \backslash D^n, M\backslash D^n; B^{2n})$. 
Also, $SH(B^{2n}) = 0$ implies that $WH(L\backslash D^n,L\backslash D^n; B^{2n} ) = WH(M \backslash D^n, M\backslash D^n; B^{2n}) = 0$. 
By the tautological long exact sequence for $WH$, this shows that $H^*(M \backslash D^n; \mathbb{Z}) \cong H^*(L \backslash D^n; \mathbb{Z})$. Finally $H^*(M; \mathbb{Z}) \cong H^*(L; \mathbb{Z})$ by the Mayer-Vietoris exact sequence. 
 
In the second case, we remove both $T^*_x M$ and $T^*_y N$ from 
$T^*M\sharp_p T^*N$. The result is a subcritical Weinstein domain $W$ obtained by attaching some subcritical handles to $T^*(M \backslash D^n)$. Since $L$ intersects $T^*_x M$ once and  is disjoint from $T^*_y N$, we have $L\backslash D^n \subset W$. Also, $M \backslash D^n \subset W$ is a flexible, Morse subcritical Lagrangian with $\partial (M\backslash D^n) = \partial(L\backslash D^n)$. Since $W$ is subcritical, we can conclude that $SH(W) = 0$ and the rest of the proof is exactly the same as before.
\end{proof}

\subsection{Asymptotically dynamically convex Lagrangians}

We now introduce the class of asymptotically dynamically convex Lagrangians which generalize flexible Lagrangians. It should be possible to use these Lagrangians to construct even more exotic Legendrians; see Proposition \ref{prop: ADC_Legendrian} below.  
\begin{definition}
An exact Lagrangian $L \subset (W, \omega)$ is \textit{asymptotically dynamically convex} if the complement $(W\backslash T^*L, \omega|_{W\backslash T^*L})$ is an asymptotically dynamically convex Weinstein cobordism.
\end{definition}

This definition is motivated by the definition of flexible Lagrangians given in \cite{EGL} and makes sense for both Lagrangians with and without boundary. However if $L$ has boundary,
the situation is a bit more complicated. In this case, the cobordism $W\backslash T^*L$ has corners. In particular, the negative boundary $\partial_- (W\backslash T^*L)$ itself has boundary $ST^*(\partial L) \times \{\pm 1\} \cong \partial (T^*(\partial L) \times \mathbb{R})$ that make up the corners.
The definition of an ADC cobordism should be relative to these corners. 
That is, we not only require that all Reeb chords of an attaching sphere $\Lambda^{n-1} \subset ST^*L$ have positive degree but also that chord words $ab$, where $a$ goes from $\Lambda^{n-1}$ to $\partial L$ and $b$  goes in the opposite direction, have positive degree. Note that these words have well-defined degree even though $\Lambda^{n-1} \cup \partial L$ is disconnected; see Remark \ref{rem: nonsimultaneousgood}.
This should also hold for all successive handles that we attach to successive contact manifolds. Finally, note that $(W, \omega)$ is always a Weinstein domain. In fact, if $L$ has non-empty boundary or $L$ is Morse subcritical, then $(W, \omega)$ is an ADC Weinstein domain. 

Now we give some examples. In the following, $M, N$ are closed, Morse subcritical manifolds.

\begin{proposition}
The following Lagrangians are asymptotically dynamically convex:
\begin{enumerate}
\item Flexible Lagrangians
\item The cotangent fiber $T^*_x M$ of $T^*M$,  $T^*M\sharp_p T^*N$, or $T^*S^n_M$ 
\item The zero-section $M$ of $T^*M$, $T^*M \sharp_p T^*N$, or 
$T^*S^n_M$.
\end{enumerate}
\end{proposition}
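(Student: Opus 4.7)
The plan is to verify in each case that the complement $W\setminus T^*L$ decomposes as a Weinstein cobordism whose handles are either subcritical or attached along Legendrians forming ADC pairs with the intermediate contact boundaries, together with the extra relative condition (when $\partial L\neq\varnothing$) that Reeb chord words involving an attaching sphere and $\partial L\subset T^*(\partial L)\times\mathbb{R}$ have positive degree. The whole argument mirrors Corollary \ref{cor: semiflexlaggood} and Proposition \ref{prop: examples_ADC_Weinstein}, with extra care in the relative setting.

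For part (1), the complement $W\setminus T^*L$ is by assumption a flexible Weinstein cobordism, so inductively built from subcritical handles and handles attached along loose Legendrians. By Theorems \ref{thm: MLYau} and \ref{thm: MLYauindex2}, subcritical surgeries preserve the ADC property of successive contact boundaries and create no chords affecting the relative condition. For a flexible handle, Proposition \ref{prop: loose_nice_leg} shows that the loose attaching Legendrian forms an ADC pair with the (inductively ADC) boundary. For the relative condition, note that the loose chart of the attaching Legendrian can be chosen inside $ST^*L\cup\partial W'$, disjoint from $\partial L\subset T^*(\partial L)\times\mathbb{R}$; I would then apply Lemma \ref{lem: techkey} (in the same style as in the proof of Theorem \ref{thm: semi-surgery}) to shrink the loose chart and isotope it so that all Reeb chord words between it and $\partial L$ below any fixed action bound have positive degree, producing a sequence of decreasing contact forms with the required properties.

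For part (2), I would identify the Weinstein handle decomposition of the complement. For $T^*_xM\subset T^*M$, a Weinstein neighborhood of the fiber is a Darboux ball; removing it is equivalent to deleting the index-$0$ critical point at $(x,0)$ from a Morse function on $T^*M$ coming from a Morse function on $M$ with $x$ as a minimum. The remaining handles correspond to the other critical points of $M$, which by Morse-subcriticality avoid index $n-1$; the top-index handle is attached along the Legendrian already shown to form an ADC pair in Lemma \ref{lem: cotangentLaggood}. For $T^*_xM\subset T^*M\natural_p T^*N$, the additional handles arise from $N\setminus D^n$ (subcritical) and from the plumbing, and the argument of Proposition \ref{prop: examples_ADC_Weinstein}(2) identifies them as subcritical or attached along ADC-pair Legendrians. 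For $T^*_xM\subset T^*S^n_M$, further subcritical and flexible handles of \cite{EGL} are appended, and the loose-Legendrian case is handled exactly as in part (1). The relative condition at $\partial T^*_xM\cong S^{n-1}$ is checked in the same way.

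For part (3), in $T^*M$ the Weinstein neighborhood of the zero-section is all of $T^*M$, so the complement is trivial and $M$ is tautologically ADC, just as in the flexibility example from Section \ref{subsec: legwithflexfill}. In $T^*M\natural_p T^*N$, removing a Weinstein neighborhood of $M$ leaves a region symplectomorphic to $T^*N$ with a Weinstein neighborhood of the fiber $T^*_yN$ deleted, which is ADC by part (2). In $T^*S^n_M$, the complement of $M$ is exactly the cobordism that attaches the subcritical and flexible handles used in \cite{EGL} to pass from $T^*M$ to $T^*S^n_M$, and this is an ADC cobordism by the reasoning of part (1). The main obstacle throughout is the relative condition on mixed Reeb chord words: for flexible/loose handles this is secured by Lemma \ref{lem: maingeo}/\ref{lem: techkey} together with disjointness of loose charts from $\partial L$, while for subcritical handles no such chords of non-positive degree are created, so no further work is needed.
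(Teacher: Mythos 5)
Parts (1) and (3) are essentially the paper's argument. For (1) you cite Lemma \ref{lem: techkey} directly where the paper cites Proposition \ref{prop: nonsimultaneousgood}, but the underlying mechanism (pushing mixed chord words to positive degree via looseness in the complement of $\partial L$) is the same. For (3) your reduction of the plumbing case to part (2) applied to $T^*_y N\subset T^*N$ is a clean rephrasing of what the paper does via Proposition \ref{prop: examples_ADC_Weinstein}; both describe the same cobordism.

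Part (2) contains a genuine error. You claim that removing a Weinstein neighborhood of $T^*_xM$, with $x$ a \emph{minimum} of a Morse function $f$ on $M$, ``is equivalent to deleting the index-$0$ critical point at $(x,0)$.'' That identification is false: the co-core of the index-$0$ handle in $T^*M$ is $2n$-dimensional (the whole $0$-handle), not a Lagrangian $n$-disc, so $T^*_xM$ is not that co-core, and removing $T^*(T^*_xM)$ is not the same as removing the $0$-handle. The correct statement (used by the paper) is that $T^*_xM$ is the co-core of the \emph{index-$n$} handle, attached to $T^*(M\setminus D^n)$ along $\partial(M\setminus D^n)\subset \partial T^*(M\setminus D^n)$, which is the ADC pair from Lemma \ref{lem: cotangentLaggood}. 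Moreover, identifying the handle decomposition of the complementary \emph{cobordism} $W\setminus T^*L$ is not automatic from a handle decomposition of $W$: the paper handles this by invoking the general principle that the co-core of an index-$n$ handle attached to an ADC Weinstein domain along a Legendrian forming an ADC pair is an ADC Lagrangian, mirroring Lemma 5.1 of \cite{EGL}. Your argument never states or establishes this key co-core lemma, and the faulty Morse decomposition cannot substitute for it. Since your parts (2) and (3) for $T^*_xM$, $T^*M\sharp_p T^*N$, and $T^*S^n_M$ all route through this gap, this needs to be fixed before the proof closes.
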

\begin{proof}
Flexible Weinstein cobordisms are ADC cobordisms, which proves (1) if $L$ is closed. If $L$ has boundary, all words $a b$ (described above) also need to have positive degree; this is true because we can make the degree arbitrarily high 
of any Reeb chord with at least one endpoint on a Legendrian loose in the complement of $\partial L$; see Proposition \ref{prop: nonsimultaneousgood}.

For (2), note that the co-core of an index n handle attached   to an ADC Weinstein domain $W$ along a Legendrian $\Lambda \subset \partial W$ forming an ADC pair is an ADC Lagrangian.
The proof of this is the same as the proof of Lemma 5.1, \cite{EGL}: the co-core of a flexible handle attached to a flexible Weinstein domain is a flexible Lagrangian. 
The cotangent fiber $T^*_x M \subset T^*M$ is the co-core of a handle attached to the ACD domain $T^*(M\backslash D^n)$ along $\partial (M\backslash D^n) \subset \partial T^*(M\backslash D^n)$, which is an ACD pair. Hence  $T^*_x M \subset T^*M$ is an ADC Lagrangian, which proves the first part of (1). 
For the plumbing case, we note that $T^*M \sharp_p T^*N$ is constructed from $T^*N$ by doing a sequence of Legendrian surgeries on $\partial T^*_y N \subset \partial T^*N$, which is an ADC pair since $N$ is Morse subcritical. Then  $T^*_x M \subset T^*M \sharp_p T^*N$ is the co-core of the index n handle that is attached last. Finally, note that $T^*_x M \subset T^* S^n_M$ is the co-core of the index n handle attached to $B^{2n}$ along $\partial (M\backslash D^n) \subset S^{2n-1}$, which is an ADC pair. 

The zero-section of a cotangent bundle has trivial Weinstein complement and so the first part of (3) holds tautologically. The second part of (3) follows from the proof that $T^*M \sharp_p T^*N$ is an ADC Weinstein domain in Proposition \ref{prop: examples_ADC_Weinstein}. For the third part of (3), recall that $T^*S^n_M$ is constructed by attaching subcritical or flexible handles to $T^*M$.
\end{proof}
The following result generalizes Proposition \ref{cor: semiflexlaggood} for flexible Lagrangians and gives another proof of Proposition \ref{prop: surgerybeltsphere}. It can also be thought of as the relative version of Proposition \ref{prop: ADC_weinstein_boundary_ADC}.

\begin{proposition}\label{prop: ADC_Legendrian}
If $L \subset W$ is an asymptotically dynamically convex Lagrangian that has non-empty boundary and is Morse subcritical, then $\partial L \subset \partial W$ is an asymptotically dynamically convex pair. 
\end{proposition}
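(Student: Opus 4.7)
The plan is to carry out an induction over the handle decomposition of the cobordism $W \setminus T^*L$, which by definition of asymptotically dynamically convex Lagrangian (relative to corners) is an ADC Weinstein cobordism presented as a sequence of subcritical handles and critical handles attached along Legendrians forming ADC pairs with the intermediate contact manifolds, and, in the relative picture, along links of the form $\Lambda^{n-1} \cup \partial L$ satisfying the ADC link condition spelled out in the definition. The base case is the Weinstein domain $T^*L$: since $L$ is Morse subcritical with non-empty boundary, Lemma \ref{lem: cotangentLaggood} gives that $(\partial T^*L, \partial L)$ is an ADC pair, which anchors the induction. Writing $W^{(0)} = T^*L$ and $W^{(i)}$ for the Weinstein domain obtained after the first $i$ handles of $W \setminus T^*L$, the inductive hypothesis is that $(\partial W^{(i)}, \partial L)$ is an ADC pair.

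For the inductive step we attach the next handle to $W^{(i)}$ to form $W^{(i+1)}$ and show that $(\partial W^{(i+1)}, \partial L)$ remains an ADC pair. If the handle is subcritical of index $k < n$, with attaching isotropic sphere $\Lambda^{k-1}$, then Theorem \ref{thm: MLYau} (or Theorem \ref{thm: MLYauindex2} when $k=2$) shows that $\partial W^{(i+1)}$ is still ADC, and Proposition \ref{prop: nonsimultaneousgood} shows that $\partial L \subset \partial W^{(i+1)}$ is still ADC; moreover Proposition \ref{prop: MLYau_action} tells us that all new Reeb orbits lie in the belt sphere of the handle and have positive degree, so the single sequence of contact forms produced by Proposition \ref{prop: techsurgery} simultaneously witnesses both properties. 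If the handle is critical, attached along a Legendrian $\Lambda^{n-1} \subset \partial W^{(i)}$, then by the definition of ADC Weinstein cobordism $(\partial W^{(i)}, \Lambda^{n-1})$ is an ADC pair, and by the relative definition of ADC Lagrangian the link $\Lambda^{n-1} \cup \partial L$ is an ADC link in $\partial W^{(i)}$. Proposition \ref{prop: nonsimultaneousgood} combined with Remark \ref{rem: nonsimultaneousgood}(2) then gives that $\partial L \subset \partial W^{(i+1)}$ is ADC, while Proposition \ref{prop: BEE} identifies the new Reeb orbits of $\partial W^{(i+1)}$ with words of Reeb chords of $\Lambda^{n-1}$, all of positive degree since $(\partial W^{(i)}, \Lambda^{n-1})$ is ADC; this is exactly the argument underlying Proposition \ref{prop: ADC_weinstein_boundary_ADC} and yields that $\partial W^{(i+1)}$ is ADC.

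The main obstacle is arranging a \emph{single} non-increasing sequence of contact forms $(\alpha_k, D_k)$ on $\partial W^{(i+1)}$ that simultaneously witnesses positive degree of all contractible Reeb orbits and of all Reeb chords of $\partial L$; without this we would only obtain separate ADC properties, not the ADC pair property. The resolution is to apply the framework of Proposition \ref{prop: techsurgery} and Proposition \ref{prop: nice_to_supernice}: one shrinks the Weinstein handle along a cofinal sequence so that the new orbits inside the handle (or the new chords on $\partial L$ near the surgery region) have arbitrarily small action and are controlled explicitly by the Reeb dynamics of the nested standard handles. Combined with the techniques of the proof of Theorem \ref{thm: semi-surgery}, which modify the attaching Legendrian by a formal isotopy to make all chord words of bounded action positive-degree, these ensure that the forms produced at each stage of the induction are simultaneously adapted to $\partial W^{(i+1)}$ and to $\partial L$. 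Iterating through all the handles of $W \setminus T^*L$ then yields $(\partial W, \partial L)$ as an asymptotically dynamically convex pair.
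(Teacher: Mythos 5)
Your proposal is correct and follows essentially the same route as the paper: anchor at $(\partial T^*L, \partial L)$ via Lemma \ref{lem: cotangentLaggood}, then propagate the ADC pair property across the handles of $W\setminus T^*L$ using Proposition \ref{prop: nonsimultaneousgood} together with the ADC-link condition of Remark \ref{rem: nonsimultaneousgood}(2), which is supplied by the corner-relative definition of an ADC Lagrangian. The paper's proof is more terse because the synchronization of a single decreasing sequence of contact forms that you single out as the ``main obstacle'' is already packaged inside the proof of Proposition \ref{prop: nonsimultaneousgood}, so the explicit appeals to Theorems \ref{thm: MLYau}, \ref{thm: MLYauindex2}, \ref{thm: semi-surgery} and Proposition \ref{prop: nice_to_supernice} are redundant (though harmless).
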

\begin{proof}
Since $L$ is Morse subcritical, $\partial L \subset  \partial T^*L$ is an ADC pair by Lemma \ref{lem: cotangentLaggood}. 
Since $L \subset W$ is an ACD Lagrangian, $W$ is formed by successively attaching Weinstein handles to $ST^*L \subset \partial T^*L$ along Legendrians that form ACD pairs with each successive contact manifolds. Then by Proposition \ref{prop: nonsimultaneousgood}, $\partial L$ also forms an ACD pair with each successive contact manifold; in particular, $\partial L \subset \partial W$ is also an ADC pair. 
Note that for Proposition \ref{prop: nonsimultaneousgood} to hold, we need the condition that all words $ab$ from the attaching spheres to $\partial L$ have positive degree;  this is precisely the  condition mentioned in Remark \ref{rem: nonsimultaneousgood} that $\Lambda_- \cup \Lambda^{n-1}$ form an ACD link. Here all words $ab$ have positive degree since the  cobordism $W\backslash T^*L$ is ADC relative to its corner. 
\end{proof}

\section{Open problems}\label{sec: open_problems}
  
We now present some open problems; we focus on the contact case but many similar problems apply to Legendrians. 
\subsection{Topology of fillings}
Suppose that $(Y^{2n-1}, \xi)$ has a flexible filling $W^{2n}$. Corollary \ref{thm: mainstronger} showed that $(Y, \xi)$ remembers the cohomology of fillings with \textit{vanishing} $SH$. However, it is not known whether all fillings have this property for $n \ge 2$; for $n=1$ this is false because $S^1$ has Weinstein fillings with vanishing and non-vanishing $SH$, as shown in Example \ref{ex: surfaces}. \\

\noindent\textbf{Problem 1:} If $(Y, \xi)$ has a flexible filling $W$, do all Liouville fillings $X$ of $(Y, \xi)$ have the same cohomology as $W$? More generally, do all Liouville fillings of an arbitrary asymptotically dynamically convex $(Y, \xi)$ have the same cohomology? 
\\

Work of Barth, Geiges, and Zehmisch \cite{Geiges_subcritical} showed that the first part of Problem 1 has an affirmative answer if $W$ is subcritical and simply-connected. Using this result, we showed in Proposition \ref{thm: reduce_subcritical} that there is also an affirmative answer when the flexible domain admits an almost symplectic embedding into a subcritical domain.

In a slightly different direction, we can also ask whether the contact boundary of a flexible domain remembers more than just the cohomology. 
\\

\noindent\textbf{Problem 2:}
Do all flexible Weinstein fillings of $(Y, \xi)$ have the same almost Weinstein class?
\\

In this case, the Weinstein fillings would be Weinstein deformation equivalent by the uniqueness h-principle  \cite{CE12}
and hence have symplectomorphic completions.
In Corollary \ref{cor: displaceable_flexible_diffeomorphism}, we showed that this is the case if $(Y, \xi)$ has a flexible filling that is smoothly displaceable in its completion.

Theorem \ref{thm: boundedinfinite} shows that for any $M^n \in \Omega^n$, there is a contact structure $\xi_M$ in $(Y^{2n-1}, J)$ or $(Y^{2n-1}, J')$, depending on the parity of $n$. Furthermore, if $H_k(\Lambda M; \mathbb{Z})$, $H_k(\Lambda N; \mathbb{Z})$ are sufficiently different, then $\xi_M, \xi_N$ are non-contactomorphic. This shows that $\xi_M$ remembers part of the homology of $\Lambda M$. 
\\

\noindent\textbf{Problem 3:}
Does $\xi_M$ remember the cohomology of $M$?  
\\

Problem 3 is the contact analog of Problem 4.12 in \cite{EGL}, which asked a similar question for the Weinstein domains $T^*S^n_M$ from Theorem \ref{thm: boundedinfinite}.

\subsection{Exotic contact structures}
By connect summing with exotic contact spheres, we showed in Theorem \ref{thm: inf_contact_flex} that any almost contact manifold with an almost Weinstein filling has infinitely many contact structures. We can consider the following more general scenario. 
Suppose $(Y^{2n-1}, \xi), n \ge 3,$ has a Liouville filling $X$. Let $M_i$ be the infinite sequence of Weinstein domains with $\partial M_i$ in $(S^{2n-1}, J_{std})$ from the proof of Theorem \ref{thm: inf_contact_flex}; as we noted in Remark \ref{rem: ustilovksy}, these examples exist only for $n \ge 3$. Let $X_i := X \natural M_i$ and $(Y, \xi_i) := \partial X_i$. Since $\partial M_i$ in $(S^{2n-1}, J_{std})$, $(Y, \xi_i)$ is almost contactomorphic to $(Y, \xi)$ for all $i$. 
However it is not clear whether different $(Y, \xi_i)$ are contactomorphic or not. 
Either infinitely many of the $(Y, \xi_i)$  are different or only finitely many of them are different, in which case infinitely many of the $(Y, \xi_i)$ coincide; in the latter case, there exists a contact structure with infinitely many fillings $X_i$,  all of which have different homotopy types. 
To summarize: for any $(Y^{2n-1}, \xi), n \ge 3,$ with a Liouville filling $X$, either
\begin{enumerate}
\item $Y$ has infinitely many contact structures  almost contactomorphic to $(Y, \xi)$, or 
\item there exists a contact structure on $Y$ that is almost contactomorphic to $(Y, \xi)$ and has infinitely many Liouville fillings with different homotopy types. 
\end{enumerate}

In Theorem \ref{thm: inf_contact_flex}, we showed that if $(Y^{2n-1}, \xi), n \ge 3,$ has a flexible Weinstein filling, then (1) holds. We can ask whether (1) holds more generally. 
\\

\noindent \textbf{Problem 4:} 
If $(Y^{2n-1}, \xi), n \ge 3,$ has a Liouville filling, does $Y$ have infinitely many different contact structures in the same almost contact class?
\\

As we discussed after the statement of Theorem \ref{thm: inf_contact_flex} in the Introduction, claim (1) does not always hold for $n = 2$; hence the condition $n \ge 3$ is crucial. 
 We also note that (2) holds for certain $(Y^{2n-1}, \xi)$; Ozbagci and Stipsicz  \cite{OS} constructed examples for $n =2$ and Oba \cite{Oba_infinite_fillings} constructed examples for $n \ge 4$ and even. 

By taking the boundary connected sum with the exotic plumbings from Theorem \ref{thm: boundedinfinite}, we can obtain slightly stronger versions of (1), (2). More precisely, suppose $(Y^{2n-1}, \xi)$ has a Liouville filling $X$ such that  $SH(X)$ is finite-dimensional (or at least is finite-dimensional in certain degrees). If $n \ge 4$ is odd, then either 
\begin{enumerate}
\item[(1)\textquotesingle] $Y$ has infinitely many contact structures that are almost contactomorphic to $(Y, \xi)$ and admit almost symplectomorphic Liouville fillings, or
\item[(2)\textquotesingle] there exists a contact structure on $Y$ that is almost contactomorphic to $(Y, \xi)$ and has 
infinitely many almost symplectomorphic fillings with different symplectomorphism types.  
\end{enumerate}
Theorem \ref{thm: boundedinfinite} shows that (1)\textquotesingle \ holds if $(Y^{2n-1},\xi)$ has a flexible Weinstein filling and $n \ge 4$ is odd. There are no known examples  in any dimension where (2)\textquotesingle \ holds, i.e. in all similar examples \cite{Oba_infinite_fillings, OS}, fillings of the same contact manifold are distinguished by algebraic topology.

It is also not known whether all almost contact manifolds have a fillable contact structure. Hence it is unclear when the situations described above apply. On the other hand, \textit{all} almost contact manifolds have at least one contact structure \cite{BEMtwisted}, which is overtwisted and hence non-fillable. Unfortunately it is not clear how to use this contact structure to create infinitely many other contact structures. If we connect sum an overtwisted structure with our exotic contact spheres, the result is still an overtwisted structure in the same formal class and hence by the h-principle for overtwisted structures is contactomorphic to the original one.

\section{Appendix: scaling map lemma}

In this section, we prove Lemma \ref{lem: techkey}, a more refined version of Lemma \ref{lem: maingeo}, 
and Proposition \ref{prop: nice_sh_independent}. These results are needed to prove our main result Theorem \ref{thm: semi-surgery}, i.e. flexible surgery preserves asymptotically dynamically convex contact structures.  
The proofs of Lemma \ref{lem: techkey} and Proposition \ref{prop: nice_sh_independent} involve another somewhat technical but geometric lemma, which can be stated without any reference to the ambient contact manifold or looseness. 
Fix a metric on $\Lambda$. As before, we let  $U^\epsilon = U^{\epsilon}(\Lambda) \subset (J^1(\Lambda), \alpha_{std})$ denote $\{ \|y\| < \epsilon, |z| < \epsilon\}$; here $\alpha_{std} = dz - \sum_{i=1}^n y_i dx_i$, where $x_i$ are any local coordinates on $\Lambda$ and $y_i$ are the dual coordinates on $T^*\Lambda$. For $c>0$, let $s_c$ be the scaling contactomorphism of $J^1(\Lambda)$ 
\begin{equation}
s_c(x,y,z) = (x,cy, cz).
\end{equation} 
Note that we can multiply the $y$ coordinate by $c$ since it is the coordinate on the vector space fibers of $T^*\Lambda$. Also, note that $s_c^*\alpha = c\alpha$. In the following lemma, we construct a contactomorphism supported in $U^1$ that scales $U^{1/2}$ down an arbitrary amount using $s_c$ but does not change the contact form by more than a fixed bounded amount. To do so, we first construct such a contactomorphism  without the correct support and then modify this contactomorphism using contact Hamiltonians. 

\begin{lemma}\label{lem: scalingmap}
For any positive $\epsilon$ and $\delta$ such that 
$\delta \le \epsilon$, there exists a contactomorphism 
$f_{\delta}$ of $(J^1(\Lambda), \alpha)$ supported in $U^\epsilon$ such that  $f_\delta|_{U_{\epsilon/2}}= 
s_\delta$ 
and $f_\delta^* \alpha < 4 \alpha$. 
\end{lemma}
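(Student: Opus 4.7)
The plan is to realize $f_\delta$ as the time-$(-\log\delta)$ map of a cutoff contact Hamiltonian flow. The scaling $s_c$ is itself such an isotopy: with respect to $\alpha_{std} = dz - \sum y_i\,dx_i$ (Reeb field $\partial_z$), a direct computation shows that the contact Hamiltonian $H = -z$ generates the contact vector field $X_H = -z\,\partial_z - \sum y_i\,\partial_{y_i}$, whose time-$t$ flow is precisely $s_{e^{-t}}$; in particular $\phi^H_{-\log\delta} = s_\delta$, with uniform conformal factor $\delta$.

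To make the resulting contactomorphism compactly supported, I would pick a smooth cutoff $\chi: J^1(\Lambda) \to [0,1]$ with $\chi \equiv 1$ on $U^{\epsilon/2}$ and $\chi \equiv 0$ outside $U^\epsilon$, and set $f_\delta := \phi^{\tilde H}_{-\log\delta}$ for $\tilde H := \chi H$. Two of the three required properties are then immediate. First, $X_{\tilde H}$ vanishes outside $U^\epsilon$, so $f_\delta$ is supported there and $\partial U^\epsilon$ is invariant under the flow. Second, $X_{\tilde H} = X_H$ on $U^{\epsilon/2}$, and since $s_c$ preserves $U^{\epsilon/2}$ for every $c \le 1$, a trajectory starting inside $U^{\epsilon/2}$ never leaves it and agrees with the scaling trajectory, so $f_\delta|_{U^{\epsilon/2}} = s_\delta$.

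The third condition, the pointwise bound $f_\delta^*\alpha_{std} < 4\alpha_{std}$, is the main obstacle. Writing $(\phi^{\tilde H}_t)^*\alpha_{std} = \mu_t\,\alpha_{std}$, the conformal factor evolves by
\[
\partial_t \log \mu_t \;=\; (R_{\alpha_{std}}\tilde H)\circ \phi^{\tilde H}_t \;=\; -(\chi + z\,\partial_z\chi)\circ \phi^{\tilde H}_t,
\]
which automatically gives $\mu_{-\log\delta} = \delta$ on $U^{\epsilon/2}$ and $\mu_{-\log\delta} = 1$ outside $U^\epsilon$. The difficulty is entirely confined to the transition annulus, where the term $z\,\partial_z\chi$ can have either sign and grows with the sharpness of $\chi$; a naive estimate degrades as $\delta\to 0$, since trajectories of $X_{\tilde H}$ can linger arbitrarily long near $\partial U^\epsilon$ (where $\chi$, and hence the speed of the flow, vanishes).

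To control the transition region, I would choose $\chi$ to be a function of the radial combination $r^2 := \|y\|^2 + z^2$; a direct calculation then shows that the flow preserves the foliation by $x$-fibres and the radial coordinate obeys the autonomous ODE $\dot{r^2} = -2\chi(r^2)\,r^2$, reducing the analysis to a one-dimensional problem in which the accumulated integral giving $\log\mu_t$ can be expressed in closed form in terms of the cutoff profile. Using the fact that once a trajectory enters $U^{\epsilon/2}$ the scaling takes over and $\log\mu_t$ is monotonically driven down, the maximum of $\mu_t$ along any trajectory is bounded in terms of the profile alone, independently of $\delta$. By choosing the profile carefully (and, if a single profile is not enough, writing $f_\delta$ as a finite composition of cutoff scalings each of factor at least $\tfrac{1}{2}$ on a nested sequence of neighborhoods, so that conformal-factor contributions accumulate only in disjoint transition regions), one arranges the uniform bound $\mu_{-\log\delta} < 4$; the threshold $4$ is precisely what one expects from such an iterated localization rather than from any single trajectory estimate.
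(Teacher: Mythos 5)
The central claim of your proposal, that ``the maximum of $\mu_t$ along any trajectory is bounded in terms of the profile alone, independently of $\delta$,'' is false, and this is a genuine gap. To see it concretely, restrict to the $z$-axis ($y = 0$), which is invariant under $X_{\tilde H}$. Writing $g(z) := z\chi(z^2)$, the radial ODE you derive becomes $\dot z = -g(z)$ and your formula $\partial_t\log\mu_t = -(\chi + 2z^2\chi') = -g'(z)$ combine (via $\frac{d}{dt}\log g(z(t)) = g'(z)\dot z/g(z) = -g'(z)$) to give the exact closed form $\mu_t = g(z(t))/g(z_0)$. Now $g$ vanishes on $\partial U^\epsilon$, equals $z$ inside the region where $\chi\equiv 1$, and hence attains a maximum of at least $\epsilon/\sqrt 2$ somewhere in the transition annulus. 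For a trajectory starting at $z_0$ near $\epsilon$ and flowed long enough to reach the argmax $z^*$ of $g$, we get $\max_t\mu_t = g(z^*)/g(z_0)\to\infty$ as $z_0\to\epsilon^-$. The reason this scenario is compatible with the constraint that the time is $-\log\delta$ is precisely the ``lingering'' you observe: the time to escape the transition region diverges as $z_0\to\epsilon^-$, so by choosing $\delta$ small enough, one finds trajectories whose conformal factor is arbitrarily large. No cutoff profile avoids this, since $g(z_0)$ necessarily tends to $0$ while $\max g$ stays bounded below; and the iterated-composition variant does not obviously cure it either, as a single trajectory starting near $\partial U^\epsilon$ can pass successively through the nested transition annuli, accumulating a factor from each stage. (A secondary, smaller error: the flow does \emph{not} preserve the $x$-fibers; the $x$-component of $X_{\tilde H}$ equals $2y_i z\chi'(r^2)$, which is nonzero on the transition annulus.)

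The paper sidesteps this difficulty by never cutting off the generator in the $z$-direction at all. Instead it starts from a compactly supported family of diffeomorphisms $h_t$ of the $z$-line (Lemma~\ref{lem: techlem2}) whose associated conformal factor $\partial h_t/\partial z$ is \emph{uniformly} bounded for all $t$; the lift $\phi_t(x,y,z) = (x, (\partial h_t/\partial z)y, h_t(z))$ is then automatically supported in $\{|z| < 1\}$. The only remaining cutoff is in the $y$-direction by a bump $b(\|y\|^2)$, and this is benign because $R_\alpha = \partial_z$ and $b$ is $z$-independent, so $dG_t(R_\alpha) = b\cdot dH_t(R_\alpha)$ and the pointwise bound $\sup dH_t(R_\alpha) \le 5/4$ passes through unchanged. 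The mechanism that fails in your approach (an autonomous Hamiltonian cut off in the direction of the Reeb field) is exactly the mechanism the paper is careful to avoid.
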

\begin{remark}\
\begin{enumerate}[leftmargin=*]
\item
By cutting off the contact Hamiltonian for $s_c$ with a bump function, it is easy to construct such $f$ with  $f_\delta^* \alpha < C_\delta \alpha$ for some constant $C_\delta$ depending on $\delta$. However it is unclear what effect this cutting off has on $C_\delta$. The point is that there is a bound on $f_\delta^* \alpha$ independent of $\delta$, which we will show by explicit computation.  
\item 
Note that $(f_\delta|_{U_{\epsilon/2}})^*\alpha = \delta \alpha$. However, this condition by itself is not enough for the proof of Lemma \ref{lem: techkey}; we will actually need  $f_\delta|_{U_{\epsilon/2}}= s_\delta$.
\end{enumerate}
\end{remark}
\begin{proof}
It is enough to prove the case $\epsilon = 1$. 
If we have constructed $f_\delta$ for $\epsilon=1$, in general we can take as our contactomorphism $s_\epsilon\circ f_\delta \circ s_\epsilon^{-1}$. This has the desired support, 
$s_\epsilon\circ f_\delta \circ s_\epsilon^{-1}|_{U^{\epsilon/2}} = s_\delta$, and                                                                                                                                                                                                                                                                                                                                                                                                                                                                                                                                                                                                                                                                                                                                                                                                                                                                                                                                                                                                                                                                                                                                                                                                                                                                                                                                                                                                                                                                                                                                                                                                                                                                                                                                                        
$(s_\epsilon \circ f_\delta \circ s_\epsilon^{-1})^*\alpha
= \epsilon (f_\delta \circ s_\epsilon^{-1})^* \alpha
\le 4 \epsilon (s_\epsilon^{-1})^* \alpha
= 4 \alpha$. 

We first explain how to construct a contactomorphism of $J^1(\Lambda)$ with all the desired properties except support in $U^1$. For $t\in [0,1)$, we consider the diffeomorphism $h_t: [0,1] \rightarrow [0, 1]$ that is the identity map near $\{1\}$ and is obtained by smoothing 
the piecewise-linear map whose graph in $\mathbb{R}^2$ connects the points 
\begin{itemize}
\item $(0,0)$ and $(1/2, (1-t)/2)$ with a line of slope $1-t$
\item 
 $(1/2, (1-t)/2)$ and $(1,1)$
with a line of slope $1+t $.
\end{itemize}
 \begin{figure}
     \centering
     \includegraphics[scale=0.25]
     {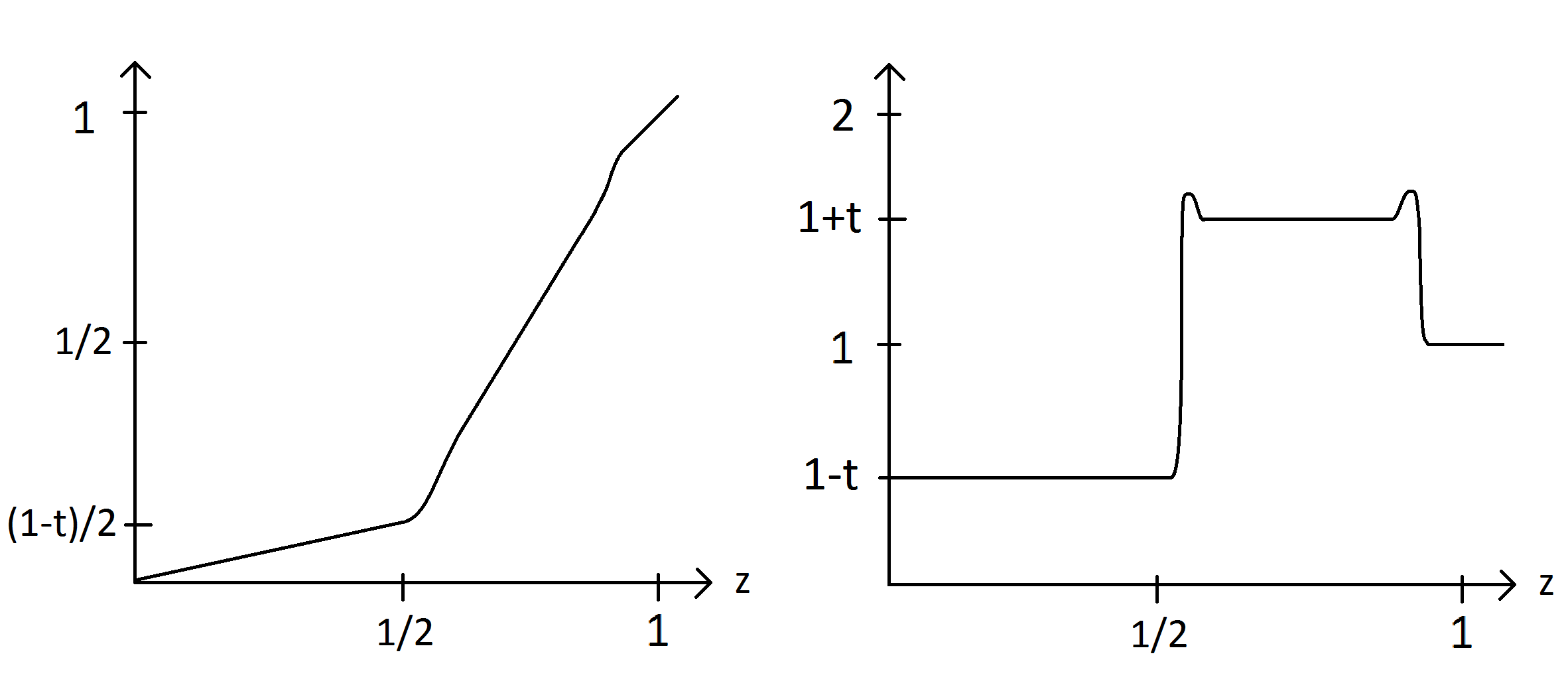}
     \caption{Graph of $h_t$ and $\frac{dh_t}{dz}$}
     \label{fig: graphft}
 \end{figure}
See Figure \ref{fig: graphft}.
Note that $h_0 = Id$ and as $t$ approaches $1$, the  graph of $h_t$ approaches a line of slope $0$ and a line of slope $2$. 
Note that $1-t, 1+t < 2$ for $t < 1$. In particular, we can assume that $h_t(z) = (1-t) z$  for $z\in [0,1/2]$ and $\frac{d h_t}{d z} \le 2$ everywhere;
also $\frac{d h_t}{d z} > 0$ since $h_t$ is a diffeomorphism. 
Extend $h_t$ to a diffeomorphism $h_t: \mathbb{R} \rightarrow \mathbb{R}$ satisfying
$h_t(-z) = -h_t(-z)$ and $h_t(z) = z$ for $|z|> 1$.
Let $\phi_t: J^1(\Lambda)\rightarrow J^1(\Lambda)$ be the contactomorphism defined by
$\phi_t(x, y, z) = (x, \frac{d h_t}{dz}y, h_t(z))$;
because $\frac{d h_t}{dz} > 0$, this is a diffeomorphism. In particular, we have $\phi_t|_{U_{1/2}}=  
 s_{1-t}$. Also, note that $\phi_t^* \alpha = \frac{d h_t}{d z} \alpha$ and so $\phi_t^* \alpha \le 2\alpha$ for all $t \in [0,1)$.  Then 
 $\phi_{1-\delta}$ satisfies all conditions except it is not supported in $U^{1}$; the issue is that    if $ \frac{d h_t}{d z}(z) \ne 1$, then $(x,  \frac{d h_t}{d z}y, h_t(z)) \ne (x,y,z)$ for \textit{all} $y \in \mathbb{R}^n \backslash \{0\}$.

We now explain to how to modify $\phi_{t}$ to get the correct support while preserving the other properties.  We first describe $h_t$ more carefully. In particular, we will use the following lemma about smooth functions, which we prove at the end of this section.

\begin{lemma}\label{lem: techlem2}
There exists a smooth family of diffeomorphisms $h_t= h(t, \cdot): \mathbb{R} \rightarrow \mathbb{R}$ 
defined for $t\in [0,1)$ such that 
$h_0(z) = z$, 
$h_t(z) = (1-t)z$ for $z\in [-\frac{1}{2}, \frac{1}{2}]$,
$h_t(z) = z$ for $|z| > 1$
and 
$$
\max_{z\in \mathbb{R}}
\left(\frac{\partial h_t}{\partial z} \right)^{-1}
\frac{\partial^2 h_t} {\partial t \partial z}
\le \frac{5}{4}
$$
for all $t\in [0,1)$.
\end{lemma}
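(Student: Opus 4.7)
The plan is to construct $h_t$ as a one-parameter linear perturbation of the identity, namely $h_t(z) := z + t\, g(z)$, where $g \colon \R \to \R$ is a single smooth odd function chosen once and for all. The three prescribed behaviors of $h_t$ translate immediately into requirements on $g$: the condition $h_0 = \mathrm{id}$ is automatic, $h_t(z) = (1-t)z$ on $[-\tfrac12, \tfrac12]$ forces $g(z) = -z$ on that interval (so in particular $g'(z) \equiv -1$ there), and $h_t(z) = z$ for $|z| > 1$ forces $g \equiv 0$ for $|z| \ge 1$. The oddness of $g$ (so that $g'$ is even) will come automatically from the symmetric construction.

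Since $\partial_z h_t = 1 + t\, g'(z)$ and $\partial_t \partial_z h_t = g'(z)$, the quantity to estimate is
\[
F(t,z) \;=\; \frac{g'(z)}{1 + t\, g'(z)}.
\]
The key observation is that $\partial_t F = -g'(z)^2/(1+t\,g'(z))^2 \le 0$, so $F(t,z) \le F(0,z) = g'(z)$ for every $t \in [0,1)$. Hence the supremum over $z$ and $t$ reduces to $\max_z g'(z)$, and the bound $5/4$ follows once I arrange $\max_z g'(z) \le 5/4$. At the same time $h_t$ is a diffeomorphism for all $t \in [0,1)$ iff $1 + t\, g'(z) > 0$, which is equivalent to $g'(z) \ge -1$ everywhere (with equality exactly on $[-\tfrac12, \tfrac12]$ by our prescription).

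The main obstacle is an integral constraint. Since $g$ is odd with $g(\tfrac12) = -\tfrac12$ and $g(1) = 0$,
\[
\int_{1/2}^{1} g'(z)\,dz \;=\; \tfrac12,
\]
so the average of $g'$ on the outer half-interval of length $\tfrac12$ must equal $1$. Smoothness forces $g'$ to leave the value $-1$ at $z = \tfrac12$ and return to $0$ at $z = 1$, and both transitions strictly lower the average, so $g'$ must overshoot above $1$ on some subinterval. I would make the overshoot explicit by fixing a small $\delta > 0$ and defining $g'$ on $[\tfrac12, 1]$ in three pieces: a smooth monotone transition from $-1$ up to the plateau value $M := 5/4$ on $[\tfrac12, \tfrac12 + \delta]$, the constant value $5/4$ on $[\tfrac12 + \delta, 1 - \delta]$, and a smooth monotone transition from $5/4$ down to $0$ on $[1-\delta, 1]$, with all derivatives vanishing at the plateau endpoints so that the pieces glue to a $C^\infty$ function. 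Extend to $[-1,-\tfrac12]$ by evenness of $g'$. A direct calculation shows that $\int_{1/2}^1 g'(z)\,dz$ equals $\tfrac{5}{4}(\tfrac12 - 2\delta)$ plus transition contributions which can each be independently tuned, by adjusting the shape of the transition, to lie anywhere in $[-\delta, \tfrac{5}{4}\delta]$; for $\delta = \tfrac18$ the target value $\tfrac12$ lies in the interior of this admissible range, so the intermediate value theorem produces a valid choice of transition profile. With this $g$, the required properties of $h_t$ follow by inspection and the uniform bound $5/4$ follows from the monotonicity argument above.
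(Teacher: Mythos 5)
Your proof is correct and follows essentially the same approach as the paper: the ansatz $h_t(z)=z+t\,g(z)$ is an equivalent reparametrization of the paper's $h_t(z)=\int_0^z\bigl(t\,g(s)+1\bigr)\,ds$ (your $g'$ plays the role of the function the paper calls $g$), and both arguments reduce the estimate to $\max_z g'(z)\le \tfrac{5}{4}$ and then produce an odd, compactly supported profile with the correct normalization $\int_{1/2}^1 g' = \tfrac12$. Your observation that $\partial_t F\le 0$, so that $F(t,z)\le g'(z)$, is a slightly slicker way to reach that reduction than the paper's case split on the sign of its $g$, and you are more explicit about existence via the plateau-plus-transitions profile and the intermediate value theorem, but neither change constitutes a genuinely different route.
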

\begin{remark}
In fact, we can replace $\frac{5}{4}$ by any number bigger than $1$. 
\end{remark}
Our old $h_t$ satisfies all conditions except for possibly the last one and the $h_t$ constructed in this lemma look very much like the $h_t$ we constructed earlier. 

Again we consider the contact isotopy $\phi_t$ of $J^1(\Lambda)$ with
$\phi_t(x,y,z) = $ $(x, \frac{\partial h_t }{\partial z}y, h_t(z))$. The vector field $X_t = (\frac{d\phi_t}{dt}) \circ \phi_t^{-1}$ is a time-dependent contact vector field whose flow is defined for all $t\in [0,1)$; for example, see Section 2 of \cite{Topological_contact_dynamics}. 
Therefore, there exists a time-dependent contact Hamiltonian $H_t: J^1(\Lambda) \rightarrow \mathbb{R}$ such that the corresponding contact vector field is precisely $X_t$; we do not need 
an explicit formula for $H_t$ but note that $H_t = \alpha(X_t)$. 
Because $\phi_t$ is the identity map for $|z|>1$, $X_t$ and hence $H_t$ vanish for $|z|>1$. 
Let $b: \mathbb{R}^+ \rightarrow [0,1]$ be a smooth non-increasing function supported on $[0, 1)$ such that $b=1$ on $[0, 3/4]$. 
Let $G_t: J^1(\Lambda) \rightarrow \mathbb{R}$ be defined by $G_t(x,y,z) = b(\|y\|^2)H_t(x,y,z)$, where we use the metric on $\Lambda$ to define 
$\|y\|^2$. Note that $G_t$ is supported in $U^1$. Let $\psi_t$ be the contact isotopy obtained by integrating the contact vector field $Y_t$ of $G_t$. Since $G_t$ is supported in $U^1$, so is $\psi_t$ and therefore $\psi_t$ is defined for all $t\in [0,1)$. 
We will show that $\psi_{1-\delta}$ is the desired contactomorphism. Since $G_t = H_t$ in $U^{1/2}$, we have $Y_t = X_t$ in $U^{1/2}$. Furthermore, $\phi_t(U^{1/2}) \subseteq U^{1/2}$ for all $t$ and so $\psi_t|_{U^{1/2}} =
\phi_t|_{U^{1/2}} = s_{1-t}$. Therefore $\psi_{1-\delta}|_{U^{1/2}} = s_\delta$. It remains to show that $\psi_{1-\delta}^*\alpha < 4\alpha$; we will show that $\psi_t^*\alpha < 4 \alpha$ for all $t\in [0,1)$. 

We now recall some properties of contact Hamiltonians. 
Suppose $\phi_t$ is a contact isotopy induced by an arbitrary contact Hamiltonian $H_t$. Then $\phi_t^*\alpha = \lambda_t \alpha$ for some $\lambda_t$. We have the following explicit formula for $\lambda_t$
\begin{equation}\label{eqn: lambda_formula}
\lambda_t = e^{\int_0^t \mu_s ds },
\end{equation}
where
$
\mu_t= dH_t(R_\alpha)\circ \phi_t
$. For completeness, we review the proof of this formula, which is given in \cite{Gbook}, p. 63. Note that 
$$
\frac{d\lambda_t}{dt}\alpha = \frac{d\phi_t^*\alpha}{dt} = \phi_t^*L_{X_t} \alpha = \phi_t^* (dH_t(R_\alpha) \alpha) = 
\mu_t \lambda_t \alpha
$$
where the second-to-last equality follows from the definition of the contact Hamiltonian. So we get 
$\frac{d\lambda_t}{dt} = \mu_t \lambda_t$, which proves Equation \ref{eqn: lambda_formula}. In particular, to bound $\lambda_t$ it is sufficient to bound $dH_t(R_\alpha)$. 
Solving for
$dH_t(R_\alpha)\circ \phi_t$ in terms of $\lambda_t$, we get
$$
 dH_t(R_\alpha)\circ \phi_t =  \mu_t  = \lambda_t^{-1} \frac{d \lambda_t}{dt}.
$$
In our situation, we have $\lambda_t = \frac{d h_t(z)}{d z}$ and so 
$$
 dH_t(R_\alpha)\circ \phi_t =  \left(\frac{\partial h_t}{\partial z} \right)^{-1}
\frac{\partial^2 h_t} {\partial t \partial z}.
$$
By the last condition of Lemma 
\ref{lem: techlem2}, 
$\underset{J^1(\Lambda)}{\max} \ dH_t(R_\alpha) 
= \underset{J^1(\Lambda)}{\max} \ dH_t(R_\alpha) \circ \phi_t 
\le 5/4$ for all $t\in [0,1)$. 

We are interested in bounding the function $\gamma_t$ defined by $\psi_t^*\alpha = \gamma_t\alpha$. Recall that $\psi_t$ is generated by $G_t = b(\|y\|^2)H_t$. 
Since $b$ is independent of $z$ and 
$R_\alpha = \partial_z$, we have that $dG_t(R_\alpha) = b(\|y\|^2) dH_t(R_\alpha)$; the fact that we can just factor out $b$ and not consider derivatives of $b$ is key. Let $\nu_t = dG_t(R_\alpha)\circ \psi_t$. 
We have $\underset{J^1(\Lambda)}{\max} \ dH_t(R_\alpha) \ge 0$ because $H_t = 0$ for $|z| > 1$ for all $t$. Also,  $0 \le b(\|y\|^2) \le 1$. Therefore
$$
\max_{J^1(\Lambda)}\nu_t = \max_{J^1(\Lambda)}{dG_t(R_\alpha)} 
\le 
\max_{J^1(\Lambda)}{dH_t(R_\alpha)} 
=\max_{J^1(\Lambda)}\mu_t
\le \frac{5}{4}.
$$
Therefore by Equation \ref{eqn: lambda_formula}
$$
\gamma_t = e^{\int_0^t \nu_s ds } \le 
e^{\int_0^t \frac{5}{4} ds} = e^{\frac{5}{4}t} < e^\frac{5}{4} < 4
$$
for all $t\in [0,1)$ as desired. 
\end{proof}
\begin{remark}
We knew a priori that 
$\int_0^t \mu_s ds = \ln \lambda_t \le \ln 2$ for all points in $J^1(\Lambda)$ but this is not enough to conclude that $\int_0^1 \max_{J^1(\Lambda)}\mu_t dt < \infty$, as required for the last part of the proof of Lemma \ref{lem: scalingmap}. This is why we needed to us Lemma \ref{lem: techlem2} to bound  $\max_{J^1(\Lambda)}\mu_t$.
\end{remark}

\begin{proof}[Proof of Lemma \ref{lem: techlem2}]
 \begin{figure}
     \centering
     \includegraphics[scale=0.27]{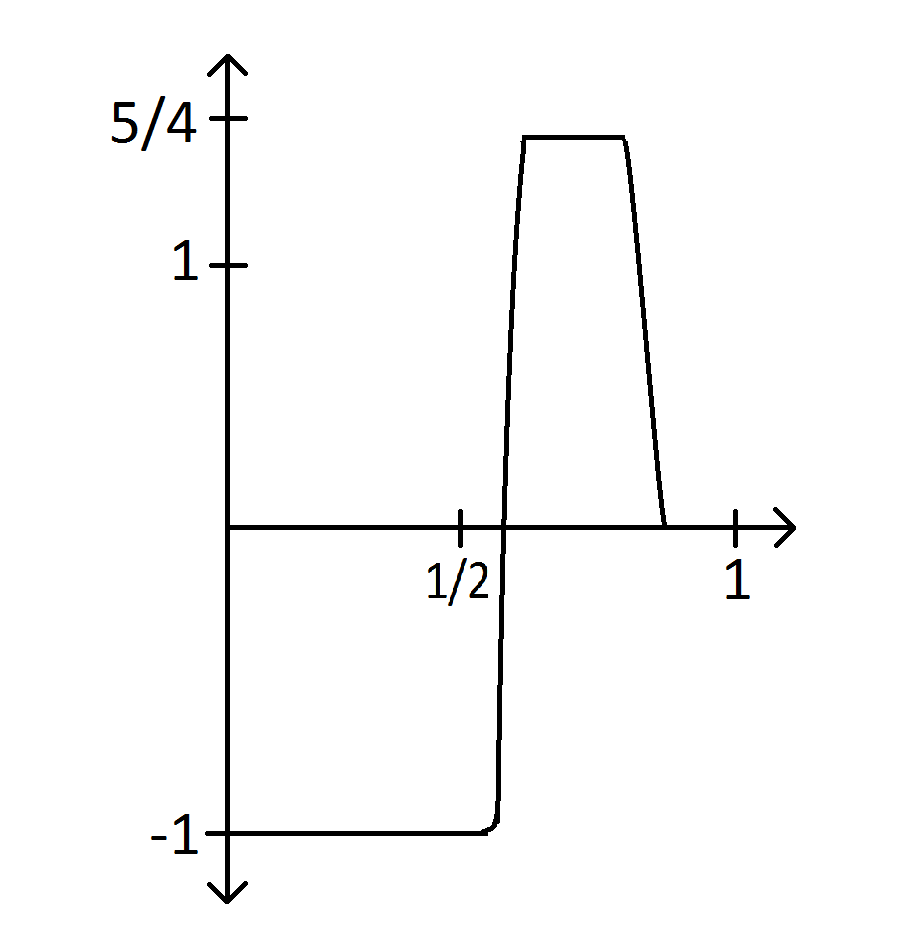}
     \caption{Graph of $g$ }
     \label{fig: graphgt}
 \end{figure}
Now we explain how to construct the desired $h_t$.
Let $g:[0,1] \rightarrow [-1, \frac{5}{4}]$ be a smooth function such that $g$ equals $-1$ on $[0,1/2]$, vanishes near $1$, and $\int_0^1 g(x) dx = 0$; see Figure \ref{fig: graphgt}.
Define $h_t: [0,1] \rightarrow [0,1]$ by $h_t(z) = \int_0^z (tg(s)+1)ds$.
  Since $\frac{\partial h_t}{\partial z} = tg(z)+1 > 0$ for $t < 1$, $h_t$ is a smooth family of increasing (in $z$) functions. Also, $h_0(z) =z, h_t(z) = (1-t)z$ for $z\in [0,1/2],$ and $h_t(z) = z$ near $1$. We need to bound 
$$
\left(\frac{\partial h_t}{\partial z}\right)^{-1}
\frac{\partial^2 h_t} {\partial t \partial z} = 
(tg(z)+1)^{-1} \frac{\partial (tg(z)+1)}{\partial t}
 = \frac{g(z)}{tg(x)+1}
$$
in $z$ for fixed $t<1$. If $g(z)\le 0$, then because $tg(z)+1 >0$ for $t<1$, we have that $g(z)/(tg(z)+1) \le 0$. If $g(z)>0$, then $tg(z)+1 \ge 1$ and so $g(z)/(tg(z)+1)) \le g(z) \le \frac{5}{4}$. So $g(z)/(tg(z)+1) \le \frac{5}{4}$ for all $z\in [0,1]$ and $ t < 1$, as desired. Finally, we extend $h_t$ to a diffeomorphism $h_t: \mathbb{R} \rightarrow \mathbb{R}$ such that $h_t(z)= -h_t(-z)$ and $h_t(z) = z$ for $|z|>1$. Since $\frac{\partial h_t}{\partial z}(z)= \frac{\partial h_t}{ \partial z}(-z)$, this bound holds for all $z\in \mathbb{R}$. 
\end{proof}

\begin{remark}\label{rem: appendix}
Lemma \ref{lem: scalingmap} also holds for 
$(J^1(\Lambda) \times E, \alpha)$. Here $E$ is a trivial conformal symplectic bundle over $\Lambda$ and $\alpha = dz - y dx - vdu$, where $(x,y,z, u, v)$ are local coordinates on $J^1(\Lambda) \times E$. In this case, we take 
$s_c(x,y,z,u,v) := (x, cy, cz, \sqrt{c}u, \sqrt{c}v)$ 
and $U^\epsilon = \{\|y\|, |z|, |u|, |v| < \epsilon\}$  so that $s_c^*\alpha = c\alpha$ and $s_c(U^\epsilon) \subset U^{\epsilon \sqrt{c}}$. 
We modify the proof a bit by using  $\phi_t(x,y,z,u,v) = (x,\frac{\partial h_t}{\partial z}y,h_t(z), \sqrt{\frac{\partial h_t}{\partial z}}u, \sqrt{\frac{\partial h_t}{\partial z}} v)$, which is smooth since $\frac{\partial h_t}{\partial z}$ is never zero, and $G_t = b(\|y\|^2) b(|u|^2)b(|v|^2)H_t$. 
Note that any isotropic submanifold with trivial symplectic conormal bundle has a neighborhood that is strictly contactomorphism to a neighborhood of $\Lambda$ in 
$(J^1(\Lambda) \times E, \alpha)$. 
When $\Lambda$ is a sphere, this is precisely the neighborhood of $\Lambda$ used in subcritical surgery. 
\end{remark}

Using Lemma \ref{lem: scalingmap} together with Lemma \ref{lem: maingeo}, we finally prove Lemma \ref{lem: techkey}.

\begin{proof}[Proof of Lemma \ref{lem: techkey}]

By Lemma \ref{lem: maingeo}, for any $D> 0$,
$\Lambda_0$ is formally Legendrian isotopic in $U^{\epsilon/4}(\Lambda_0, Y, \alpha)$ to a loose Legendrian $\Lambda'$ such that all elements of $\mathcal{P}^{< D}(\Lambda', Y, \alpha)$ have positive degree. 
Since $\Lambda$ is formally Legendrian isotopic to $\Lambda_0$ in $U^{\epsilon/4}(\Lambda_0, Y,\alpha)$ by assumption, $\Lambda$ is also formally Legendrian isotopic to $\Lambda'$ in $U^{\epsilon/4}(\Lambda_0, Y, \alpha)$. Since $\Lambda$ and $\Lambda'$ are loose in 
$U^{\epsilon/4}(\Lambda_0, Y, \alpha)$ and are formally Legendrian isotopic, by the relative version of Murphy's h-principle there exists a genuine Legendrian isotopy between $\Lambda$ and $\Lambda'$ contained in $U^{\epsilon/4}(\Lambda_0, Y, \alpha)$. We can extend this to an ambient contact isotopy $\phi_t$ supported in a small neighborhood of the Legendrian isotopy; see the proof of Theorem 2.6.2 of \cite{Gbook}. Hence we can assume $\phi_t$ is supported in 
$U^{\epsilon/2}(\Lambda_0, Y, \alpha)$. 

Note that we do not have any control over $\phi = \phi_1$ and indeed $\phi^*\alpha$ might be much larger than $\alpha$. We now explain how to modify $\phi$ to get a new contactomorphism $h$ which does not have this problem. 
Suppose that $\phi^* \alpha|_{U^{\epsilon/2}(\Lambda_0)} < 2^k \alpha$ for some possibly very large $k$. By taking larger $k$ if necessary, we can  assume that $\epsilon/2^k < \delta$.
Note that $(\phi^* \alpha)|_{U^{\epsilon}(\Lambda_0) 
\backslash {U^{\epsilon/2}(\Lambda_0)}}  = \alpha$ because $\phi$ is supported in $U^{\epsilon/2}(\Lambda_0, \alpha)$.
 By Lemma \ref{lem: scalingmap}, there exists a contactomorphism $f_{k}$ supported in $U^{\epsilon}(\Lambda_0, \alpha)$ 
such that $f_{k}^*\alpha < 4\alpha$ and $f_{k}|_{U^{\epsilon/2}(\Lambda_0)} = s_{1/2^k}$. 
We will show that the contactomorphism $h = f_{k} \circ \phi$ has the desired properties. 

First note that $h$ is supported in $U^\epsilon(\Lambda_0)$ because this is true for $f_{k}$ and $\phi$. 
Now we give bounds on $h^* \alpha$ in  $U^{\epsilon/2}(\Lambda_0, \alpha)$ and $Y\backslash U^{\epsilon/2}(\Lambda_0,  \alpha)$.
On $U^{\epsilon/2}(\Lambda_0, \alpha)$, 
we have $f_k^* \alpha|_{U^{\epsilon/2}(\Lambda_0)} = \frac{1}{2^k}\alpha$ and $\phi^*\alpha|_{U^{\epsilon/2}(\Lambda_0)} < 2^k \alpha$. Since $\phi(U^{\epsilon/2}(\Lambda_0)) = U^{\epsilon/2}(\Lambda_0)$, 
$$
h^*\alpha|_{U^{\epsilon/2}(\Lambda_0)}
= (f_k \circ \phi)^*\alpha|_{U^{\epsilon/2}(\Lambda_0)}
=
\phi^*(f_k^* \alpha|_{U^{\epsilon/2}(\Lambda_0)})|_{U^{\epsilon/2}(\Lambda_0)}
= \frac{1}{2^k}\phi^* \alpha|_{U^{\epsilon/2}(\Lambda_0)} 
< \alpha|_{U^{\epsilon/2}(\Lambda_0)}.
$$
On $Y\backslash U^{\epsilon/2}(\Lambda_0)$, we
have $f_k^* \alpha < 4 \alpha$ and  $\phi^*\alpha|_{Y\backslash U^{\epsilon/2}} = \alpha$ and so
$$
h^*\alpha|_{Y \backslash {U^{\epsilon/2}(\Lambda_0)}} 
 = (f_k\circ \phi)^*\alpha|_{Y\backslash {U^{\epsilon/2}(\Lambda_0)}} 
 < 4\alpha|_{Y\backslash {U^{\epsilon/2}(\Lambda_0)}}.
$$
Therefore, we have $h^*\alpha < 4 \alpha$ on all of $Y$ as desired. 

We now show that all elements of $\mathcal{P}^{< D}(h(\Lambda), Y, \alpha)$ have positive degree. Note that there is a degree-preserving bijection between 
$\mathcal{P}^{< D}(h(\Lambda), \alpha)$ and $\mathcal{P}^{< D}(\Lambda', \alpha)$. 
To see this, recall from Lemma \ref{lem: maingeo} that $\Lambda'$ is just $\Lambda_0$ with many zig-zags in $U^{\epsilon/2}(\Lambda_0)$. These zig-zags are so small that there is a bijection between  $\mathcal{P}^{< D}(\Lambda', Y, \alpha)$ and the union of global Reeb chords $\mathcal{P}^{< D}(\Lambda_0, Y, \alpha)$ of $\Lambda_0$ and local Reeb chords contained in $U^{\epsilon/2}(\Lambda_0)$. 
On the other hand, because $\Lambda' \subset U^{\epsilon/2}(\Lambda_0, \alpha)$, and $f_k|_{U^{\epsilon/2}(\Lambda_0)} = s_{1/2^k}$, we have that 
$h(\Lambda) = f_k(\Lambda') = s_{1/2^k}(\Lambda')$. 
By scaling $\Lambda'$ to $s_{1/2^k}(\Lambda')$, neither the set of global nor local chords change and so we have a grading-preserving bijection between $\mathcal{P}^{< D}(\Lambda', Y, \alpha)$ and 
$\mathcal{P}^{< D}(h(\Lambda), Y, \alpha)$; see the proof of Lemma \ref{lem: maingeo}. 
Since all elements of 
$\mathcal{P}^{< D}(\Lambda', Y, \alpha)$ have positive degree by construction, so do all elements of $\mathcal{P}^{< D}(h(\Lambda), Y, \alpha)$.

Since $\Lambda$ is loose in 
$U^{\epsilon/4}(\Lambda_0, \alpha)$, then by Remark \ref{rem: looseleg}, $h(\Lambda)$ is loose in $h(U^{\epsilon/4}(\Lambda_0, \alpha))$. 
Since $\epsilon/2^k < \delta$, we have $h(U^{\epsilon/4}(\Lambda_0, \alpha)) = s_{1/2^k}(U^{\epsilon/2}(\Lambda_0, \alpha)) = U^{\epsilon/2^{k+1}}(\Lambda_0, \alpha) \subset U^\delta(\Lambda_0, \alpha)$ and so $h(\Lambda)$ is loose in 
$U^\delta(\Lambda_0, \alpha)$. 
Also, since $\Lambda'$ is formally isotopic to $\Lambda_0$ in $U^{\epsilon/4}(\Lambda_0, \alpha)$, $h(\Lambda) = s_{1/2^k}(\Lambda')$ is formally isotopic to $s_{1/2^k}(\Lambda_0) = \Lambda_0$ in $s_{1/2^k}(U^{\epsilon/4}(\Lambda_0, \alpha)) \subset U^\delta(\Lambda, \alpha)$.
\end{proof}

We also need to prove Proposition \ref{prop: nice_to_supernice}, which produces contact forms for nice contact structures that are standard in a neighborhood of an isotropic submanifold. 

\begin{proof}[Proof of Proposition \ref{prop: nice_to_supernice}]

Let $\alpha_1 > \alpha_2$ be contact forms for $(Y, \xi)$ and let $\Lambda \subset (Y, \xi)$ be an isotropic submanifold with neighborhood $U^\epsilon(\Lambda, \alpha_1)$. 
Also, let $\epsilon_1 = \epsilon$
and let $\epsilon_2$ be sufficiently small so that 
$U^{\epsilon_2}(\Lambda, \alpha_2) \subset U^{\epsilon_1}(\Lambda, \alpha_1)$ and 
$U^{\epsilon_2/2}(\Lambda, \alpha_2) \subset U^{\epsilon_1/2}(\Lambda, \alpha_1)$.

We first show that there is a  contactomorphism $\phi$ of $(Y, \xi)$ supported in 
$U^{\epsilon_2/2}(\Lambda, \alpha_2)$ such that $\phi|_\Lambda = Id$ and  $\phi^*\alpha_2|_{U^{\delta_1}(\Lambda, \alpha_1)}=
 \alpha_1|_{U^{\delta_1}(\Lambda, \alpha_1)}$ for some sufficiently small $\delta_1$ such that 
$U^{\delta_1}(\Lambda, \alpha_1)\subset 
U^{\epsilon_2/2}(\Lambda, \alpha_2) \subset U^{\epsilon_1/2}(\Lambda, \alpha_1)$.
This follows essentially from the proof of strict Weinstein neighorhood theorem for isotropic submanifolds. The proof involves constructing a contact vector field in a neighborhood of $\Lambda$ whose time-1 flow $\phi$ is defined in a small neighorhood of $\Lambda$  and satisfies $\phi^*\alpha_2 = \alpha_1$ and $\phi|_\Lambda = Id$.  
We can ensure that this contactomorphism is supported in $U^{\epsilon/2}(\Lambda, \alpha_2)$ by proceeding as in the proof of Lemma \ref{lem: scalingmap} and using a bump function to cut off the contact Hamiltonian corresponding to the contact vector field used to construct $\phi$. Since the bump function equals $1$ near $\Lambda$, we can ensure that  $\phi^*\alpha_2= 
 \alpha_1$ near $\Lambda$ and   $\phi|_{\Lambda} = Id$ still hold. 

Note that $\phi^*\alpha_2$ might be much larger than $\alpha_1$ in 
$U^{\epsilon_2/2}(\Lambda, \alpha_2)\backslash 
U^{\delta_1}(\Lambda, \alpha_1)$; suppose that $\phi^*\alpha_2 < 2^k \alpha_1$ for some possibly large $k$. Now we proceed as in the proof of Proposition \ref{prop: techsurgery}
and show that $h:= f_{1/2^k} \circ \phi$ has the desired properties. Here 
$f_{1/2^k}$ is the contactomorphism from Lemma \ref{lem: scalingmap} defined for $U^{\epsilon_2}(\Lambda, \alpha_2) \subset 
U^{\epsilon_1}(\Lambda, \alpha_1)$;  when $\Lambda$ is a subcritical isotropic submanifold, we use Remark \ref{rem: appendix}.

Since $\phi$ and $ f_{1/2^k}$ are both supported in $U^{\epsilon_1}(\Lambda, \alpha_1)$, so is $h$. Since $\phi|_\Lambda =  f_{1/2^k}|_\Lambda = Id$, then $h|_\Lambda = Id$ as well. Also, 
$f_{1/2^k}|_{U^{\epsilon_2/2}(\Lambda, \alpha_2)} = s_{1/2^k}$ by construction, where $s_{1/2^k}^*\alpha_2 = \frac{1}{2^k}\alpha_2$.
Since $\phi$ is supported in 
$U^{\epsilon_2/2}(\Lambda, \alpha_2)$ and 
$U^{\delta_1}(\Lambda, \alpha_1)\subset U^{\epsilon_2/2}(\Lambda, \alpha_2)$, we have $\phi(U^{\delta_1}(\Lambda, \alpha_1))\subset U^{\epsilon_2/2}(\Lambda, \alpha_2)$.
Hence
 $h^*\alpha_2|_{U^{\delta_1}(\Lambda, \alpha_1)} = \phi^*(s_{1/2^k}^*\alpha_2|_{ U^{\epsilon_2/2}(\Lambda, \alpha_2)})|_{U^{\delta_1}(\Lambda, \alpha_1)} 
 = \frac{1}{2^k}\phi^*\alpha_2|_{U^{\delta_1}(\Lambda, \alpha_1)}  = \frac{1}{2^k}\alpha_1|_{U^{\delta_1}(\Lambda, \alpha_1)}$; so we can take $c = \frac{1}{2^k}$ in the statement of the proposition. Next, we need to show $h^*\alpha_2 < 4\alpha_1$. In $U^{\epsilon_2/2}(\Lambda, \alpha_2)$, we have 
 $h^*\alpha_2|_{U^{\epsilon/2}(\Lambda, \alpha_2)} = \phi^*(s_{1/2^k}^*\alpha_2|_{U^{ \epsilon_2/2}(\Lambda, \alpha_2)})|_{U^{\epsilon_2/2}(\Lambda, \alpha_2)} 
  = \frac{1}{2^k}\phi^*\alpha_2|_{U^{\epsilon_2/2}(\Lambda, \alpha_2)}  <  \alpha_1|_{U^{\epsilon_2/2}(\Lambda, \alpha_2)}$ since $\phi^* \alpha_2 < 2^k \alpha_1$. 
  In $U^{\epsilon_1}(\Lambda, \alpha_1)\backslash 
  U^{\epsilon_2/2}(\Lambda, \alpha_2)$,  
  we have $\phi = Id$ and therefore
  $h^*\alpha_2|_{U^{\epsilon_1}(\Lambda, \alpha_1)\backslash 
    U^{\epsilon_2/2}(\Lambda, \alpha_2)} $ $
    = f_{1/2^k}^*\alpha_2|_{U^{\epsilon_1}(\Lambda, \alpha_1)\backslash 
        U^{\epsilon_2/2}(\Lambda, \alpha_2)} < 4 \alpha_2$. This implies that 
        $h^*\alpha_2|_{U^{\epsilon_1}(\Lambda, \alpha_1)\backslash 
            U^{\epsilon_2/2}(\Lambda, \alpha_2)} < 4\alpha_1|_{U^{\epsilon_1}(\Lambda, \alpha_1)\backslash 
            U^{\epsilon_2/2}(\Lambda, \alpha_2)}$ 
            since $\alpha_1 > \alpha_2$. 
 Finally, we need to show that for any sufficiently small $\delta_2$ (and fixed $\delta_1$), 
 $h(U^{\delta_1}(\Lambda, \alpha_1) ) \subset U^{\delta_2}(\Lambda, \alpha_2)$. As noted before,  $\phi(U^{\delta_1}(\Lambda, \alpha_1))\subset U^{\epsilon_2/2}(\Lambda, \alpha_2)$ and 
 $f_{1/2^k}|_{U^{\epsilon_2/2}(\Lambda, \alpha_2)} = s_{1/2^k}$. So 
 $h(U^{\delta_1}(\Lambda, \alpha_1)) \subset U^{\epsilon_2/2^{k+1}}(\Lambda, \alpha_2)$. By taking $k$ large enough so that $\delta_2 < \epsilon_2/2^{k+1}$, we have $h(U^{\delta_1}(\Lambda, \alpha_1)) \subset U^{\delta_2}(\Lambda, \alpha_2)$ as desired.  
\end{proof}

\bibliographystyle{abbrv}
\bibliography{sources}

\end{document}